\DeclareMathAlphabet\mathscr{U}{eus}{m}{n} \SetMathAlphabet\mathscr{bold}{U}{eus}{b}{n} \DeclareMathAlphabet\matheur{U}{eur}{m}{n} \SetMathAlphabet\matheur{bold}{U}{eur}{b}{n}
\numberwithin{equation}{section}
\newtheorem{theo}{Theorem}[section]
\newtheorem{prop}[theo]{Proposition}
\newtheorem{lemm}[theo]{Lemma}
\newtheorem{coro}[theo]{Corollary}
\theoremstyle{definition}
\newtheorem{defi}[theo]{Definition}
\theoremstyle{remark}
\newtheorem{rema}[theo]{Remark}
\newtheorem{rems}[theo]{Remarks}
\newcommand{\defeq}{\,\raisebox{.08ex}{$\colon$}\hspace{-1.5ex}=}
\newcommand{\eqdef}{=\hspace{-1ex}\raisebox{.08ex}{$\colon$}}
\renewcommand{\k}{\mathbf{k}} \newcommand{\n}{\mathbf{n}}  \newcommand{\Zd}{\mathbb Z^d} \newcommand{\e}{\mathbf{e}} \newcommand{\Z}{\mathbb{Z}}
\begin{document}
\allowdisplaybreaks\frenchspacing

\setlength{\baselineskip}{1.2\baselineskip}

\title{Abelian Sandpiles and the Harmonic Model}

\author{Klaus Schmidt}

\address{Klaus Schmidt: Mathematics Institute, University of Vienna, Nordberg\-stra{\ss}e 15, A-1090 Vienna, Austria \newline\indent \textup{and} \newline\indent Erwin Schr\"odinger Institute for Mathematical Physics, Boltzmanngasse~9, A-1090 Vienna, Austria} \email{klaus.schmidt@univie.ac.at}

\author{Evgeny Verbitskiy}

\address{Evgeny Verbitskiy: Philips Research, High Tech Campus 36 (M/S 2), 5656 AE, Eindhoven, The Netherlands \newline\indent \textup{and} \newline\indent Department of Mathematics, University of Groningen, PO Box 407, 9700 AK, Groningen, The Netherlands} \email{evgeny.verbitskiy@philips.com}

\thanks{E.V. would like to acknowledge the hospitality
of the Erwin Schr\"{o}dinger Institute (Vienna), where
part of this work was done. E.V. is also grateful to Frank Redig, Marius van der Put and Tomas Tsang for illuminating discussions.
\\
\indent K.S. would like to thank EURANDOM (Eindhoven) and MSRI (Berkeley), for hospitality and support during part of this work.}




	\begin{abstract}
We present a construction of an entropy-preserving equivariant surjective map from the $d$-dimensional critical sandpile model to a certain closed, shift-invariant subgroup of $\mathbb{T}^{\mathbb{Z}^d}$ (the `harmonic model'). A similar map is constructed for the dissipative abelian sandpile model and is used to prove uniqueness and the Bernoulli property of the measure of maximal entropy for that model.
	\end{abstract}

\maketitle

\setcounter{tocdepth}{2} \tableofcontents

\section{\label{s:intro}Introduction}

For any integer $d\ge 2$ let
	\begin{equation}
	\label{eq:fundentropy}
h_d=\int_{0}^1 \cdots\int_{0}^1 \log \biggl( 2d-2\sum_{i=1}^d\cos(2\pi x_i)\biggr) \ dx_1\cdots dx_d,
	\end{equation}
$h_2=1.166$, $h_3=1.673$, etc. It turns out that for $d\ge 2$, $h_d$ is the topological entropy of three different $d$-dimensional models in mathematical physics, probability theory, and dynamical systems. For $d=2$, there is even a fourth model with the same entropy $h_d$.

\subsection{Four models} 

The $d$-dimensional \textit{\textbf{abelian sandpile model}} was introduced by Bak, Tang and Wiesenfeld in \cite{BTW1,BTW2} and attracted a lot of attention after the discovery of the Abelian property by Dhar in \cite{Dhar0}. The set of infinite allowed configurations of the sandpile model is the shift-invariant subset $\mathcal{R}_\infty \subset \{0,\ldots,2d-1\}^{\mathbb{Z}^d}$ defined in \eqref{eq:sandpiles} and discussed in Section \ref{s:sandpiles}.\footnote{In the physics literature it is more customary to view the sandpile model as a subset of $\{1,\dots ,d\}^{\mathbb{Z}^d}$ by adding $1$ to each coordinate.} In \cite{Dhar2}, Dhar showed that the topological entropy of the shift-action $\sigma _{\mathcal{R}_\infty }$ on $\mathcal{R}_\infty $ is also given by \eqref{eq:entropy}, which implies that every shift-invariant measure $\mu $ of maximal entropy on $\mathcal{R}_\infty $ has entropy \eqref{eq:fundentropy}. Shift-invariant measures on $\mathcal{R}_\infty $ were studied in some detail by Athreya and Jarai in \cite{Jarai1,Jarai2}, Jarai and Redig in \cite{JR}; however, the question of uniqueness of the measure of maximal entropy is still unresolved.

\textit{\textbf{Spanning trees}} of finite graphs are classical objects in combinatorics and graph theory. In 1991, Pemantle in his seminal paper \cite{Pemantle} addressed the question of constructing \textit{uniform} probability measures on the set $\mathcal{T}_d$ of infinite spanning trees on $\mathbb{Z}^d$ --- i.e., on the set of spanning subgraphs of $\mathbb{Z}^d$ without loops. This work was continued in 1993 by Burton and Pemantle \cite{BP}, where the authors observed that the topological entropy of the set of all spanning trees in $\mathbb{Z}^d$ is also given by the formula \eqref{eq:fundentropy}. Another problem discussed in \cite{BP} is the uniqueness of the shift-invariant measure of maximal entropy on $\mathcal{T}_d$ (the proof in \cite{BP} is not complete, but Sheffield has recently completed the proof in \cite{Sheffield}.

This coincidence of entropies raised the question about the relation between these models. A partial answer to this question was given in 1998 by R.~Solomyak in \cite{Solomyak}: she constructed injective mappings from the set of rooted spanning trees on finite regions of $\mathbb{Z}^d$ into $X_{f^{(d)}}$ such that the images are sufficiently separated. In particular, this provided a direct proof of coincidence of the topological entropies of $\alpha _{f^{(d)}}$ and $\sigma _{\mathcal{T}_d}$ without making use of formula (\ref{eq:fundentropy}).

In dimension 2, spanning trees are related not only to the sandpile models (cf. e.g., \cite{Redig} for a detailed account) and, by \cite{Solomyak}, to the harmonic model, but also to a \textit{\textbf{dimer model}} (more precisely, to the \textit{even} shift-action on the two-dimensional dimer model) by \cite{BP}.

However, the connections between the abelian sandpiles and spanning trees (as well as dimers in dimension 2), are \textit{non-local}: they are obtained by restricting the models to finite regions in $\mathbb{Z}^d$ (or $\mathbb{Z}^2$) and constructing maps between these restrictions, but these maps are not consistent as the finite regions increase to $\mathbb{Z}^d$.

In this paper we study the relation between the infinite abelian sandpile models and the algebraic dynamical systems called the \textit{\textbf{harmonic models}}. The purpose of this paper is to define a shift-equivariant, surjective \textit{local} mapping between these models: from the infinite critical sandpile model $\mathcal{R}_\infty $ to the harmonic model. Although we are not able to prove that this mapping is almost one-to-one it has the property that it sends every shift-invariant measure of maximal entropy on $\mathcal{R}_\infty $ to Haar measure on $X_{f^{(d)}}$. Moreover, it sheds some light on the somewhat elusive group structure of $\mathcal{R}_\infty$.

Firstly, the dual group of $X_{f^{(d)}}$ is the group
	$$
\mathcal{G}_d= R_d/(f^{(d)}),
	$$
where $R_d=\mathbb Z[u_1^\pm, \ldots, u_d^{\pm}]$ is the ring of Laurent polynomials with integer coefficients in the variables $u_1,\ldots, u_d$, and $(f^{(d)})$ is the principal ideal in $R_d$ generated by $f^{(d)}=2d-\sum_{i=1}^d (u_i+u_i^{-1})$. The group $\mathcal{G}_d$ is the correct infinite analogue of the groups of \textbf{\textit{addition operators}} defined on finite volumes, see \cite{Dhar1, Redig} (cf. Section \ref{s:conclusions}).

Secondly, the map $\xi _{I_d}$ constructed in this paper gives rise to an equivalence relation $\sim$ on $\mathcal{R}_\infty$ with
	$$
x\sim y\iff x-y\in\ker(\xi _{I_d}),
	$$
such that $\mathcal{R}_\infty /_\sim$ is a compact abelian group. Moreover, $\mathcal{R}_\infty /_\sim$, viewed as a dynamical system under the natural shift-action of $\mathbb{Z}^d$, has the topological entropy \eqref{eq:fundentropy}. This extends the result of \cite{MRS}, obtained in the case of dissipative sandpile model, to the critical sandpile model.

Finally, we also identify an algebraic dynamical system isomorphic to the dissipative sandpile model. This allows an easy extension of the results in \cite{MRS}: namely, the uniqueness of the measure of maximal entropy on the set of infinite recurrent configurations in the dissipative case. Unfortunately, we are not yet able to establish the analogous uniqueness result in the critical case.

\subsection{Outline of the paper} Section \ref{s:green} investigates certain multipliers of the potential function (or Green's function) of the elementary random walk on $\mathbb{Z}^d$. In Section \ref{s:harmonic} these results are used to describe the \textit{homoclinic points} of the harmonic model. These points are then used to define shift-equivariant maps from the space $\ell ^\infty (\mathbb{Z}^d,\mathbb{Z})$ of all bounded $d$-parameter sequences of integers to $X_{f^{(d)}}$. In Section \ref{s:sandpiles} we introduce the critical and dissipative sandpile models. In Section \ref{s:critical} we show that the maps found in Section \ref{s:harmonic} send the critical sandpile model $\mathcal{R}_\infty $ \textit{onto} $X_{f^{(d)}}$, preserve topological entropy, and map every measure of maximal entropy on $\mathcal{R}_\infty $ to Haar measure on the harmonic model. After a brief discussion of further properties of these maps in Subsection \ref{ss:properties}, we turn to dissipative sandpile models in Section \ref{s:dissipative} and define an analogous map to another closed, shift-invariant subgroup of $\mathbb{T}^{\mathbb{Z}^d}$. The main result in \cite{MRS} shows that this map is almost one-to-one, which implies that the measure of maximal entropy on the dissipative sandpile model is unique and Bernoulli.

\section{A potential function and its $\ell ^1$-multipliers
	\label{s:green}
}

Let $d\ge1$. For every $i=1,\dots ,d$ we write $\mathbf{e}^{(i)}=(0,\dots ,0,1,0,\dots ,0)$ for the $i$-th unit vector in $\mathbb{Z}^d$, and we set $\mathbf{0}=(0,\dots ,0)\in\mathbb{Z}^d$.

We identify the cartesian product $W_d=\mathbb{R}^{\mathbb{Z}^d}$ with the set of formal real power series in the variables $u_1^{\pm1},\dots ,u_d^{\pm1}$ by viewing each $w=(w_\mathbf{n})\in W_d$ as the power series
	\begin{equation}
	\label{eq:powerseries}
\smash[t]{\sum_{\mathbf{n}\in\mathbb{Z}^d}w_\mathbf{n}u^\mathbf{n}}
	\end{equation}
with $w_\mathbf{n}\in\mathbb{R}$ and $u^{\mathbf{n}}=u_1^{n_2}\cdots u_d^{n_d}$ for every $\mathbf{n}=(n_1,\dots ,n_d)\in\mathbb{Z}^d$. The \textit{involution} $w\mapsto w^*$ on $W_d$ is defined by
	\begin{equation}
	\label{eq:involution}
w^*_\mathbf{n}=w_{-\mathbf{n}},\enspace \mathbf{n}\in\mathbb{Z}^d.
	\end{equation}

For $E\subset \mathbb{Z}^d$ we denote by $\pi _E\colon W_d\longrightarrow \mathbb{R}^E$ the projection onto the coordinates in $E$.

\medskip For every $p\ge1$ we regard $\ell ^p(\mathbb{Z}^d)$ as the set of all $w\in W_d$ with
	$$
\|w\|_p=\biggl(\sum_{\mathbf{n}\in\mathbb{Z}^d}|w_\mathbf{n}|^p \biggr)^{1/p}<\infty .
	$$
Similarly we view $\ell ^\infty (\mathbb{Z}^d)$ as the set of all bounded elements in $W_d$, equipped with the supremum norm $\|\cdot \|_\infty $. Finally we denote by $R_d=\mathbb{Z}[u_1^{\pm1},\dots ,u_d^{\pm1}]\subset \ell ^1(\mathbb{Z}^d)\subset W_d$ the ring of Laurent polynomials with integer coefficients. Every $h$ in any of these spaces will be written as $h=(h_\mathbf{n})= \sum_{\mathbf{n}\in\mathbb{Z}^d}h_\mathbf{n} u^{\mathbf{n}}$ with $h_\mathbf{n}\in\mathbb{R}$ (resp. $h_\mathbf{n}\in\mathbb{Z}$ for $h\in R_d$).
	\label{h}

The map $(\mathbf{m},w)\mapsto u^\mathbf{m}\cdot w$ with $(u^\mathbf{m}\cdot w)_\mathbf{n}=w_{\mathbf{n}-\mathbf{m}}$ is a $\mathbb{Z}^d$-action by automorphisms of the additive group $W_d$ which extends linearly to an $R_d$-action on $W_d$ given by
	\begin{equation}
	\label{eq:module}
h\cdot w=\sum_{\mathbf{n}\in\mathbb{Z}^d}h_\mathbf{n}u^{\mathbf{n}}\cdot w
	\end{equation}
for every $h\in R_d$ and $w\in W_d$. If $w$ also lies in $R_d$ this definition is consistent with the usual product in $R_d$.

\medskip For the following discussion we assume that $d\ge 2$ and consider the irreducible Laurent polynomial
	\begin{equation}
	\label{eq:fd}
\smash{f^{(d)}=2d-\sum_{i=1}^d(u_i+u_i^{-1}) \in R_d.}
	\end{equation}
The equation
	\begin{equation}
	\label{eq:inverse}
f^{(d)}\cdot w=1
	\end{equation}
with $w\in W_d$ admits a multitude of solutions.\footnote{\label{delta}Under the obvious embedding of $R_d\hookrightarrow \ell ^\infty (\mathbb{Z}^d,\mathbb{Z})$, the constant polynomial $1\in R_d$ corresponds to the element $\delta ^{(\mathbf{0})}\in\ell ^\infty (\mathbb{Z}^d,\mathbb{Z})$ given by
	$$
\smash[t]{\delta ^{(\mathbf{0})}_\mathbf{n}=
	\begin{cases}
1&\textup{if}\enspace \mathbf{n}=\mathbf{0},
	\\
0&\textup{otherwise}.
	\end{cases}}
	$$}
However, there is a distinguished (or \textit{fundamental}) solution $w^{(d)}$ of \eqref{eq:inverse} which has a deep probabilistic meaning: it is a certain multiple of the \textit{lattice Green's function of the symmetric nearest-neighbour random walk on $\mathbb{Z}^d$} (cf. \cite{deBHR}, \cite{FU}, \cite{S}, \cite{U}).

	\begin{defi}
	\label{d:fundamental}
For every $\mathbf{n}=(n_1,\dots ,n_d)\in\mathbb{Z}^d$ and $\mathbf{t}=(t_1,\dots ,t_d)\in\mathbb{T}^d$ we set $\langle \mathbf{n},\mathbf{t}\rangle =\sum_{j=1}^dn_jt_j\in\mathbb{T}$. We denote by
	\begin{equation}
	\label{eq:F}
F^{(d)}(\mathbf{t})=\sum_{\mathbf{n}\in\mathbb{Z}^d} f_\mathbf{n}^{(d)}e^{2\pi i\langle \mathbf{n},\mathbf{t}\rangle }=2d-2\cdot \sum_{j=1}^d \cos(2\pi t_j),\enspace \mathbf{t}=(t_1,\dots ,t_d)\in\mathbb{T}^d,
	\end{equation}
the Fourier transform of $f^{(d)}$.

\medskip (1) For $d=2$,
	$$
\smash{w_\mathbf{n}^{(2)}\defeq\int_{\mathbb{T}^2} \frac{e^{-2\pi i\langle \mathbf{n},\mathbf{t}\rangle} -1}{F^{(2)}(\mathbf{t})}\,d\mathbf{t} \enspace \textup{for every}\enspace \mathbf{n}\in\mathbb{Z}^2.}
	$$

(2) For $d\ge3$,
	$$
\smash[t]{w_\mathbf{n}^{(d)}\defeq\int_{\mathbb{T}^d} \frac{e^{-2\pi i\langle \mathbf{n},\mathbf{t}\rangle} }{F^{(d)}(\mathbf{t})}\,d\mathbf{t}\enspace \textup{for every}\enspace \mathbf{n}\in\mathbb{Z}^d.}
	$$

The difference in these definitions for $d=2$ and $d>2$ is a consequence of the fact that the simple random walk on $\mathbb{Z}^2$ recurrent, while on higher dimensional lattices it is transient.
	\end{defi}

	\begin{theo}[\cite{deBHR,FU,S,U}]
	\label{t:estimates}
We write $\|\cdot \|$ for the Euclidean norm on $\mathbb{Z}^d$.
	\begin{itemize}
	\item[(i)]
For every $d\ge 2$, $w^{(d)}$ satisfies \eqref{eq:inverse}.
	\item[(ii)]
For $d=2$,
	\begin{equation}
	\label{eq:sharpest}
w_\mathbf{n}^{(2)}=
	\begin{cases}
\hphantom{-}0&\textup{if}\enspace \mathbf{n}=\mathbf{0},
	\\
-\frac1{8\pi } \log \|\mathbf{n}\| - \kappa_2- c_2\frac {\frac{1}{\|\mathbf{n}\|^4}(n_1^4+n_2^4)-\frac 34}{\|\mathbf{n}\|^2} + \mathcal{O}(\|\mathbf{n}\|^{-4})&\textup{if}\enspace \mathbf{n}\ne\mathbf{0},
	\end{cases}
	\end{equation}
where $\kappa_2>0$ and $c_2>0$. In particular, $w_\mathbf{0}^{(2)}=0$ and $w_\mathbf{n}^{(2)}<0$ for all $\mathbf{n}\ne\mathbf 0$. Moreover,
	$$
4\cdot w^{(2)}_\mathbf{n} = \sum_{k=1}^\infty \bigl(\mathbf{P}(X_k=\mathbf{n}|X_0=\mathbf 0) -\mathbf{P}(X_k=\mathbf 0|X_0=\mathbf 0)\bigr),
	$$
where $(X_k)$ is the symmetric nearest-neighbour random walk on $\mathbb{Z}^2$.

	\item[(iii)]
For $d\ge 3$,
	\begin{equation}
	\label{eq:sharpestD}
\|\mathbf{n}\|^{d-2} w_\mathbf{n}^{(d)}= \kappa_d +c_d\frac { \frac{1}{\|\mathbf{n}\|^4}\sum_{i=1}^d n_i^4-\frac 3{d+2}}{\|\mathbf{n}\|^2} +\mathcal{O}(\|\mathbf{n}\|^{-4})
	\end{equation}
as $\|\mathbf{n}\|\to \infty$, where $\kappa_d>0$, $c_d>0$. Moreover,
	$$
2d\cdot w^{(d)}_\mathbf{n} = \sum_{k=0}^\infty\mathbf{P}(X_k=\mathbf n|X_0=\mathbf 0)>0\enspace \textup{for every}\enspace \mathbf{n}\in\mathbb{Z}^d,
	$$
where $(X_k)$ is again the symmetric nearest-neighbour random walk on $\mathbb{Z}^d$.
	\end{itemize}
	\end{theo}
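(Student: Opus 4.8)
The plan is to prove all three parts by Fourier analysis on $\mathbb{T}^d$, exploiting that under the Fourier transform the $R_d$-action \eqref{eq:module} becomes multiplication by $F^{(d)}$, so that \eqref{eq:inverse} is equivalent to the assertion that the Fourier series of $f^{(d)}\cdot w^{(d)}$ equals the constant $1$. For part (i) with $d\ge3$ I would first record the Taylor expansion $F^{(d)}(\mathbf{t})=4\pi^2\|\mathbf{t}\|^2-\tfrac{4\pi^4}{3}\sum_jt_j^4+\mathcal{O}(\|\mathbf{t}\|^6)$ at the unique zero $\mathbf{t}=\mathbf{0}$ of $F^{(d)}$ on $\mathbb{T}^d$; since $\|\mathbf{t}\|^{-2}$ is integrable in dimension $\ge3$, the coefficients $w_\mathbf{n}^{(d)}$ are well defined. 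Because $f^{(d)}$ is a Laurent polynomial, the convolution $f^{(d)}\cdot w^{(d)}$ is a \emph{finite} sum that may be moved inside the integral, and using $\sum_\mathbf{m}f^{(d)}_\mathbf{m}e^{2\pi i\langle\mathbf{m},\mathbf{t}\rangle}=F^{(d)}(\mathbf{t})$ one obtains $(f^{(d)}\cdot w^{(d)})_\mathbf{n}=\int_{\mathbb{T}^d}e^{-2\pi i\langle\mathbf{n},\mathbf{t}\rangle}\,d\mathbf{t}=\delta^{(\mathbf{0})}_\mathbf{n}$, which is \eqref{eq:inverse}.

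For $d=2$ the singularity $\|\mathbf{t}\|^{-2}$ is no longer integrable, which is exactly why the regularising $-1$ appears in the definition of $w^{(2)}$: the factor $e^{-2\pi i\langle\mathbf{n},\mathbf{t}\rangle}-1$ vanishes to first order at $\mathbf{0}$ and restores integrability. I would verify \eqref{eq:inverse} by the same interchange, now using the key cancellation $\sum_\mathbf{m}f^{(2)}_\mathbf{m}=F^{(2)}(\mathbf{0})=0$ to annihilate the contribution of the $-1$ term, again leaving $\delta^{(\mathbf{0})}_\mathbf{n}$. The probabilistic representations follow from writing the one-step characteristic function of the symmetric nearest-neighbour walk as $\phi(\mathbf{t})=\tfrac1d\sum_j\cos(2\pi t_j)$, so that $\mathbf{P}(X_k=\mathbf{n}\mid X_0=\mathbf{0})=\int_{\mathbb{T}^d}\phi(\mathbf{t})^k e^{-2\pi i\langle\mathbf{n},\mathbf{t}\rangle}\,d\mathbf{t}$ and $1-\phi=F^{(d)}/(2d)$. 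For $d\ge3$ (transience) the resolvent series $\sum_{k\ge0}\phi^k=(1-\phi)^{-1}=2d/F^{(d)}$ may be integrated term by term against $e^{-2\pi i\langle\mathbf{n},\mathbf{t}\rangle}$ by dominated convergence, the partial sums being bounded by the integrable function $4d/F^{(d)}$; this yields $2d\,w^{(d)}_\mathbf{n}=\sum_{k\ge0}\mathbf{P}(X_k=\mathbf{n}\mid X_0=\mathbf{0})$, and positivity is immediate since each summand is nonnegative and some summand is positive. For $d=2$ (recurrence) the individual terms are not summable, but integrating the same series against the regularised kernel $e^{-2\pi i\langle\mathbf{n},\mathbf{t}\rangle}-1$ reproduces the stated potential-kernel expression for $4\,w^{(2)}_\mathbf{n}$, dominated convergence again applying because $(1-\phi)^{-1}\sim\|\mathbf{t}\|^{-2}$ is tamed by $|e^{-2\pi i\langle\mathbf{n},\mathbf{t}\rangle}-1|\sim\|\mathbf{t}\|$.

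The genuinely hard part, and the reason the theorem is attributed to \cite{deBHR,FU,S,U}, is the sharp asymptotics \eqref{eq:sharpest} and \eqref{eq:sharpestD}. Here I would use a localisation-and-rescaling analysis: the decay of $w^{(d)}_\mathbf{n}$ as $\|\mathbf{n}\|\to\infty$ is governed entirely by the behaviour of $1/F^{(d)}$ near its singularity $\mathbf{t}=\mathbf{0}$, so one splits $\mathbb{T}^d$ into a small ball about $\mathbf{0}$ and its complement (the latter contributing a negligible, rapidly decaying error, since $1/F^{(d)}$ is smooth there and its Fourier coefficients decay faster than any power), substitutes $\mathbf{t}=\mathbf{s}/\|\mathbf{n}\|$, and expands $F^{(d)}(\mathbf{t})^{-1}=(4\pi^2\|\mathbf{t}\|^2)^{-1}\bigl(1+\tfrac{\pi^2}{3}\,\tfrac{\sum_jt_j^4}{\|\mathbf{t}\|^2}+\cdots\bigr)$ from the quartic term above. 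The leading term reproduces the continuum Newtonian kernel and gives $\kappa_d\|\mathbf{n}\|^{-(d-2)}$ for $d\ge3$ (respectively $-\tfrac{1}{8\pi}\log\|\mathbf{n}\|$ for $d=2$), while the anisotropic quartic correction produces exactly the factor $\bigl(\|\mathbf{n}\|^{-4}\sum_in_i^4-\tfrac{3}{d+2}\bigr)\|\mathbf{n}\|^{-2}$; the constant $\tfrac{3}{d+2}$ is forced as the average of $\sum_i\theta_i^4$ over the unit sphere in $\mathbb{R}^d$. The main obstacle is controlling the remainder uniformly in the direction of $\mathbf{n}$ and pinning down the positive constants $\kappa_d,c_d$; I expect this to require either an Ornstein--Zernike type expansion of the lattice Green's function or a careful Euler--Maclaurin comparison with the continuum integral, as carried out in the cited references.
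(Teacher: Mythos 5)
The paper does not actually prove Theorem \ref{t:estimates}: it is imported wholesale from \cite{deBHR,FU,S,U}, so there is no internal argument to compare yours against. Judged on its own terms, your Fourier-analytic reconstruction follows the standard route of those references and is essentially sound. Part (i) is complete: the convolution with the Laurent polynomial $f^{(d)}$ is a finite sum, so the interchange with the integral is legitimate, and the cancellation $\sum_{\mathbf{m}}f^{(2)}_{\mathbf{m}}=F^{(2)}(\mathbf{0})=0$ correctly disposes of the regularising $-1$ when $d=2$. Your dominated-convergence treatment of the resolvent series is also right: the partial sums $(1-\phi^{K+1})/(1-\phi)$ are bounded by $2/(1-\phi)=4d/F^{(d)}$, which is integrable for $d\ge3$ (and, after multiplication by $e^{-2\pi i\langle\mathbf{n},\mathbf{t}\rangle}-1$, for $d=2$); do note that pointwise convergence fails on the null set where $|\phi|=1$ — in particular at $\mathbf{t}=(\tfrac12,\dots,\tfrac12)$, where $\phi=-1$ and bipartiteness makes the series oscillate — so ``a.e.'' should be said explicitly. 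Deferring the uniform remainder control in \eqref{eq:sharpest}--\eqref{eq:sharpestD} to the cited references is consistent with what the paper itself does, and your localisation/rescaling skeleton, including the expansion of $1/F^{(d)}$ and the identification of $3/(d+2)$ as the spherical mean of $\sum_i\theta_i^4$, is the correct outline of those proofs.

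There is, however, one concrete discrepancy you gloss over. Carried out honestly, your $d=2$ computation yields $4w^{(2)}_{\mathbf{n}}=\sum_{k\ge0}\bigl(p_k(\mathbf{n})-p_k(\mathbf{0})\bigr)$, where $p_k(\mathbf{n})=\mathbf{P}(X_k=\mathbf{n}\mid X_0=\mathbf{0})$, and the $k=0$ term equals $\delta_{\mathbf{n},\mathbf{0}}-1$; for $\mathbf{n}\ne\mathbf{0}$ this differs from the displayed formula in the theorem (which starts at $k=1$) by the constant $-1$. So the claim that your series ``reproduces the stated potential-kernel expression'' is not true as written: you must either account for the $k=0$ term or flag the stated lower summation limit as a misprint — your own argument strongly suggests the latter. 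Relatedly, since $F^{(2)}=4(1-\phi)$, your derivation gives $w^{(2)}_{\mathbf{n}}=-\frac14a(\mathbf{n})$ with $a$ the classical potential kernel, whose asymptotics $a(\mathbf{n})=\frac2\pi\log\|\mathbf{n}\|+O(1)$ make the leading term $-\frac1{2\pi}\log\|\mathbf{n}\|$ rather than the stated $-\frac1{8\pi}\log\|\mathbf{n}\|$; asserting that your leading-term computation ``reproduces'' the stated constant is therefore unjustified, and the numerical constants should be checked against the normalizations of \cite{FU,S} rather than taken on faith. Finally, the assertion $w^{(2)}_{\mathbf{n}}<0$ for $\mathbf{n}\ne\mathbf{0}$ is never addressed; it drops out in one line from $4w^{(2)}_{\mathbf{n}}=-a(\mathbf{n})$ together with strict positivity of the potential kernel off the origin, and should be recorded.
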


	\begin{defi}
	\label{d:ideal}
Let $w^{(d)}\in W_d$ be the point appearing in Definition \ref{d:fundamental}. We set
	\begin{equation}
	\label{eq:Id}
I_d=\bigl\{g\in R_d:g\cdot w^{(d)}\in \ell ^1(\mathbb{Z}^d)\bigr\}\supset (f^{(d)}),
	\end{equation}
where $(f^{(d)})=f^{(d)}\cdot R_d$ is the principal ideal generated by $f^{(d)}$. Since $w^{(d)}_\mathbf{n}=w^{(d)}_{-\mathbf{n}}$ for every $\mathbf{n}\in\mathbb{Z}^d$ it is clear that $I_d=I_d^*=\{g^*:g\in I_d\}$.
	\end{defi}

	\begin{theo}
	\label{t:homoclinic}
The ideal $I_d$ is of the form
	\begin{equation}
	\label{eq:Idform}
I_d=(f^{(d)})+\mathscr{I}_d^3,\vspace{-2mm}
	\end{equation}
where
	\begin{equation}
	\label{eq:maximal}
\mathscr{I}_d=\bigl\{h\in R_d:h(\mathbf{1})=0\bigr\}= (1-u_1)\cdot R_d+\dots +(1-u_d)\cdot R_d
	\end{equation}
with $\mathbf{1}=(1,\dots ,1)$.
	\end{theo}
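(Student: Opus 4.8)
The plan is to prove the two inclusions separately, using throughout the Fourier description of the module action. Writing $\hat g(\mathbf t)=\sum_{\mathbf m}g_{\mathbf m}e^{2\pi i\langle\mathbf m,\mathbf t\rangle}$ for $g\in R_d$, the coefficient sequence of $g\cdot w^{(d)}$ has ``Fourier transform'' $\hat g/F^{(d)}$, and $g\cdot w^{(d)}\in\ell^1(\mathbb Z^d)$ is equivalent to $\hat g/F^{(d)}$ lying in the Wiener algebra of absolutely convergent Fourier series; in particular it forces $\hat g/F^{(d)}$ to be continuous on $\mathbb T^d$. The only zero of $F^{(d)}$ is $\mathbf t=\mathbf 0$, where $F^{(d)}(\mathbf t)=4\pi^2\|\mathbf t\|^2+O(\|\mathbf t\|^4)$ is a nondegenerate quadratic zero, so the whole analysis is local at the origin. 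I would also record at the outset the algebraic fact that, writing $v_i=1-u_i$, one has $u_i+u_i^{-1}\equiv 2+v_i^2\pmod{\mathscr I_d^3}$ and hence
\[
f^{(d)}\equiv-\sum_{i=1}^d v_i^2=-\sum_{i=1}^d(1-u_i)^2\pmod{\mathscr I_d^3},
\]
so that the image of $f^{(d)}$ in $\mathscr I_d^2/\mathscr I_d^3$ is the ``round'' quadratic form. Since $(f^{(d)})\subset I_d$ already and $I_d$ is visibly an ideal (convolving an $\ell^1$ sequence with a Laurent polynomial keeps it in $\ell^1$), it suffices to prove $\mathscr I_d^3\subseteq I_d$ and the reverse inclusion $I_d\subseteq(f^{(d)})+\mathscr I_d^3$.

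For the inclusion $\mathscr I_d^3\subseteq I_d$ I would reduce, since $I_d$ is an ideal, to the generators: it is enough that $(1-u_i)(1-u_j)(1-u_k)\cdot w^{(d)}\in\ell^1$ for all $i,j,k$, i.e.\ that every third-order difference of $w^{(d)}$ is summable. Here I would invoke Theorem \ref{t:estimates} and split $w^{(d)}_{\mathbf n}$ into its explicit leading and correction terms plus the remainder. The leading and correction terms extend to functions on $\mathbb R^d\setminus\{\mathbf 0\}$ that are smooth and homogeneous of degrees $-(d-2)$ and $-d$ (for $d=2$ the leading term is $-\tfrac1{8\pi}\log\|\mathbf n\|$); by the mean value theorem their third-order differences are bounded by the corresponding third partial derivatives, which decay like $\|\mathbf n\|^{-(d+1)}$ and $\|\mathbf n\|^{-(d+3)}$, both summable over $\mathbb Z^d$. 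The remainder is controlled only pointwise, by $O(\|\mathbf n\|^{-(d+2)})$ (resp.\ $O(\|\mathbf n\|^{-4})$ when $d=2$), but its third difference is crudely bounded by the sum of its values at the eight neighbouring points, which is again $O(\|\mathbf n\|^{-(d+2)})$ and hence summable; crucially no derivative control on the remainder is needed because it already decays faster than the borderline rate $\|\mathbf n\|^{-d}$.

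For the reverse inclusion I would take $g\in I_d$ and exploit only the continuity of $\hat g/F^{(d)}$ at the origin, peeling off the Taylor expansion of $\hat g$ order by order. If $\hat g(\mathbf 0)\ne 0$ then $\hat g/F^{(d)}\sim\hat g(\mathbf 0)/(4\pi^2\|\mathbf t\|^2)$ is unbounded, so $\hat g(\mathbf 0)=g(\mathbf 1)=0$ and $g\in\mathscr I_d$; if the linear part $L(\mathbf t)$ of $\hat g$ were nonzero then $\hat g/F^{(d)}\sim L(\mathbf t)/(4\pi^2\|\mathbf t\|^2)$ blows up along some ray, so $L\equiv 0$ and $g\in\mathscr I_d^2$; finally, writing $\hat g(\mathbf t)=Q(\mathbf t)+O(\|\mathbf t\|^3)$ with $Q$ a quadratic form, the radial limits $Q(\theta)/(4\pi^2)$ must be independent of the direction $\theta$, forcing $Q(\mathbf t)=\lambda\|\mathbf t\|^2$. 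Thus the image of $g$ in $\mathscr I_d^2/\mathscr I_d^3$ is a real multiple of the round form, and because $g$ has integer coefficients this multiple is an integer times the class of $-f^{(d)}$; hence $g-\lambda f^{(d)}\in\mathscr I_d^3$ and $g\in(f^{(d)})+\mathscr I_d^3$.

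I expect the main obstacle to be the third-difference estimate in the forward inclusion: matching the differencing order to the decay rate so that everything lands strictly inside $\ell^1$ requires the refined expansions of Theorem \ref{t:estimates}, and one must treat the smooth principal part (via derivatives) and the merely pointwise remainder by genuinely different bounds. A secondary technical point is the recurrent case $d=2$, where $w^{(2)}$ is the renormalized Green's function and $\widehat{w^{(2)}}=1/F^{(2)}$ holds only after subtracting the value at $\mathbf 0$; there one must first establish $g\in\mathscr I_2$ (equivalently $g(\mathbf 1)=0$, which already follows from the $\log$-growth of $w^{(2)}$) before the clean identity $\widehat{g\cdot w^{(2)}}=\hat g/F^{(2)}$ becomes available. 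Note finally the asymmetry in how the hypothesis is used: the reverse inclusion needs only continuity of $\hat g/F^{(d)}$, a consequence of $\ell^1$, whereas the forward inclusion requires the full summability, and it is the combination of the two that pins $I_d$ down exactly.
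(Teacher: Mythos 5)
Your proposal is correct, and while your reverse inclusion $I_d\subseteq(f^{(d)})+\mathscr{I}_d^3$ follows the paper almost verbatim --- continuity of $\hat g/F^{(d)}$ at the unique zero of $F^{(d)}$, peeling off the constant, linear and quadratic Taylor parts, then identifying $g$ modulo $\mathscr{I}_d^3$ (your use of the free $\mathbb{Z}$-module $\mathscr{I}_d^2/\mathscr{I}_d^3$ on the monomials $(1-u_i)(1-u_j)$ is a tidy repackaging of the paper's Lemma \ref{l:ideals}, and you correctly flag the $d=2$ caveat that condition \eqref{eq:condA} must first be extracted from the logarithmic growth before the identity $\widehat{g\cdot w^{(2)}}=\hat g/F^{(2)}$ is legitimate, exactly as in Lemma \ref{l:necessary}) --- your forward inclusion takes a genuinely different and arguably simpler route. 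The paper proves $J_d\subseteq I_d$ for an \emph{arbitrary} $g$ satisfying the moment conditions \eqref{eq:condA}--\eqref{eq:condD}, via a second-order Taylor expansion of $\|\mathbf{n}-\mathbf{k}\|^{-\gamma}$ in $\mathbf{k}$ (Lemma \ref{l:polynomcase}); there the leading term $\gamma=d-2$ produces a borderline $\|\mathbf{n}\|^{-d}$ contribution that is killed only by the cancellation $\gamma(\gamma+2)-\gamma d=0$, i.e.\ harmonicity of $\|\mathbf{x}\|^{2-d}$, and the two-dimensional logarithmic term requires the separate Lemma \ref{l:embed} with the explicit degree count $M_g-m_g\ge3$ for $(u_1-1)^3$ plus an algebraic identity to reach the mixed generators. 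You instead reduce to the ideal generators $(1-u_i)(1-u_j)(1-u_k)$ (legitimate, since $I_d$ is an ideal and $f^{(d)}\cdot w^{(d)}=\delta^{(\mathbf{0})}$ disposes of $(f^{(d)})$ directly) and bound third differences of the smooth homogeneous principal parts by third derivatives: differencing three times gains three orders of decay, so the leading term of degree $-(d-2)$ (or $\log\|\mathbf{x}\|$ when $d=2$) lands at the strictly summable rate $\|\mathbf{n}\|^{-(d+1)}$ (resp.\ $\|\mathbf{n}\|^{-3}$), with no harmonicity cancellation and no special $d=2$ lemma; your observation that the remainder needs only its pointwise $O(\|\mathbf{n}\|^{-(d+2)})$ bound, no derivative control, is exactly right. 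What the paper's longer route buys is the moment characterization of $J_d$ itself, which is reused later (in Lemmas \ref{l:constant} and \ref{l:zero} to evaluate $H_g(0)$), so if you adopt your streamlined forward argument you should still record the equivalence of \eqref{eq:condA}--\eqref{eq:condD} with membership in $(f^{(d)})+\mathscr{I}_d^3$, which your reverse-inclusion analysis in fact yields. Two small points: the freeness of $\mathscr{I}_d^2/\mathscr{I}_d^3$ on the quadratic monomials deserves a line of justification (the paper does it by hand with partial derivatives), and your congruence $f^{(d)}\equiv-\sum_i(1-u_i)^2\pmod{\mathscr{I}_d^3}$ is the correct one --- the right-hand side of the paper's display \eqref{eq:idealequality} actually equals $-f^{(d)}$, a harmless sign slip that does not affect ideal membership.
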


For the proof of Theorem \ref{t:homoclinic} we need several lemmas. We set
	\begin{equation}
	\label{eq:J}
J_d=(f^{(d)})+\mathscr{I}_d^3\subset R_d.
	\end{equation}

	\begin{lemm}
	\label{l:ideals}
Let $g=\sum_{\mathbf{k}\in\mathbb{Z}^d}g_\mathbf{k} u^\mathbf{k}\in R_d$. Then $g\in J_d$ if and only if it satisfies the following conditions \eqref{eq:condA}--\eqref{eq:condD}.
	\begin{align}
\sum_{\mathbf{k}\in\mathbb{Z}^d} g_\mathbf{k}&=0,
	\label{eq:condA}
	\\
\sum_{\mathbf{k}=(k_1,\dots ,k_d)\in\mathbb{Z}^d}g_\mathbf{k} k_i & =0 \qquad \textup{for} \enspace i=1,\ldots,d,
	\label{eq:condB}
	\\
\sum_{\mathbf{k}=(k_1,\dots ,k_d)\in\mathbb{Z}^d} g_\mathbf{k} k_i k_j&=0 \qquad \textup{for}\enspace 1\le i\ne j\le d,
	\label{eq:condC}
	\\
\sum_{\mathbf{k}=(k_1,\dots ,k_d)\in\mathbb{Z}^d} g_\mathbf{k} (k_i^2-k_j^2)&=0 \qquad \textup{for}\enspace 1\le i\ne j\le d.
	\label{eq:condD}
	\end{align}
	\end{lemm}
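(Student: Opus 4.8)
The plan is to detect membership in $J_d=(f^{(d)})+\mathscr{I}_d^3$ by a ``second-order Taylor expansion at $\mathbf{1}=(1,\dots ,1)$,'' i.e. by reducing modulo the third power of the maximal ideal $\mathscr{I}_d=(u_1-1,\dots ,u_d-1)$ from \eqref{eq:maximal}. Writing $v_i=u_i-1$, I would first identify
\[
R_d/\mathscr{I}_d^3\;\cong\;\mathbb{Z}[v_1,\dots ,v_d]/(v_1,\dots ,v_d)^3 ,
\]
a free $\mathbb{Z}$-module with basis $\{1\}\cup\{v_i\}\cup\{v_iv_j:i\le j\}$. Concretely, since $1+v_i$ is a unit in $S\defeq\mathbb{Z}[v]/(v)^3$ (with inverse $1-v_i+v_i^2$), the universal property of the Laurent ring gives a ring homomorphism $\psi\colon R_d\to S$ with $\psi(u_i)=1+v_i$ and hence $\psi(u_i^{-1})=1-v_i+v_i^2$. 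One checks $\psi(\mathscr{I}_d)\subseteq(v)$, so $\psi(\mathscr{I}_d^3)=0$ and $\psi$ descends to $\bar\psi\colon R_d/\mathscr{I}_d^3\to S$; together with the ring map $\bar\phi\colon S\to R_d/\mathscr{I}_d^3$, $v_i\mapsto\overline{u_i-1}$, the relation $\bar\psi\bar\phi=\mathrm{id}$ and the surjectivity of $\bar\phi$ (which holds because $u_i$ and $u_i^{-1}\equiv 1-v_i+v_i^2$ both lie in its image modulo $\mathscr{I}_d^3$) show that both maps are isomorphisms.

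Expanding $\psi(u^{\mathbf k})=\prod_i(1+v_i)^{k_i}$ to second order gives, for $g=\sum_{\mathbf k}g_{\mathbf k}u^{\mathbf k}\in R_d$,
\[
\psi(g)=\Bigl(\textstyle\sum_{\mathbf k}g_{\mathbf k}\Bigr)
+\sum_i\Bigl(\textstyle\sum_{\mathbf k}g_{\mathbf k}k_i\Bigr)v_i
+\sum_i\Bigl(\textstyle\sum_{\mathbf k}g_{\mathbf k}\tbinom{k_i}{2}\Bigr)v_i^2
+\sum_{i<j}\Bigl(\textstyle\sum_{\mathbf k}g_{\mathbf k}k_ik_j\Bigr)v_iv_j ,
\]
so the coefficient functionals of $\bar\psi$ are exactly the quantities appearing in \eqref{eq:condA}--\eqref{eq:condD}. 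Next I would compute the image of the distinguished generator: from $\psi(u_i+u_i^{-1})=2+v_i^2$ one gets $\psi(f^{(d)})=-\sum_{i=1}^d v_i^2$. Moreover $f^{(d)}=-\sum_i u_i^{-1}(1-u_i)^2\in\mathscr{I}_d^2$, whence $f^{(d)}\cdot\mathscr{I}_d\subseteq\mathscr{I}_d^3$; consequently only the constant term of a cofactor survives modulo $\mathscr{I}_d^3$, and the image of the principal ideal $(f^{(d)})$ in $R_d/\mathscr{I}_d^3$ is the rank-one subgroup $\mathbb{Z}\cdot\sum_i v_i^2$. Since $\bar\psi$ kills $\mathscr{I}_d^3$, this yields $\bar\psi(J_d)=\mathbb{Z}\cdot\sum_i v_i^2$.

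The proof then closes via the chain $g\in J_d\iff\psi(g)\in\mathbb{Z}\cdot\sum_i v_i^2$, using that $\bar\psi$ is injective. Reading off coefficients, $\psi(g)$ is an integer multiple of $v_1^2+\dots+v_d^2$ precisely when its $1$-, $v_i$- and $v_iv_j$-coefficients vanish --- that is, \eqref{eq:condA}, \eqref{eq:condB}, \eqref{eq:condC} --- and its $v_i^2$-coefficients $\sum_{\mathbf k}g_{\mathbf k}\binom{k_i}{2}$ are all equal. In the presence of \eqref{eq:condB} one has $\sum_{\mathbf k}g_{\mathbf k}\binom{k_i}{2}=\tfrac12\sum_{\mathbf k}g_{\mathbf k}k_i^2$, so equality of these coefficients across $i$ is exactly \eqref{eq:condD}. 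I expect the only genuine obstacle to be the structural claim $R_d/\mathscr{I}_d^3\cong\mathbb{Z}[v]/(v)^3$: surjectivity is immediate, but establishing that this is a \emph{free} $\mathbb{Z}$-module --- so that the four families of functionals are independent and detect membership exactly --- requires the explicit left inverse $\psi$ and a careful treatment of the Laurent inverses $u_i^{-1}\equiv 1-v_i+v_i^2$. The remaining steps are direct computations.
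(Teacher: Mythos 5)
Your proposal is correct, and it takes a genuinely different, more structural route than the paper's own proof. The paper proceeds by successively unwinding membership in $\mathscr{I}_d$, $\mathscr{I}_d^2$ and $\mathscr{I}_d^3$ through explicit representations $g=\sum_i(1-u_i)\cdot a_i$ and $a_j=\sum_i(1-u_i)\cdot b_{i,j}$, interpreting \eqref{eq:condA}--\eqref{eq:condD} via formal partial derivatives evaluated at $\mathbf{1}$, and arriving at the normal form \eqref{eq:gform} before relating $\sum_i(1-u_i)^2$ to $f^{(d)}$ through \eqref{eq:idealequality}. You instead pass at once to the quotient $R_d/\mathscr{I}_d^3$, prove it is a free $\mathbb{Z}$-module by exhibiting the mutually inverse pair $\bar\phi$, $\bar\psi$ with the truncated polynomial ring $\mathbb{Z}[v_1,\dots,v_d]/(v)^3$, read off that the four families of conditions are precisely the coordinate functionals, and observe that the image of $(f^{(d)})$ is the rank-one subgroup $\mathbb{Z}\cdot\sum_i v_i^2$ because $f^{(d)}\in\mathscr{I}_d^2$ forces $h\cdot f^{(d)}\equiv h(\mathbf{1})f^{(d)}\pmod{\mathscr{I}_d^3}$. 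The mathematical content is the same in both arguments --- membership in $J_d$ is detected by the $2$-jet at $\mathbf{1}$, together with $f^{(d)}\equiv-\sum_i(u_i-1)^2\pmod{\mathscr{I}_d^3}$ --- but your formulation buys two things the paper leaves implicit: the independence of the functionals, needed so that \eqref{eq:condA}--\eqref{eq:condD} characterize membership exactly rather than merely being necessary, is automatic from freeness and the split $\bar\psi\bar\phi=\mathrm{id}$; and integrality is handled cleanly by the binomial coefficients $\tbinom{k_i}{2}$ (your identity $\sum_{\mathbf{k}}g_{\mathbf{k}}\tbinom{k_i}{2}=\tfrac12\sum_{\mathbf{k}}g_{\mathbf{k}}k_i^2$ under \eqref{eq:condB} is exactly right), whereas the paper introduces a constant $c\in\mathbb{R}$ and keeps the integrality bookkeeping informal. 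What the paper's hands-on computation buys in exchange is explicit cofactor data ($a_i$, $b_{i,j}$, and concrete generators of $J_d$) that are reused later, e.g.\ in the choice of the generating set $G_d$ before Theorem \ref{t:kernels}. One incidental benefit of your computation $\psi(f^{(d)})=-\sum_i v_i^2$: it exposes a harmless sign slip in \eqref{eq:idealequality}, whose right-hand side simplifies to $\sum_i u_i^{-1}(1-u_i)^2=-f^{(d)}$; since ideals are closed under negation, this affects neither proof.
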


	\begin{proof}
Condition \eqref{eq:condA} is equivalent to saying that $g\in\mathscr{I}_d$. In conjunction with \eqref{eq:condA}, \eqref{eq:condB} is equivalent to saying that $g\in\mathscr{I}_d^2$: indeed, if $g\in\mathscr{I}_d$, then it is of the form
	\begin{equation}
	\label{eq:hi}
\smash[b]{g=\sum_{i=1}^d (1-u_i)\cdot a_i}
	\end{equation}
with $a_i\in R_d$ for $i=1,\dots ,d$. Then
	$$
\frac{\partial g}{\partial u_j}=\sum_{\mathbf{k}=(k_1,\dots ,k_d) \in\mathbb{Z}^d}g_\mathbf{k}k_j\cdot u_1^{k_1}\cdots u_j^{k_j-1}\cdots u_d^{k_d}= -a_j+\sum_{i=1}^d(1-u_i)\cdot \frac{\partial a_i}{\partial u_j},
	$$
and $\frac{\partial g}{\partial u_j}(\mathbf{1})=0$ if and only if $a_j\in\mathscr{I}_d$.

If $g\in\mathscr{I}_d$ is of the form \eqref{eq:hi} and satisfies \eqref{eq:condB} we set
	\begin{equation}
	\label{eq:aj}
a_j=\sum_{i=1}^d(1-u_i)\cdot b_{i,j}
	\end{equation}
with $b_{i,j}\in R_d$. Condition \eqref{eq:condC} is satisfied if and only if
	$$
\frac{\partial ^2g}{\partial u_i\partial u_j}(\mathbf{1})=-\frac{\partial a_i}{\partial u_j} - \frac{\partial a_j}{\partial u_i}=b_{i,j}(\mathbf{1})+b_{j,i}(\mathbf{1})=0
	$$
for $1\le i\ne j\le d$.

Finally, if $g$ satisfies \eqref{eq:condA}--\eqref{eq:condB} and is of the form \eqref{eq:hi}--\eqref{eq:aj} with $b_{i,j}\in R_d$ for all $i,j$, then \eqref{eq:condD} is equivalent to the existence of a constant $c\in \mathbb R$ with
	$$
\sum_{\mathbf{k}=(k_1,\dots ,k_d)\in\mathbb{Z}^d}g_\mathbf{k} k_i^2 = -2\frac{\partial a_i}{\partial u_i}(\mathbf{1})=2b_{i,i}(\mathbf{1})=c
	$$
for $i=1,\dots ,d$.

The last equation shows that $b_{i,i}-b_{1,1}\in \mathscr{I}_d$ for $i=2,\dots ,d$. By combining all these observations we have proved that $g$ satisfies \eqref{eq:condA}--\eqref{eq:condD} if and only if it is of the form
	\begin{equation}
	\label{eq:gform}
g=h_1\cdot \sum_{i=1}^d (1-u_i)^2 + h_2
	\end{equation}
with $c\in\mathbb{Z}$, $h_1 \in R_d$ and $h_2\in\mathscr{I}_d^3$. The set of all such $g\in R_d$ is an ideal which we denote by $\tilde{J}$. Clearly, $ \mathscr{I}_d^3\subset \tilde{J}$ and $\sum_{i=1}^d(1-u_i)^2\in \tilde{J}$. Since $(1-u_i)^2\cdot (1-u_i^{-1})\in\mathscr{I}_d^3$ for $i=1,\dots ,d$ as well, we conclude that
	\begin{equation}
	\label{eq:idealequality}
f^{(d)}=\sum_{i=1}^d(1-u_i)^2- \sum_{i=1}^d(1-u_i^{-1})\cdot (1-u_i)^2\in \tilde{J}.
	\end{equation}
This shows that $\tilde{J}\subset J_d$, and the reverse inclusion also follows from \eqref{eq:idealequality} and \eqref{eq:gform}.
	\end{proof}

	\begin{lemm}
	\label{l:necessary}
$I_d\subset J_d$.
	\end{lemm}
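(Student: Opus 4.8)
The plan is to read off membership in $J_d$ through Lemma~\ref{l:ideals} and to show that every $g\in R_d$ with $g\cdot w^{(d)}\in\ell^1(\mathbb{Z}^d)$ satisfies the four moment conditions \eqref{eq:condA}--\eqref{eq:condD}. Since $g$ has finite support, the coordinate $(g\cdot w^{(d)})_{\mathbf{n}}=\sum_{\mathbf{k}}g_{\mathbf{k}}w^{(d)}_{\mathbf{n}-\mathbf{k}}$ is a fixed finite-difference combination of shifts of $w^{(d)}$, so its $\ell^1$-behaviour is governed entirely by the large-$\|\mathbf{n}\|$ asymptotics in Theorem~\ref{t:estimates}. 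First I would replace $w^{(d)}$ by the explicit smooth profile
\[
\Phi(\mathbf{x})=\kappa_d\|\mathbf{x}\|^{-(d-2)}+c_d\|\mathbf{x}\|^{-d}\Bigl(\|\mathbf{x}\|^{-4}\sum_{i=1}^{d} x_i^{4}-\tfrac{3}{d+2}\Bigr)\qquad(d\ge 3),
\]
and by the corresponding logarithmic profile $\Phi(\mathbf{x})=-\frac1{8\pi}\log\|\mathbf{x}\|-\kappa_2-c_2\|\mathbf{x}\|^{-2}(\cdots)$ when $d=2$. By \eqref{eq:sharpest}--\eqref{eq:sharpestD} one has $w^{(d)}_{\mathbf{n}}-\Phi(\mathbf{n})=\mathcal{O}(\|\mathbf{n}\|^{-(d+2)})$, and since convolution with the finitely supported $g$ preserves this rate, $g\cdot(w^{(d)}-\Phi)$ already lies in $\ell^1$. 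Hence $g\cdot w^{(d)}\in\ell^1$ if and only if $\sum_{\mathbf{k}}g_{\mathbf{k}}\Phi(\,\cdot-\mathbf{k})\in\ell^1$.

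The next step is a finite Taylor expansion of the smooth function $\Phi$ about $\mathbf{n}$. Writing $\mathbf{v}=(v_i)$ with $v_i=\sum_{\mathbf{k}}g_{\mathbf{k}}k_i$ and $M_{ij}=\sum_{\mathbf{k}}g_{\mathbf{k}}k_ik_j$, I would expand $\Phi(\mathbf{n}-\mathbf{k})$ to second order with remainder. Because $\mathbf{k}$ ranges over the fixed support of $g$ and the third derivatives of $\Phi$ are $\mathcal{O}(\|\mathbf{x}\|^{-(d+1)})$, the remainder sums to an $\ell^1$ sequence, and one obtains
\[
(g\cdot w^{(d)})_{\mathbf{n}}=\Bigl(\sum_{\mathbf{k}}g_{\mathbf{k}}\Bigr)\Phi(\mathbf{n})-\sum_{i}v_i\,\partial_i\Phi(\mathbf{n})+\tfrac12\sum_{i,j}M_{ij}\,\partial_i\partial_j\Phi(\mathbf{n})+r_{\mathbf{n}},
\]
with $(r_{\mathbf{n}})\in\ell^1(\mathbb{Z}^d)$. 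The three explicit terms have strictly decreasing radial orders $\|\mathbf{n}\|^{-(d-2)}$, $\|\mathbf{n}\|^{-(d-1)}$ and $\|\mathbf{n}\|^{-d}$ (a logarithm in place of $\|\mathbf{n}\|^{0}$ when $d=2$), so they cannot cancel one another and each must be $\ell^1$ on its own.

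I would then force the conditions one order at a time. The leading term $\Phi$ is not summable, so $\sum_{\mathbf{k}}g_{\mathbf{k}}=0$, which is \eqref{eq:condA}. Given this, $\partial_i\Phi(\mathbf{n})\sim-\kappa_d(d-2)\,\theta_i\|\mathbf{n}\|^{-(d-1)}$ with $\theta=\mathbf{n}/\|\mathbf{n}\|$, and the sum over each spherical shell is comparable to $\langle|\mathbf{v}\cdot\theta|\rangle$; the full $\ell^1$-sum then diverges like $\sum_r 1$ unless this angular average vanishes, i.e.\ unless $\mathbf{v}=0$, giving \eqref{eq:condB}. For the second-order term I would use $\partial_i\partial_j(\|\mathbf{x}\|^{-(d-2)})=(d-2)\|\mathbf{x}\|^{-d}(d\,\theta_i\theta_j-\delta_{ij})$ (with its logarithmic analogue for $d=2$), so the term behaves like $\|\mathbf{n}\|^{-d}\bigl(d\,\theta^{\top}\!M\theta-\operatorname{tr}M\bigr)$ and its shell sums are comparable to $\|\mathbf{n}\|^{-1}\langle|d\,\theta^{\top}\!M\theta-\operatorname{tr}M|\rangle$, diverging logarithmically unless $\theta^{\top}\!M\theta$ is constant on the sphere; for a symmetric $M$ this forces $M=\frac{\operatorname{tr}M}{d}\,\mathrm{Id}$. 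The vanishing of the off-diagonal entries is exactly \eqref{eq:condC}, and the equality $M_{ii}=M_{jj}$ of the diagonal entries is \eqref{eq:condD}. Once $M$ is scalar the second-order term collapses to $\tfrac{c}{2}\Delta\Phi$, and since the leading part of $\Phi$ is harmonic away from the origin this residual term is again $\mathcal{O}(\|\mathbf{n}\|^{-(d+2)})\in\ell^1$. Thus $g$ satisfies \eqref{eq:condA}--\eqref{eq:condD} and, by Lemma~\ref{l:ideals}, lies in $J_d$.

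The main obstacle is analytic rather than algebraic: I must make the passage from the finite-difference sequence to the Taylor polynomial in $\partial^\alpha\Phi$ fully rigorous, controlling the remainder uniformly in the bounded shift $\mathbf{k}$ and verifying that every discarded contribution is genuinely $\ell^1$. The delicate point is that each of the three leading orders is only \emph{marginally} non-summable — the shell sums decay like $\|\mathbf{n}\|^{-1}$ and the radial sums diverge only logarithmically — so the necessity of each moment condition hinges on the precise leading angular profile of $\Phi$ and of its first two derivatives, and in particular on the harmonicity of the leading term, which is exactly what renders the scalar case $M=c\,\mathrm{Id}$ summable. Confirming that no cancellation between different orders can restore summability (here justified by their distinct radial rates, but requiring care near the nodal sets of the angular factors) is the crux that turns the four conditions into genuine constraints on $g$.
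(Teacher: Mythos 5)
Your proof is correct, but it takes a genuinely different route from the paper's. The paper establishes \eqref{eq:condA} exactly as you do, from the leading term of the asymptotics in Theorem~\ref{t:estimates}, but then switches to Fourier analysis for the remaining conditions: once \eqref{eq:condA} holds, $v=g\cdot w^{(d)}$ is the sequence of Fourier coefficients of $H(\mathbf{t})=G(\mathbf{t})/F^{(d)}(\mathbf{t})$, where $G=\sum_{\mathbf{k}}g_{\mathbf{k}}e^{2\pi i\langle\mathbf{k},\cdot\rangle}$, so $v\in\ell^1(\mathbb{Z}^d)$ forces $H$ to be continuous on $\mathbb{T}^d$; comparing the Taylor expansions of $G$ and $F^{(d)}$ at the unique zero $\mathbf{t}=\mathbf{0}$ of $F^{(d)}$, continuity of the ratio at that single point forces the constant, linear, mixed and pure quadratic coefficients of $G$ to satisfy \eqref{eq:condA}--\eqref{eq:condD} all at once. (The separate treatment of \eqref{eq:condA} is in fact needed for $d=2$, since $1/F^{(2)}$ is not integrable and the representation of $v$ by $H$ is only valid after \eqref{eq:condA}.) This one continuity-at-a-point argument replaces your entire second- and third-stage analysis --- no shell sums, no angular averages, no marginal divergences --- which is why the paper's proof is shorter. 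What your real-space argument buys is self-containedness: it uses only the pointwise asymptotics of $w^{(d)}$, in the same spirit as the converse Lemmas~\ref{l:easycase}--\ref{l:polynomcase}. The price is the delicacy you correctly flag: the terms of order $\|\mathbf{n}\|^{-(d-1)}$ and $\|\mathbf{n}\|^{-d}$ are only marginally non-summable, so to rule out $\ell^1$ rigorously you should, for instance, restrict to a cone on which the angular factor ($\mathbf{v}\cdot\theta$, resp.\ $d\,\theta^{\top}M\theta-\operatorname{tr}M$) is bounded away from zero; there the partial sums of the offending term grow like $R$, resp.\ $\log R$, while the absolute partial sums of all discarded terms are $\mathcal{O}(\log R)$, resp.\ $\mathcal{O}(1)$, so no cancellation between the distinct radial orders can restore summability. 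With that standard cone argument written out, together with the observation that a symmetric $M$ with $\theta^{\top}M\theta$ constant on the sphere must equal $\frac{\operatorname{tr}M}{d}\,\mathrm{Id}$ (giving \eqref{eq:condC} and \eqref{eq:condD}), your proof is complete; the closing remark on harmonicity of the leading profile is only needed for the converse inclusion, not for this lemma.
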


	\begin{proof}
We assume that $g\in I_d$ and set $v=g\cdot w^{(d)}$. In order to verify \eqref{eq:condA} we argue by contradiction and assume that $\sum_\mathbf{k} g_\mathbf{k}\ne 0$. If $d=2$ then
	$$
v_\mathbf{n} = -\frac {\sum_\mathbf{k} g_\mathbf{k}}{2\pi}\log \|\mathbf{n}\| + \textup{l.o.t.},
	$$
for large $\|\mathbf{n}\|$. If $d\ge 3$, then
	$$
v_\mathbf{n} = \frac {\kappa_d \sum_\mathbf{k} g_\mathbf{k}} {\|\mathbf{n}\|^{d-2}}+\textup{l.o.t.}
	$$
for large $\|\mathbf{n}\|$. In both cases it is evident that $v\not\in \ell ^1(\mathbb{Z}^d)$.

By taking \eqref{eq:condA} into account one gets that, for every $d\ge 2$,
	\begin{align*}
v_\mathbf{n}=(g\cdot w^{(d)})_\mathbf{n} & =\sum_{\mathbf{k}} g_\mathbf{k} w_{\mathbf{n}-\mathbf{k}}^{(d)}
	\\
&=\int_{\mathbb{T}^d} e^{-2\pi i \langle \mathbf{n},\mathbf{t}\rangle }\frac {\sum_{\mathbf{k}} g_\mathbf{k} e^{2\pi i\langle \mathbf{k},\mathbf{t}\rangle }} {2d-2\sum_{j=1}^d\cos(2\pi t_j)} \,d\mathbf{t}.
	\end{align*}
Hence $v=(v_\mathbf{n})$ is the sequence of Fourier coefficients of the function
	$$
H(\mathbf{t}) = \frac {\sum_{\mathbf{k}} g_\mathbf{k} e^{2\pi i\langle \mathbf{k},\mathbf{t}\rangle }} {2d-2\sum_{j=1}^d\cos(2\pi t_j)}.
	$$

If $v\in \ell ^1(\mathbb{Z}^d)$, then $H$ must be a continuous function on $\mathbb{T}^d$. Since $\mathbf{t}=\mathbf{0}$ is the only zero of $F^{(d)}$ on $\mathbb{T}^d$ (cf. \eqref{eq:F}), the numerator $G=\sum_{k} g_\mathbf{k} e^{2\pi i \langle\mathbf{k},\cdot \rangle }$ must compensate for this singularity. Consider the Taylor series expansion of $G$ at $\mathbf{t}=\mathbf{0}$:
	$$
G(\mathbf{t})=\sum_{k} g_\mathbf{k} +2\pi i \sum_{j=1}^d t_j \sum_{\mathbf{k}} g_\mathbf{k} k_j-2\pi^2 \sum_{j=1}^d t_j^2 \sum_{\mathbf{k} }g_\mathbf{k} k_j^2 -4\pi^2 \sum_{i\ne j} t_i t_j\sum_{\mathbf{k}} g_\mathbf{k} k_ik_j+\textup{h.o.t.}
	$$
The Taylor series expansion of $F^{(d)}$ at $\mathbf{t}=\mathbf{0}$ is given by
	$$
\smash[b]{F^{(d)}(\mathbf{t})= 4\pi^2\sum_{j=1}^d t_j^2+\textup{h.o.t.}}
	$$
Suppose that
	$$
h(\mathbf{t}) =\frac {a_0+\sum_{j=1}^d b_j t_j+ \sum_{j=1}^d c_j t_j^2+\sum_{i\ne j} d_{i,j} t_i t_j+\textup{h.o.t}}{t_1^2+\dots+t_d^2+\textup{h.o.t}}
	$$
is continuous at $\mathbf{t}=\mathbf{0}$. Then
	$$
a_0 =0,\quad b_j=0\enspace \enspace \textup{for all}\;j, \quad c_j=c \enspace \enspace \textup{for all}\;j,\quad d_{ij}=0\quad\textup{for all}\;i\ne j,
	$$
and for some constant $c$. If any of these conditions is violated, then one easily produces examples of sequences $\mathbf{t}^{(m)}\to\mathbf{0}$ as $m\to\infty$ with distinct limits $\lim_{m\to\infty} h(\mathbf{t}^{(m)})$. By applying this to $H$ we obtain \eqref{eq:condA}--\eqref{eq:condD}, so that $g\in J_d$ by Lemma \ref{l:ideals}.
	\end{proof}

To establish the inclusion $J_d\subseteq I_d$, we have to show that for any $g\in J_d$, $g\cdot u\in\ell^1(\mathbb{Z}^d)$ where $u\in W_d$ of the form
	$$
\omega _\n=\frac{\sum_{i=1}^d n_i^4}{\|\n\|^{d+4}},\enspace \enspace \textup{or}\enspace \enspace \omega _\n=\frac 1{\|\n\|^{\gamma}}\enspace \enspace \textup{with} \enspace \gamma\ge d-2.
	$$
For $d=2$, we also have to treat the case $\omega _\n=\log\|\n\|$.

These results are obtained in the following three lemmas.

	\begin{lemm}
	\label{l:easycase}
Suppose that $d\ge 2$ and that $\omega \in W_d$ is given by
	$$
\omega _{\mathbf{n}} =
	\begin{cases}
0&\textup{if}\enspace\mathbf{n}=\mathbf{0},
	\\
\frac {\sum_{i=1}^d n_i^4}{\|\n\|^{d+4}}&\textup{if}\enspace\mathbf{n}\ne \mathbf{0}.
	\end{cases}
	$$
If $g\in R_d$ satisfies \eqref{eq:condA}, then $g\cdot \omega \in \ell^{1}(\mathbb Z^d)$.
	\end{lemm}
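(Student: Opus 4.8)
The plan is to exploit the single linear constraint \eqref{eq:condA} to upgrade the borderline non-summable decay of $\omega$ by exactly one order. First I would record the size of $\omega$. Writing $\phi(\mathbf{x})=\bigl(\sum_{i=1}^d x_i^4\bigr)/\|\mathbf{x}\|^{d+4}$ for $\mathbf{x}\in\mathbb{R}^d\setminus\{\mathbf{0}\}$, the sequence $\omega$ agrees with $\phi$ at every $\mathbf{n}\ne\mathbf{0}$. The function $\phi$ is smooth on $\mathbb{R}^d\setminus\{\mathbf{0}\}$ and homogeneous of degree $4-(d+4)=-d$, and since $\tfrac1d\|\mathbf{x}\|^4\le\sum_i x_i^4\le\|\mathbf{x}\|^4$ one has $\omega_{\mathbf{n}}\asymp\|\mathbf{n}\|^{-d}$. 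As $\sum_{\mathbf{n}\ne\mathbf{0}}\|\mathbf{n}\|^{-d}$ diverges, $\omega\notin\ell^1(\mathbb{Z}^d)$ itself, so the gain must come entirely from the cancellation in \eqref{eq:condA}.

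The key step is to use $\sum_{\mathbf{k}}g_{\mathbf{k}}=0$ to rewrite the convolution as a difference. Letting $S=\mathrm{supp}(g)$ (a finite set), I have
$$(g\cdot\omega)_{\mathbf{n}}=\sum_{\mathbf{k}\in S}g_{\mathbf{k}}\,\omega_{\mathbf{n}-\mathbf{k}}=\sum_{\mathbf{k}\in S}g_{\mathbf{k}}\,\bigl(\omega_{\mathbf{n}-\mathbf{k}}-\omega_{\mathbf{n}}\bigr),$$
the subtracted term $\omega_{\mathbf{n}}\sum_{\mathbf{k}}g_{\mathbf{k}}$ vanishing by \eqref{eq:condA}. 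Put $M=\max_{\mathbf{k}\in S}\|\mathbf{k}\|$. For $\|\mathbf{n}\|>2M$ and $\mathbf{k}\in S$, the whole segment joining $\mathbf{n}-\mathbf{k}$ to $\mathbf{n}$ stays in the region $\{\|\mathbf{x}\|\ge\|\mathbf{n}\|/2\}$, where $\omega$ coincides with the smooth function $\phi$. By the mean value theorem together with the homogeneity bound $\|\nabla\phi(\mathbf{x})\|\le C\|\mathbf{x}\|^{-d-1}$ (the gradient of a degree $-d$ homogeneous function being homogeneous of degree $-d-1$, hence bounded by its supremum on the unit sphere), I obtain
$$|\omega_{\mathbf{n}-\mathbf{k}}-\omega_{\mathbf{n}}|\le\|\mathbf{k}\|\sup_{\|\mathbf{x}\|\ge\|\mathbf{n}\|/2}\|\nabla\phi(\mathbf{x})\|\le C'\,\|\mathbf{k}\|\,\|\mathbf{n}\|^{-d-1}.$$
Summing against $g$ yields $|(g\cdot\omega)_{\mathbf{n}}|\le C''\|\mathbf{n}\|^{-d-1}$ for all $\|\mathbf{n}\|>2M$, with $C''$ depending only on $g$.

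Finally I would conclude by summing: $\sum_{\|\mathbf{n}\|>2M}\|\mathbf{n}\|^{-d-1}<\infty$, since the number of lattice points with $\|\mathbf{n}\|\asymp R$ is $O(R^{d-1})$ and $R^{d-1}\cdot R^{-d-1}=R^{-2}$ is summable. The finitely many terms with $\|\mathbf{n}\|\le 2M$ are individually finite and do not affect $\ell^1$-membership. Hence $g\cdot\omega\in\ell^1(\mathbb{Z}^d)$. Note that only \eqref{eq:condA} is used, consistent with the hypothesis. The one point requiring genuine care — and the place I expect the main (though modest) obstacle — is the passage from the discrete difference $\omega_{\mathbf{n}-\mathbf{k}}-\omega_{\mathbf{n}}$ to the derivative bound: one must first excise a fixed neighbourhood of the origin so that $\omega=\phi$ is honestly differentiable along the interpolating segment, after which the homogeneity of $\nabla\phi$ does the rest.
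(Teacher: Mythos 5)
Your proof is correct and takes essentially the same approach as the paper: both exploit \eqref{eq:condA} to gain one power of decay, arriving at the estimate $|(g\cdot \omega )_{\mathbf{n}}|=\mathcal O(\|\mathbf{n}\|^{-d-1})$ and concluding by summability of $\|\mathbf{n}\|^{-d-1}$ over $\mathbb{Z}^d$. The only cosmetic difference is that the paper obtains this bound by directly expanding $(n_i-k_i)^4=n_i^4+\mathcal O(\|\mathbf{n}\|^3)$ and $\|\mathbf{n}-\mathbf{k}\|^{-d-4}=\|\mathbf{n}\|^{-d-4}\bigl(1+\mathcal O(\|\mathbf{n}\|^{-1})\bigr)$, whereas you package the same first-order cancellation via the mean value theorem and the homogeneity of degree $-d$ of the extension $\phi$.
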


	\begin{proof}
Let $M=\max\{\|\k\|: g_\k\ne 0\}$, and suppose that $\|\n\|>M$. Then
	\begin{align*}
(g\cdot \omega )_\n&=\sum_{\k} g_k\frac {\sum_{i=1}^d (n_i-k_i)^4} {\|\n-\k\|^{d+4}}= \sum_{\k} g_k\frac{\sum_{i=1}^d n_i^4 +\mathcal O(\|\n\|^3)}{ \|\n\|^{d+4} (1+\mathcal O(\|\n\|^{-1}))}
	\\
&=\frac{\sum_{i=1}^d n_i^4}{ \|\n\|^{d+4}}\biggl( \sum_\k g_\k\biggr)+\mathcal O\biggl( \frac {1}{\|\n\|^{d+1}}\biggr)=\mathcal O\biggl( \frac {1}{\|\n\|^{d+1}}\biggr).
	\end{align*}
Therefore, $\sum_{\n} |(g\cdot \omega )_\n|<\infty$.
	\end{proof}

For the reverse inclusion $J_d\subset I_d$ we need different arguments for $d=2$ and for $d\ge3$. We start with the case $d=2$.

	\begin{lemm}
	\label{l:embed}
Suppose that $g=\sum_{\mathbf{k}\in\mathbb{Z}^2} g_\mathbf{k}u^\mathbf{k}\in R_2$ satisfies \eqref{eq:condA}. We set $S_+=\{\mathbf{k}: g_\mathbf{k}>0\}$ and $S_-=\{\mathbf{k}: g_\mathbf{k}<0\}$. Put
	$$
M_g= 2\sum_{\mathbf{k}\in S_+} g_\mathbf{k} = 2\sum_{\mathbf{k}\in S_-} |g_\mathbf{k}|
	$$
and define two polynomials in the variables $(n_1,n_2)$:
	\begin{equation}
	\label{eq:P+-}
	\begin{aligned}
P_+(n_1,n_2) & =\prod_{\mathbf{k}\in S_+}\bigl( (n_1-k_1)^2+(n_2-k_2)^2\bigr)^{g_{\mathbf{k}}} = \prod_{\mathbf{k}\in S_+}\|\mathbf{n}-\mathbf{k}\|^{2g_{\mathbf{k}}},
	\\
P_-(n_1,n_2) &= \prod_{\mathbf{k}\in S_-}\bigl( (n_1-k_1)^2+(n_2-k_2)^2\bigr)^{|g_\mathbf{k}|}= \prod_{\mathbf{k}\in S_-}\|\mathbf{n}-\mathbf{k}\|^{2|g_{\mathbf{k}}|}.
	\end{aligned}
	\end{equation}
Let $m_g$ be the degree of $P= P_+-P_-$. If
	\begin{equation}
	\label{eq:degree}
M_g-m_g\ge 3,
	\end{equation}
then $g\cdot \omega \in \ell^1(\mathbb Z^2)$, where
	$$
\omega _{\mathbf{n}} =
	\begin{cases}
0&\textup{if}\enspace\mathbf{n}=(0,0),
	\\
\log\|\n\|&\textup{if}\enspace\mathbf{n}\ne (0,0).
	\end{cases}
	$$
	\end{lemm}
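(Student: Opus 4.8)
The plan is to compute $(g\cdot\omega)_{\mathbf{n}}$ explicitly for all large $\mathbf{n}$ and show that it decays like $\|\mathbf{n}\|^{-3}$, which is summable over $\mathbb{Z}^2$. First I would set $M=\max\{\|\mathbf{k}\|:g_{\mathbf{k}}\ne0\}$ and restrict attention to $\|\mathbf{n}\|>M$, so that $\mathbf{n}-\mathbf{k}\ne\mathbf{0}$ whenever $g_{\mathbf{k}}\ne0$; only finitely many $\mathbf{n}$ are excluded, and these contribute a finite amount to $\|g\cdot\omega\|_1$, hence are irrelevant to summability. For such $\mathbf{n}$ the module action \eqref{eq:module} gives $(g\cdot\omega)_{\mathbf{n}}=\sum_{\mathbf{k}}g_{\mathbf{k}}\log\|\mathbf{n}-\mathbf{k}\|$.

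The key step is an algebraic rewriting that makes the cancellation hypothesis \eqref{eq:condA} visible. Splitting the sum into its positive and negative parts and absorbing the logarithms into the products \eqref{eq:P+-}, I would write
$$
(g\cdot\omega)_{\mathbf{n}}=\sum_{\mathbf{k}\in S_+}g_{\mathbf{k}}\log\|\mathbf{n}-\mathbf{k}\|-\sum_{\mathbf{k}\in S_-}|g_{\mathbf{k}}|\log\|\mathbf{n}-\mathbf{k}\|=\tfrac12\log\frac{P_+(\mathbf{n})}{P_-(\mathbf{n})}.
$$
Both $P_+$ and $P_-$ are polynomials of degree $M_g$, and---crucially---their top-degree homogeneous parts both equal $(n_1^2+n_2^2)^{M_g/2}=\|\mathbf{n}\|^{M_g}$; this is exactly where \eqref{eq:condA} enters, since it forces $\sum_{S_+}g_{\mathbf{k}}=\sum_{S_-}|g_{\mathbf{k}}|=M_g/2$. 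Hence the leading terms cancel in $P=P_+-P_-$, so that $\deg P=m_g<M_g$.

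Then I would write $(g\cdot\omega)_{\mathbf{n}}=\tfrac12\log\bigl(1+P(\mathbf{n})/P_-(\mathbf{n})\bigr)$. Since $P_-(\mathbf{n})$ is positive for $\|\mathbf{n}\|>M$ and satisfies $P_-(\mathbf{n})\gtrsim\|\mathbf{n}\|^{M_g}$ (each factor $\|\mathbf{n}-\mathbf{k}\|^{2|g_{\mathbf{k}}|}\ge(\|\mathbf{n}\|-M)^{2|g_{\mathbf{k}}|}$), while $|P(\mathbf{n})|=\mathcal{O}(\|\mathbf{n}\|^{m_g})$, the ratio $P(\mathbf{n})/P_-(\mathbf{n})=\mathcal{O}(\|\mathbf{n}\|^{m_g-M_g})$ tends to $0$. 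Using $\log(1+x)=\mathcal{O}(x)$ as $x\to0$, I obtain
$$
(g\cdot\omega)_{\mathbf{n}}=\mathcal{O}\bigl(\|\mathbf{n}\|^{-(M_g-m_g)}\bigr)=\mathcal{O}\bigl(\|\mathbf{n}\|^{-3}\bigr),
$$
the last step invoking hypothesis \eqref{eq:degree}. Since $\sum_{\mathbf{n}\in\mathbb{Z}^2}\|\mathbf{n}\|^{-3}<\infty$ (the exponent $3$ exceeds the lattice dimension $2$), summability follows and $g\cdot\omega\in\ell^1(\mathbb{Z}^2)$.

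I do not expect a genuine obstacle: once the reformulation as $\tfrac12\log(P_+/P_-)$ is in place, the argument is elementary, and that reformulation is really the crux. The only points requiring care are the degree bookkeeping---verifying that the highest-order parts of $P_+$ and $P_-$ genuinely coincide so that $\deg P=m_g<M_g$---and the lower bound $P_-(\mathbf{n})\gtrsim\|\mathbf{n}\|^{M_g}$ controlling the denominator. It is also worth noting that, because $M_g$ and $m_g$ are integers, the inequality \eqref{eq:degree} is precisely the condition $M_g-m_g>2$ guaranteeing a decay exponent strictly larger than the dimension.
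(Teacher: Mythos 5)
Your proposal is correct and follows essentially the same route as the paper's own proof: the identity $(g\cdot\omega)_{\mathbf{n}}=\tfrac12\log\bigl(P_+(\mathbf{n})/P_-(\mathbf{n})\bigr)=\tfrac12\log\bigl(1+(P_+-P_-)/P_-\bigr)$, the degree drop $m_g<M_g$ forced by \eqref{eq:condA}, the bound $\log(1+x)=\mathcal{O}(x)$, and the resulting decay $\mathcal{O}(\|\mathbf{n}\|^{-(M_g-m_g)})$ with $M_g-m_g\ge 3$ exceeding the lattice dimension. Your added details (the explicit cancellation of the top homogeneous parts $(n_1^2+n_2^2)^{M_g/2}$ and the lower bound $P_-(\mathbf{n})\ge(\|\mathbf{n}\|-M)^{M_g}$) merely make explicit what the paper leaves implicit.
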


	\begin{proof}
Since $\sum_{\mathbf{k}\in\mathbb{Z}^2} g_\mathbf{k} =0$ by \eqref{eq:condA}, $M_g=\deg P_+=\deg P_-$ and
	$$
m_g=\deg P<\max(\deg P_+,\deg P_-)=M_g.
	$$
Let $v=g\cdot \omega $. Hence, for all $\n$ with $\|\n\|>\max\{\|\k\|: \k\in S_+\cup S_\}$, one has
	$$
|(g\cdot \omega )_\mathbf{n}|= \frac 1{2}\biggl| \log\frac {P_{+}(n_1,n_2)}{ P_{-}(n_1,n_2)}\biggr| =\frac 1{2}\biggl| \log\biggl(1+\frac {P_{+}(n_1,n_2)-P_{-}(n_1,n_2)}{ P_{-}(n_1,n_2)}\biggr)\biggr|.
	$$
There exist constants $C,N$ such that
	$$
\biggl|\frac {P_{+}(n_1,n_2)-P_{-}(n_1,n_2)}{ P_{-}(n_1,n_2)} \biggr|\le C\frac{\|\mathbf{n}\|^{m_g}}{\|\mathbf{n}\|^{M_g}}=\frac {C}{\|\mathbf{n}\|^{M_g-m_g}}<\frac12
	$$
for $\|\mathbf{n}\|\ge N$. Hence we can find another constant $\tilde C$ such that
	$$
|(g\cdot \omega )_\n| \le \frac {\tilde C}{\|\mathbf{n}\|^{M_g-m_g}}
	$$
for all sufficiently large $\|\n\|$. Since $M_g-m_g\ge 3$, we finally conclude that $g\cdot \omega \in \ell^1(\mathbb Z^2)$.
	\end{proof}

	\begin{lemm}
	\label{l:polynomcase}
Suppose that $g\in J_d$ \textup{(}cf. \eqref{eq:condA}--\eqref{eq:condD}\textup{)}, and that $\omega \in W_d$ is given by
	$$
\omega _{\mathbf{n}} =
	\begin{cases}
0&\textup{if}\enspace\mathbf{n}=\mathbf{0},
	\\
\frac {1}{\|\n\|^{\gamma}} &\textup{if}\enspace\mathbf{n}\ne \mathbf{0},
	\end{cases}
	$$
for some integer $\gamma\ge d-2$. Then $g\cdot \omega \in \ell^1(\mathbb Z^d)$.
	\end{lemm}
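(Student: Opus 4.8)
The plan is to follow the strategy of the proof of Lemma~\ref{l:easycase}, estimating $(g\cdot\omega)_\mathbf{n}$ for large $\|\mathbf{n}\|$ by a Taylor expansion, but now exploiting all four conditions \eqref{eq:condA}--\eqref{eq:condD} simultaneously, together with the fact that $\mathbf{x}\mapsto\|\mathbf{x}\|^{-(d-2)}$ is harmonic away from the origin. The decisive mechanism is that \eqref{eq:condA}--\eqref{eq:condC} annihilate the zeroth-, first-, and off-diagonal second-order contributions of the expansion, while \eqref{eq:condD} forces the diagonal second-order contributions to assemble into the Laplacian of $\omega$.

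Write $\phi(\mathbf{x})=\|\mathbf{x}\|^{-\gamma}$ for $\mathbf{x}\in\mathbb{R}^d\setminus\{\mathbf{0}\}$ and set $M=\max\{\|\mathbf{k}\|:g_\mathbf{k}\ne0\}$. For $\|\mathbf{n}\|>2M$ we have $\mathbf{n}-\mathbf{k}\ne\mathbf{0}$ whenever $g_\mathbf{k}\ne0$, so that $(g\cdot\omega)_\mathbf{n}=\sum_\mathbf{k} g_\mathbf{k}\,\phi(\mathbf{n}-\mathbf{k})$. First I would Taylor expand $\phi(\mathbf{n}-\mathbf{k})$ in $\mathbf{k}$ about $\mathbf{k}=\mathbf{0}$ to second order with Lagrange remainder. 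Since every point on the segment joining $\mathbf{n}-\mathbf{k}$ to $\mathbf{n}$ stays at distance at least $\|\mathbf{n}\|/2$ from the origin, and the third-order partials of $\phi$ are $O(\|\mathbf{x}\|^{-\gamma-3})$ there, the remainder is $O(\|\mathbf{n}\|^{-\gamma-3})$ uniformly over the finitely many $\mathbf{k}$ in the support of $g$.

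Summing against $g_\mathbf{k}$, the zeroth- and first-order terms vanish by \eqref{eq:condA} and \eqref{eq:condB}, the mixed second-order terms with $i\ne j$ vanish by \eqref{eq:condC}, and \eqref{eq:condD} tells us that $c\defeq\sum_\mathbf{k} g_\mathbf{k} k_i^2$ is independent of $i$. Hence the surviving second-order contribution collapses to $\tfrac{c}{2}\sum_{i=1}^d\partial_i^2\phi(\mathbf{n})=\tfrac{c}{2}\Delta\phi(\mathbf{n})$. A direct computation gives $\Delta\|\mathbf{x}\|^{-\gamma}=\gamma(\gamma+2-d)\|\mathbf{x}\|^{-\gamma-2}$, so that
	$$
(g\cdot\omega)_\mathbf{n}=\frac{c}{2}\,\gamma(\gamma+2-d)\,\|\mathbf{n}\|^{-\gamma-2}+O(\|\mathbf{n}\|^{-\gamma-3}).
	$$

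The outcome now hinges on the leading coefficient $\gamma(\gamma+2-d)$. If $\gamma\ge d-1$ it may be nonzero, but the exponent obeys $\gamma+2\ge d+1$, so the leading term is already summable over $\mathbb{Z}^d$, and the remainder, decaying like $\|\mathbf{n}\|^{-\gamma-3}$, is summable a fortiori. If $\gamma=d-2$, the coefficient $\gamma(\gamma+2-d)$ vanishes---this is precisely the harmonicity of the Newtonian potential---so the second-order term disappears and we are left with $(g\cdot\omega)_\mathbf{n}=O(\|\mathbf{n}\|^{-d-1})$, again summable. In both cases $\sum_\mathbf{n}|(g\cdot\omega)_\mathbf{n}|<\infty$, giving $g\cdot\omega\in\ell^1(\mathbb{Z}^d)$. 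I expect the only delicate point to be the role of \eqref{eq:condD}: without the equality of the diagonal moments $\sum_\mathbf{k} g_\mathbf{k} k_i^2$ one would obtain a generic second-order operator applied to $\phi$, whose pieces $\partial_i^2\phi$ are individually of order $\|\mathbf{n}\|^{-\gamma-2}$ and do \emph{not} cancel at $\gamma=d-2$; it is exactly the symmetry enforced by \eqref{eq:condD} that reassembles them into the Laplacian and lets the harmonicity do its work.
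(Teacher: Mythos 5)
Your proposal is correct and follows essentially the same argument as the paper: a second-order Taylor expansion of $\|\mathbf{n}-\mathbf{k}\|^{-\gamma}$ in $\mathbf{k}$ with an $\mathcal{O}(\|\mathbf{n}\|^{-\gamma-3})$ remainder, conditions \eqref{eq:condA}--\eqref{eq:condC} killing the lower-order terms, and \eqref{eq:condD} reducing the diagonal second-order contribution to a multiple of $\gamma(\gamma+2-d)\|\mathbf{n}\|^{-\gamma-2}$, which vanishes when $\gamma=d-2$ and is summable when $\gamma\ge d-1$. Your packaging of the surviving term as $\tfrac{c}{2}\Delta\phi(\mathbf{n})$ with the harmonicity of $\|\mathbf{x}\|^{-(d-2)}$ is just a cleaner conceptual reading of the paper's explicit computation $[\gamma(\gamma+2)-\gamma d]\,C\,\|\mathbf{n}\|^{-\gamma-2}$.
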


	\begin{proof}
Let $S_g=\{\mathbf{k}\in \mathbb{Z}^d: g_\mathbf{k}\ne 0\}$, $M=\max\{\|\k\|: \k\in S_g\}$, and note that
	\begin{equation}
	\label{eq:B2}
S_g\subset B_d= \{\mathbf{y}\in\mathbb{R}^d: \|\mathbf{y}\| \le M\},
	\end{equation}
where $\|\cdot \|$ is the Euclidean norm on $\mathbb{Z}^d\subset \mathbb{R}^d$.

We fix $\mathbf{n}\in\mathbb{Z}^d$ with $\|\mathbf{n}\|>M$ and set
	\begin{equation}
	\label{eq:h(n)}
h^{(\mathbf{n})}(\mathbf{k})= \|\mathbf{n}-\mathbf{k}\|^{-\gamma }=\biggl(\sum_{i=1}^d (n_i-k_i)^2\biggr)^ {-\gamma /2}.
	\end{equation}
In calculating the Taylor expansion of $h^{(\mathbf{n})}$ as a function of the variables $k_1,\dots ,k_d$ we use the notation
	\begin{equation}
	\label{eq:notation}
I!=i_1!\cdots i_d!,\enspace \enspace |I|=i_1+\dots +i_d\enspace \enspace \textup{and}\enspace \enspace \frac{\partial ^{|I|} h^{(\mathbf{n})}}{\partial \mathbf{k}^I}=\frac{\partial^{i_1+\dotsb+i_d} h^{(\mathbf{n})}} {\partial k_1^{i_1} \cdots \partial k_n^{i_n}},
	\end{equation}
for $I=(i_1,\dots ,i_d)\in\mathbb{Z}_+^d, \;\mathbf{k}=(k_1,\dots ,k_d)\in\mathbb{Z}^d$, where $\mathbb{Z}_+=\{n\in\mathbb{Z}:n\ge0\}$. Then the Taylor expansion of $h^{(\mathbf{n})}$ for $\|\mathbf{k}\|\le M$ is given by
	$$
\smash[b]{h^{(\mathbf{n})}(\mathbf{k}) = \sum_{|I| \leq 2} \frac{1}{I!} \frac{\partial^{|I|} h^{(\mathbf{n})}}{\partial \mathbf{k}^I}(\mathbf{0}) \, \mathbf{k}^I+\sum_{|I|=3} R_I^{(\mathbf{n})} \mathbf{k}^I,}
	$$
where
	$$
\smash[t]{|R_I^{(\mathbf{n})}|\le\sup_{\mathbf{y}\in B_d}\biggl|\frac{1}{I!} \frac{\partial^{|I|}h^{(\mathbf{n})}}{\partial\mathbf{k}^I}(\mathbf{y})\biggr|}
	$$
(cf. \eqref{eq:B2}).

The first and second order derivatives of $h^{(\mathbf{n})}$ have the following form.
	\begin{align*}
\frac{\partial h^{(\mathbf{n})}}{\partial k_i}(\mathbf{k})&=\gamma \cdot (n_i-k_i) \cdot \|\mathbf{n}-\mathbf{k}\|^{-\gamma -2}\enspace \enspace \textup{for}\enspace i=1,\dots ,d,
	\\
\frac{\partial^2 h^{(\mathbf{n})}}{\partial k_i\partial k_j}(\mathbf{k})&=\gamma \cdot (\gamma +2)\cdot (n_i-k_i)\cdot (n_j-k_j)\cdot \|\mathbf{n}-\mathbf{k}\|^{-\gamma -4} \enspace \enspace \textup{for}\enspace i,j=1,\dots ,d,\;i\ne j,
	\\
\frac{\partial^2 h^{(\mathbf{n})}}{\partial k_i^2}(\mathbf{k})&=\gamma \cdot (\gamma +2)\cdot (n_i-k_i)^2 \cdot \|\mathbf{n}-\mathbf{k}\|^{-\gamma -4} - \gamma \cdot \|\mathbf{n}-\mathbf{k}\|^{-\gamma -2}\enspace \enspace \textup{for}\enspace i=1,\dots ,d.
	\end{align*}
It follows that
	\begin{align*}
h^{(\mathbf{n})}(\mathbf{0}) &=\|\mathbf{n}\|^{-\gamma },
	\\
\frac{\partial h^{(\mathbf{n})}}{\partial k_i}(\mathbf{0})&=\gamma \cdot n_i\cdot \|\mathbf{n}\| ^{-\gamma -2},
	\\
\frac{\partial^2 h^{(\mathbf{n})}}{\partial k_i\partial k_j}(\mathbf{0})&= \gamma \cdot (\gamma +2)\cdot n_i\cdot n_j\cdot \|\mathbf{n}\|^{-\gamma -4}, \quad i\ne j,
	\\
\frac{\partial^2 h^{(\mathbf{n})}}{\partial k_i^2}(\mathbf{0}) &=\gamma \cdot (\gamma +2) \cdot n_i^2 \cdot \|\mathbf{n}\|^{-\gamma -4} - \gamma \cdot \|\mathbf{n}\|^{-\gamma -2}.
	\end{align*}
For $I=(i_1,\dots ,i_d)\in\mathbb{Z}_+^d$ and $\mathbf{y}\in\mathbb{R}^d$,
	$$
\frac{\partial^{|I|}h^{(\mathbf{n})}}{\partial \mathbf{k}^I}(\mathbf{y}) =P_I(n_1,\ldots,n_d)\cdot \|\mathbf{n}-\mathbf{y}\|^{-\gamma -2|I|},
	$$
where $P_I$ is a polynomial of degree at most $|I|$ in the variables $n_1,\dots ,n_d$. Therefore, for every $I\in\mathbb{Z}_+^d$ with $|I|=3$,
	\begin{equation}
	\label{eq:C}
|R_I^{(\mathbf{n})}|\le \mathcal O( \|\mathbf{n}\|^{-\gamma -3}).
	\end{equation}

\medskip By using the Taylor series expansion of $h^{(\mathbf{n})}$ above we obtain that, for all $\n$ with sufficiently large norm,
	\begin{align}
|(g\cdot \omega )_\mathbf{n}|&= \biggl|\sum_{\mathbf{k}\in S_g} g_\mathbf{k} h^{(\n)}(\k) \biggr|\notag
	\\
&\le \biggl| h^{(n)}(\mathbf{0})\sum_{\k\in S_g} g_\k \biggr| + \biggl| \sum_{i=1}^d \frac {\partial h^{(\n)}(\mathbf{0})} {\partial k_i}\biggl(\sum_{\k\in S_g} g_\k k_i\biggr)\biggr|+ \biggl| \sum_{i\ne j} \frac {\partial^2 h^{(\n)}(\mathbf{0})} {\partial k_i\partial k_j}\biggl(\sum_{\k\in S_g} g_\k k_ik_j\biggr)\biggr|\notag
	\\
&\quad+\frac 12\biggl| \sum_{i=1}^d \frac {\partial^2 h^{(\n)}(\mathbf{0})} {\partial k_i^2}\biggl(\sum_{\k\in S_g} g_\k k_i^2\biggr)\biggr| +\mathcal O(\|\n\|^{-(\gamma+3)}).
	\label{eq:longeq}
	\end{align}
The first three terms on the right hand side of the above inequality vanish because of \eqref{eq:condA}, \eqref{eq:condB}, and \eqref{eq:condC}. The fourth term is estimated as follows: \eqref{eq:condD} implies that
	$$
\sum_{\k\in S_g} g_\k k_i^2 = \textup{const}\quad\textup{for all}\enspace i=1,\ldots,d,
	$$
and we denote by $C$ this common value. Then
	\begin{align*}
\sum_{i=1}^d &\frac {\partial^2 h^{(\n)}(\mathbf{0})} {\partial k_i^2}\biggl(\sum_{\k\in S_g} g_\k k_i^2\biggr)
	\\
&= \sum_{i=1}^d\bigl(\gamma(\gamma+2)\cdot n_i^2\cdot \|\mathbf{n}\|^{-\gamma-4}-\gamma\cdot \|\mathbf{n}\|^{-\gamma-2}\bigr)C=\bigl[ \gamma(\gamma+2)- \gamma d\bigr]C ||\n||^{-\gamma-2}.
	\end{align*}
Therefore, if $\gamma=d-2$, then the fourth term vanishes. If $\gamma>d-2$, i.e., if $\gamma\ge d-1$, then the fourth term is of the order $\mathcal O(\|\n\|^{-(d+1)})$, and is thus summable over $\mathbb Z^d$. The remainder term in \eqref{eq:longeq} is always summable since $\gamma+3\ge d+1$.
	\end{proof}

	\begin{proof}[Proof of Theorem \ref{t:homoclinic}]
We start with the case $d\ge 3$. Recall that for $\n\ne \mathbf{0}$
	$$
w_{\n}^{(d)} = \frac{\kappa_d}{\|\mathbf{n}\|^{d-2}} +c_d \frac {\sum_{i=1}^d n_i^4}{\|\n\|^{d+4}}-\frac {3c_d}{d+2}\frac {1}{\|\mathbf{n}\|^{d}} +\mathcal{O}(\|\mathbf{n}\|^{-(d+2)}) =: \omega ^{(1)}_\n+\omega ^{(2)}_\n+\omega ^{(3)}_\n+r_\n.
	$$
Applying $g$, we conclude that $g\cdot w\in\ell^1(\mathbb Z^d)$, because $g\cdot \omega ^{(1)}, g\cdot \omega ^{(3)}\in \ell^1(\mathbb Z^d)$ by Lemma \ref{l:polynomcase} for $\gamma=d-2$ and $\gamma=d$, respectively; $g\cdot \omega ^{(2)}\in \ell^1(\mathbb Z^d)$ by Lemma \ref{l:easycase}; $(g\cdot r)_\n=\mathcal O(\|\n\|^{-(d+2)})$, and hence $g\cdot r\in \ell^1(\mathbb Z^d)$ as well.

Now consider the case $d=2$. Then
	$$
w_\n^{(2)}=-\frac1{8\pi } \log \|\mathbf{n}\| - \kappa_2- c_2\frac {n_1^4+n_2^4}{\|\n\|^{4+2}}-\frac 34\frac {1}{\|\mathbf{n}\|^2} + \mathcal{O}(\|\mathbf{n}\|^{-4})=\omega ^{(1)}_\n+\omega ^{(2)}_\n+\omega ^{(3)}_\n+r_\n.
	$$
For any $g\in J_2$,
	\begin{equation}
	\label{eq:encl2}
g\cdot \omega ^{(2)},\;g\cdot \omega ^{(3)},\;g\cdot r\in \ell^1(\mathbb Z^2)
	\end{equation}
by the results of the Lemmas \ref{l:easycase} and \ref{l:polynomcase}.

The remaining term $g\cdot \omega ^{(1)}$ has to be treated slightly differently. First of all, note that since
	$$
J_2=(f)+(u_1-1)^3\cdot R_2+(u_1-1)^2(u_2-1)\cdot R_2+(u_1-1)(u_2-1)^2\cdot R_2 +(u_2-1)^3\cdot R_2,
	$$
it is sufficient to check that $g\cdot \omega ^{(1)}\in\ell^1(\mathbb Z^2)$ only for the set of generators, i.e., for
	$$
g=f^{(2)},\; (u_1-1)^3,\;(u_1-1)^2(u_2-1),\;(u_1-1)(u_2-1)^2,\;(u_2-1)^3.
	$$

For $g=f^{(2)}$, $f^{(2)}\cdot w^{(2)} =\delta^{(\mathbf{0}})\in\ell^1(\mathbb Z^2)$ (cf. \eqref{eq:inverse} and Footnote \ref{delta} \vpageref{delta}), and hence, given \eqref{eq:encl2}, $f\cdot \omega ^{(1)}\in \ell^{1}(\mathbb Z^2)$ as well.

For $g=(u_1-1)^3\in R_2$ we apply Lemma \ref{l:embed}. Note that $S_+=\{(1,0),(3,0)\}$, $S_{-}=\{(0,0),(2,0)\}$,
	$$
P_+=((n_1 - 3)^2 + n_2^2)((n_1 - 1)^2 + n_2^2)^3,\enspace P_-=((n_1 - 2)^2 + n_2^2)^3(n_1^2 + n_2^2)
	$$
and
	\begin{align*}
P_+-P_i=9 - 60n_1 &+ 108n_1^2 - 84n_1^3 + 30n_1^4 - 4n_1^5 - 36n_2^2 + 60n_1n_2^2
	\\
&- 36n_1^2n_2^2 + 8n_1^3n_2^2 - 18n_2^4 + 12n_1n_2^4
	\end{align*}
Hence $M_g=\deg P_+=\deg P_-=8$, $m_g=\deg P=5$, $M_g-m_g=3$. Therefore, by Lemma \ref{l:embed}, $|(g\cdot \omega ^{(1)})_\n|=\mathcal O(\|\n\|^{-3})$, and hence $g\cdot \omega ^{(1)}\in\ell^1(\mathbb Z^2)$, which is equivalent to $g\in I_2$.

The same calculation shows that $(u_2-1)^3\in I_2$. Furthermore, since $f^{(2)}\in I_2$ and
	$$
u_1^{-1}(u_1-1)^3+f^{(2)}=-u_2^{-1}(u_1-1)(u_2-1)^2,
	$$
we obtain that $(u_1-1)(u_2-1)^2\in I_2$ and, by symmetry, that $(u_1-1)^2(u_2-1)\in I_2$. This proves that $J_2\subset I_2$, and Lemma \ref{l:necessary} yields that $J_2=I_2$.
	\end{proof}

\section{The harmonic model
	\label{s:harmonic}
}

Let $d>1$. We define the \textit{shift-action} $\alpha $ of $\mathbb{Z}^d$ on $\mathbb{T}^{\mathbb{Z}^d}$ by
	\begin{equation}
	\label{eq:alpha}
(\alpha ^\mathbf{m}x)_\mathbf{n}=x_{\mathbf{m}+\mathbf{n}}
	\end{equation}
for every $\mathbf{m},\mathbf{n}\in\mathbb{Z}^d$ and $x=(x_\mathbf{n})\in \mathbb{T}^{\mathbb{Z}^d}$ and consider, for every $h\in R_d$, the group homomorphism
	\begin{equation}
	\label{eq:halpha}
h(\alpha )=\sum_{\mathbf{m}\in\mathbb{Z}^d}h_\mathbf{m}\alpha ^\mathbf{m} \colon \mathbb{T}^{\mathbb{Z}^d}\longrightarrow \mathbb{T}^{\mathbb{Z}^d}.
	\end{equation}
Since $R_d$ is an integral domain, Pontryagin duality implies that $h(\alpha )$ is surjective for every nonzero $h\in R_d$ (it is dual to the injective homomorphism from $R_d\cong \widehat{\mathbb{T}^{\mathbb{Z}^d}}$ to itself consisting of multiplication by $h$).

Let $f^{(d)}\in R_d$ be given by \eqref{eq:fd} and let $X_{f^{(d)}}\subset \mathbb{T}^{\mathbb{Z}^d}$ be the closed, connected, shift-invariant subgroup
	\begin{equation}
	\label{eq:harmonic}
	\begin{aligned}
X_{f^{(d)}}=\ker f^{(d)}(\alpha )=\biggl\{x=(x_\mathbf{n})\in\mathbb{T}^{\mathbb{Z}^d}&:2dx_{\mathbf{n}}-\smash{\sum_{j=1}^d(x_{\mathbf{n}+\mathbf{e}^{(j)}}+x_{\mathbf{n}-\mathbf{e}^{(j)}})=0}
	\\
&\qquad \qquad \qquad \qquad \enspace \enspace \smash[t]{\textup{for every}\enspace \mathbf{n}\in\mathbb{Z}^d\biggr\}}.
	\end{aligned}
	\end{equation}

\vspace{-2mm}\noindent We denote by $\alpha _{f^{(d)}}$ the restriction of $\alpha $ to $X_{f^{(d)}}$. Since every $\alpha _{f^{(d)}}^\mathbf{m},\,\mathbf{m}\in\mathbb{Z}^d$, is a continuous automorphism of $X_{f^{(d)}}$, the $\mathbb{Z}^d$-action $\alpha _{f^{(d)}}$ preserves the normalized Haar measure $\lambda _{X_{f^{(d)}}}$ of $X_{f^{(d)}}$.

The Laurent polynomial $f^{(d)}$ can be viewed as a Laplacian on $\mathbb{Z}^d$ and every $x=(x_\mathbf{n})\in X_{f^{(d)}}$ is \textit{harmonic} (mod 1) in the sense that, for every $\mathbf{n}\in\mathbb{Z}^d$, $2d\cdot x_\mathbf{n}$ is the sum of its $2d$ neighbouring coordinates (mod 1). This is the reason for calling $(X_{f^{(d)}},\alpha _{f^{(d)}})$ the $d$-dimensional \textit{harmonic model}.

According to \cite[Theorem 18.1]{DSAO} and \cite[Theorem 19.5]{DSAO}, the metric entropy of $\alpha _{f^{(d)}}$ with respect to $\lambda _{X_{f^{(d)}}}$ coincides with the topological entropy of $\alpha _{f^{(d)}}$ and is given by
	\begin{equation}
	\label{eq:entropy}
h_{\lambda _{X_{f^{(d)}}}}(\alpha _{f^(d)})=h_\textup{top}(\alpha _{f^{(d)}})=\int_0^1\cdots\int_0^1 \log\,f^{(d)}(2\pi it_1,\dots ,2\pi it_d)\,\,dt_1\cdots dt_d<\infty .
	\end{equation}
Furthermore, $\alpha _{f^{(d)}}$ is Bernoulli with respect to $\lambda _{X_{f^{(d)}}}$ (cf. \cite{DSAO}).

Since every constant element of $\mathbb{T}^{\mathbb{Z}^d}$ lies in $X_{f^{(d)}}$, $\alpha _{f^{(d)}}$ has uncountably many fixed points and is therefore nonexpansive: for every $\varepsilon >0$ there exists a nonzero point $x=(x_\mathbf{n})$ in $X_{f^{(d)}}$ with
	$$
\pmb{|}x_\mathbf{n}\pmb{|}<\varepsilon \enspace \textup{for every}\enspace \mathbf{n}\in\mathbb{Z}^d,
	$$
where
	\begin{equation}
	\label{eq:metric}
\pmb{|}t\,(\textup{mod}\;1)\pmb{|}=\min\,\{|t-n|:n\in\mathbb{Z}\},\enspace t\in\mathbb{R}.
	\end{equation}

\subsection{Linearization}

Consider the surjective map $\rho \colon W_d=\mathbb{R}^{\mathbb{Z}^d}\longrightarrow \mathbb{T}^{\mathbb{Z}^d}$ given by
	\begin{equation}
	\label{eq:rho}
\rho (w)_\mathbf{n}=w_\mathbf{n}\enspace (\textup{mod}\;1)
	\end{equation}
for every $\mathbf{n}\in\mathbb{Z}^d$ and $w=(w_\mathbf{n})\in W_d$. We write $\sigma $ for the shift action
	\begin{equation}
	\label{eq:sigma}
(\sigma ^\mathbf{m}w)_\mathbf{n}=(u^{-\mathbf{m}}\cdot w)_\mathbf{n}=w_{\mathbf{m}+\mathbf{n}}
	\end{equation}
of $\mathbb{Z}^d$ on $W_d$ (cf. \eqref{eq:module}). As in \eqref{eq:halpha} we set, for every $g=\sum_{\mathbf{n}\in\mathbb{Z}^d} g_\mathbf{n} u^\mathbf{n}\in R_d$, $h=\sum_{\mathbf{n}\in\mathbb{Z}^d} h_\mathbf{n} u^\mathbf{n}\in \ell ^1(\mathbb{Z}^d)$,
	\begin{equation}
	\label{eq:hsigma}
h(\sigma )=\sum_{\mathbf{n}\in\mathbb{Z}^d}h_\mathbf{n}\sigma ^\mathbf{n}\colon W_d\longrightarrow W_d.
	\end{equation}
Then
	\begin{equation}
	\label{eq:hsigma2}
	\begin{gathered}
h(\sigma )(w)=h^*\cdot w,
	\\
g(\alpha )(\rho (w))=\rho (g^*\cdot w)
	\end{gathered}
	\end{equation}
for every $w\in W_d$ (cf. \eqref{eq:involution} and \eqref{eq:module}).

\medskip We set $W_d(\mathbb{Z})=\mathbb{Z}^{\mathbb{Z}^d} \subset W_d$. According to \eqref{eq:harmonic},
	\begin{equation}
	\label{eq:Wf}
	\begin{aligned}
W_{f^{(d)}}\,&\hspace{-1mm}\defeq\rho ^{-1}(X_{f^{(d)}})=\{w\in W_d:\rho (w)\in X_{f^{(d)}}\}
	\\
&=f^{(d)}(\sigma )^{-1}(W_d(\mathbb{Z})) =\{w\in W_d:f^{(d)}\cdot w\in W_d(\mathbb{Z})\}.
	\end{aligned}
	\end{equation}
For later use we denote by
	\begin{equation}
	\label{eq:widetilde}
\widetilde{\mathbb{R}}\subset W_d,\enspace \enspace \widetilde{\mathbb{Z}}\subset W_d(\mathbb{Z}),\enspace \enspace \widetilde{\mathbb{T}}\subset \mathbb{T}^{\mathbb{Z}^d}
	\end{equation}
the set of constant elements. If $c$ is an element of $\mathbb{R}$, $\mathbb{Z}$ or $\mathbb{T}$ we denote by $\tilde{c}$ the corresponding constant element of $\widetilde{R}$, $\widetilde{\mathbb{Z}}$ or $\widetilde{\mathbb{T}}$.

Equation \eqref{eq:Wf} allows us to view $W_{f^{(d)}}$ as the \textit{linearization} of $X_{f^{(d)}}$.

\subsection{Homoclinic points} Let $\beta $ be an algebraic $\mathbb{Z}^d$-action on a compact abelian group $Y$, i.e., a $\mathbb{Z}^d$-action by continuous group automorphisms of $Y$. An element $y\in Y$ is \textit{homoclinic} for $\beta $ (or $\beta $-homoclinic to $0$) if $\lim_{\mathbf{n}\to\infty }\beta ^\mathbf{n}y=0$. The set of all homoclinic points of $\beta $ is a subgroup of $Y$, denoted by $\Delta _\beta (Y)$.

If $\beta $ is an expansive algebraic $\mathbb{Z}^d$-action on a compact abelian group $Y$ then $\Delta _\beta (Y)$ is countable, and $\Delta _\beta (Y)\ne\{0\}$ if and only if $\beta $ has positive entropy with respect to the Haar measure $\lambda _Y$ (or, equivalently, positive topological entropy). Furthermore, $\Delta _\beta (Y)$ is dense in $Y$ if and only if $\beta $ has completely positive entropy w.r.t. $\lambda _Y$. Finally, if $\beta $ is expansive, then $\beta ^\mathbf{n}x\to0$ exponentially fast (in an appropriate metric) as $\|\mathbf{n}\|\to\infty $. All these results can be found in \cite{LS}.

If $\beta $ is nonexpansive on $Y$, then there is no guarantee that $\Delta _\beta (Y)\ne\{0\}$ even if $\beta $ has completely positive entropy. Furthermore, $\beta $-homoclinic points $y$ may have the property that $\beta ^\mathbf{n}y\to0$ very slowly as $\|\mathbf{n}\|\to\infty $.

The $\mathbb{Z}^d$-action $\alpha _{f^{(d)}}$ on $X_{f^{(d)}}$ is nonexpansive and the investigation of its homoclinic points therefore requires a little more care. In particular we shall have to restrict our attention to $\alpha _{f^{(d)}}$-homoclinic points $x$ for which $\alpha _{f^{(d)}}^\mathbf{n}x\to0$ sufficiently fast as $\|\mathbf{n}\|\to\infty $. For this reason we set
	\begin{equation}
	\label{eq:homoclinic}
\smash{\Delta _\alpha ^{(1)}(X_{f^{(d)}})=\biggl\{x\in\Delta _\alpha (X_{f^{(d)}}): \sum_{\mathbf{n}\in\mathbb{Z}^d}\pmb{|}x_\mathbf{n}\pmb{|}<\infty \biggr\},}
	\end{equation}
where $\pmb{|}\cdot \pmb{|}$ is defined in \eqref{eq:metric}.

\medskip In order to describe the homoclinic groups $\Delta_\alpha (X_{f^{(d)}})$ and $\Delta _\alpha ^{(1)}(X_{f^{(d)}})$ we set
	\begin{equation}
	\label{eq:xDelta}
x^\Delta =\rho (w^{(d)})\in X_{f^{(d)}}.
	\end{equation}
The fact that $x^\Delta \in X_{f^{(d)}}$ is a consequence of Theorem \ref{t:estimates} (1) and \eqref{eq:Wf}.

	\begin{prop}
	\label{p:Delta}
Let $\alpha _{f^{(d)}}$ be the algebraic $\mathbb{Z}^d$-action on the compact abelian group $X_{f^{(d)}}$ defined in \eqref{eq:harmonic}. Then every homoclinic point $z\in\Delta _\alpha (X_{f^{(d)}})$ is of the form $z=\rho (h\cdot w^{(d)})$ for some $h\in R_d$. Furthermore,
	\begin{equation}
	\label{eq:Delta}
\Delta _\alpha ^{(1)}(X_{f^{(d)}})=\rho \bigl(\{h\cdot w^{(d)}:h\in I_d\}\bigr)
	\end{equation}
\textup{(}cf. Theorem \ref{t:estimates}, \eqref{eq:Id} and \eqref{eq:homoclinic}\textup{)}.
	\end{prop}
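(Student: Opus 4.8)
The plan is to prove both assertions by lifting homoclinic points to the linearization $W_{f^{(d)}}$ of \eqref{eq:Wf} and comparing them with integer multiples of the fundamental solution $w^{(d)}$, the ambiguity being killed by a discrete Liouville-type rigidity. I would begin with the first claim. Let $z\in\Delta_\alpha(X_{f^{(d)}})$. Homoclinicity means $\pmb{|}z_\mathbf{n}\pmb{|}\to0$ as $\|\mathbf{n}\|\to\infty$ (cf. \eqref{eq:metric}), so taking in each coordinate the representative in $(-\tfrac12,\tfrac12]$ yields a lift $w\in W_d$ with $\rho(w)=z$ and $w_\mathbf{n}\to0$. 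Since $z\in X_{f^{(d)}}$, \eqref{eq:Wf} gives $w\in W_{f^{(d)}}$, i.e. $\delta\defeq f^{(d)}\cdot w\in W_d(\mathbb{Z})$. Because $f^{(d)}$ has finite support, each $\delta_\mathbf{n}$ is a fixed finite integer combination of the numbers $w_{\mathbf{n}-\mathbf{k}}$, all tending to $0$; being integer-valued and tending to $0$, $\delta$ vanishes off a finite set, so $\delta\in R_d$. I set $h=\delta$.

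Next I would consider $v\defeq w-h\cdot w^{(d)}\in W_d$. By Theorem \ref{t:estimates}(i), $f^{(d)}\cdot w^{(d)}=1$, whence $f^{(d)}\cdot v=\delta-h=0$; that is, $v$ is a real-valued discrete harmonic function. Its growth is controlled: $w$ is bounded, while by the asymptotics of Theorem \ref{t:estimates} the sequence $h\cdot w^{(d)}$ is bounded for $d\ge3$ and grows at most like $\log\|\mathbf{n}\|$ for $d=2$. Hence $v$ has at most sublinear growth, and the classical fact that discrete harmonic functions of sublinear growth on $\mathbb{Z}^d$ are constant forces $v=\tilde{c}$ for some $c\in\mathbb{R}$, so $w=h\cdot w^{(d)}+\tilde{c}$. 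For $d=2$ the boundedness of $w$ then excludes $h(\mathbf{1})\ne0$ (which would make $h\cdot w^{(d)}$ genuinely logarithmically unbounded), so $h\cdot w^{(d)}$ is bounded and tends to $0$ in every dimension; in particular $\rho(h\cdot w^{(d)})$ is itself homoclinic. Consequently $\rho(\tilde{c})=z-\rho(h\cdot w^{(d)})$ is a homoclinic \emph{constant} element of $\mathbb{T}^{\mathbb{Z}^d}$, and since constants are fixed by $\alpha$ this forces $\rho(\tilde{c})=0$. Therefore $z=\rho(h\cdot w^{(d)})$ with $h\in R_d$, which proves the first claim and, as a byproduct, records that the chosen $h$ satisfies $h\cdot w^{(d)}\to0$.

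For \eqref{eq:Delta} I would establish both inclusions, using only the definition \eqref{eq:Id} of $I_d$. If $h\in I_d$ then $h\cdot w^{(d)}\in\ell^1(\mathbb{Z}^d)$; moreover $f^{(d)}\cdot(h\cdot w^{(d)})=h\in W_d(\mathbb{Z})$, so $\rho(h\cdot w^{(d)})\in X_{f^{(d)}}$ by \eqref{eq:Wf}, and summability gives $\sum_\mathbf{n}\pmb{|}\rho(h\cdot w^{(d)})_\mathbf{n}\pmb{|}<\infty$ together with homoclinicity, so $\rho(h\cdot w^{(d)})\in\Delta_\alpha^{(1)}(X_{f^{(d)}})$ by \eqref{eq:homoclinic}. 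Conversely, let $z\in\Delta_\alpha^{(1)}(X_{f^{(d)}})$. By the first part $z=\rho(h\cdot w^{(d)})$ for some $h\in R_d$ with $(h\cdot w^{(d)})_\mathbf{n}\to0$. Since $\sum_\mathbf{n}\pmb{|}z_\mathbf{n}\pmb{|}<\infty$ and $(h\cdot w^{(d)})_\mathbf{n}\to0$, for all large $\|\mathbf{n}\|$ one has $(h\cdot w^{(d)})_\mathbf{n}\in(-\tfrac12,\tfrac12)$, so $|(h\cdot w^{(d)})_\mathbf{n}|=\pmb{|}z_\mathbf{n}\pmb{|}$; as these two sequences differ in only finitely many entries, $h\cdot w^{(d)}\in\ell^1(\mathbb{Z}^d)$, i.e. $h\in I_d$. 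This yields the reverse inclusion and completes the proof.

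I expect the main obstacle to be the rigidity step, namely forcing $v$ to be constant. This rests on the discrete Liouville theorem for harmonic functions of sublinear (here at most logarithmic) growth, and on extracting the precise growth of $h\cdot w^{(d)}$ from Theorem \ref{t:estimates} --- in particular the dichotomy at $d=2$ between the bounded case $h(\mathbf{1})=0$ and the logarithmically unbounded case $h(\mathbf{1})\ne0$. The remaining subtlety, in the second inclusion, is the passage from summability in the torus metric $\pmb{|}\cdot\pmb{|}$ to genuine $\ell^1$-summability, which is exactly what the decay $h\cdot w^{(d)}\to0$ secures.
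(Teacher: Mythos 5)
Your proposal is correct and follows the same skeleton as the paper's proof: lift $z$ to $w\in\ell^\infty(\mathbb{Z}^d)$ with $w_\mathbf{n}\to0$, observe that $h=f^{(d)}\cdot w$ is integer-valued with coordinates tending to $0$, hence finitely supported and in $R_d$, recover $w=h\cdot w^{(d)}$, and then read off the $\ell^1$-statement from the definition of $I_d$. The one place where you genuinely add to, rather than reproduce, the paper's argument is the inversion step: the paper simply writes $w^{(d)}\cdot f^{(d)}\cdot w=w$, an associativity assertion that is not automatic, since $w^{(d)}\notin\ell^1(\mathbb{Z}^d)$ in every dimension (it is even unbounded for $d=2$) and $w$ is merely bounded. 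Your route --- showing that $v=w-h\cdot w^{(d)}$ is discrete harmonic of at most logarithmic growth, invoking the discrete Liouville theorem for harmonic functions of sublinear growth on $\mathbb{Z}^d$ to force $v=\tilde{c}$, excluding $h(\mathbf{1})\ne0$ when $d=2$ by the boundedness of $w$, and killing the constant because a constant homoclinic point of the shift must vanish --- supplies exactly the rigidity the paper leaves implicit, and as a byproduct yields the decay $h\cdot w^{(d)}\to0$ that your converse inclusion then exploits. Your passage from summability of $\pmb{|}z_\mathbf{n}\pmb{|}$ to $h\cdot w^{(d)}\in\ell^1(\mathbb{Z}^d)$ (agreement of $|(h\cdot w^{(d)})_\mathbf{n}|$ with $\pmb{|}z_\mathbf{n}\pmb{|}$ off a finite set) is the same observation the paper makes by choosing the small lift, so both directions of \eqref{eq:Delta} are handled as in the text; in short, the proof is sound and, at its only delicate point, more carefully justified than the original.
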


	\begin{proof}
If $z\in\Delta_\alpha (X_{f^{(d)}})$, then we choose $w\in\ell ^\infty (\mathbb{Z}^d)$ with $\lim_{\mathbf{n}\to\infty }w_\mathbf{n}=0$ and $\rho (w)=z$. From \eqref{eq:Wf} we know that $f^{(d)}\cdot w\in W_d(\mathbb{Z})$, and the smallness of (most of) the coordinates of $w$ guarantees that $h=f^{(d)}\cdot w\in R_d=\ell ^1(\mathbb{Z}^d)\cap \ell ^\infty (\mathbb{Z}^d,\mathbb{Z})$, where
	$$
\ell ^\infty (\mathbb{Z}^d,\mathbb{Z})=\{w=(w_\mathbf{n})\in\ell ^\infty (\mathbb{Z}^d):w_\mathbf{n}\in\mathbb{Z}\enspace \textup{for every}\enspace \mathbf{n}\in\mathbb{Z}^d\}.
	$$
If we multiply the last identity by $w^{(d)}$ we get that
	$$
w^{(d)}\cdot f^{(d)}\cdot w= w=w^{(d)}\cdot h =h\cdot w^{(d)}
	$$
for some $h\in R_d$.

If $z\in\Delta _\alpha ^{(1)}(X_{f^{(d)}})$ then $w\in\ell ^1(\mathbb{Z}^d)$ and hence, by definition, $h\in I_d$. Conversely, if $h\in I_d$, then $z=\rho (h\cdot w^{(d)})\in\Delta _\alpha ^{(1)}(X_{f^{(d)}})$.
	\end{proof}

	\begin{rema}
	\label{r:fundamental}
A homoclinic point $z$ of an algebraic $\mathbb{Z}^d$-action $\beta $ on a compact abelian group $Y$ is \textit{fundamental} if its homoclinic group $\Delta _\beta (Y)$ is the countable group generated by the orbit $\{\beta ^\mathbf{n}z:\mathbf{n}\in\mathbb{Z}^d\}$ (cf. \cite{LS}).

Proposition \ref{p:Delta} shows that $x^\Delta =\rho (w^{(d)})$ also has the property that its orbit under $\alpha _{f^{(d)}}$ generates the homoclinic groups $\Delta _\alpha (X_{f^{(d)}})$ and $\Delta _\alpha ^{(1)}(X_{f^{(d)}})$, although $x^\Delta $ itself may not be homoclinic (e.g., when $d=2$).
	\end{rema}

\subsection{Symbolic covers of the harmonic model} We construct, for every homoclinic point $z\in \Delta^{(1)}_\alpha (X_{f^{(d)}})$, a shift-equivariant group homomorphism from $\ell ^\infty (\mathbb{Z}^d,\mathbb{Z})$ to $X_{f^{(d)}}$ which we subsequently use to find symbolic covers of $\alpha _{f^{(d)}}$.

According to Proposition \ref{p:Delta}, every homoclinic point $z\in\Delta^{(1)}_\alpha (X_{f^{_{d}}})$ is of the form $z=g(\alpha )(x^\Delta )=\rho (g^*\cdot w^{(d)})$ for some $g\in I_d$. We define group homomorphisms $\bar{\xi }_g\colon \ell ^\infty (\mathbb{Z}^d)\longrightarrow \ell ^\infty (\mathbb{Z}^d)$ and $\xi _g\colon \ell ^\infty (\mathbb{Z}^d)\longrightarrow \mathbb{T}^{\mathbb{Z}^d}$ by
	\begin{equation}
	\label{eq:barxi}
\bar{\xi }_g(w)=(g\cdot w^{(d)})(\sigma )(w)=(g^*\cdot w^{(d)})\cdot w\enspace \enspace \textup{and}\enspace \enspace \xi _g(w)=(\rho \circ \bar{\xi }_g)(w).
	\end{equation}
These maps are well-defined, since
	$$
\bar{\xi }_g(w)_\mathbf{n}=\sum_{\mathbf{k}\in\mathbb{Z}^d} w_{\mathbf{n}-\mathbf{k}}\cdot (g^*\cdot w^{(d)})_\mathbf{k}
	$$
converges for every $\mathbf{n}$, and \textit{equivariant} in the sense that
	\begin{equation}
	\label{eq:equivariance}
	\begin{gathered}
\bar{\xi }_g\circ \sigma ^\mathbf{n}=\sigma ^\mathbf{n}\circ \bar{\xi }_g,\enspace \enspace \xi _g\circ \sigma ^\mathbf{n}=\alpha ^\mathbf{n}\circ \xi _g,
	\\
\bar{\xi }_g\circ h(\sigma )=h(\sigma )\circ \bar{\xi }_g,\enspace \enspace \xi _g\circ h(\sigma )=h(\alpha )\circ \xi _g,
	\end{gathered}
	\end{equation}
for every $\mathbf{n}\in\mathbb{Z}^d$, $g\in I_d$ and $h\in R_d$. We also note that
	$$
\smash[b]{\xi _g(v)=\sum_{\mathbf{n}\in\mathbb{Z}^d} v_\mathbf{n} \alpha ^{-\mathbf{n}}\bigl( g(\alpha)(x^\Delta)\bigr) }
	$$
for every $v=(v_\mathbf{n})\in\ell ^\infty (\mathbb{Z}^d,\mathbb{Z})$.

	\begin{prop}
	\label{p:xig}
For every $g\in I_d$,
	\begin{equation}
	\label{eq:range}
\xi _g(\ell ^\infty (\mathbb{Z}^d,\mathbb{Z}))=
	\begin{cases}
\{0\}&\textup{if}\enspace g\in (f^{(d)}),
	\\
X_{f^{(d)}}&\textup{if}\enspace g\in \tilde{I}_d\defeq I_d\smallsetminus (f^{(d)}),
	\end{cases}
	\end{equation}
\textup{(}cf. \eqref{eq:Id} and \eqref{eq:barxi}--\eqref{eq:equivariance}\textup{)}.
	\end{prop}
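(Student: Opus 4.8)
The plan is to establish the inclusion $\xi_g(\ell^\infty(\mathbb{Z}^d,\mathbb{Z}))\subseteq X_{f^{(d)}}$ first, then dispose of the two cases, using throughout the identity $f^{(d)}\cdot\bar\xi_g(v)=g^*\cdot v$. This follows from $\bar\xi_g(v)=(g^*\cdot w^{(d)})\cdot v$ (cf. \eqref{eq:barxi}) together with $f^{(d)}\cdot w^{(d)}=1$ (Theorem~\ref{t:estimates}(i) and \eqref{eq:inverse}), by regrouping the finitely supported factor $f^{(d)}$. The inclusion is then immediate: for $v\in\ell^\infty(\mathbb{Z}^d,\mathbb{Z})$ we have $g^*\cdot v\in W_d(\mathbb{Z})$, so $f^{(d)}\cdot\bar\xi_g(v)=g^*\cdot v\in W_d(\mathbb{Z})$ and hence $\bar\xi_g(v)\in W_{f^{(d)}}$ by \eqref{eq:Wf}, i.e. $\xi_g(v)\in X_{f^{(d)}}$. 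For $g\in(f^{(d)})$, write $g=f^{(d)}\cdot p$; then $g^*=f^{(d)}\cdot p^*$ and $\bar\xi_g(v)=p^*\cdot(f^{(d)}\cdot w^{(d)})\cdot v=p^*\cdot v\in W_d(\mathbb{Z})$, so $\xi_g(v)=\rho(p^*\cdot v)=0$, giving the first line of \eqref{eq:range}.

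The substance is surjectivity in the case $g\in\tilde{I}_d$, and my approach is to avoid any closedness or Baire-category argument and instead reduce everything to surjectivity of the single endomorphism $g(\alpha)$ of $X_{f^{(d)}}$. The key point is the following computation. Given $x\in X_{f^{(d)}}$, choose the bounded real lift $w\in[0,1)^{\mathbb{Z}^d}$ with $\rho(w)=x$; then $n\defeq f^{(d)}\cdot w\in\ell^\infty(\mathbb{Z}^d,\mathbb{Z})$ by \eqref{eq:Wf}, and I claim that
\[
\xi_g(n)=g(\alpha)\,x .
\]
Indeed $\bar\xi_g(n)=(g^*\cdot w^{(d)})\cdot(f^{(d)}\cdot w)=g^*\cdot(w^{(d)}\cdot f^{(d)})\cdot w=g^*\cdot w$; applying $\rho$ and using \eqref{eq:hsigma2} gives $\xi_g(n)=\rho(g^*\cdot w)=g(\alpha)(\rho(w))=g(\alpha)x$. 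Consequently $\xi_g(\ell^\infty(\mathbb{Z}^d,\mathbb{Z}))\supseteq g(\alpha)(X_{f^{(d)}})$.

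It then remains to show $g(\alpha)(X_{f^{(d)}})=X_{f^{(d)}}$, which I would do by Pontryagin duality. The dual group of $X_{f^{(d)}}$ is $\mathcal{G}_d=R_d/(f^{(d)})$, and the dual of the endomorphism $g(\alpha)$ is multiplication by $g$ on $R_d/(f^{(d)})$. Since $f^{(d)}$ is irreducible and $R_d$ is a unique factorization domain, $(f^{(d)})$ is prime and $R_d/(f^{(d)})$ is an integral domain; as $g\notin(f^{(d)})$, multiplication by $g$ is injective on this domain. The dual of an injective endomorphism of the discrete group $\mathcal{G}_d$ is a surjective endomorphism of the compact group $X_{f^{(d)}}$, so $g(\alpha)$ maps $X_{f^{(d)}}$ onto itself. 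Combined with the inclusion from the previous paragraph and with $\xi_g(\ell^\infty(\mathbb{Z}^d,\mathbb{Z}))\subseteq X_{f^{(d)}}$, this yields $\xi_g(\ell^\infty(\mathbb{Z}^d,\mathbb{Z}))=X_{f^{(d)}}$, completing \eqref{eq:range}.

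I expect the main obstacle to be finding this reduction rather than carrying it out. The naive attempt to solve $\bar\xi_g(v)\equiv w\pmod{W_d(\mathbb{Z})}$ directly for $v\in\ell^\infty(\mathbb{Z}^d,\mathbb{Z})$ runs into the fact that $g$ and $f^{(d)}$ need not generate the unit ideal of $R_d$ (their common zero set has codimension two), so $g$ cannot in general be inverted against a prescribed integer sequence, and a $\sigma$-compact dense subgroup of $X_{f^{(d)}}$ need not be all of it. The identity $\xi_g(f^{(d)}\cdot w)=g(\alpha)x$ sidesteps this entirely, reducing the whole problem to the soft statement that $g(\alpha)$ is onto. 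The only genuinely technical step is the justification of the convolution regrouping $(g^*\cdot w^{(d)})\cdot(f^{(d)}\cdot w)=g^*\cdot w$: this is an absolutely summable triple convolution precisely because $g\in I_d=I_d^*$ forces $g^*\cdot w^{(d)}\in\ell^1(\mathbb{Z}^d)$ while $f^{(d)}\in R_d$ and $w\in\ell^\infty(\mathbb{Z}^d)$, so the membership $g\in I_d$ is exactly what makes the argument work.
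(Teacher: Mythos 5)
Your proof is correct, and its skeleton coincides with the paper's own: the identity $f^{(d)}\cdot \bar{\xi }_g(v)=g^*\cdot v$ (the paper's Lemma \ref{l:Wf}), the observation that $g\in (f^{(d)})$ forces $g^*\cdot w^{(d)}\in R_d$ and hence $\bar{\xi }_g(v)\in W_d(\mathbb{Z})$, and, above all, the lift computation $\xi _g(f^{(d)}(\sigma )(w))=g(\alpha )(x)$ for the lift $w\in[0,1)^{\mathbb{Z}^d}$ of $x$ --- this is exactly \eqref{eq:xigg}, and it reduces surjectivity to the single statement $g(\alpha )(X_{f^{(d)}})=X_{f^{(d)}}$, i.e.\ \eqref{eq:g(X)}. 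Where you genuinely diverge is in proving \eqref{eq:g(X)}: the paper argues via entropy --- Yuzvinskii's addition formula, the vanishing $h_\textup{top}(\alpha _Y)=0$ for $Y=\ker g(\alpha )\cap X_{f^{(d)}}$ from \cite[Corollary 18.5]{DSAO} (using that $f^{(d)}$ and $g$ have no common factor), and uniqueness of the measure of maximal entropy --- whereas you use Pontryagin duality: multiplication by $g$ is injective on the integral domain $R_d/(f^{(d)})=\widehat{X_{f^{(d)}}}$ because $(f^{(d)})$ is prime and $g\notin(f^{(d)})$, and the dual of an injective endomorphism of a discrete group is surjective on the compact dual (the image being closed by compactness). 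Both routes consume the same algebraic input (irreducibility of $f^{(d)}$ together with $g\notin(f^{(d)})$), but yours is softer and more self-contained, needing no entropy theory at all; what the paper's heavier Lemma \ref{l:surjective} buys in exchange is the strictly stronger conclusion $\xi _g(\Lambda _{2d})=X_{f^{(d)}}$ --- surjectivity already on the alphabet $\{0,\dots ,2d-1\}$, obtained by the stabilization (`sandpile') argument --- which is what is actually used later in the critical model (Theorem \ref{t:surjective}); your argument does not yield that refinement, but the proposition as stated does not require it. Finally, your Fubini justification of the regrouping $(g^*\cdot w^{(d)})\cdot (f^{(d)}\cdot w)=g^*\cdot w$ (absolutely convergent since $g^*\cdot w^{(d)}\in\ell ^1(\mathbb{Z}^d)$ by $I_d=I_d^*$, $f^{(d)}$ is finitely supported, and $w\in\ell ^\infty (\mathbb{Z}^d)$) is sound and replaces the density/continuity argument on $V_K$ that the paper uses to prove Lemma \ref{l:Wf}.
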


We begin the proof of Proposition \ref{p:xig} with two lemmas.

	\begin{lemm}
	\label{l:Wf}
For every $w\in \ell ^\infty (\mathbb{Z}^d)$ and $g\in I_d$,
	\begin{equation}
	\label{eq:lemmaWf}
(f^{(d)}(\sigma )\circ \bar{\xi }_g)(w)=f^{(d)}\cdot (g^*\cdot w^{(d)})\cdot w= g^*\cdot (f^{(d)}\cdot w^{(d)})\cdot w=g^*\cdot w=g(\sigma )(w).
	\end{equation}
Furthermore, $\xi _g(\ell ^\infty (\mathbb{Z}^d,\mathbb{Z}))\subset X_{f^{(d)}}$.
	\end{lemm}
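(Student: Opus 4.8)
The plan is to verify the displayed chain of identities one equality at a time and then to extract the containment $\xi_g(\ell^\infty(\mathbb{Z}^d,\mathbb{Z}))\subset X_{f^{(d)}}$ as an immediate consequence. The structural facts I need are the identity $h(\sigma)(v)=h^*\cdot v$ from \eqref{eq:hsigma2}, the self-adjointness $(f^{(d)})^*=f^{(d)}$ (clear from \eqref{eq:fd}, since the coefficients of $f^{(d)}$ are symmetric under $\mathbf{n}\mapsto-\mathbf{n}$), the fundamental relation $f^{(d)}\cdot w^{(d)}=1$ from Theorem \ref{t:estimates}(i), and the definition $\bar{\xi}_g(w)=(g^*\cdot w^{(d)})\cdot w$ from \eqref{eq:barxi}. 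For the first equality I would apply \eqref{eq:hsigma2} with $h=f^{(d)}$ to the element $\bar{\xi}_g(w)$, obtaining $f^{(d)}(\sigma)(\bar{\xi}_g(w))=(f^{(d)})^*\cdot\bar{\xi}_g(w)=f^{(d)}\cdot\bar{\xi}_g(w)$, and then substitute the definition of $\bar{\xi}_g$ to reach $f^{(d)}\cdot(g^*\cdot w^{(d)})\cdot w$.

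For the middle equalities I would collapse the convolution using commutativity and associativity of the $R_d$-module action on $W_d$. Writing $a=g^*\cdot w^{(d)}$, the definition gives $\bar{\xi}_g(w)=a\cdot w$, and I would re-associate $f^{(d)}\cdot(a\cdot w)=(f^{(d)}\cdot a)\cdot w$, then compute $f^{(d)}\cdot a=f^{(d)}\cdot(g^*\cdot w^{(d)})=g^*\cdot(f^{(d)}\cdot w^{(d)})=g^*\cdot 1=g^*$, using that $f^{(d)},g^*\in R_d$ commute and that $f^{(d)}\cdot w^{(d)}=1$. Since $1$ corresponds to $\delta^{(\mathbf{0})}$ (cf. the footnote to \eqref{eq:inverse}) and $\delta^{(\mathbf{0})}$ is the identity for convolution, this yields $f^{(d)}(\sigma)(\bar{\xi}_g(w))=g^*\cdot w$, which equals $g(\sigma)(w)$ by a final application of \eqref{eq:hsigma2}.

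The one point requiring care — and the only plausible obstacle — is the legitimacy of these rearrangements, since $W_d$ is not a ring under convolution and a general triple product need not converge. I would dispose of this by noting that $g\in I_d$ forces $g^*\in I_d$ as well (because $I_d=I_d^*$, cf. Definition \ref{d:ideal}), so that $a=g^*\cdot w^{(d)}\in\ell^1(\mathbb{Z}^d)$; together with the finite support of $f^{(d)}$ and $w\in\ell^\infty(\mathbb{Z}^d)$, the triple sum $\sum_{\mathbf{a},\mathbf{b}}|f^{(d)}_\mathbf{a}|\,|a_\mathbf{b}|\,|w_{\mathbf{n}-\mathbf{a}-\mathbf{b}}|\le\|w\|_\infty\,\|f^{(d)}\|_1\,\|a\|_1$ is finite for each $\mathbf{n}$, so Fubini justifies the re-association $f^{(d)}\cdot(a\cdot w)=(f^{(d)}\cdot a)\cdot w$. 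The intermediate step $f^{(d)}\cdot(g^*\cdot w^{(d)})=g^*\cdot(f^{(d)}\cdot w^{(d)})$ involves only the finitely supported $f^{(d)},g^*$ acting on $w^{(d)}\in W_d$, so each convolution there is a finite sum and the commutativity/associativity of the module action apply with no convergence issue.

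Finally, for the containment I would feed an integer sequence $w\in\ell^\infty(\mathbb{Z}^d,\mathbb{Z})$ into the identity just proved: by self-adjointness of $f^{(d)}$ we have $f^{(d)}\cdot\bar{\xi}_g(w)=f^{(d)}(\sigma)(\bar{\xi}_g(w))=g^*\cdot w$, and because $g^*\in R_d$ has integer coefficients while $w$ is integer-valued and bounded, the convolution $g^*\cdot w$ is again integer-valued, i.e. $g^*\cdot w\in W_d(\mathbb{Z})$. By the description $W_{f^{(d)}}=\{w\in W_d:f^{(d)}\cdot w\in W_d(\mathbb{Z})\}$ and $X_{f^{(d)}}=\rho(W_{f^{(d)}})$ in \eqref{eq:Wf}, this shows $\bar{\xi}_g(w)\in W_{f^{(d)}}$, whence $\xi_g(w)=\rho(\bar{\xi}_g(w))\in X_{f^{(d)}}$, as claimed.
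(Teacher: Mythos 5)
Your proof is correct, but it takes a genuinely different route from the one the paper prints. The paper first verifies the identity \eqref{eq:timesg2} for configurations $v$ with finite support, where everything is polynomial algebra plus the relation $f^{(d)}\cdot w^{(d)}=1$ (cf.\ \eqref{eq:timesg}), and then extends it in stages: by continuity of $\bar{\xi}_g$ and of multiplication by $g^*$ on the compact sets $V_K=\{-K+1,\dots ,K-1\}^{\mathbb{Z}^d}$, then to all of $\ell^\infty(\mathbb{Z}^d,\mathbb{Z})$ by letting $K\to\infty$, then to $\frac1M\ell^\infty(\mathbb{Z}^d,\mathbb{Z})$, and finally to all of $\ell^\infty(\mathbb{Z}^d)$ by coordinatewise approximation; the containment $\xi_g(\ell^\infty(\mathbb{Z}^d,\mathbb{Z}))\subset X_{f^{(d)}}$ is likewise deduced by continuity from the finitely supported case, using \eqref{eq:inXf} and the closedness of $X_{f^{(d)}}$. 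You instead prove the identity for arbitrary $w\in\ell^\infty(\mathbb{Z}^d)$ in a single stroke, justifying the crucial re-association $f^{(d)}\cdot\bigl((g^*\cdot w^{(d)})\cdot w\bigr)=\bigl(f^{(d)}\cdot(g^*\cdot w^{(d)})\bigr)\cdot w$ by absolute convergence of the double sum --- and you correctly identify both places where care is needed: the hypothesis $g\in I_d$ together with $I_d=I_d^*$ (equivalently, the symmetry $w^{(d)}_\mathbf{n}=w^{(d)}_{-\mathbf{n}}$) gives $g^*\cdot w^{(d)}\in\ell^1(\mathbb{Z}^d)$, which makes the Fubini estimate work, while the rearrangement $f^{(d)}\cdot(g^*\cdot w^{(d)})=g^*\cdot(f^{(d)}\cdot w^{(d)})=g^*$ involves only finitely supported multipliers and is plain $R_d$-module algebra. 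Your containment argument is also more direct than the paper's: instead of approximating, you read it off from the proved identity via the linearization \eqref{eq:Wf}, noting that $g^*\cdot w\in W_d(\mathbb{Z})$ when $w$ is integer-valued, so no appeal to closedness of $X_{f^{(d)}}$ is needed. (In fact the paper's source contains a commented-out alternative proof that is essentially your computation.) What the printed density argument buys is that all convergence bookkeeping is hidden in one continuity assertion; what your argument buys is a shorter, self-contained proof valid for all bounded $w$ at once --- and it makes explicit that the $\ell^1$ decay of $g^*\cdot w^{(d)}$ is the same analytic fact that underlies the continuity invoked in the paper. Both are complete proofs.
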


	\begin{proof}
For every $h,v\in R_d$, Theorem \ref{t:estimates} (1) implies that
	\begin{equation}
	\label{eq:timesg}
f^{(d)}\cdot h^*\cdot w^{(d)}\cdot v=h^*\cdot f^{(d)}\cdot w^{(d)}\cdot v=h^*\cdot v.
	\end{equation}

Fix $g\in I_d$ and let $K\ge1$ and $V_K=\{-K+1,\dots ,K-1\}^{\mathbb{Z}^d}\subset \ell ^\infty (\mathbb{Z}^d,\mathbb{Z})$. Then $V_K$ is shift-invariant and compact in the topology of pointwise convergence, and the set $V_K'\subset V_K$ of points with only finitely many nonzero coordinates is dense in $V_K$. For $v\in V_K'\subset R_d$,
	\begin{equation}
	\label{eq:timesg1}
\smash{\bar{\xi }_g(v)=(g^*\cdot w^{(d)})\cdot v}
	\end{equation}
and
	\begin{equation}
	\label{eq:timesg2}
\smash{(f^{(d)}(\sigma )\circ \bar{\xi }_g)(v)=f^{(d)}\cdot g^*\cdot w^{(d)}\cdot v=g^*\cdot f^{(d)}\cdot w^{(d)}\cdot v=g^*\cdot v}
	\end{equation}
by \eqref{eq:barxi} and \eqref{eq:timesg}. Since both $\bar{\xi }_g$ and multiplication by $g^*$ are continuous on $V_K$, \eqref{eq:timesg2} holds for every $v\in V_K$. By letting $K\to\infty $ we obtain \eqref{eq:timesg2} for every $v\in \ell ^\infty (\mathbb{Z}^d,\mathbb{Z})$, hence for every $v\in \frac1{M}\ell ^\infty (\mathbb{Z}^d,\mathbb{Z})$ with $M\ge1$, and finally, again by coordinatewise convergence, for every $w\in \ell ^\infty (\mathbb{Z}^d)$, as claimed in \eqref{eq:lemmaWf}.

For the last assertion of the lemma we note that
	\begin{equation}
	\label{eq:inXf}
\xi _g(v)=\rho ((g^*\cdot w^{(d)})\cdot v)=(g\cdot v^*)(\alpha )(x^\Delta )\in X_{f^{(d)}}
	\end{equation}
for every $v\in V_K'$ (cf.\eqref{eq:xDelta}). The continuity argument above yields that $\xi _g(v)\in X_{f^{(d)}}$ for every $v\in \ell ^\infty (\mathbb{Z}^d,\mathbb{Z})$.
	\end{proof}

	\begin{lemm}
	\label{l:surjective}
If $g\in \tilde{I}_d$ then $\xi _g(\ell ^\infty (\mathbb{Z}^d,\mathbb{Z}))= X_{f^{(d)}}$. In fact,
	$$
\xi _g(\Lambda _{2d})=X_{f^{(d)}},
	$$
where $\Lambda _m=\{0,\dots ,m-1\}^{\mathbb{Z}^d}\subset \ell ^\infty (\mathbb{Z}^d,\mathbb{Z})$ for every $m\ge1$. Furthermore, the restriction of $\xi _g$ to $\Lambda _{2d}$ \textup{(}or to any other closed, bounded, shift-invariant subset of $\ell ^\infty (\mathbb{Z}^d,\mathbb{Z})$\textup{)} is continuous in the product topology on that space.
	\end{lemm}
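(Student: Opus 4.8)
The plan is to handle the three assertions in turn: continuity on $\Lambda_{2d}$, the inclusion $\xi_g(\Lambda_{2d})\supseteq X_{f^{(d)}}$ (the reverse inclusion being the last sentence of Lemma \ref{l:Wf}), and finally the $\ell^\infty(\mathbb{Z}^d,\mathbb{Z})$ statement, which is immediate once $\xi_g(\Lambda_{2d})=X_{f^{(d)}}$ is known, since $\Lambda_{2d}\subset\ell^\infty(\mathbb{Z}^d,\mathbb{Z})$ and the image always lies in $X_{f^{(d)}}$.

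For continuity, the point is that $g\in I_d=I_d^*$, so $y=g^*\cdot w^{(d)}$ lies in $\ell^1(\mathbb{Z}^d)$ and by \eqref{eq:barxi} we have $\bar{\xi}_g(v)_\mathbf{n}=\sum_{\mathbf{k}}y_{\mathbf{n}-\mathbf{k}}v_\mathbf{k}$. On $\Lambda_{2d}$ the tail $\sum_{\|\mathbf{k}\|>N}|y_{\mathbf{n}-\mathbf{k}}|\,|v_\mathbf{k}|\le(2d-1)\sum_{\|\mathbf{j}\|>N-\|\mathbf{n}\|}|y_\mathbf{j}|$ tends to $0$ uniformly in $v\in\Lambda_{2d}$; since each truncation of the series factors through finitely many coordinates of $v$ and is therefore continuous for the product topology, each coordinate of $\bar{\xi}_g$ is a uniform limit of continuous functions and hence continuous. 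Composing with $\rho$ gives continuity of $\xi_g$ on $\Lambda_{2d}$, and the same estimate works verbatim on any bounded shift-invariant subset of $\ell^\infty(\mathbb{Z}^d,\mathbb{Z})$.

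For surjectivity I would first reduce to an integer equation. Fix $x\in X_{f^{(d)}}$ and choose the lift $w\in[0,1)^{\mathbb{Z}^d}$ with $\rho(w)=x$; by \eqref{eq:Wf} the sequence $a=f^{(d)}\cdot w$ is integer-valued, and $w_\mathbf{n}\in[0,1)$ forces $a_\mathbf{n}=2dw_\mathbf{n}-\sum_{j}(w_{\mathbf{n}+\mathbf{e}^{(j)}}+w_{\mathbf{n}-\mathbf{e}^{(j)}})\in(-2d,2d)$, i.e.\ $|a_\mathbf{n}|\le 2d-1$. Using Lemma \ref{l:Wf} in the form $f^{(d)}(\sigma)\bar{\xi}_g=g(\sigma)$ and $g(\sigma)v=g^*\cdot v$ (cf.\ \eqref{eq:hsigma2}), one sees that $\xi_g(v)=x$ is equivalent to finding $v\in\Lambda_{2d}$ and a bounded integer carry sequence $m$ with $g^*\cdot v-f^{(d)}\cdot m=a$, subject to the extra requirement that the bounded harmonic remainder $\bar{\xi}_g(v)-w-m$ — which is a constant, by discrete Liouville — be integer. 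The crucial flexibility is supplied by the equivariance \eqref{eq:equivariance}: since $\xi_g\circ f^{(d)}(\sigma)=f^{(d)}(\alpha)\circ\xi_g$ and $\xi_g(\Lambda_{2d})\subseteq X_{f^{(d)}}=\ker f^{(d)}(\alpha)$, replacing $v$ by $v-f^{(d)}(\sigma)\delta^{(\mathbf{n})}$ (subtracting $2d$ at $\mathbf{n}$ and adding $1$ at each of its $2d$ neighbours, $\delta^{(\mathbf n)}$ being the shift of the indicator of Footnote \ref{delta}) leaves $\xi_g(v)$ unchanged. This ``toppling'' move is exactly the device that will force the digits into $\{0,\dots,2d-1\}$.

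Finally I would pass to the limit by finite-volume approximation: on an increasing sequence of boxes $Q_N\uparrow\mathbb{Z}^d$ solve the truncated equation — a finite linear/chip-firing computation in which the bound $|a_\mathbf{n}|\le 2d-1$ together with the toppling move keeps the digits in $\{0,\dots,2d-1\}$ — producing $v^{(N)}\in\Lambda_{2d}$ whose image agrees with $x$ on $Q_N$; by compactness of $\Lambda_{2d}$ a convergent subsequence yields $v\in\Lambda_{2d}$, and the continuity above gives $\xi_g(v)=x$. The main obstacle, and the reason this is harder than the standard expansive homoclinic-cover theorem (here $\alpha_{f^{(d)}}$ is nonexpansive), is precisely controlling this limit: one must show the toppling/approximation stabilises with uniformly bounded carries on each box, and one must kill the leftover additive constant $\rho(\bar{\xi}_g(v)-w-m)\in\mathbb{T}$. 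I expect the latter to be handled via the freedom $m\mapsto m+\tilde{1}$, which preserves $g^*\cdot v-f^{(d)}\cdot m=a$ because $f^{(d)}\cdot\tilde{1}=0$ and hence supplies exactly the integer degree of freedom needed to absorb the constant.
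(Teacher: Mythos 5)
Your continuity argument is correct (and matches what the paper leaves implicit), your reduction of $\xi_g(v)=x$ to the integer convolution equation $g^*\cdot v-f^{(d)}\cdot m=a$ with an integral Liouville constant is sound, and your toppling move $v\mapsto v-f^{(d)}(\sigma)\delta^{(\mathbf{n})}$ is exactly the stabilization device the paper uses. But the heart of the lemma is missing: you never show that the equation has \emph{any} bounded integer solution, i.e.\ that $x$ has any preimage at all. Chip-firing only rearranges a solution already in hand; it cannot produce one, and there is no reason the truncated equation on a box $Q_N$ is solvable, since $g$ is not invertible modulo $f^{(d)}$ (neither over $R_d$ nor over finite boxes). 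The paper's proof turns precisely on this point: it first observes that $v=f^{(d)}(\sigma)(w)\in V_{2d}=\{-2d+1,\dots,2d-1\}^{\mathbb{Z}^d}$, where $w$ is the $[0,1)$-lift of $x$, satisfies $\xi_g(v)=g(\alpha)(x)$, and then proves the crucial identity $g(\alpha)(X_{f^{(d)}})=X_{f^{(d)}}$ via Yuzvinskii's addition formula, the vanishing of the entropy of $\ker g(\alpha)\cap X_{f^{(d)}}$ (using that $g$ and the irreducible polynomial $f^{(d)}$ have no common factor), and uniqueness of the measure of maximal entropy; equivalently one can dualize, since multiplication by $g$ is injective on the integral domain $R_d/(f^{(d)})$. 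Some such statement is indispensable, and your finite-volume scheme has no substitute for it.

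There is a second genuine defect: your device for killing the additive constant is vacuous. For a \emph{fixed} $v$, the decomposition $\bar{\xi}_g(v)=w+m+\tilde{c}$ is unique up to the simultaneous replacement $(m,\tilde{c})\mapsto(m+\tilde{k},\tilde{c}-\tilde{k})$, $k\in\mathbb{Z}$; hence $c\bmod 1$, and with it $\xi_g(v)-x=\rho(\tilde{c})$, is an invariant of $v$ which the freedom $m\mapsto m+\tilde{1}$ cannot change — it only relabels the same $v$. Nor can you absorb the constant by adding a constant integer configuration to $v$, since $\xi_g(\tilde{m})=0$ by Lemma \ref{l:constant}. The paper's route never has to confront this obstruction, because it reaches every $x$ as $g(\alpha)$ of something rather than by matching fractional parts. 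Finally, the step you flag as "the main obstacle" — uniform control of the carries — is where $g\in I_d$ actually enters: termination of toppling comes from the monotone quantity \eqref{eq:D}--\eqref{eq:increase}, the boundary-layer excess is bounded as in \eqref{eq:range2}, and the boundary corrections are harmless in the limit only because Theorem \ref{t:homoclinic} gives the decay $|(g^*\cdot w^{(d)})_\mathbf{n}|=\mathcal{O}(\|\mathbf{n}\|_{\textup{max}}^{-d-1})$; your proposal names the difficulty but supplies none of these estimates. As written, then, the plan fails at the existence of a preimage and at the constant, and is incomplete on the stabilization bounds.
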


	\begin{proof}
We fix $x\in X_{f^{(d)}}$ and define $w\in W_{f^{(d)}}$ by demanding that $\rho (w)=x$ and $0\le w_\mathbf{n}<1$ for every $\mathbf{n}\in\mathbb{Z}^d$. If $v=f^{(d)}(\sigma )(w)$ then $-2d+1\le v_\mathbf{n}\le 2d-1$ for every $\mathbf{n}\in\mathbb{Z}^d$.

Since $\bar{\xi }_g$ commutes with $f^{(d)}(\sigma )$ by \eqref{eq:equivariance}, \eqref{eq:timesg2} shows that
	\begin{equation}
	\label{eq:xigg}
\xi _g(v)=(\rho \circ \bar{\xi }_g)(v)=g(\alpha )(x).
	\end{equation}
Hence
	\begin{equation}
	\label{eq:cover}
X_{f^{(d)}}\supset \xi _g(\ell ^\infty (\mathbb{Z}^d,\mathbb{Z}))\supset \xi _g(V_{2d})\supset g(\alpha )(X_{f^{(d)}}),
	\end{equation}
where $V_K=\{-K+1,\ldots,K-1\}^{\Z^d}\subset \ell^\infty(\Z^d,\Z)$.

We claim that
	\begin{equation}
	\label{eq:g(X)}
g(\alpha )(X_{f^{(d)}})=X_{f^{(d)}}.
	\end{equation}
Indeed, consider the exact sequence
	$$
\smash{\{0\}\longrightarrow \ker g(\alpha )\cap X_{f^{(d)}}\longrightarrow X_{f^{(d)}}\overset{g(\alpha )}\longrightarrow X_{f^{(d)}}\longrightarrow \{0\},}
	$$
set $Y=\ker g(\alpha )\cap X_{f^{(d)}}$, $Z=g(\alpha )(X_{f^{(d)}})\subset X_{f^{(d)}}$, write $\alpha _Y$ and $\alpha _Z$ for the restrictions of $\alpha $ to $Y$ and $Z$, and denote by $\alpha '$ the $\mathbb{Z}^d$-action induced by $\alpha $ on $X_{f^{(d)}}/Z$.

Yuzvinskii's addition formula (\cite[(14.1)]{DSAO}) implies that
	$$
h_\textup{top}(\alpha _{f^{(d)}})=h_\textup{top}(\alpha _Y) \linebreak[0]+h_\textup{top}(\alpha _Z)=h_\textup{top}(\alpha ')+h_\textup{top}(\alpha _Z),
	$$
where we are using the fact that the topological entropies of these actions coincide with their metric entropies with respect to Haar measure. Since the polynomials $f^{(d)}$ and $g$ have no common factors, $h_\textup{top}(\alpha _Y)=0$ by \cite[Corollary 18.5]{DSAO}, hence $h_\textup{top}(\alpha _{f^{(d)}})=h_\textup{top}(\alpha _Z)$ is given by \eqref{eq:entropy} and $0<h_\textup{top}(\alpha _{f^{(d)}})<\infty$. Since the Haar measure $\lambda _{X_{f^{(d)}}}$ of $X_{f^{(d)}}$ is the unique measure of maximal entropy for $\alpha _{f^{(d)}}$ we conclude that $\lambda _{X_{f^{(d)}}}(g(\alpha )(X_{f^{(d)}}))=1$ and $g(\alpha )(X_{f^{(d)}})= X_{f^{(d)}}$, as claimed in \eqref{eq:g(X)}.

We have proved that $\xi _g(V_{2d})=X_{f^{(d)}}$. If $v'\in \ell ^\infty (\mathbb{Z}^d,\mathbb{Z})$ satisfies that $v'_\mathbf{n}=2d-1$ for every $\mathbf{n}\in\mathbb{Z}^d$, then $v'+V_{2d}=\Lambda _{4d-1}$, and \eqref{eq:cover} implies that $\xi _g(\Lambda _{4d-1})=\xi _g(V_{2d})+\xi _g(v')=X_{f^{(d)}}+\xi _g(v')=X_{f^{(d)}}$.

\medskip We still have to show that $\xi _g(\Lambda _{2d})=X_{f^{(d)}}$. Fix $M\ge1$ for the moment and put
	\begin{equation}
	\label{eq:QM}
Q_M=\{-M,\dots ,M\}^d\subset \mathbb{Z}^d.
	\end{equation}
Let
	$$
\ell ^\infty (\mathbb{Z}^d,\mathbb{Z}_+)=\{v\in\ell ^\infty (\mathbb{Z}^d,\mathbb{Z}):v_\mathbf{n}\ge0\enspace \textup{for every}\enspace \mathbf{n}\in\mathbb{Z}^d\}.
	$$
For every $v\in\ell ^\infty (\mathbb{Z}^d,\mathbb{Z}_+)$ and $\mathbf{n}\in\mathbb{Z}^d$ we set
	$$
h^{(v,\mathbf{n})}= \smash[b]{
	\begin{cases}
u^\n\cdot f^{(d)}&\textup{if}\enspace v_\mathbf{n}\ge2d
	\\
0&\textup{otherwise},
	\end{cases}
}
	$$
and we put
	$$
\smash[b]{H^{(v,M)}=\sum_{\mathbf{n}\in Q_M}h^{(v,\mathbf{n})},\enspace \enspace T(v)=v-H^{(v,M)}.}
	$$
If
	\begin{equation}
	\label{eq:D}
\smash[t]{D_M(v)=\sum_{\mathbf{n}\in Q_M}v_\mathbf{n}\cdot \|\mathbf{n}\|_\textup{max}^2,}
	\end{equation}
where $\|\cdot \|_\textup{max}$ is the maximum norm on $\mathbb{R}^d$, then $T(v)=v$ if and only if $v_\mathbf{n}<2d$ for every $\mathbf{n}\in Q_M$, and
	\begin{equation}
	\label{eq:increase}
D_M(T(v))\ge D_M(v)+2
	\end{equation}
otherwise. We define inductively $T^n(v)=T(T^{n-1}(v)),\,n\ge2$, and conclude from \eqref{eq:increase} that there exists, for every $v\in \ell ^\infty (\mathbb{Z}^d,\mathbb{Z}_+)$, an integer $K_M(v)\ge0$ with
	\begin{equation}
	\label{eq:tildev}
\tilde{v}^{(M)}=T^k(v)\enspace \textup{for every}\enspace k\ge K_M(v).
	\end{equation}
For $v\in\Lambda _{4d-1}$ and any $M\ge 1$, the corresponding $\tilde{v}^{(M)}$ satisfies
	\begin{equation}
	\label{eq:range2}
	\begin{gathered}
0\le \tilde{v}^{(M)}_\mathbf{n}\le 2d-1\enspace \textup{if}\enspace \mathbf{n}\in Q_M,
	\\
\tilde{v}^{(M)}_\mathbf{n}\ge v_\mathbf{n}\enspace \textup{if}\enspace \|\mathbf{n}\|_\textup{max}=M+1,
	\\
\sum_{\{\mathbf{n}:\|\mathbf{n}\|_{\textup{max}}=M+1\}} \tilde{v}^{(M)}_\mathbf{n}-v_\mathbf{n}\le (2d-1)\cdot (2M+1)^d,
	\\
\tilde{v}^{(M)}_\mathbf{n}=v_\mathbf{n} \enspace\textup{if}\enspace \|\mathbf{n}\|_\textup{max}>M+1,
	\end{gathered}
	\end{equation}
where $\|\cdot \|_{\textup{max}}$ is the maximum norm on $\mathbb{R}^d$.

Let $\tilde{V}^{(M)}=\{\tilde{v}^{(M)}:v\in \Lambda _{4d-1}\}$. Since $v-\tilde{v}^{(M)}\in (f^{(d)})$ it is clear that $\xi _g(v)=\xi _g(\tilde{v}^{(M)})$ for every $v\in \Lambda _{4d-1}$ and $g\in \tilde{I}_d$.

Since $g\in \tilde{I}_d$, Theorem \ref{t:homoclinic} implies that there exists a constant $C>0$ with
	$$
|(g^*\cdot w^{(d)})_\mathbf{n}|\le C\cdot \|\mathbf{n}\|_\textup{max}^{-d-1}\enspace \textup{for every nonzero}\enspace \mathbf{n}\in\mathbb{Z}^d.
	$$
Hence
	$$
|\bar{\xi }_g(\tilde{v}^{(M)})_\mathbf{0} - \bar{\xi }_g(\bar{v}^{(M)})_\mathbf{0}|<4d\cdot (2M+1)^d\cdot C\cdot (M+1)^{-d-1}\to 0
	$$
as $M\to\infty $, where
	$$
\smash{\bar{v}^{(M)}_\mathbf{n}=
	\begin{cases}
\tilde{v}^{(M)}_\mathbf{n}&\textup{if}\enspace \mathbf{n}\in Q_M,
	\\
v_\mathbf{n}&\textup{otherwise}.
	\end{cases}
}
	$$
It follows that
	$$
\lim_{M\to\infty }\bar{\xi }_g(v-\bar{v}^{(M)})=0
	$$
in the topology of coordinate-wise convergence. Since
	$$
\bar{v}^{(M)}\in\{v\in\Lambda _{4d-1}:0\le v_\mathbf{n}<2d\enspace \textup{for every}\enspace \mathbf{n}\in Q_M\}
	$$
for every $v\in \Lambda _{4d-1}$ and $M\ge1$, we conclude that $\xi _g(\Lambda _{2d})$ is dense in  $X_{f^{(d)}}$. As $\xi _g(\Lambda _{2d})$ is also closed, this implies that $\xi _g(\Lambda _{2d})=X_{f^{(d)}}$, as claimed.
	\end{proof}

	\begin{rema}
	\label{r:surjective2}
Although we have not yet introduced sandpiles and their stabilization (this will happen in Section \ref{s:sandpiles}), the second part of the proof of Lemma \ref{l:surjective} is effectively a `sandpile' argument, and $\tilde{v}^{(M)}$ is a stabilization of $v$ in $Q_M$.
	\end{rema}

	\begin{proof}[Proof of Proposition \ref{p:xig}]
If $g$ lies in $\tilde{I}_d$, Lemma \ref{l:surjective} shows that $\xi _g(\Lambda _{2d})=\xi _g(\ell ^\infty (\mathbb{Z}^d,\mathbb{Z}))=X_{f^{(d)}}$. On the other hand, if $g=h\cdot f^{(d)}$ for some $h\in R_d$, then $g^*\cdot w^{(d)}\in R_d$, and hence $\bar\xi_g(v)_\n\in\Z$ for every $\n\in\Z^d$ and $v\in\ell^\infty(\Z^d,\Z)$, implying that $\xi_g(v)=0$.
	\end{proof}

\subsection{\label{ss:kernels}Kernels of covering maps} Having found compact shift-invariant subsets $V\subset \ell ^\infty (\mathbb{Z}^d,\mathbb{Z})$ such that the restrictions of $\xi _g$ to $V$ are surjective for every $g\in\tilde{I}_d$ (cf. Lemma \ref{l:surjective}), we turn to the problem of determining the kernels of the group homomorphisms $\xi _g\colon \ell ^\infty (\mathbb{Z}^d,\mathbb{Z})\longrightarrow X_{f^{(d)}},\,g\in I_d$ (cf. \eqref{eq:barxi}). We shall see below that $\ker(\xi _g)$ depends on $g$ and that $\ker\xi _{gh}\supsetneq \ker(\xi _g)$ for $g\in I_d$ and $0\ne h\in R_d$. In view of this it is desirable to characterize the set
	\begin{equation}
	\label{eq:kerI}
K_d=\bigcap_{g\in I_d}\ker(\xi _g)
	\end{equation}
of all $v\in\ell ^\infty (\mathbb{Z}^d,\mathbb{Z})$ which are sent to $0$ by \textit{every} $\xi _g,\,g\in I_d$.

In the following discussion we set, for every ideal $J\subset R_d$,
	\begin{equation}
	\label{eq:XJ}
\smash[b]{X_J=\{x\in \mathbb{T}^{\mathbb{Z}^d}:g(\alpha )( x)=0\enspace \textup{for every}\enspace g\in J\}=\bigcap_{g\in J}\ker g(\alpha ),}
	\end{equation}
and put
	\begin{equation}
	\label{eq:tildeX}
\tilde{X}_{f^{(d)}}= \widehat{\mathscr{I}_d/(f^{(d)})}=X_{f^{(d)}}/X_{I_d}.
	\end{equation}
In order to explain \eqref{eq:tildeX} we note that the dual group of $\tilde{X}_{f^{(d)}}$ is a subgroup of $\widehat{X_{f^{(d)}}}=R_d/(f^{(d)})$, hence $\tilde{X}_{f^{(d)}}$ is a quotient of $X_{f^{(d)}}$ by a closed, shift-invariant subgroup, which is the annihilator of $I_d/(f^{(d)})$ and hence equal to $X_{I_d}$. The $\mathbb{Z}^d$-action $\alpha _{f^{(d)}}$ on $X_{f^{(d)}}$ induces a $\mathbb{Z}^d$-action $\tilde{\alpha }_{f^{(d)}}$ on $\tilde{X}_{f^{(d)}}$. Note that $\tilde{\alpha }_{f^{(d)}}^\mathbf{n}$ is dual to multiplication by $u^\mathbf{n}$ on $\mathscr{I}_d/(f^{(d)})$. With this notation we have the following result.

	\begin{theo}
	\label{t:kernels}
There exists a surjective group homomorphisms $\eta \colon \ell ^\infty (\mathbb{Z}^d,\mathbb{Z})\longrightarrow \tilde{X}_{f^{(d)}}$ with the following properties.
	\begin{enumerate}
	\item
The homomorphism $\eta $ is equivariant in the sense that $\eta \circ \sigma ^\mathbf{n}=\tilde{\alpha }_{f^{(d)}}^\mathbf{n}\circ \eta $ for every $\mathbf{n}\in\mathbb{Z}^d$;
	\item
$\ker(\eta )=K_d$;
	\item
The topological entropy of $\tilde{\alpha }_{f^{(d)}}$ coincides with that of $\alpha _{f^{(d)}}$ \textup{(}cf. \eqref{eq:entropy}\textup{)}.
	\end{enumerate}
	\end{theo}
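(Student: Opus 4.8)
The plan is to realise $\eta$ as a \emph{character-valued} map that simultaneously encodes all of the homomorphisms $\xi_p$, $p\in I_d$, rather than as a composite $q\circ\xi_{g_0}$ of the quotient map $q\colon X_{f^{(d)}}\to\tilde X_{f^{(d)}}$ with one fixed $\xi_{g_0}$: the latter has kernel $\xi_{g_0}^{-1}(X_{I_d})$, which strictly contains $K_d$ as soon as $X_{I_d}\ne\{0\}$, and so is the wrong map. Since $\tilde X_{f^{(d)}}=X_{f^{(d)}}/X_{I_d}$ (cf. \eqref{eq:tildeX}), its dual group is the annihilator of $X_{I_d}$ in $\widehat{X_{f^{(d)}}}=R_d/(f^{(d)})$, namely $I_d/(f^{(d)})$. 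I therefore define, for $v\in\ell^\infty(\mathbb Z^d,\mathbb Z)$, the element $\eta(v)\in\widehat{I_d/(f^{(d)})}=\tilde X_{f^{(d)}}$ to be the character
\[
\overline p\longmapsto\bigl(\xi_p(v)\bigr)_{\mathbf 0}\in\mathbb T,\qquad p\in I_d.
\]
This is well defined because $p\mapsto(\xi_p(v))_{\mathbf 0}$ is additive (as $\xi_{p+p'}=\xi_p+\xi_{p'}$) and vanishes on $(f^{(d)})$ (since $\xi_p\equiv0$ for $p\in(f^{(d)})$ by Proposition \ref{p:xig}), so it descends to a homomorphism of the discrete group $I_d/(f^{(d)})$ into $\mathbb T$, i.e.\ to an element of $\tilde X_{f^{(d)}}$.

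The two identities I would use throughout are $\xi_{u^{\mathbf m}p}=\alpha^{\mathbf m}\circ\xi_p$ (immediate from \eqref{eq:barxi}), whence $(\xi_p(v))_{\mathbf m}=(\alpha^{\mathbf m}\xi_p(v))_{\mathbf 0}=(\xi_{u^{\mathbf m}p}(v))_{\mathbf 0}$, and $\xi_p\circ\sigma^{\mathbf m}=\alpha^{\mathbf m}\circ\xi_p$ from \eqref{eq:equivariance}. Property (1) then follows directly: $\eta(\sigma^{\mathbf m}v)(\overline p)=(\xi_p(\sigma^{\mathbf m}v))_{\mathbf 0}=(\alpha^{\mathbf m}\xi_p(v))_{\mathbf 0}=(\xi_{u^{\mathbf m}p}(v))_{\mathbf 0}=\eta(v)(\overline{u^{\mathbf m}p})$, which equals $(\tilde\alpha_{f^{(d)}}^{\mathbf m}\eta(v))(\overline p)$ since $\tilde\alpha_{f^{(d)}}^{\mathbf m}$ is dual to multiplication by $u^{\mathbf m}$. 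For property (2), $\eta(v)=0$ says $(\xi_p(v))_{\mathbf 0}=0$ for every $p\in I_d$; because $I_d$ is an ideal, $u^{\mathbf m}p\in I_d$ for all $\mathbf m$, and then $(\xi_p(v))_{\mathbf m}=(\xi_{u^{\mathbf m}p}(v))_{\mathbf 0}=0$, so $\xi_p(v)=0$ for every $p\in I_d$. Hence $\eta(v)=0\iff v\in\bigcap_{p\in I_d}\ker\xi_p=K_d$ (cf. \eqref{eq:kerI}), i.e.\ $\ker\eta=K_d$.

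For surjectivity I would first establish that $\eta$ has dense image by computing the annihilator of $\overline{\eta(\ell^\infty(\mathbb Z^d,\mathbb Z))}$ in $I_d/(f^{(d)})$: if $\overline p$ annihilates the image, then $(\xi_p(v))_{\mathbf 0}=0$ for all $v$, and testing on $v=\delta^{(\mathbf k)}$ gives $(p^*\cdot w^{(d)})_{\mathbf k}\in\mathbb Z$ for every $\mathbf k$, so $p^*\cdot w^{(d)}\in\ell^1(\mathbb Z^d,\mathbb Z)$ (it lies in $\ell^1$ because $p\in I_d=I_d^*$). But then every $(p^*\cdot w^{(d)})\cdot v$ with $v\in\ell^\infty(\mathbb Z^d,\mathbb Z)$ is an absolutely convergent series of integers, hence integer-valued, so $\xi_p\equiv0$ and therefore $p\in(f^{(d)})$ by Proposition \ref{p:xig}; that is, $\overline p=0$. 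A closed subgroup with trivial annihilator is the whole group, so the image is dense. To upgrade density to surjectivity I would use compactness: $\eta$ restricted to $\Lambda_{2d}$ is continuous (this is the mechanism behind Lemma \ref{l:surjective}), so $\eta(\Lambda_{2d})$ is compact, hence closed; and the sandpile reduction in the proof of Lemma \ref{l:surjective} (cf. Remark \ref{r:surjective2}) alters any $v$ only by elements of $f^{(d)}(\sigma)\,\ell^\infty(\mathbb Z^d,\mathbb Z)\subset K_d$ — since $\xi_p(f^{(d)}(\sigma)w)=\rho(p^*\cdot w)=0$ for $w\in\ell^\infty(\mathbb Z^d,\mathbb Z)$ — and so preserves $\eta$ and yields $\eta(\ell^\infty(\mathbb Z^d,\mathbb Z))=\eta(\Lambda_{2d})$. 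This set is both dense and closed, so $\eta$ is onto.

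For property (3) I would apply Yuzvinskii's addition formula \cite[(14.1)]{DSAO} to the exact sequence $\{0\}\to X_{I_d}\to X_{f^{(d)}}\to\tilde X_{f^{(d)}}\to\{0\}$, giving $h_{\mathrm{top}}(\alpha_{f^{(d)}})=h_{\mathrm{top}}(\alpha|_{X_{I_d}})+h_{\mathrm{top}}(\tilde\alpha_{f^{(d)}})$. By Theorem \ref{t:homoclinic}, $I_d=(f^{(d)})+\mathscr I_d^3$ contains the coprime pair $f^{(d)}$ and $(1-u_1)^3$ — coprime because $1-u_1\nmid f^{(d)}$, as $f^{(d)}|_{u_1=1}=f^{(d-1)}\not\equiv0$ — so $h_{\mathrm{top}}(\alpha|_{X_{I_d}})=0$ by \cite[Corollary 18.5]{DSAO}, exactly as in the proof of Lemma \ref{l:surjective}. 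Hence $h_{\mathrm{top}}(\tilde\alpha_{f^{(d)}})=h_{\mathrm{top}}(\alpha_{f^{(d)}})$ is given by \eqref{eq:entropy}. The main obstacle is property (2): one must pass from the single‑coordinate condition $(\xi_p(v))_{\mathbf 0}=0$ to the vanishing of the entire sequence $\xi_p(v)$, and dually from $p^*\cdot w^{(d)}$ being integer-valued to $p\in(f^{(d)})$. Both rest on the shift-covariance $\xi_{u^{\mathbf m}p}=\alpha^{\mathbf m}\circ\xi_p$ and on Proposition \ref{p:xig}, so the crux is to arrange the construction so that these structural facts, rather than any delicate estimate, carry the argument.
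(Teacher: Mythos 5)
Your construction is correct in substance but takes a genuinely different route from the paper's. The paper fixes a finite generating set $G_d=\{g^{(1)},\dots,g^{(m)}\}$ of $I_d$, forms $\xi_{I_d}=(\xi_{g^{(1)}},\dots,\xi_{g^{(m)}})$ with values in $X_{f^{(d)}}^m$, and identifies $\xi_{I_d}(\ell^\infty(\mathbb{Z}^d,\mathbb{Z}))$ with $\tilde X_{f^{(d)}}$ via $\theta'(x)=(g^{(1)}(\alpha)(x),\dots,g^{(m)}(\alpha)(x))$, whose kernel is $X_{I_d}$ (Lemma \ref{l:xiI}); it then sets $\eta=\theta_d\circ\xi_{I_d}$, getting surjectivity from the identity $\xi_g\circ h(\sigma)=g(\alpha)\circ\xi_h$ together with Lemma \ref{l:surjective}, and the entropy equality from the quotient inequality plus the surjective first-coordinate projection onto $X_{f^{(d)}}$. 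You instead define $\eta(v)$ intrinsically as the character $\overline p\mapsto(\xi_p(v))_{\mathbf 0}$ of $I_d/(f^{(d)})$ (your reading of \eqref{eq:tildeX}, with $I_d/(f^{(d)})$ rather than the printed $\mathscr I_d/(f^{(d)})$ as the dual, is the consistent one), which avoids any choice of generators. Your kernel computation via $(\xi_p(v))_{\mathbf m}=(\xi_{u^{\mathbf m}p}(v))_{\mathbf 0}$ and ideal-invariance is cleaner than the paper's, which tacitly needs $\xi_{hg}=h(\alpha)\circ\xi_g$ to reduce $K_d$ of \eqref{eq:kerI} to the generators; and your annihilator computation for density — testing on $\delta^{(\mathbf k)}$, concluding $p^*\cdot w^{(d)}\in R_d$ and hence $p\in(f^{(d)})$ via Proposition \ref{p:xig} — is a self-contained duality argument the paper does not make. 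Your entropy argument (Yuzvinskii's formula \cite[(14.1)]{DSAO} applied to $\{0\}\to X_{I_d}\to X_{f^{(d)}}\to\tilde X_{f^{(d)}}\to\{0\}$, with $h_{\textup{top}}(\alpha|_{X_{I_d}})=0$ from the coprime pair $f^{(d)},(1-u_1)^3\in I_d$ and \cite[Corollary 18.5]{DSAO}) is likewise valid and mirrors the computation inside the proof of Lemma \ref{l:surjective}.

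The one step you should repair is the upgrade from density to surjectivity. Your claim that the sandpile reduction ``alters any $v$ only by elements of $f^{(d)}(\sigma)\,\ell^\infty(\mathbb{Z}^d,\mathbb{Z})$'', so that $\eta(\ell^\infty(\mathbb{Z}^d,\mathbb{Z}))=\eta(\Lambda_{2d})$ follows at once, is not what Lemma \ref{l:surjective} delivers, and as an exact statement it is false in the critical model: there is no bounded $h$ with $(f^{(d)}\cdot h)_{\mathbf n}\ge 1$ for every $\mathbf n$ (such an $h$ would satisfy $h_{\mathbf n}\ge\mathbb{E}_{\mathbf n}[h(X_k)]+k/2d$ along the simple random walk, contradicting boundedness), so for instance the constant configuration with value $2d$ is not congruent modulo $f^{(d)}\cdot\ell^\infty(\mathbb{Z}^d,\mathbb{Z})$ to any element of $\Lambda_{2d}$; its image lies in $\eta(\Lambda_{2d})$ only because constants belong to $K_d$ (Lemma \ref{l:constant}), a deeper fact. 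In the paper the passage to $\Lambda_{2d}$ is a limiting argument ($\xi_g(v)=\lim_M\xi_g(\bar v^{(M)})$ followed by closedness), and in your setting it would have to be rerun character by character. A one-line fix is available and is exactly the mechanism of Lemma \ref{l:xiI}: for $h\in\tilde I_d$ one has $\eta(h(\sigma)(v))(\overline p)=(p(\alpha)(\xi_h(v)))_{\mathbf 0}=\sum_{\mathbf m}p_{\mathbf m}(\xi_h(v))_{\mathbf m}$, so $\eta\circ h(\sigma)$ is the composition of $\xi_h$ with the canonical surjection $X_{f^{(d)}}\to X_{f^{(d)}}/X_{I_d}$, and surjectivity of $\eta$ follows from Lemma \ref{l:surjective}; this renders your density-plus-compactness step unnecessary (though, once correctly justified, not wrong).
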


For the proof of Theorem \ref{t:kernels} we choose and fix a set of generators $G_d=\{g^{(1)},\dots ,\linebreak[0]g^{(m)}\}$ of $I_d$ (for $d=2$ we may take, for example, $G_2=\{g^{(1)},g^{(2)},g^{(3)}\}$ with $g^{(1)}=(1-u_1)^2\cdot (1-u_2)$, $g^{(2)}=(1-u_1)\cdot (1-u_2)^2$ and $g^{(3)}=(1-u_1)^2+(1-u_2)^2)$; for $d\ge3$ we can use the set of generators $G_d=\{f^{(d)}\}\cup\{(u_i-1)\cdot (u_j-1)\cdot (u_k-1):i,j,k=1,\dots ,d\}$). With such a choice of $G_d$ we define a map
	\begin{equation}
	\label{eq:xiI1}
\xi _{I_d}\colon \ell ^\infty (\mathbb{Z}^d,\mathbb{Z})\longrightarrow X_{f^{(d)}}^m
	\end{equation}
by setting
	\begin{equation}
	\label{eq:xiId}
\xi _{I_d}(v)=(\xi _{g^{(1)}}(v),\dots ,\xi _{g^{(m)}}(v))
	\end{equation}
for every $v\in \ell ^\infty (\mathbb{Z}^d,\mathbb{Z})$.

	\begin{lemm}
	\label{l:xiI}
There exists a continuous shift-equivariant group isomorphism
	\begin{equation}
	\label{eq:theta}
\theta _d\colon \xi _{I_d}(\ell ^\infty (\mathbb{Z}^d,\mathbb{Z}))\longrightarrow \tilde{X}_{f^{(d)}}.
	\end{equation}
	\end{lemm}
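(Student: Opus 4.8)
The plan is to realize $\xi_{I_d}(\ell^\infty(\mathbb{Z}^d,\mathbb{Z}))$ as the image of $X_{f^{(d)}}$ under the diagonal ``evaluation'' homomorphism determined by the chosen generators $G_d=\{g^{(1)},\dots ,g^{(m)}\}$ of $I_d$. I define
\[
\psi\colon X_{f^{(d)}}\longrightarrow X_{f^{(d)}}^m,\qquad \psi(x)=\bigl(g^{(1)}(\alpha)(x),\dots ,g^{(m)}(\alpha)(x)\bigr),
\]
a continuous, shift-equivariant group homomorphism (for the diagonal $\alpha$-action on $X_{f^{(d)}}^m$). The goal is to prove $\xi_{I_d}(\ell^\infty(\mathbb{Z}^d,\mathbb{Z}))=\psi(X_{f^{(d)}})$ and $\ker(\psi)=X_{I_d}$. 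Granting these, $\psi$ factors through a continuous bijective homomorphism $\bar\psi\colon X_{f^{(d)}}/X_{I_d}\to\psi(X_{f^{(d)}})$, which is automatically a homeomorphism since both groups are compact, so $\theta_d\defeq\bar\psi^{-1}$ is the desired isomorphism onto $\tilde X_{f^{(d)}}=X_{f^{(d)}}/X_{I_d}$. The kernel computation is immediate: because $G_d$ generates $I_d$, the conditions $g^{(i)}(\alpha)(x)=0$ for all $i$ are equivalent to $g(\alpha)(x)=0$ for all $g\in I_d$, i.e.\ to $x\in X_{I_d}$ (cf.\ \eqref{eq:XJ}), and $X_{I_d}\subset X_{f^{(d)}}$ because $f^{(d)}\in I_d$. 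Shift-equivariance of $\theta_d$ follows from that of $\psi$ together with the fact that $\tilde\alpha_{f^{(d)}}$ is precisely the action induced by $\alpha_{f^{(d)}}$ on the quotient.

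For the inclusion $\psi(X_{f^{(d)}})\subseteq\xi_{I_d}(\ell^\infty(\mathbb{Z}^d,\mathbb{Z}))$ I reuse the device from the proof of Lemma \ref{l:surjective}: given $x\in X_{f^{(d)}}$, choose $w\in W_{f^{(d)}}$ with $\rho(w)=x$ and $0\le w_\mathbf{n}<1$, and put $v=f^{(d)}(\sigma)(w)\in\ell^\infty(\mathbb{Z}^d,\mathbb{Z})$ (which is integer-valued by \eqref{eq:Wf} and bounded since $|v_\mathbf{n}|\le 2d-1$). Then \eqref{eq:xigg} gives $\xi_{g^{(i)}}(v)=g^{(i)}(\alpha)(x)$ for every $i$, so $\xi_{I_d}(v)=\psi(x)$.

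The substantive direction is $\xi_{I_d}(\ell^\infty(\mathbb{Z}^d,\mathbb{Z}))\subseteq\psi(X_{f^{(d)}})$. Since $\psi(X_{f^{(d)}})$ is a subgroup and $\xi_{I_d}$ a homomorphism, and since every element of $\ell^\infty(\mathbb{Z}^d,\mathbb{Z})$ is a difference of two elements of $\bigcup_{N\ge1}\Lambda_N$, it suffices to treat $v\in\Lambda_N$. For \emph{finitely supported} $v$ the convolution $w^{(d)}\cdot v$ converges, lies in $W_{f^{(d)}}$ because $f^{(d)}\cdot(w^{(d)}\cdot v)=(f^{(d)}\cdot w^{(d)})\cdot v=v\in W_d(\mathbb{Z})$ (cf.\ \eqref{eq:inverse}), and satisfies $\psi\bigl(\rho(w^{(d)}\cdot v)\bigr)=\xi_{I_d}(v)$. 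I expect the main obstacle to be that for infinitely supported $v$ the convolution $w^{(d)}\cdot v$ need \emph{not} converge, so one cannot write down a preimage directly; the remedy is a compactness-and-limiting argument. Approximate $v\in\Lambda_N$ by the truncations $v^{(M)}$ (equal to $v$ on $Q_M$ and $0$ elsewhere; cf.\ \eqref{eq:QM}), set $x^{(M)}=\rho(w^{(d)}\cdot v^{(M)})\in X_{f^{(d)}}$ so that $\psi(x^{(M)})=\xi_{I_d}(v^{(M)})$. By compactness of $X_{f^{(d)}}$ some subsequence $x^{(M_j)}$ converges to a point $x\in X_{f^{(d)}}$; since $v^{(M)}\to v$ in the product topology and $\xi_{I_d}$ is continuous on the bounded, shift-invariant set $\Lambda_N$ by Lemma \ref{l:surjective}, we have $\xi_{I_d}(v^{(M_j)})\to\xi_{I_d}(v)$, while continuity of $\psi$ gives $\psi(x^{(M_j)})\to\psi(x)$. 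Hence $\psi(x)=\xi_{I_d}(v)$, as required.

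Combining the two inclusions yields $\xi_{I_d}(\ell^\infty(\mathbb{Z}^d,\mathbb{Z}))=\psi(X_{f^{(d)}})$, which in particular is compact (a continuous image of $X_{f^{(d)}}$), so $\bar\psi$ is a topological isomorphism and $\theta_d=\bar\psi^{-1}$ is the continuous, shift-equivariant group isomorphism claimed in \eqref{eq:theta}. I note that this also sets up Theorem \ref{t:kernels}: the composite $\eta=\theta_d\circ\xi_{I_d}$ is then the desired surjection with $\ker(\eta)=\ker(\xi_{I_d})=K_d$.
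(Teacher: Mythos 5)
Your proposal is correct and takes essentially the same route as the paper: your $\psi$ is exactly the paper's homomorphism $\theta'$, the kernel identification $\ker(\psi)=X_{I_d}$ and the passage to the quotient $\tilde{X}_{f^{(d)}}=X_{f^{(d)}}/X_{I_d}$ via compactness are identical, and both inclusions rest on the same devices the paper uses (the preimage $v=f^{(d)}(\sigma)(w)$ from \eqref{eq:xigg} in one direction, and approximation of bounded integer configurations by finitely supported ones in the other). The only difference is cosmetic: where the paper invokes $\xi_g\circ h(\sigma)=g(\alpha)\circ\xi_h$ together with Lemma \ref{l:surjective} and says ``by continuity'', you spell out the truncation-and-compactness argument explicitly, which is a legitimate expansion of the same step.
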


	\begin{proof}
We define a continuous group homomorphism $\theta '\colon X_{f^{(d)}}\longrightarrow X_{f^{(d)}}^m$ by setting $\theta '(x)=(g^{(1)}(\alpha )(x),\linebreak[0]\dots ,g^{(m)}(\alpha )(x))$ for every $x\in X_{f^{(d)}}$.

According to \eqref{eq:barxi} and \eqref{eq:equivariance},
	$$
\xi _g\circ h(\sigma )(v)=\rho (g^*\cdot w^{(d)}\cdot h^*\cdot v)=g(\alpha )\circ \xi _h(v)
	$$
for every every $g,h\in\tilde{I}_d$ and $v\in R_d$, and hence, by continuity, for every $g,h\in\tilde{I}_d$ and $v\in\ell ^\infty (\mathbb{Z}^d,\mathbb{Z})$. Since $\xi _h(\ell ^\infty (\mathbb{Z}^d,\mathbb{Z}))=X_{f^{(d)}}$ by Lemma \ref{l:surjective} we conclude that
	$$
\xi _{I_d}(\ell ^\infty (\mathbb{Z}^d,\mathbb{Z}))\supset \xi _{I_d}\circ h(\sigma )(\ell ^\infty (\mathbb{Z}^d,\mathbb{Z}))=\theta '(X_{f^{(d)}}).
	$$
On the other hand,
	$$
\xi _{I_d}(v)=(g^{(1)}(\alpha )\circ v^*(\alpha )(x^\Delta ),\dots ,g^{(m)}(\alpha )\circ v^*(\alpha )(x^\Delta ))\in \theta '(X_{f^{(d)}})
	$$
for every $v\in R_d$ and hence, again by continuity, for every $v\in\ell ^\infty (\mathbb{Z}^d,\mathbb{Z})$. We have proved that
	$$
\xi _{I_d}(\ell ^\infty (\mathbb{Z}^d,\mathbb{Z})))=\theta '(X_{f^{(d)}}).
	$$

The homomorphism $\theta '$ has kernel $X_{I_d}$ and induces a group isomorphism $\theta ''\colon \tilde{X}_{f^{(d)}}\longrightarrow \linebreak[0]\theta ' (X_{f^{(d)}})$. The proof is completed by setting $\theta _d=(\theta '')^{-1}$.
	\end{proof}

	\begin{proof}[Proof of Theorem \ref{t:kernels}]
We set $\eta =\theta _d\circ \xi _{I_d}$ (cf. \eqref{eq:xiI1}--\eqref{eq:theta}). By definition, $K_d=\ker(\xi _{I_d})\linebreak[0]=\ker(\eta )$.

The equivariance of $\eta $ is obvious. Furthermore, $h_{\textup{top}}(\tilde{\alpha })\le h_{\textup{top}}(\alpha _{f^{(d)}})$, since $\tilde{X}_{f^{(d)}}$ is an equivariant quotient of $X_{f^{(d)}}$. On the other hand, $\tilde{X}_{f^{(d)}}\cong \xi _{I_d}(\ell ^\infty (\mathbb{Z}^d,\mathbb{Z}))$, and the first coordinate projection $\pi _1\colon \xi _{I_d}(\ell ^\infty (\mathbb{Z}^d,\mathbb{Z})\longrightarrow X_{f^{(d)}}$ is surjective by Lemma \ref{l:surjective}. This implies that $h_{\textup{top}}(\beta _1)=h_{\textup{top}}(\tilde{\alpha })\ge h_{\textup{top}}(\alpha _{f^{(d)}})$, so that these entropies have to coincide.
	\end{proof}

In order to characterize the kernel $K_d$ of $\eta $ further we need a lemma and a definition.

	\begin{lemm}
	\label{l:cg}
For every $y\in\ell ^\infty (\mathbb{Z}^d)$ with $\rho (y)\in X_{\mathscr{I}_d^3}$ there exists a unique $c(y)\in[0,1)$ with $f^{(d)}\cdot y+\tilde{c}(y)\in\ell ^\infty (\mathbb{Z}^d,\mathbb{Z})$, where $\tilde{c}(y)$ denotes the element of $\widetilde{\mathbb{R}}$ with $\tilde{c}(y)_\mathbf{n}=c(y)$ for every $\mathbf{n}\in\mathbb{Z}^d$.
	\end{lemm}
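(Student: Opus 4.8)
The plan is to reduce the assertion to a purely ring-theoretic membership in $\mathscr{I}_d^3$, apply the hypothesis to deduce that $f^{(d)}\cdot y$ has integer nearest-neighbour differences, and then extract the constant $c(y)$ from the connectivity of $\mathbb{Z}^d$.

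First I would restate the hypothesis $\rho(y)\in X_{\mathscr{I}_d^3}$. By \eqref{eq:XJ} it means $g(\alpha)(\rho(y))=0$ for every $g\in\mathscr{I}_d^3$, and by the second identity in \eqref{eq:hsigma2} we have $g(\alpha)(\rho(y))=\rho(g^*\cdot y)$, so the hypothesis is equivalent to $g^*\cdot y\in W_d(\mathbb{Z})=\mathbb{Z}^{\mathbb{Z}^d}$ for every $g\in\mathscr{I}_d^3$. Since $(1-u_i)^*=1-u_i^{-1}=-u_i^{-1}(1-u_i)\in\mathscr{I}_d$, the ideal $\mathscr{I}_d$ is $*$-invariant, and hence so is $\mathscr{I}_d^3$. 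The hypothesis therefore reads simply as
$$
g\cdot y\in\mathbb{Z}^{\mathbb{Z}^d}\quad\textup{for every }g\in\mathscr{I}_d^3.
$$

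The heart of the argument is the observation that $f^{(d)}$ already lies in $\mathscr{I}_d^2$. Indeed,
$$
f^{(d)}=\sum_{i=1}^d\bigl(2-u_i-u_i^{-1}\bigr)=\sum_{i=1}^d(1-u_i)(1-u_i^{-1}),
$$
and both factors $1-u_i$ and $1-u_i^{-1}$ vanish at $\mathbf{1}$, i.e. lie in $\mathscr{I}_d$ (cf. \eqref{eq:maximal}). Consequently $(1-u_j)\cdot f^{(d)}\in\mathscr{I}_d\cdot\mathscr{I}_d^2=\mathscr{I}_d^3$ for each $j=1,\dots,d$. Writing $w=f^{(d)}\cdot y$ and using that $W_d$ is an $R_d$-module, so the action is associative, I would conclude from the reformulated hypothesis that
$$
(1-u_j)\cdot w=\bigl((1-u_j)f^{(d)}\bigr)\cdot y\in\mathbb{Z}^{\mathbb{Z}^d}\qquad(j=1,\dots,d).
$$

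Finally I would turn these difference relations into a single constant. By \eqref{eq:module}, $\bigl((1-u_j)\cdot w\bigr)_\mathbf{n}=w_\mathbf{n}-w_{\mathbf{n}-\mathbf{e}^{(j)}}$, so the previous step says that $w_\mathbf{n}\ (\textup{mod}\;1)$ is unchanged under each nearest-neighbour translation; since $\mathbb{Z}^d$ is connected through such steps, $w_\mathbf{n}\ (\textup{mod}\;1)$ is independent of $\mathbf{n}$, equal to some $c_0\in[0,1)$. Taking $c(y)$ to be the unique element of $[0,1)$ with $c_0+c(y)\in\mathbb{Z}$ then gives $f^{(d)}\cdot y+\tilde c(y)\in\mathbb{Z}^{\mathbb{Z}^d}$; boundedness is automatic because $f^{(d)}\in R_d$ has finite support and $y\in\ell^\infty(\mathbb{Z}^d)$, so $w\in\ell^\infty(\mathbb{Z}^d)$ and hence $f^{(d)}\cdot y+\tilde c(y)\in\ell^\infty(\mathbb{Z}^d,\mathbb{Z})$. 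Uniqueness is immediate: if $c,c'\in[0,1)$ both work then $\widetilde{c-c'}\in\mathbb{Z}^{\mathbb{Z}^d}$, whence $c-c'\in\mathbb{Z}\cap(-1,1)=\{0\}$.

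I expect the only genuine content to be the algebraic observation $f^{(d)}\in\mathscr{I}_d^2$, equivalently $(1-u_j)f^{(d)}\in\mathscr{I}_d^3$, since this is exactly what lets the hypothesis bite on $f^{(d)}\cdot y$; everything afterwards is the connectivity reduction and a trivial uniqueness check. The two points to handle with care are the $*$-invariance of $\mathscr{I}_d^3$, used to pass from $g^*\cdot y$ to $g\cdot y$, and the associativity of the $R_d$-action, used to refactor $(1-u_j)f^{(d)}$ before applying the hypothesis.
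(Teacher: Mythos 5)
Your proof is correct and takes essentially the same route as the paper: both arguments hinge on the single fact that $(u_j-1)\cdot f^{(d)}\in\mathscr{I}_d^3$ for $j=1,\dots,d$ (equivalently, $f^{(d)}=\sum_{i=1}^d(1-u_i)(1-u_i^{-1})\in\mathscr{I}_d^2$), which forces $f^{(d)}\cdot y$ to be constant modulo $1$. Your write-up merely makes explicit two details the paper compresses into the statement that $f^{(d)}(\alpha)(\rho(y))$ is a fixed point of the shift --- the $*$-invariance of $\mathscr{I}_d^3$ and the nearest-neighbour connectivity step --- and these are handled correctly.
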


	\begin{proof}
Let $x\in X_{\mathscr{I}_d^3}$ and $y\in\ell ^\infty (\mathbb{Z}^d)$ with $\rho (y)=x$. According to the definition of $X_{\mathscr{I}_d^3}$ this means that
	$$
g(\alpha )(x)=\rho (g^*\cdot y)=0
	$$
for every $g\in \mathscr{I}_d^3$.

Since $g_j=(u_j-1)\cdot f^{(d)}\in\mathscr{I}_d^3$ for $j=1,\dots ,d$, $g_j(\alpha )(x)=\rho (g_j^*\cdot y)=0$ for $j=1,\dots ,d$, which implies that $f^{(d)}(\alpha )(x)$ is a fixed point of the $\mathbb{Z}^d$-action $\alpha $ on $X_{\mathscr{I}_d^3}$. Hence there exists a unique constant $c(y)\in[0,1)$ with $f^{(d)}\cdot y+\tilde{c}(y)\in\ell ^\infty (\mathbb{Z}^d,\mathbb{Z})$.
	\end{proof}

	\begin{defi}
	\label{d:periodic}
We call points $v\in \ell ^\infty (\mathbb{Z}^d)$ and $x\in\mathbb{T}^{\mathbb{Z}^d}$ \textit{periodic} if their orbits (under $\sigma $ and $\alpha $, respectively) are finite.

If $\Gamma \subset \mathbb{Z}^d$ is a subgroup of finite index we denote by $\ell (\mathbb{Z}^d)^{(\Gamma )}$, $\ell ^\infty (\mathbb{Z}^d,\mathbb{Z})^{(\Gamma )}$ and $K_d^{(\Gamma )}$ the sets of all $\Gamma $-invariant elements in the respective spaces.
	\end{defi}

	\begin{theo}
	\label{t:periodic}
\textup{(1)} For every $y\in\ell ^\infty (\mathbb{Z}^d)$ with $\rho(y)\in X_{\mathscr{I}^3_d}$,
	\begin{equation}
	\label{eq:period}
v=f^{(d)}\cdot y+\tilde{c}(y)+\tilde{m}\in K_d\subset \ell ^\infty (\mathbb{Z}^d,\mathbb{Z})
	\end{equation}
for every $\tilde{m}\in \widetilde{\mathbb{Z}}$ \textup{(}cf. \eqref{eq:Idform}, \eqref{eq:widetilde}, \eqref{eq:XJ} and Lemma \ref{l:cg}\textup{)}.

\smallskip \textup{(2)} Let $\Gamma \subset \mathbb{Z}^d$ be a subgroup of finite index. An element $v\in\ell ^\infty (\mathbb{Z}^d,\mathbb{Z})^{(\Gamma )}$ lies in $K_d$ if and only if it is of the form \eqref{eq:period} with $y\in\ell ^\infty (\mathbb{Z}^d)^{(\Gamma )}$, $\rho(y)\in X_{\mathscr{I}^3_d}$ and $\tilde{m}\in \widetilde{\mathbb{Z}}$.
	\end{theo}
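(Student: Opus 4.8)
The plan is to handle the two parts separately, using throughout the decomposition $I_d=(f^{(d)})+\mathscr{I}_d^3$ from Theorem~\ref{t:homoclinic}, the fundamental identity $f^{(d)}\cdot w^{(d)}=1$ (Theorem~\ref{t:estimates}(i)), and the additivity $\xi_{g_1+g_2}=\xi_{g_1}+\xi_{g_2}$ of the maps in \eqref{eq:barxi}. For part (1) I first note that $v=f^{(d)}\cdot y+\tilde c(y)+\tilde m$ lies in $\ell^\infty(\mathbb{Z}^d,\mathbb{Z})$ by Lemma~\ref{l:cg}. To prove $v\in K_d$ I must show $\xi_g(v)=0$ for every $g\in I_d$; writing $g=a\cdot f^{(d)}+b$ with $a\in R_d$, $b\in\mathscr{I}_d^3$, Proposition~\ref{p:xig} kills the $a\cdot f^{(d)}$ part, so only $g=b\in\mathscr{I}_d^3$ remains. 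Expanding $\bar\xi_b(v)=(b^*\cdot w^{(d)})\cdot v$, the term $(b^*\cdot w^{(d)})\cdot(f^{(d)}\cdot y)$ collapses to $b^*\cdot y$ because $f^{(d)}\cdot w^{(d)}=1$, while the constant terms $(b^*\cdot w^{(d)})\cdot(\tilde c(y)+\tilde m)$ vanish since $(b^*\cdot w^{(d)})(\mathbf{1})=\sum_{\mathbf{n}}(b^*\cdot w^{(d)})_{\mathbf{n}}=0$. This last vanishing holds because the Fourier transform of $b^*\cdot w^{(d)}$ is $\hat{b^*}/F^{(d)}$, whose value at $\mathbf{0}$ is $0$: membership $b^*\in\mathscr{I}_d^3$ forces the numerator to vanish to order three while $F^{(d)}$ vanishes only to order two. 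Hence $\xi_b(v)=\rho(b^*\cdot y)=b(\alpha)(\rho(y))=0$ by \eqref{eq:hsigma2}, the last equality because $\rho(y)\in X_{\mathscr{I}_d^3}$.

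For part (2) the `if' direction is immediate from part (1): if $y$ is $\Gamma$-invariant then so are $f^{(d)}\cdot y$ and the constants $\tilde c(y),\tilde m$, whence $v\in K_d\cap\ell^\infty(\mathbb{Z}^d,\mathbb{Z})^{(\Gamma)}$. For the `only if' direction I reconstruct $y$ from a $\Gamma$-periodic $v\in K_d$. Passing to the finite quotient $G=\mathbb{Z}^d/\Gamma$, multiplication by $f^{(d)}$ acts on $\mathbb{R}^G$ as the graph Laplacian of the Cayley graph with generators $\pm\mathbf{e}^{(i)}$; since that graph is connected, this operator has kernel exactly the constants and image exactly the mean-zero functions. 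Writing $\bar v$ for the mean of $v$ over $G$, the function $v-\tilde{\bar v}$ is mean-zero, so I can solve $f^{(d)}\cdot y=v-\tilde{\bar v}$ for a real $\Gamma$-periodic $y$, unique up to an additive constant (which, as $X_{\mathscr{I}_d^3}$ contains the constants, will not affect the conclusion).

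The heart of the argument is to verify $\rho(y)\in X_{\mathscr{I}_d^3}$, i.e. $\rho(b^*\cdot y)=0$ for every $b\in\mathscr{I}_d^3$. Applying $f^{(d)}$ to both $b^*\cdot y$ and $(b^*\cdot w^{(d)})\cdot v$ gives $f^{(d)}\cdot(b^*\cdot y)=b^*\cdot v=f^{(d)}\cdot\bigl((b^*\cdot w^{(d)})\cdot v\bigr)$, using $f^{(d)}\cdot w^{(d)}=1$ and $b^*\cdot\tilde{\bar v}=0$ (as $b(\mathbf{1})=0$). Thus the difference $\delta=(b^*\cdot w^{(d)})\cdot v-b^*\cdot y$ is a $\Gamma$-periodic harmonic function on $G$, hence a constant $\tilde e$. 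Computing means over $G$ forces $e=0$: the mean of $b^*\cdot y$ is $\bar y\cdot b(\mathbf{1})=0$, while the mean of $(b^*\cdot w^{(d)})\cdot v$ equals $\bar v\cdot(b^*\cdot w^{(d)})(\mathbf{1})=0$ by the same vanishing established in part (1). Therefore $b^*\cdot y=(b^*\cdot w^{(d)})\cdot v$ exactly, so $\rho(b^*\cdot y)=\xi_b(v)=0$ because $v\in K_d$, giving $\rho(y)\in X_{\mathscr{I}_d^3}$. Finally, from $v=f^{(d)}\cdot y+\tilde{\bar v}$ with $v$ integer-valued, Lemma~\ref{l:cg} identifies $c(y)\in[0,1)$ as the fractional part of $\bar v$ and $m=\bar v-c(y)\in\mathbb{Z}$, yielding the required form $v=f^{(d)}\cdot y+\tilde c(y)+\tilde m$.

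I expect the main obstacle to be this `only if' step. The subtlety is that $(b^*\cdot w^{(d)})\cdot v$ and $b^*\cdot y$ are two a priori unrelated preimages of $b^*\cdot v$ under the (noninvertible) Laplacian, and one must show they coincide rather than merely differ by a constant. It is precisely the cubic vanishing encoded in $\mathscr{I}_d^3$ played against the quadratic vanishing of the Laplacian $F^{(d)}$ — manifested as $(b^*\cdot w^{(d)})(\mathbf{1})=0$ — that makes the mean of $\delta$ vanish, and thereby transfers the kernel condition $\xi_b(v)=0$ into the membership $\rho(y)\in X_{\mathscr{I}_d^3}$. The finiteness of $G$ is what licenses the harmonic-implies-constant reduction, so periodicity is essential to this converse direction.
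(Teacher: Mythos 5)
Your proposal is correct and follows essentially the same route as the paper: the decomposition $I_d=(f^{(d)})+\mathscr{I}_d^3$ with Proposition \ref{p:xig} killing the $(f^{(d)})$-part, the vanishing $\sum_{\mathbf{n}}(b^*\cdot w^{(d)})_\mathbf{n}=0$ for $b\in\mathscr{I}_d^3$ (the paper's Lemma \ref{l:zero}, which you rederive by the order-$3$ versus order-$2$ Taylor comparison), and for part (2) solving $f^{(d)}\cdot y=v-\tilde{\bar v}$ on the finite quotient where the Laplacian has kernel the constants and image the mean-zero functions (the paper's $\ell_0^{(\Gamma)}$ argument). Your only detour is the harmonic-implies-constant step for $\delta=(b^*\cdot w^{(d)})\cdot v-b^*\cdot y$: the paper gets $\delta=0$ in one line from $(b^*\cdot w^{(d)})\cdot(f^{(d)}\cdot y)=b^*\cdot y$ and $(b^*\cdot w^{(d)})\cdot\tilde{\bar v}=0$, so your mean computation is sound but redundant, while your explicit handling of the decomposition $g=af^{(d)}+b$ and of $\tilde a=\tilde c(y)+\tilde m$ via Lemma \ref{l:cg} spells out steps the paper leaves implicit.
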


We start the proof of Theorem \ref{t:periodic} with two lemmas.

	\begin{lemm}
	\label{l:constant}
For every $g\in I_d$ and every constant element $\tilde{m}\in\ell ^\infty (\mathbb{Z}^d,\mathbb{Z})$, $\xi _g(\tilde{m})=0$. In other words, $\widetilde{\mathbb{Z}}\subset K_d$.
	\end{lemm}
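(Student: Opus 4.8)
The plan is to exploit the additivity of the map $g\mapsto \xi_g$ together with the structure $I_d=(f^{(d)})+\mathscr{I}_d^3$ from Theorem~\ref{t:homoclinic}. Since $\bar{\xi}_{g+g'}=\bar{\xi}_g+\bar{\xi}_{g'}$ directly from \eqref{eq:barxi}, the assignment $g\mapsto\xi_g$ is a homomorphism of additive groups, so it suffices to prove $\xi_g(\tilde m)=0$ separately for $g\in(f^{(d)})$ and for $g\in\mathscr{I}_d^3$; every $g\in I_d$ is a sum of two such elements. For $g\in(f^{(d)})$ there is nothing to do, since Proposition~\ref{p:xig} already gives $\xi_g(\ell^\infty(\mathbb{Z}^d,\mathbb{Z}))=\{0\}$, and in particular $\xi_g(\tilde m)=0$. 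All the work is therefore concentrated in the case $g\in\mathscr{I}_d^3$.

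For such $g$, first I would reduce the statement to a single scalar. Since $\mathscr{I}_d^3\subset I_d=I_d^*$ (Definition~\ref{d:ideal}), the sequence $v\defeq g^*\cdot w^{(d)}$ lies in $\ell^1(\mathbb{Z}^d)$. Because $\tilde m$ is the constant sequence with value $m\in\mathbb{Z}$, the convolution defining $\bar{\xi}_g$ collapses to $\bar{\xi}_g(\tilde m)_\mathbf{n}=\sum_\mathbf{k}v_\mathbf{k}\,\tilde m_{\mathbf{n}-\mathbf{k}}=m\sum_\mathbf{k}v_\mathbf{k}$, independent of $\mathbf{n}$. Writing $S\defeq\sum_\mathbf{k}v_\mathbf{k}$, we see that $\bar{\xi}_g(\tilde m)=\widetilde{mS}$ is a constant element, and hence $\xi_g(\tilde m)=\rho(\widetilde{mS})=0$ as soon as $S\in\mathbb{Z}$. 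It therefore remains only to show that in fact $S=0$.

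To evaluate $S$ I would pass to Fourier transforms. As $v\in\ell^1(\mathbb{Z}^d)$, the function $\hat v(\mathbf{t})=\sum_\mathbf{k}v_\mathbf{k}e^{2\pi i\langle\mathbf{k},\mathbf{t}\rangle}$ is continuous on $\mathbb{T}^d$ with $\hat v(\mathbf{0})=S$. On the other hand, multiplying $f^{(d)}\cdot w^{(d)}=\delta^{(\mathbf{0})}$ (Theorem~\ref{t:estimates}(i) and Footnote~\ref{delta}) by $g^*$ gives $f^{(d)}\cdot v=g^*$, so that $F^{(d)}(\mathbf{t})\,\hat v(\mathbf{t})=\widehat{g^*}(\mathbf{t})$ and hence $\hat v(\mathbf{t})=\widehat{g^*}(\mathbf{t})/F^{(d)}(\mathbf{t})$ for every $\mathbf{t}\neq\mathbf{0}$, with $\mathbf{t}=\mathbf{0}$ the only zero of $F^{(d)}$ on $\mathbb{T}^d$. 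Now $\widehat{g^*}(\mathbf{t})=g^*(e^{2\pi it_1},\dots,e^{2\pi it_d})$; since $g^*\in\mathscr{I}_d^3$ is a finite combination of products $(1-u_i)(1-u_j)(1-u_k)$ and $1-e^{2\pi it_a}=\mathcal{O}(\|\mathbf{t}\|)$, the numerator vanishes to order at least $3$ at $\mathbf{t}=\mathbf{0}$, whereas $F^{(d)}(\mathbf{t})=4\pi^2\|\mathbf{t}\|^2+\mathcal{O}(\|\mathbf{t}\|^4)$ vanishes to order exactly $2$. Consequently $\hat v(\mathbf{t})=\mathcal{O}(\|\mathbf{t}\|)\to0$ as $\mathbf{t}\to\mathbf{0}$, and continuity of $\hat v$ forces $S=\hat v(\mathbf{0})=0$, which completes the argument.

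The main obstacle is precisely this last step. A priori $S=\sum_\mathbf{k}(g^*\cdot w^{(d)})_\mathbf{k}$ is merely some real number, and the whole point is to show that it vanishes (it would already suffice that it be an integer). This requires transferring the cubic vanishing of $g$ at $u=\mathbf{1}$ — the algebraic feature encoded in $\mathscr{I}_d^3$ — through the non-summable Green's function $w^{(d)}$ to the value of a genuinely $\ell^1$ sum, which is exactly what the comparison of the vanishing orders of $\widehat{g^*}$ and $F^{(d)}$ achieves. Working from the identity $f^{(d)}\cdot v=g^*$ rather than from the integral formulae for $w^{(d)}$ has the pleasant side effect of treating $d=2$ and $d\ge3$ uniformly, sidestepping the recurrence correction (the extra $-1$) built into $w^{(2)}$.
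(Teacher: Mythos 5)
Your proof is correct, and it takes a genuinely different route from the paper's. The paper treats an arbitrary $g\in I_d$ in one stroke: writing $v=g^*\cdot w^{(d)}\in\ell^1(\mathbb{Z}^d)$, it observes (as you do) that $\bar\xi_g(\tilde m)$ is the constant sequence with value $m\sum_{\mathbf{n}}v_\mathbf{n}=mH_g(0)$, where $H_g=\widehat{g^*}/F^{(d)}$, but it then evaluates $H_g(0)$ via the Taylor expansion at $\mathbf{t}=\mathbf{0}$ and the moment conditions \eqref{eq:condA}--\eqref{eq:condD}, obtaining $H_g(0)=-\frac12\sum_{\mathbf{k}}g_\mathbf{k}k_j^2$, and finishes with the integrality trick \eqref{eq:Hg(0)} (using \eqref{eq:condB} to replace $k_j^2$ by $k_j(k_j-1)$): the sum is an \emph{integer}, not necessarily zero --- e.g.\ for $g=f^{(d)}$ one has $v=\delta^{(\mathbf{0})}$ and $H_g(0)=1$ --- which is all that is needed for $\xi_g(\tilde m)=0$. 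You instead split $g$ along $I_d=(f^{(d)})+\mathscr{I}_d^3$ (Theorem \ref{t:homoclinic}), dispose of the $(f^{(d)})$-component with Proposition \ref{p:xig} (legitimate: that proposition is established earlier and independently), and prove the \emph{exact} vanishing $H_g(0)=0$ on $\mathscr{I}_d^3$ by comparing orders of vanishing at $\mathbf{t}=\mathbf{0}$: the numerator $\widehat{g^*}$ vanishes to order $\ge 3$ since $\mathscr{I}_d^3$ is generated by the products $(1-u_i)(1-u_j)(1-u_k)$ and is $*$-closed, while $F^{(d)}$ vanishes to order exactly $2$ and is bounded below by a multiple of $\|\mathbf{t}\|^2$ near $\mathbf{0}$. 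That sharper statement is precisely the paper's subsequent Lemma \ref{l:zero}, which it proves separately by reduction to generators plus \eqref{eq:Hg(0)}; so your argument yields Lemmas \ref{l:constant} and \ref{l:zero} simultaneously and dispenses with the integrality computation altogether. A further genuine advantage of your route is the derivation of $\hat v=\widehat{g^*}/F^{(d)}$ from the $\ell^1$-identity $f^{(d)}\cdot v=g^*$ (itself legitimate, since the $R_d$-module action is associative for polynomial multipliers), which is uniform in $d$; the integral representation the paper starts from requires the condition-\eqref{eq:condA} cancellation to absorb the extra $-1$ in the definition of $w^{(2)}$ when $d=2$. What the paper's computation buys in exchange is the explicit value $H_g(0)=-c/2$ with $c$ the common second moment, and the reusable identity \eqref{eq:Hg(0)}, whereas your proof leans on the full strength of the decomposition in Theorem \ref{t:homoclinic} rather than only on the characterization \eqref{eq:condA}--\eqref{eq:condD}.
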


	\begin{proof}
We know that $g\in I_d$ if and only if it satisfies \eqref{eq:condA}--\eqref{eq:condD}. We fix $g=\sum_{\mathbf{k}\in\mathbb{Z}^d} g_\mathbf{k}u^\mathbf{k}\linebreak[0]\in I_d$, put $v=g^*\cdot w^{(d)}\in\ell ^1(\mathbb{Z}^d)$, and set $c=\sum_{\mathbf{k}\in\mathbb{Z}^d} g_\mathbf{k} k_j^2\in\mathbb Z$ (note that this value is independent of $j\in\{1,\dots ,d\}$ by \eqref{eq:condD}).

For every $\mathbf n\in{\mathbb Z}^d$,
	$$
v_\mathbf{n}=(g^*\cdot w^{(d)})_\mathbf{n} =\sum_{\mathbf{k}\in\mathbb{Z}^d} g_\mathbf{k} w_{\mathbf{n}+\mathbf{k}}^{(d)} =\int_{\mathbb{T}^d} e^{-2\pi i \langle \mathbf{n},\mathbf{t}\rangle }\frac {\sum_{\mathbf{k}} g_\mathbf{k} e^{-2\pi i\langle \mathbf{k},\mathbf{t}\rangle }} {2d-2\sum_{j=1}^d\cos(2\pi t_j)} \,d\mathbf{t}.
	$$
Hence $v=(v_\mathbf{n})$ is the sequence of Fourier coefficients of the function
	$$
H_g(\mathbf{t}) = \frac {\sum_{\mathbf{k}} g_\mathbf{k} e^{-2\pi i\langle \mathbf{k},\mathbf{t}\rangle }} {2d-2\sum_{j=1}^d\cos(2\pi t_j)}.
	$$
Since these Fourier coefficients are absolutely summable by assumption, we get that
	\begin{equation}
	\label{eq:coeffsum}
\sum_{\mathbf n\in\mathbb{Z}^d} v_{\mathbf n} = H_g(0).
	\end{equation}
On the other hand, given the Taylor series expansion of $H_g$ at $\mathbf t=\mathbf 0$, we have
	$$
\smash[b]{H_g(t)=\frac {-2\pi^2 \sum_{j=1}^d t_j^2 \bigl( \sum_{\mathbf k} g_\mathbf{k} k_j^2\bigr)+\text{h.o.t}}
{4\pi^2 \sum_{j=1}^d t_j^2 +\text{h.o.t}}= \frac {-2\pi^2 c \sum_{j=1}^d t_j^2+\text{h.o.t}}{4\pi^2 \sum_{j=1}^d t_j^2 +\text{h.o.t}}}
	$$
and hence
	$$
H_g(0)= -c/2.
	$$
We are going to show that $H_g(0)\in\mathbb Z$. Indeed, since $\sum_{\mathbf k} g_{\mathbf k} k_j=0$ for all $j$ by \eqref{eq:condB}, we have that
	\begin{equation}
	\label{eq:Hg(0)}
H_g(0) =-\frac 12\sum_{\mathbf k} g_\mathbf{k} k_j^2 = -\frac 12\sum_{\mathbf k} g_\mathbf{k} k_j (k_j-1)=
-\sum_{\mathbf k} g_\mathbf{k} \frac{ k_j(k_j-1)}2\in\mathbb Z.
	\end{equation}
Finally, for any $g\in I_d$ and $\tilde m\in \widetilde {\mathbb Z}$, we have
	\begin{equation}
	\label{eq:multiple}
\smash{\bar\xi_g(\tilde m) =m\cdot \sum_{\mathbf n\in\mathbb{Z}^d} v_{\mathbf n} =mH_g(0)\in \mathbb Z}
	\end{equation}
by \eqref{eq:coeffsum}, and hence $\xi_g(\tilde m)=0\in X_{f^{(d)}}$.
	\end{proof}

	\begin{lemm}
	\label{l:zero}
For every $g\in\mathscr{I}_d^3$, $H_g(0)=0$ \textup{(}cf. \eqref{eq:coeffsum}\textup{)}.
	\end{lemm}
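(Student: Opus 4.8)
The plan is to read off $H_g(0)$ from the formula already derived in the proof of Lemma \ref{l:constant}, and then to show that the relevant second moment of $g$ vanishes precisely because $g\in\mathscr{I}_d^3$ vanishes to third order at $\mathbf{1}$.

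Since $\mathscr{I}_d^3\subset I_d$ by \eqref{eq:Idform}, every $g\in\mathscr{I}_d^3$ lies in $I_d$, so $v=g^*\cdot w^{(d)}\in\ell^1(\mathbb{Z}^d)$, the function $H_g$ is continuous on $\mathbb{T}^d$, and the computation in Lemma \ref{l:constant} applies verbatim. In particular, for each $j$,
$$
H_g(0)=-\frac12\sum_{\mathbf{k}}g_\mathbf{k}k_j^2
$$
(cf. \eqref{eq:Hg(0)}; the value is independent of $j$ by \eqref{eq:condD}). Thus it suffices to prove that $\sum_{\mathbf{k}}g_\mathbf{k}k_j^2=0$ for $g\in\mathscr{I}_d^3$.

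For this I would pass to the exponential sum $G(\mathbf{t})=\sum_{\mathbf{k}}g_\mathbf{k}e^{-2\pi i\langle\mathbf{k},\mathbf{t}\rangle}$, whose order-two Taylor coefficients at $\mathbf{t}=\mathbf{0}$ are, up to a nonzero constant, the numbers $\sum_{\mathbf{k}}g_\mathbf{k}k_ik_j$ (cf. the expansion of $G$ in the proof of Lemma \ref{l:necessary}). Writing $g\in\mathscr{I}_d^3$ as a finite $R_d$-combination of products $(1-u_a)(1-u_b)(1-u_c)$ and substituting $u_i=e^{-2\pi i t_i}$, each factor $1-u_i$ vanishes to first order at $\mathbf{t}=\mathbf{0}$; hence every such product, and therefore $G$ itself, vanishes to order at least $3$ there. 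Consequently all first and second partial derivatives of $G$ vanish at $\mathbf{0}$, which forces $\sum_{\mathbf{k}}g_\mathbf{k}k_ik_j=0$ for all $i,j$, and in particular $\sum_{\mathbf{k}}g_\mathbf{k}k_j^2=0$. Feeding this back into the displayed formula gives $H_g(0)=0$, as required.

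The one step needing a little care---the main (though mild) obstacle---is the passage from the purely algebraic membership $g\in\mathscr{I}_d^3$ to the analytic statement that $G$ vanishes to third order at the origin. I would make this rigorous by changing variables $u_i=1+s_i$ and observing that the Taylor expansion of $g$ at $\mathbf{s}=\mathbf{0}$ contains no terms of total degree below $3$, so that $G$, being a smooth reparametrisation of $g$ near $\mathbf{1}$, has vanishing Taylor part up to and including order $2$; alternatively one differentiates a single product $(1-u_a)(1-u_b)(1-u_c)\,h$ at most twice and notes that at least one undifferentiated factor $1-u_i$ survives and vanishes at $\mathbf{1}$.
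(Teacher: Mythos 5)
Your proof is correct, and it takes a mildly but genuinely different route from the paper's. Both arguments rest on the identity \eqref{eq:Hg(0)} from Lemma \ref{l:constant}, which applies because $\mathscr{I}_d^3\subset I_d$ by \eqref{eq:Idform}, so everything reduces to showing $\sum_{\mathbf{k}}g_\mathbf{k}k_j^2=0$. The paper handles a general element of $\mathscr{I}_d^3$ by factoring out a triple-product generator: it verifies $H_g(0)=\sum_{\mathbf{n}}v_\mathbf{n}=0$ for $g=(u_i-1)(u_j-1)(u_k-1)$ via \eqref{eq:Hg(0)}, and then disposes of an extra polynomial factor $h$ through the translation-invariance of the sum, $\sum_{\mathbf{n}}(h^*\cdot v)_\mathbf{n}=\sum_{\mathbf{k}}h_\mathbf{k}\sum_{\mathbf{n}}v_{\mathbf{n}-\mathbf{k}}=0$. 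You instead prove the moment condition uniformly for every $g\in\mathscr{I}_d^3$ at once, by observing that the numerator $G$ vanishes to order at least $3$ at $\mathbf{t}=\mathbf{0}$ (by Leibniz, at most two derivatives cannot clear three vanishing factors $1-u_a$), so that all second-order Taylor coefficients, hence all $\sum_{\mathbf{k}}g_\mathbf{k}k_ik_j$, vanish. Your route buys two small things: it treats the whole ideal without a generator decomposition, thereby sidestepping the paper's slightly inaccurate opening claim that every element of $\mathscr{I}_d^3$ \emph{is} of the form $h\cdot g$ (in general it is a finite sum of such products, which the paper's argument covers only implicitly via linearity of $g\mapsto H_g(0)$); and it actually proves a bit more than needed, since $G(\mathbf{t})=\mathcal{O}(\|\mathbf{t}\|^3)$ against $F^{(d)}(\mathbf{t})\asymp\|\mathbf{t}\|^2$ gives $H_g(\mathbf{t})\to0$ directly, so you could have concluded $H_g(0)=0$ from \eqref{eq:coeffsum} and continuity without invoking \eqref{eq:Hg(0)} at all. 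The paper's route is more computational but entirely elementary; yours is cleaner on the structural side, and your final paragraph correctly isolates and repairs the only delicate point, the passage from algebraic membership in $\mathscr{I}_d^3$ to third-order analytic vanishing at $\mathbf{1}$.
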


	\begin{proof}
Every element of $\mathscr{I}_d^3$ is of the form $h\cdot g$ with $h\in R_d$ and $g=(u_i-1)\cdot (u_j-1)\cdot (u_k-1)$ for some $i,j,k\in\{1,\dots ,d\}$. We set $v=g^*\cdot w^{(d)}$ and obtain from \eqref{eq:Hg(0)} that $H_g(0)=\sum_{\mathbf{n}\in\mathbb{Z}^d}v_\mathbf{n}=0$. If $w=(hg)^*\cdot w^{(d)}=h^*\cdot v$, then $H_{hg}(0)=\sum_{\mathbf{n}\in\mathbb{Z}^d}w_\mathbf{n}=\sum_{\mathbf{k}\in\mathbb{Z}^d}h_\mathbf{k}\sum_{\mathbf{n}\in\mathbb{Z}^d}v_{\mathbf{n-k}}=0$.
	\end{proof}

	\begin{proof}[Proof of Theorem \ref{t:periodic}]
Let $x\in X_{\mathscr{I}_d^3}$, $y\in\ell ^\infty (\mathbb{Z}^d)$ with $\rho (y)=x$, $\tilde{m}\in\widetilde{\mathbb{Z}}$, and $v=f^{(d)}\cdot y+\tilde{c}(y)+\tilde{m}\in\ell ^\infty (\mathbb{Z}^d,\mathbb{Z})$ (cf. Lemma \ref{l:cg}). Then
	$$
g(\alpha )(x)=\rho (g^*\cdot y)=0
	$$
for every $g\in \mathscr{I}_d^3$. We set $w=g^*\cdot w^{(d)}$ and obtain from \eqref{eq:equivariance}, \eqref{eq:lemmaWf} and Lemma \ref{l:constant}, that
	\begin{align*}
\xi _g(v)&=\xi _g(f^{(d)}\cdot y+\tilde{c}(y)+\tilde{m})=\xi _g(f^{(d)}\cdot y+\tilde{c}(y))=
	\\
&=\rho (g^*\cdot w^{(d)}\cdot f^{(d)}\cdot y + g^*\cdot w^{(d)}\cdot \tilde{c}(y))=\rho (g^*\cdot y + w\cdot \tilde{c}(y))
	\\
&=\rho (g^*\cdot y)=0,
	\end{align*}
since $\sum_{\mathbf{n}\in\mathbb{Z}^d}w_\mathbf{n}=0$ by Lemma \ref{l:zero}. This proves that every $v\in\ell ^\infty (\mathbb{Z}^d,\mathbb{Z})$ of the form \eqref{eq:period} lies in $K_d$.

For (2) we assume that $\Gamma \subset \mathbb{Z}^d$ is a subgroup of finite index. In view of (1) we only have to verify that every $v\in \ell ^\infty (\mathbb{Z}^d,\mathbb{Z})^{(\Gamma )}\cap K_d$ has the form \eqref{eq:period}.

Assume therefore that $v\in \ell ^\infty (\mathbb{Z}^d,\mathbb{Z})^{(\Gamma )}\cap K_d$. We choose a set $C_\Gamma \subset \mathbb{Z}^d$ which intersects each coset of $\Gamma $ in $\mathbb{Z}^d$ in a single point and set $\ell _0^{(\Gamma )}=\{w\in\ell ^\infty (\mathbb{Z}^d)^{(\Gamma )}:\sum_{\mathbf{n}\in C_\Gamma }w_\mathbf{n}=0\}$. As $\ell _0^{(\Gamma )}$ is finite-dimensional and $\ker(f^{(d)}(\sigma ))=\widetilde{\mathbb{R}}$ there exists, for every $y\in \ell _0^{(\Gamma )}$, a unique $y'\in\ell _0^{(\Gamma )}$ with $f^{(d)}\cdot y'=y$.

Put $\tilde{a}=\bigl(\sum_{\mathbf{n}\in C_\Gamma }v_\mathbf{n}\bigr)/|\mathbb{Z}^d/\Gamma |$, regarded as an element of $\widetilde{\mathbb{R}}$. If $v'=v-\tilde{a}$, then $v'\in\ell _0^{(\Gamma )}$ and $f^{(d)}\cdot y=v'$ for some $y\in\ell _0^{(\Gamma )}$.

Since $v\in K_d$, $\xi _g(v)=0$ for every $g\in I_d$. For $g\in\mathscr{I}_d^3$, Lemma \ref{l:zero} shows that
	\begin{align*}
\bar{\xi }_g(v)&=g^*\cdot w^{(d)}\cdot v=g^*\cdot w^{(d)}\cdot v' + g^*\cdot w^{(d)}\cdot \tilde{a}= g^*\cdot y + g^*\cdot w^{(d)}\cdot \tilde{a}
	\\
&= g^*\cdot y\in\ell ^\infty (\mathbb{Z}^d,\mathbb{Z}).
	\end{align*}
Hence $\rho (g^*\cdot y)=g(\alpha )(\rho (y))=0$ for all $g\in\mathscr{I}_d^3$, so that $\rho (y)\in X_{\mathscr{I}_d^3}$.

We obtain that
	$$
v=f^{(d)}\cdot y+\tilde{a}
	$$
for some $y\in\ell ^\infty (\mathbb{Z}^d)$ with $\rho (y)\in X_{\mathscr{I}_d^3}$ and some $\tilde{a}\in\widetilde{\mathbb{R}}$, which completes the proof of (2).
	\end{proof}

Theorem \ref{t:periodic} implies that there exist nonconstant elements $v\in K_d\smallsetminus f^{(d)}(\sigma )(\ell ^\infty (\mathbb{Z}^d,\mathbb{Z}))$. However, if two elements $v,v'\in \ell ^\infty (\mathbb{Z}^d,\mathbb{Z})$ differ in only finitely many coordinates, then they get identified under $\xi _{I_d}$ (i.e., their difference lies in $K_d$) if and only if they differ by an element in $(f^{(d)})\subset \ell ^\infty (\mathbb{Z}^d,\mathbb{Z})$. This is a consequence of the following assertion.

	\begin{prop}
	\label{p:injective}
For every $g\in \tilde{I}_d$, $\ker(\xi _g)\cap R_d=(f^{(d)})=f^{(d)}\cdot R_d$.
	\end{prop}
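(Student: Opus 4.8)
The plan is to prove the two inclusions separately: the containment $(f^{(d)}) \subseteq \ker(\xi_g) \cap R_d$ is immediate, while the reverse inclusion rests on the fact that $(f^{(d)})$ is a \emph{prime} ideal of $R_d$. For the easy inclusion, suppose $v = f^{(d)} \cdot h$ with $h \in R_d$. Using \eqref{eq:barxi} together with the identity $f^{(d)} \cdot w^{(d)} = 1$ from Theorem \ref{t:estimates}(i), I compute
\[
\bar\xi_g(v) = g^* \cdot w^{(d)} \cdot f^{(d)} \cdot h = g^* \cdot (f^{(d)} \cdot w^{(d)}) \cdot h = g^* \cdot h \in R_d,
\]
so that $\xi_g(v) = \rho(g^* \cdot h) = 0$. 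Hence $(f^{(d)}) \subseteq \ker(\xi_g) \cap R_d$.

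For the reverse inclusion I would take $v \in R_d$ with $\xi_g(v) = 0$ and set $u = \bar\xi_g(v) = (g^* \cdot w^{(d)}) \cdot v$. Two observations pin down $u$. First, since $I_d = I_d^*$ (Definition \ref{d:ideal}) we have $g^* \in I_d$, so $g^* \cdot w^{(d)} \in \ell^1(\mathbb{Z}^d)$; convolving with the finitely supported $v$ keeps $u$ in $\ell^1(\mathbb{Z}^d)$. Second, $\xi_g(v) = \rho(u) = 0$ forces every coordinate of $u$ to be an integer. An $\ell^1$ sequence taking only integer values is finitely supported, so in fact $u \in R_d$. Multiplying by $f^{(d)}$ and using $f^{(d)} \cdot w^{(d)} = 1$ once more gives
\[
f^{(d)} \cdot u = (f^{(d)} \cdot w^{(d)}) \cdot g^* \cdot v = g^* \cdot v,
\]
so that $g^* \cdot v = f^{(d)} \cdot u \in (f^{(d)})$.

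The conclusion $v \in (f^{(d)})$ then follows from primeness, which I regard as the conceptual heart of the argument. The ring $R_d = \mathbb{Z}[u_1^{\pm 1}, \dots, u_d^{\pm 1}]$ is a localization of the polynomial ring $\mathbb{Z}[u_1, \dots, u_d]$ and is therefore a unique factorization domain; since $f^{(d)}$ is irreducible (cf.\ \eqref{eq:fd}), the principal ideal $(f^{(d)})$ is prime. Moreover $f^{(d)}$ is fixed by the involution, so $(f^{(d)})^* = (f^{(d)})$, and the hypothesis $g \in \tilde{I}_d = I_d \smallsetminus (f^{(d)})$ gives $g^* \notin (f^{(d)})$. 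From $g^* \cdot v \in (f^{(d)})$ and primeness we conclude $v \in (f^{(d)})$, which completes the reverse inclusion and the proof. The only point requiring genuine care is the finite-support argument showing $u \in R_d$: this is what allows the entire computation to be carried out inside $R_d$, where the prime-ideal dichotomy $g^*v \in (f^{(d)}),\ g^* \notin (f^{(d)}) \Rightarrow v \in (f^{(d)})$ becomes available.
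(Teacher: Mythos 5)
Your proof is correct and takes essentially the same route as the paper: identify $u=\bar{\xi }_g(v)$ as an element of $\ell ^1(\mathbb{Z}^d)\cap \ell ^\infty (\mathbb{Z}^d,\mathbb{Z})=R_d$, multiply by $f^{(d)}$ to get $g^*\cdot v\in (f^{(d)})$, and conclude by unique factorization. You are in fact slightly more careful than the paper's own write-up, which states the final identity as $f^{(d)}\cdot v=g\cdot h$ (it should read $g^*\cdot h$) and leaves both the easy inclusion and the passage from $g\notin (f^{(d)})$ to $g^*\notin (f^{(d)})$ implicit; your explicit use of $(f^{(d)})^*=(f^{(d)})$ cleanly closes that small gap.
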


	\begin{proof}
Suppose that $h\in R_d\cap \ker(\xi _g)$. Then
	\begin{equation}
	\label{eq:injective}
v\defeq \bar{\xi }_g(h)=g^*\cdot w^{(d)}\cdot h\in\ell ^\infty (\mathbb{Z}^d,\mathbb{Z}).
	\end{equation}
Since $g\in I_d$, $g^*\cdot w^{(d)}\in \ell ^1(\mathbb{Z}^d)$ and hence $v\in R_d=\ell ^1(\mathbb{Z}^d)\cap \ell ^\infty (\mathbb{Z}^d,\mathbb{Z})$. If we multiply both sides of \eqref{eq:injective} by $f^{(d)}$ we get that
	$$
f^{(d)}\cdot v=g\cdot h.
	$$
As $R_d$ has unique factorization this implies that $h\in f^{(d)}\cdot R_d$.
	\end{proof}

	\begin{rems}
	\label{r:periodic}
(1) One can show that the periodic points are dense in $K_d$, so that every $v\in K_d$ is a coordinate-wise limit of elements of the form \eqref{eq:period} in Theorem \ref{t:periodic}.

\smallskip (2) Theorem \ref{t:periodic} (1) gives a `lower bound' for the kernel $K_d$ of the maps $\xi _g,\,g\in I_d$. There is also a straightforward `upper bound' for that kernel: an element $v\in \ell ^\infty (\mathbb{Z}^d,\mathbb{Z})$ lies in $K_d$ if and only if
	$$
\bar{\xi }_g(v)=g^*\cdot w^{(d)}\cdot v\eqdef w_g\in\ell ^\infty (\mathbb{Z}^d,\mathbb{Z})\enspace \textup{for every}\enspace g\in I_d.
	$$
By multiplying this equation with $f^{(d)}$ we obtain that
	\begin{equation}
	\label{eq:kernel2}
K_d\subset \{v\in\ell ^\infty (\mathbb{Z}^d,\mathbb{Z}):g\cdot v\in f^{(d)}\cdot \ell ^\infty (\mathbb{Z}^d,\mathbb{Z})\enspace \textup{for every}\enspace g\in I_d\}\eqdef\bar{K}_d.
	\end{equation}
It is not very difficult to see that the inclusion in \eqref{eq:kernel2} is strict. In fact, $\bar{K}_d/K_d$ turns out to be isomorphic to $\mathbb{T}^d$.

\smallskip (3) In \cite{vdPT}, the kernel $K_d$ of $\xi_{I_d}$ was studied using methods of commutative algebra.
	\end{rems}

\section{The abelian sandpile model}\label{s:sandpiles}

Let $d\ge2$, $\gamma \ge 2d$, and let $E\subset \mathbb{Z}^d$ be a nonempty set. For every $\mathbf{n}\in E$ we denote by $\mathsf{N}_E(\mathbf{n})$ the number of neighbours of $\mathbf{n}$ in $E$, i.e.,
	\begin{equation}
	\label{eq:neighbours}
\mathsf{N}_E(\mathbf{n})=\bigl|E\cap\{\mathbf{n}\pm\mathbf{e}^{(i)}:1=1,\dots ,d\}\bigr|,
	\end{equation}
where $\mathbf{e}^{(i)}$ is the $i$-th unit vector in $\mathbb{Z}^d$. We set
	\begin{equation}
	\label{eq:Lambda}
\smash{\Lambda _\gamma =\{0,\dots ,\gamma -1\}^{\mathbb{Z}^d}}
	\end{equation}
(cf. Lemma \ref{l:surjective}) and put
	\begin{equation}
	\label{eq:RE}
	\begin{gathered}
\mathcal{P}_E^{(\gamma )}=\{v\in\{0,\dots ,\gamma -1\}^E:v_\mathbf{n}\ge \mathsf{N}_E(\mathbf{n})\enspace \textup{for at least one}\enspace \mathbf{n}\in E\},
	\\
\mathcal{R}_E^{(\gamma )}= \bigcap_{\substack{\varnothing \ne F\subset E
	\\
0<|F|<\infty }}\mathcal{P}_F.
	\end{gathered}
	\end{equation}
In the literature the set $\mathcal{R}_E^{(\gamma )}$ is called the set of \textit{recurrent configurations} on $E$. A configuration $v\in\{0,\ldots,\gamma -1\}^E$ is recurrent if and only if it passes the \textit{burning test}, which is described as follows: given $v\in \{0,\ldots,\gamma -1\}^E$, delete (or \textit{burn}) all sites $\mathbf{n}\in E$ such that
	$$
v_\mathbf{n} \ge N_E(\mathbf{n}),
	$$
thereby obtaining a configuration $v'\in \{0,\dots,\gamma -1\}^{E^{(1)}}$ with $E^{(1)}\subset E$. We repeat the process and obtain a sequence $E\supset E^{(1)}\supset\dots \supset E^{(k)}\supset \cdots$. If at some stage $E^{(k)}=E^{(k+1)}\ne\varnothing $ we say that $v$ \textit{fails} the burning test, and $v$ is a \textit{forbidden} (or \textit{nonrecurrent}) configuration.

The closed, shift-invariant subset
	\begin{equation}
	\label{eq:sandpiles}
\mathcal{R}_\infty ^{(\gamma )} =\mathcal{R}_{\mathbb{Z}^d}^{(\gamma )}\subset \Lambda _\gamma \subset \ell ^\infty (\mathbb{Z}^d,\mathbb{Z})
	\end{equation}
is called \textit{the $d$-dimensional sandpile model} with parameter $\gamma $. For $\gamma =2d$, $\mathcal{R}_\infty =\mathcal{R}_\infty ^{(2d)}$ is called the \textit{critical sandpile model}, and for $\gamma >2d$, the model $\mathcal{R}_\infty ^{(\gamma )}$ is said to be \textit{dissipative}.

In order to motivate this terminology we assume that $E\subset \mathbb{Z}^d$ is a nonempty set. An element $v\in \mathbb{Z}^E_+$ is called \textit{stable} if $y_\mathbf{n}<\gamma $ for every $\mathbf{n}\in E$. If $v\in\mathbb{Z}^E_+$ is unstable at some $\mathbf{n}\in E$, i.e., if $v_\mathbf{n}\ge\gamma$, then $v$ \textit{topples} at this site: the result is a configuration $T_\mathbf{n}(v)$ with
	$$
T_\mathbf{n}(v)_\mathbf{k}=
	\begin{cases}
v_\mathbf{n}-\gamma &\textup{if}\enspace \mathbf{k}=\mathbf{n},
	\\
v_\mathbf{k}+1 &\textup{if}\enspace \|\mathbf{n}-\mathbf{k}\|_\textup{max}=1,
	\\
v_\mathbf{k} &\textup{otherwise.}
	\end{cases}
	$$
If $v_\mathbf{m},v_\mathbf{m}\ge\gamma$ for some $\mathbf{m},\mathbf{n}\in E$, $\mathbf{m}\ne \mathbf{n}$, then $T_\mathbf{n}( T_\mathbf{n}(v))=T_\mathbf{m}( T_\mathbf{n}(v))$, i.e., toppling operators commute. A stable configuration $\tilde{v}\in\{0,\dots ,\gamma -1\}^E$ is the result of toppling of $v$, if there exist $\mathbf{n}^{(1)},\dots,\mathbf{n}^{(k)} \in E$ such that
	$$
\tilde{v}= \biggl(\prod_{i=1}^k T_{\mathbf{n}^{(i)}}\biggr)(v).
	$$
If the set $E$ is finite (in symbols: $E\Subset \mathbb{Z}^d$), then every $v\in \mathbb{Z}_+^E$ will lead to a stable configuration $\tilde{v}$ by repeated topplings. However, if $E$ is infinite, then repeated toppling of a configuration $v\in\mathbb{Z}_+^E$ will, in general, lead to a stable configuration $\tilde{v}\in\{0,\dots ,\gamma -1\}^E$ only if $\gamma >2d$, i.e., in the dissipative case.\footnote{Even in the dissipative case stable configurations will, in general, only arise as a coordinate-wise limits of infinite sequences of topplings of $v$.}

We denote by $\sigma =\sigma _{\mathcal{R}_\infty ^{(\gamma )}}$ the shift-action of $\mathbb{Z}^d$ on $\mathcal{R}_\infty ^{(\gamma )}\subset \ell ^\infty (\mathbb{Z}^d,\mathbb{Z})\subset W_d$ \textup{(}cf. \eqref{eq:sigma}\textup{)}.

For the following discussion we introduce the Laurent polynomial
	\begin{equation}
	\label{eq:fdgamma}
f^{(d,\gamma )} = \gamma -\sum_{i=1}^{d}(u_i+u_i^{-1})\in R_d=\mathbb{Z}[u_1^\pm,\dots,u_d^\pm].
	\end{equation}
For $\gamma =2d$, $f^{(d,\gamma )}=f^{(d)}$ (cf. \eqref{eq:fd}).

	\begin{prop}
	\label{p:sand}
Let $d\ge2$ and $\gamma \ge 2d$. The following conditions are equivalent for every $v\in\Lambda _\gamma $.
	\begin{enumerate}
	\item
$v\in \mathcal{R}_\infty ^{(\gamma )}$;
	\item
For every nonzero $h\in R_d$ with $h_\mathbf{n}\in\{0,1\}$ for every $\mathbf{n}\in\mathbb{Z}^d$, $(f^{(d,\gamma )}\cdot h)_\mathbf{n}+v_\mathbf{n}\ge \gamma $ for at least one $\mathbf{n}\in\textup{supp}(h)=\{\mathbf{m}\in\mathbb{Z}^d:h_\mathbf{m}\ne0\}$.
	\item
For every $h\in R_d$ with $h_\mathbf{n}>0$ for some $\mathbf{n}\in\mathbb{Z}^d$, $(f^{(d,\gamma )}\cdot h)_\mathbf{n}+v_\mathbf{n}\ge \gamma $ for at least one $\mathbf{n}\in\{\mathbf{m}\in\mathbb{Z}^d:h_\mathbf{m} > 0\}$.
	\end{enumerate}
Furthermore, if $v,v'\in\mathcal{R}_\infty ^{(\gamma )}$ and $0\ne v-v'\in R_d$, then $v-v'\notin f^{(d,\gamma )}\cdot R_d$.
	\end{prop}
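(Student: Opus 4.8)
Let me focus on the final statement, which asserts that distinct recurrent configurations cannot differ by a nonzero multiple of $f^{(d,\gamma)}$ in $R_d$. The plan is to argue by contradiction: suppose $v,v'\in\mathcal R_\infty^{(\gamma)}$ with $0\ne v-v'=f^{(d,\gamma)}\cdot h$ for some $h\in R_d$. Write $h=h_+-h_-$ where $h_+,h_-$ have disjoint supports and nonnegative integer coefficients, and assume $h\ne 0$, so (after possibly swapping the roles of $v$ and $v'$) we may take $h_+\ne 0$. The key relation to exploit is
\[
v = v' + f^{(d,\gamma)}\cdot h = v' + f^{(d,\gamma)}\cdot h_+ - f^{(d,\gamma)}\cdot h_-.
\]
I want to feed $h_+$ into the burning-test characterization (2)--(3) of recurrence, applied to $v'$, and simultaneously use the same $h_-$ against $v$, to produce contradictory inequalities on the common finite set $\operatorname{supp}(h)$.

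Concretely, I would proceed as follows. First I would record the elementary but crucial observation that $(f^{(d,\gamma)}\cdot h)_{\mathbf n} = \gamma h_{\mathbf n} - \sum_{\|\mathbf n-\mathbf k\|_{\max}=1} h_{\mathbf k}$, so on $\operatorname{supp}(h_+)$ one has $(f^{(d,\gamma)}\cdot h_+)_{\mathbf n}\le \gamma (h_+)_{\mathbf n}$ minus the contributions from neighbouring positive sites; this is exactly the toppling balance. Apply characterization (3) to $v'\in\mathcal R_\infty^{(\gamma)}$ with the test polynomial $h_+$: there must exist $\mathbf n\in\operatorname{supp}(h_+)$ with $(f^{(d,\gamma)}\cdot h_+)_{\mathbf n}+v'_{\mathbf n}\ge\gamma$. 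At such an $\mathbf n$, because $v_{\mathbf n}=v'_{\mathbf n}+(f^{(d,\gamma)}\cdot h)_{\mathbf n}$ and $\mathbf n\notin\operatorname{supp}(h_-)$ forces $(f^{(d,\gamma)}\cdot h)_{\mathbf n}\le (f^{(d,\gamma)}\cdot h_+)_{\mathbf n}$, I want to conclude $v_{\mathbf n}\ge\gamma$, contradicting $v\in\Lambda_\gamma$. The sign bookkeeping is delicate: the neighbour term from $h_-$ can only lower $(f^{(d,\gamma)}\cdot h_+)$, so one must check that the burning-test site survives this adjustment. If it does not, I would iterate the burning test, peeling off the site and re-applying characterization (3) to the restricted polynomial, which is the standard abelian-sandpile stabilization argument.

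The main obstacle, and the step I would spend the most care on, is the interaction between $h_+$ and $h_-$ at the boundary between their supports. The clean inequality $v_{\mathbf n}\ge\gamma$ is immediate only at sites $\mathbf n\in\operatorname{supp}(h_+)$ all of whose $f^{(d,\gamma)}$-neighbours avoid $\operatorname{supp}(h_-)$; at mixed boundary sites the neighbour contributions can partially cancel, so the naive estimate fails. The right fix is to exploit the \emph{abelian} (order-independent) nature of toppling, encoded in characterization (3): rather than testing with $h_+$ in one shot, I would run the burning test on $v'$ restricted to $\operatorname{supp}(h_+)$ and track which sites topple. By symmetry the same argument applied to $v$ with the polynomial $h_-$ yields a site where $v'_{\mathbf n}\ge\gamma$, again impossible. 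Combining the two burning tests---one certifying that $v'$ would over-topple if $v$ were recurrent, the other vice versa---produces the contradiction without ever needing to analyze the boundary cancellations directly, since recurrence of \emph{both} $v$ and $v'$ is used. I expect the cleanest writeup routes entirely through conditions (2)--(3) and the formula for $f^{(d,\gamma)}\cdot h$, invoking the uniqueness of factorization in $R_d$ only implicitly through the requirement that $h\ne0$ forces one of $h_+,h_-$ nonzero.
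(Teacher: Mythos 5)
There is a genuine gap: your proposal proves only the final assertion and takes the equivalence of (1) and (3) as a known ``burning-test characterization,'' but the implication (1) $\Rightarrow$ (3) is precisely the substantive content of the proposition and must itself be proved. Condition (2) really is just a transcription of the definition \eqref{eq:RE}: if $h$ is the indicator function of a finite set $F$, then $(f^{(d,\gamma )}\cdot h)_\mathbf{n}=\gamma -\mathsf{N}_F(\mathbf{n})$ for $\mathbf{n}\in F$, so (2) restates the burning test. The upgrade to arbitrary $h\in R_d$ with some positive coefficient, however, needs an argument; the paper supplies it by passing to the finite set $\mathcal{S}_{\textup{max}}(h)=\{\mathbf{n}:h_\mathbf{n}=M_h\}$ where $h$ attains its maximum $M_h>0$: at such $\mathbf{n}$ every neighbour value of $h$ is at most $M_h$, and one checks (using $\gamma \ge 2d$ and $M_h\ge 1$) that $(f^{(d,\gamma )}\cdot h)_\mathbf{n}\ge \gamma -\mathsf{N}_{\mathcal{S}_{\textup{max}}(h)}(\mathbf{n})$, so if $v+f^{(d,\gamma )}\cdot h\in\Lambda _\gamma $ then $v_\mathbf{n}\le \mathsf{N}_{\mathcal{S}_{\textup{max}}(h)}(\mathbf{n})-1$ on all of $\mathcal{S}_{\textup{max}}(h)$, i.e.\ $v$ contains a forbidden subconfiguration, contradicting (1). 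Without this (or an equivalent) step, invoking (3) is circular.

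In the part you do address, your key inequality is stated backwards, and as a consequence the ``main obstacle'' you spend the last paragraph on does not exist. At $\mathbf{n}\in\textup{supp}(h_+)$ one has $(h_-)_\mathbf{n}=0$, hence $(f^{(d,\gamma )}\cdot h)_\mathbf{n}=(f^{(d,\gamma )}\cdot h_+)_\mathbf{n}+\sum_{i=1}^d\bigl((h_-)_{\mathbf{n}+\mathbf{e}^{(i)}}+(h_-)_{\mathbf{n}-\mathbf{e}^{(i)}}\bigr)\ge (f^{(d,\gamma )}\cdot h_+)_\mathbf{n}$: the neighbour contributions of $h_-$ \emph{raise} the value at positive sites rather than lower it, which is the favourable direction, so your one-shot argument closes immediately and no iterated peeling or two-sided symmetric burning is needed. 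In fact the decomposition $h=h_+-h_-$ is itself superfluous --- condition (3) is formulated exactly so as to apply to polynomials of mixed sign: after replacing $(v,v',h)$ by $(v',v,-h)$ if necessary so that $h$ has a positive coefficient, apply (3) to $v'$ with $h$ itself to obtain $\mathbf{n}$ with $h_\mathbf{n}>0$ and $v_\mathbf{n}=v'_\mathbf{n}+(f^{(d,\gamma )}\cdot h)_\mathbf{n}\ge\gamma $, contradicting $v\in\Lambda _\gamma $; this is the paper's one-line deduction. (Unique factorization in $R_d$ plays no role here.) So the final assertion is salvageable along your lines once the sign is corrected, but the real work you still owe is the implication (1) $\Rightarrow$ (3).
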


	\begin{proof}
Fix an element $v\in \Lambda _\gamma $. If $h\in R_d$ with $h_\mathbf{n}\in\{0,1\}$ for every $\mathbf{n}\in\mathbb{Z}^d$ and $E=\textup{supp}(h)$, then $(f^{(d,\gamma )}\cdot h)_\mathbf{n}+v_\mathbf{n}\in \{0,\dots ,\gamma -1\}$ for every $\mathbf{n}\in E$ if and only if $v_\mathbf{n}\le \mathsf{N}_E(\mathbf{n})-1$ for every $\mathbf{n}\in E$, in which case $\pi _E(v)\notin \mathcal{P}_E$ and $v\notin \mathcal{R}_\infty ^{(\gamma )}$ (cf. \eqref{eq:RE}). This proves the equivalence of (1) and (2).

\smallskip Now suppose that $h\in \ell ^\infty (\mathbb{Z}^d,\mathbb{Z})$ with $M_h= \max_{\mathbf{m}\in\mathbb{Z}^d}h_\mathbf{m}>0$, and that $f^{(d,\gamma )}\cdot h+v\in\Lambda _\gamma $. We set
	\begin{equation}
	\label{eq:Smax}
\mathcal{S}_\textup{max}(h)= \{\mathbf{n}\in\mathbb{Z}^d:h_\mathbf{n}=M_h\}
	\end{equation}
and observe that
	$$
v_\mathbf{n}+(f^{(d,\gamma )}\cdot h)_\mathbf{n}\ge v_\mathbf{n} + M_h\cdot (\gamma - \mathsf{N}_{\mathcal{S}_\textup{max}(h)})<\gamma
	$$
for every $\mathbf{n}\in \mathcal{S}_\textup{max}(h)$, so that
	\begin{equation}
	\label{eq:violation}
v_\mathbf{n}\le \mathsf{N}_{\mathcal{S}_\textup{max}(h)}-1\enspace \textup{for every}\enspace \mathbf{n}\in \mathcal{S}_\textup{max}(h).
	\end{equation}

If $h\in R_d$, then $\mathcal{S}_{\textup{max}}(h)$ is finite and \eqref{eq:violation} yields a contradiction to the definition of $\mathcal{R}_\infty ^{(\gamma )}$. This proves the implication (1) $\Rightarrow$ (3), and the reverse implication (3) $\Rightarrow$ (2) is obvious.

The last assertion of this proposition is a consequence of (3).
	\end{proof}

The proof of Proposition \ref{p:sand} has the following corollary.

	\begin{coro}
	\label{c:sand}
If $v\in \mathcal{R}_\infty ^{(\gamma )}$, and if $h\in \ell ^\infty (\mathbb{Z}^d,\mathbb{Z})$ satisfies that $\max_{\mathbf{m}\in\mathbb{Z}^d}h_\mathbf{m}>0$ and $v+f^{(d,\gamma)}\cdot h\in \mathcal{R}_\infty ^{(\gamma )}$, then every connected\,\footnote{\label{connected}A set $S\subset \mathbb{Z}^d$ is \textit{connected} if we can find, for any two coordinates $\mathbf{m}$ and $\mathbf{n}$ in $S$, a `path' $p(0)=\mathbf{m},\linebreak[0]p(1),\linebreak[0]\dots ,\linebreak[0]p(k)=\mathbf{n}$ in $S$ with $\|p(j)-p(j-1)\|_\textup{max}=1$ for every $j=1,\dots ,k$.} component of $\mathcal{S}_{\textup{max}}(h)$ is infinite \textup{(}cf. \eqref{eq:Smax}\textup{)}.
	\end{coro}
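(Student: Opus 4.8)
The plan is to argue by contradiction, feeding on the inequality \eqref{eq:violation} that was already established inside the proof of Proposition \ref{p:sand}. The hypotheses of the corollary, namely $v,\,v+f^{(d,\gamma)}\cdot h\in\mathcal{R}_\infty^{(\gamma)}\subset\Lambda_\gamma$ together with $M_h=\max_{\mathbf{m}\in\mathbb{Z}^d}h_\mathbf{m}>0$, are exactly those under which that inequality was derived. Hence I would take as given at the outset that
$$v_\mathbf{n}\le \mathsf{N}_{\mathcal{S}_\textup{max}(h)}(\mathbf{n})-1\quad\textup{for every}\quad \mathbf{n}\in\mathcal{S}_\textup{max}(h),$$
where $\mathsf{N}_{\mathcal{S}_\textup{max}(h)}(\mathbf{n})$ counts the axis-neighbours $\mathbf{n}\pm\mathbf{e}^{(i)}$ of $\mathbf{n}$ that lie in $\mathcal{S}_\textup{max}(h)$ (cf. \eqref{eq:neighbours} and \eqref{eq:Smax}). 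Note that $\mathcal{S}_\textup{max}(h)\ne\varnothing$, since $M_h>0$ is an integer and is therefore attained.

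Next I would suppose, for contradiction, that some connected component $S$ of $\mathcal{S}_\textup{max}(h)$ is finite. The key observation is that for any $\mathbf{n}\in S$, every axis-neighbour of $\mathbf{n}$ that happens to lie in $\mathcal{S}_\textup{max}(h)$ is at maximum-norm distance $1$ from $\mathbf{n}$, hence is joined to $\mathbf{n}$ by a one-step path and so belongs to the same connected component $S$ (cf. Footnote \ref{connected}). Consequently $\mathsf{N}_{\mathcal{S}_\textup{max}(h)}(\mathbf{n})=\mathsf{N}_S(\mathbf{n})$ for every $\mathbf{n}\in S$, and the inequality above specialises to
$$v_\mathbf{n}\le \mathsf{N}_S(\mathbf{n})-1<\mathsf{N}_S(\mathbf{n})\quad\textup{for every}\quad \mathbf{n}\in S.$$

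This says precisely that $\pi_S(v)\notin\mathcal{P}_S$ (cf. \eqref{eq:RE}), since no coordinate of $\pi_S(v)$ reaches its neighbour-threshold in $S$. But $S$ is a nonempty finite subset of $\mathbb{Z}^d$, so membership $v\in\mathcal{R}_\infty^{(\gamma)}=\bigcap_{\varnothing\ne F\Subset\mathbb{Z}^d}\mathcal{P}_F$ forces $\pi_S(v)\in\mathcal{P}_S$, a contradiction. I would then conclude that no connected component of $\mathcal{S}_\textup{max}(h)$ can be finite, which is the assertion of the corollary.

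I expect the only genuinely delicate point to be the identification $\mathsf{N}_{\mathcal{S}_\textup{max}(h)}(\mathbf{n})=\mathsf{N}_S(\mathbf{n})$ for $\mathbf{n}\in S$: one must check that the adjacency used to define $\mathsf{N}_E$ (displacements by the unit vectors $\pm\mathbf{e}^{(i)}$) is compatible with the maximum-norm adjacency used to define connected components, so that an $\mathsf{N}$-neighbour inside $\mathcal{S}_\textup{max}(h)$ cannot slip into a different component. Since each unit vector has maximum-norm $1$, the two notions coincide on single steps and the identification is legitimate; the remainder is a direct appeal to the defining intersection of $\mathcal{R}_\infty^{(\gamma)}$ over finite subsets.
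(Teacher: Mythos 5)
Your proof is correct and follows essentially the same route as the paper's: both localize the inequality \eqref{eq:violation} to a hypothetical finite connected component $C$ of $\mathcal{S}_{\textup{max}}(h)$ --- you by observing that $\mathsf{N}_{\mathcal{S}_{\textup{max}}(h)}(\mathbf{n})=\mathsf{N}_C(\mathbf{n})$ for $\mathbf{n}\in C$ (since axis-neighbours are max-norm adjacent), the paper by truncating $h$ to $C$ --- and then derive $v_\mathbf{n}\le \mathsf{N}_C(\mathbf{n})-1$ on $C$, contradicting $\pi_C(v)\in\mathcal{P}_C$ as required by \eqref{eq:RE}.
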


	\begin{proof}
If $\mathcal{S}_\textup{max}(h)$ has a finite connected component $C$ then \eqref{eq:violation} shows that
	$$
(f^{(d,\gamma)}\cdot h)_\mathbf{n}+v_\mathbf{n}\ge (f^{(d,\gamma)}\cdot \bar{h})_\mathbf{n}+v_\mathbf{n}=\gamma -\mathsf{N}_C(\mathbf{n})
	$$
for every $\mathbf{n}\in C$, where
	$$
\bar{h}_\mathbf{n}=
	\begin{cases}
h_\mathbf{n}&\textup{if}\enspace \mathbf{n}\in C,
	\\
0&\textup{otherwise}.
	\end{cases}
	$$
As in \eqref{eq:violation} we obtain a contradiction to \eqref{eq:RE}.
	\end{proof}

	\begin{rema}
	\label{r:sand}
Proposition \ref{p:sand} implies that $(f^{(d,\gamma )}(\sigma )(h)+\mathcal{R}_\infty ^{(\gamma )})\cap \mathcal{R}_\infty ^{(\gamma )}=\varnothing$ for every nonzero $h\in R_d$. However, if $h\in\{0,1\}^{\mathbb{Z}^d}$ satisfies that the set $\mathcal{S}(h)=\{\mathbf{n}\in\mathbb{Z}^d:h_\mathbf{n}=1\}$ is infinite and connected, then one checks easily that there exists a $v\in\mathcal{R}_\infty ^{(\gamma )}$ with $f^{(d)}(\sigma )(h)+v\in \mathcal{R}_\infty ^{(\gamma )}$. In spite of this the following result holds.
	\end{rema}

	\begin{prop}
	\label{p:injective2}
The set
	\begin{equation}
	\label{eq:injectivity}
\mathcal{V}=\{v\in \mathcal{R}_\infty ^{(\gamma )}:v+w\notin \mathcal{R_\infty ^{(\gamma )}}\enspace \textup{for every nonzero}\enspace w\in f^{(d,\gamma)}(\sigma )(\ell ^\infty (\mathbb{Z}^d,\mathbb{Z}))\}
	\end{equation}
is a dense $G_\delta $ in $\mathcal{R}_\infty ^{(\gamma )}$.
	\end{prop}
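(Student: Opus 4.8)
The plan is to run a Baire category argument in the compact metrizable space $\mathcal{R}_\infty^{(\gamma)}$ (product topology, hence a Baire space), establishing separately that $\mathcal{V}$ is a $G_\delta$ set and that it is dense. Throughout I write a perturbation as $w=f^{(d,\gamma)}(\sigma)(h)=f^{(d,\gamma)}\cdot h$ with $h\in\ell^\infty(\mathbb{Z}^d,\mathbb{Z})$, and I first record two reductions that confine attention to genuinely infinite topplings. If $v,v+w\in\mathcal{R}_\infty^{(\gamma)}$ then $\|w\|_\infty\le\gamma-1$. Moreover, by the last assertion of Proposition \ref{p:sand}, no nonzero \emph{finitely supported} $w\in f^{(d,\gamma)}\cdot R_d$ can have both $v$ and $v+w$ recurrent; and by Corollary \ref{c:sand}, applied to the successive toppling waves $\mathbf{1}_{\{h\ge j\}}$ (and, via the symmetry $(v,h)\leftrightarrow(v+w,-h)$, to $\mathbf{1}_{\{h\le j\}}$), every super- and sub-level set of any admissible $h$ has only infinite connected components. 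This structural rigidity is the engine of the whole argument.

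For the $G_\delta$ property I would write the complement as a countable union $\mathcal{V}^c=\bigcup_{M\ge1}C_M$, where
\[
C_M=\bigl\{v\in\mathcal{R}_\infty^{(\gamma)}:\exists\,v'\in\mathcal{R}_\infty^{(\gamma)},\ \pi_{Q_M}(v')\ne\pi_{Q_M}(v),\ v'-v\in f^{(d,\gamma)}(\sigma)(\ell^\infty(\mathbb{Z}^d,\mathbb{Z}))\bigr\},
\]
and show each $C_M$ is closed (cf. \eqref{eq:QM}). Given $v^{(k)}\to v$ in $C_M$ with witnesses $v'^{(k)}=v^{(k)}+f^{(d,\gamma)}\cdot h^{(k)}$, compactness of $\mathcal{R}_\infty^{(\gamma)}$ gives $v'^{(k)}\to v'$ along a subsequence, and the differences $w^{(k)}=v'^{(k)}-v^{(k)}$ are uniformly bounded and converge coordinatewise to $w=v'-v$ with $\pi_{Q_M}(w)\ne0$. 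The one delicate point is that $w$ again lies in $f^{(d,\gamma)}(\sigma)(\ell^\infty(\mathbb{Z}^d,\mathbb{Z}))$, i.e. that the $h^{(k)}$ may be normalised to admit a coordinatewise-convergent subsequence with bounded limit. In the dissipative case $\gamma>2d$ this is immediate: since $F^{(d,\gamma)}\ge\gamma-2d>0$, the operator $f^{(d,\gamma)}$ is invertible with $\ell^1$ inverse, so $h^{(k)}=(f^{(d,\gamma)})^{-1}\cdot w^{(k)}$ is uniformly bounded. In the critical case $\gamma=2d$ the kernel of $f^{(d)}(\sigma)$ on $\ell^\infty$ is the constants, and one must exclude the ``concentration/ramp'' phenomena in which $\|h^{(k)}\|$ diverges; this is precisely where the rigidity above is used, the infinite-component structure of all level sets together with $\|f^{(d)}\cdot h^{(k)}\|_\infty\le 2d-1$ and discrete potential theory bounding the oscillation of $h^{(k)}$ on each fixed window.

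For density it suffices to meet every nonempty cylinder $\{v:\pi_{Q_M}(v)=p\}$ with a point of $\mathcal{V}$. In the critical case I would take $v$ equal to the given recurrent pattern $p$ on $Q_M$ and to the maximal value $\gamma-1$ off $Q_M$; a forbidden-sub-configuration check shows $v\in\mathcal{R}_\infty^{(\gamma)}$, since any forbidden finite $F$ would force every site of $F\setminus Q_M$ to have all its neighbours in $F$, hence $F$ infinite. Now if $v+w\in\mathcal{R}_\infty^{(\gamma)}$ with $w=f^{(d)}\cdot h\ne0$, then $v+w\in\Lambda_\gamma$ and $v_\mathbf{n}=2d-1$ off $Q_M$ force $w_\mathbf{n}\le0$, i.e. $h$ subharmonic, off a finite set. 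Because a bounded, integer-valued function that is subharmonic outside a finite set is constant outside a finite set (a discrete Liouville/maximum-principle statement), we get $h=\tilde{c}+h_0$ with $h_0\in R_d$, whence $w=f^{(d)}\cdot h_0$ is finitely supported and the last assertion of Proposition \ref{p:sand} yields $w=0$; thus $v\in\mathcal{V}$ (in particular the fully maximal configuration lies in $\mathcal{V}$). The dissipative case requires a modified rigid witness, since the constant toppling $h\equiv-1$ gives a nonzero $w=-(\gamma-2d)\tilde{1}$; one pins the construction so that shifts of the form $h\equiv\textrm{const}$ are also excluded (e.g. interspersing sites forced to value $0$), after which the same Liouville-type reduction to a finitely supported $w$ applies.

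The main obstacle is the closedness of $C_M$ in the critical case $\gamma=2d$: controlling the toppling functions $h^{(k)}$ well enough that the limiting perturbation $w$ remains a legitimate (bounded-$h$) element of $f^{(d)}(\sigma)(\ell^\infty(\mathbb{Z}^d,\mathbb{Z}))$, rather than degenerating into an infinite-energy object such as a half-space ramp. I expect this to hinge on combining the infinite-component property of all level sets of $h^{(k)}$ (from Corollary \ref{c:sand} and the wave decomposition) with a quantitative discrete potential-theoretic oscillation estimate; the dissipative case, by contrast, is routine thanks to the invertibility of $f^{(d,\gamma)}$.
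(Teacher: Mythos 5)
Your decomposition of the complement as $\mathcal{V}^{c}=\bigcup_{M}C_{M}$ is the wrong stratification, and the ``main obstacle'' you flag at the end is a genuine gap, not a technicality: you never prove that $C_M$ is closed in the critical case, and there is real reason to doubt it can be done your way. The set $f^{(d)}(\sigma)(\ell^\infty(\mathbb{Z}^d,\mathbb{Z}))$ is \emph{not} closed under bounded coordinatewise limits: taking $h^{(k)}_{\mathbf{n}}=\min(k,\max(0,n_1))$, the images $f^{(d)}\cdot h^{(k)}$ are uniformly bounded and converge to $-1_{\{n_1=0\}}$, which has no bounded preimage (any preimage differs from the one-sided ramp by an affine harmonic function, hence is unbounded). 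Moreover, ramp-like $h^{(k)}$ are not excluded by recurrence: $\mathcal{S}_{\textup{max}}(h^{(k)})$ is an infinite connected half-space, consistent with Corollary \ref{c:sand}, and Remark \ref{r:sand} warns that such infinite toppling sets do occur between recurrent pairs. So your hoped-for ``quantitative discrete potential-theoretic oscillation estimate'' would have to rule out exactly the configurations the model permits. The paper avoids all of this with one extra countable stratification that you missed even though you had the ingredients in hand: since every witness $h$ has some finite $\|h\|_\infty=N$, one writes $\mathcal{V}^c$ as the union over $M\ge1$, $N\ge1$ and $\mathbf{0}\ne\mathbf{c}\in\mathbb{Z}^{Q_M}$ of $\tilde\pi\bigl(\{(v,h)\in\mathcal{R}_\infty^{(\gamma)}\times B_N(\ell^\infty(\mathbb{Z}^d,\mathbb{Z})): v+f^{(d,\gamma)}\cdot h\in\mathcal{R}_\infty^{(\gamma)},\ \pi_{Q_M}(f^{(d,\gamma)}\cdot h)=\mathbf{c}\}\bigr)$. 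For fixed $N$ the ball $B_N=\{-N,\dots,N\}^{\mathbb{Z}^d}$ is compact in the product topology, the witness set is closed in a compact space, and its projection is compact, hence closed; the union is countable, so $\mathcal{V}^c$ is $F_\sigma$ with no control of diverging $h^{(k)}$ needed. (Your dissipative observation, that $(f^{(d,\gamma)})^{-1}\in\ell^1$ bounds $h$ uniformly, is correct but unnecessary once one stratifies in $N$.)

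Your density argument for the \emph{critical} case is essentially sound and is in fact an attractive alternative to the paper's: where the paper argues geometrically with $F(v)$, $\mathcal{S}_{\textup{max}}(h)$ and Corollary \ref{c:sand} for a dense class of configurations, you reduce to a discrete Liouville statement (a bounded integer-valued function subharmonic off a finite set is constant off a finite set), which you assert without proof but which is true: integrality makes the far maximum $\max_{\|\mathbf{n}\|\ge R}h_\mathbf{n}$ stabilize in $R$, and the maximum principle then propagates the stabilized value over the (connected) complement of a large box. Your recurrence check for the witness ($p$ on $Q_M$, $\gamma-1$ outside) is also correct. The \emph{dissipative} density case, however, is not proved, and your proposed fix is self-defeating: interspersing infinitely many forced zeros destroys the hypothesis that $h$ is $(\gamma$-$)$subharmonic off a \emph{finite} set, which your Liouville lemma requires, while finitely many zeros fail to exclude toppling functions with $h\equiv-1$ off a finite set (these give $w=-(\gamma-2d)$ off a finite set, satisfy the zero-site constraint after a local adjustment of $h$, and can keep $v+w$ recurrent, e.g.\ with $v+w\equiv 2d-1$ far out). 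The paper's treatment works because it imposes $\min_{\mathbf{n}}v_{\mathbf{n}}=0$ together with its connectivity conditions and then splits on $\max h$: if $\max h<0$ then $f^{(d,\gamma)}\cdot h<0$ everywhere and the zero coordinate is violated, and if $\max h\ge0$ the geometric argument produces a finitely supported $\tilde h$ contradicting Proposition \ref{p:sand}; note that no analogue of your reduction-by-constants is available when $\gamma>2d$, since constants are not in the kernel of $f^{(d,\gamma)}(\sigma)$, so the case $\max h<0$ must be handled by the configuration itself.
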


	\begin{proof}
Let $v\in \mathcal{R}_\infty ^{(\gamma )}$ and $h\in \ell ^\infty (\mathbb{Z}^d,\mathbb{Z})$ such that $\max_{\mathbf{n}\in\mathbb{Z}^d}h_\mathbf{n}\ge0$, $f^{(d,\gamma )}\cdot h\ne0$ and $v+f^{(d,\gamma )}\cdot h\in\mathcal{R}_\infty ^{(\gamma )}$. We set $M_h=\max_{\mathbf{m}\in\mathbb{Z}^d}h_\mathbf{m}$, define $\mathcal{S}_{\textup{max}}(h)\subset \mathbb{Z}^d$ as in \eqref{eq:Smax}, and put
	$$
\partial \mathcal{S}_{\textup{max}}(h)=\{\mathbf{n}\in \mathcal{S}_{\textup{max}}(h):\|\mathbf{m}-\mathbf{n}\|_\textup{max} = 1\enspace \textup{for some}\enspace \mathbf{m}\in\mathbb{Z}^d\smallsetminus \mathcal{S}_{\textup{max}}(h)\}.
	$$
As $(f^{(d,\gamma )}\cdot h)_\mathbf{n}>0$ for every $\mathbf{n}\in\partial \mathcal{S}_{\textup{max}}(h)$, the set $\partial \mathcal{S}_{\textup{max}}(h)$ must have empty intersection with
	$$
F(v)=\{\mathbf{n}\in\mathbb{Z}^d:v_\mathbf{n}=\gamma -1\}.
	$$

\smallskip Now suppose that $v\in\mathcal{R}_\infty ^{(\gamma )}$ has the following properties:
	\begin{enumerate}
	\label{form}
	\item[(a)]
The set $F(v)$ is connected;
	\item[(b)]
Every connected component of $\mathbb{Z}^d\smallsetminus F(v)$ is finite.
	\item[(c)]
$\min_{\mathbf{n}\in\mathbb{Z}^d}v_\mathbf{n}=0.$
	\end{enumerate}

According to Corollary \ref{c:sand}, every connected component $C$ of $\mathcal{S}_{\textup{max}}(h)$ is infinite. If $C\ne \mathbb{Z}^d$ then the hypotheses (a)--(b) above guarantee that the boundary $\partial C=C\cap \partial \mathcal{S}_{\textup{max}}(h)$ of $C$ is a union of finite sets, each of which is contained in one of the connected components of $\mathbb{Z}^d\smallsetminus F(v)$.

Let $C$ and $D$ be connected components of $\mathcal{S}_{\textup{max}}(h)$ and $\mathbb{Z}^d\smallsetminus F(v)$, respectively, with $D\cap\partial C\ne\varnothing $. Since $C$ is infinite and connected and $F(v)$ is connected, we must have that $h_\mathbf{m}=M_h=0$ for every $\mathbf{m}\in F(v)$.

Define $\tilde{h}$ by
	$$
\tilde{h}_\mathbf{n}=
	\begin{cases}
h_\mathbf{n}&\textup{if}\enspace \mathbf{n}\in D
	\\
0&\textup{otherwise}.
	\end{cases}
	$$
Then $(f^{(d,\gamma )}\cdot \tilde{h})_\mathbf{n}=(f^{(d,\gamma )}\cdot h)_\mathbf{n}$ for every $\mathbf{n}\in D$, and $0\le (f^{(d,\gamma )}\cdot \tilde{h})_\mathbf{n}\le (f^{(d,\gamma )}\cdot h)_\mathbf{n}$ for every $\mathbf{n}\in F(v)$. For $\mathbf{n}\in\mathbb{Z}^d\smallsetminus (F(v)\cup D)$, $(f^{(d,\gamma )}\cdot \tilde{h})_\mathbf{n}=0$. By combining these statements we see that $v+f^{(d,\gamma )}\cdot \tilde{h}\in\mathcal{R}_\infty ^{(\gamma )}$. Since $0\ne \tilde{h}\in R_d$ we obtain a contradiction to Proposition \ref{p:sand}.

This shows that $v+f^{(d,\gamma )}\cdot h\notin \mathcal{R}_\infty ^{(\gamma )}$ for every $v\in\mathcal{R}_\infty ^{(\gamma )}$ satisfying the conditions (a)--(b) above and every nonzero $h\in \ell ^\infty (\mathbb{Z}^d,\mathbb{Z})$ with $\max_{\mathbf{n}\in\mathbb{Z}^d}h_\mathbf{n}\ge0$.

If $\gamma =2d$ and $h\in\ell ^\infty (\mathbb{Z}^d,\mathbb{Z})$ satisfies that $f^{(d)}\cdot h\ne0$, then we may add a constant to $h$, if necessary, to ensure that $\max_{\mathbf{n}\in\mathbb{Z}^d}h_\mathbf{n} \ge0$. Since such an addition will not affect $f^{(d)}\cdot h$ we obtain that $v+f^{(d)}\cdot h\notin \mathcal{R}_\infty ^{(\gamma )}=\mathcal{R}_\infty $ for every $v\in\mathcal{R}_\infty $ satisfying the conditions (a)--(c) above and every nonconstant $h\in \ell ^\infty (\mathbb{Z}^d,\mathbb{Z})$.

If $\gamma >2d$ and $h\in\ell ^\infty (\mathbb{Z}^d,\mathbb{Z})$ satisfies that $\max_{\mathbf{n}\in\mathbb{Z}^d}h_\mathbf{n}<0$, then $(f^{(d,\gamma )}\cdot h)_\mathbf{n}<0$ for every $\mathbf{n}\in\mathbb{Z}^d$, and $v+f^{(d,\gamma )}\cdot h\notin \mathcal{R}_\infty ^{(\gamma )}$ for every $v\in\mathcal{R}_\infty ^{(\gamma )}$ satisfying the condition (c) above.

\smallskip Let $\mathcal{V}'\subset \mathcal{R}_\infty ^{(\gamma )}$ be the set of all points satisfying the conditions (a)--(c) \vpageref{form}. This set is clearly dense and
	\begin{equation}
	\label{eq:goodpoints}
\mathcal{V}'\subset \mathcal{V}=\{v\in\mathcal{R}_\infty ^{(\gamma )} :v+w\notin\mathcal{R}_\infty ^{(\gamma )}\enspace \textup{for every nonzero}\enspace w\in f^{(d)}(\sigma )(\ell ^\infty (\mathbb{Z}^d,\mathbb{Z}))\}.
	\end{equation}
The set $\mathcal{V}$ is therefore dense, and it is obviously shift-invariant.

In order to verify that $\mathcal{V}$ is a $G_\delta $ we write its complement as an $F_\sigma $ of the form
	\begin{align*}
\mathcal{R}_\infty ^{(\gamma )}\smallsetminus \mathcal{V}&=\smash[b]{\bigcup_{M\ge1} \bigcup_{N\ge1} \,\bigcup_{\mathbf{0}\ne \mathbf{c}\in \smash{\mathbb{Z}^{Q_M}}}}\tilde{\pi }\bigl(\{(v,h)\in \mathcal{R}_\infty ^{(\gamma )}\times B_N(\ell ^\infty (\mathbb{Z}^d,\mathbb{Z})):
	\\
&\qquad \qquad \qquad \qquad \qquad v+f^{(d)}\cdot h\in\mathcal{R}_\infty ^{(\gamma )}\enspace \textup{and}\enspace \pi _{Q_M}(f^{(d,\gamma)}\cdot h)=\mathbf{c}\}\bigr),
	\end{align*}
where $B_N(\ell ^\infty (\mathbb{Z}^d,\mathbb{Z}))=\{h\in \ell ^\infty (\mathbb{Z}^d,\mathbb{Z}):\|h\|_\infty \le N\}$, $Q_M$ appears in \eqref{eq:QM} and $\tilde{\pi }\colon \mathcal{R}_\infty ^{(\gamma )}\times \ell ^\infty (\mathbb{Z}^d,\mathbb{Z})\longrightarrow \mathcal{R}_\infty^{(\gamma)} $ is the first coordinate projection.
	\end{proof}

\section{The critical sandpile model
	\label{s:critical}
}

Throughout this section we assume that $d\ge2$ and $\gamma =2d$. We write $\mathcal{R}_\infty =\mathcal{R}_\infty ^{(2d)}$ for the critical abelian sandpile model, define the harmonic model $X_{f^{(d)}}\subset \mathbb{T}^{\mathbb{Z}^d}$ by \eqref{eq:fd} and \eqref{eq:harmonic}, and use the notation of Section \ref{s:harmonic}.

\subsection{Surjectivity of the maps $\xi _g\colon \mathcal{R}_\infty \longrightarrow X_{f^{(d)}}$} For every $g\in \tilde{I}_d$ (cf. \eqref{eq:range}) we define the map $\xi _g\colon \ell ^\infty (\mathbb{Z}^d,\mathbb{Z})\longrightarrow X_{f^{(d)}}$ by \eqref{eq:Id} and \eqref{eq:barxi}. We shall prove the following results.

	\begin{theo}
	\label{t:surjective}
For every $g\in\tilde{I}_d$, $\xi _g(\mathcal{R}_\infty )=X_{f^{(d)}}$. Furthermore, the shift-action $\sigma _{\mathcal{R}_\infty }$ of $\mathbb{Z}^d$ on $\mathcal{R}_\infty $ has topological entropy
	\begin{equation}
	\label{eq:entropy2}
	\begin{aligned}
h_\textup{top}(\sigma _{\mathcal{R}_\infty })&= \lim_{N\to\infty }\frac 1{|Q_N|}\,\log\,\bigl|\pi _{Q_N}(\mathcal{R}_\infty )\bigr|
	\\
&=\int_0^1\cdots \int_0^1 \log\,f^{(d)}(e^{2\pi it_1},\dots ,e^{2\pi it_d})\,dt_1\cdots dt_d =h(\alpha _{f^{(d)}}).
	\end{aligned}
	\end{equation}
	\end{theo}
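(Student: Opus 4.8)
The plan is to reduce both assertions to the \emph{surjectivity} of $\xi_g$ on $\mathcal{R}_\infty$, and to prove surjectivity by a toppling argument that leaves the $\xi_g$-image unchanged. Fix $g\in\tilde{I}_d$ throughout. Two invariances make the argument work. First, for every $h\in\ell^\infty(\mathbb{Z}^d,\mathbb{Z})$,
$$
\xi_g\bigl(v+f^{(d)}(\sigma)(h)\bigr)=\xi_g(v),
$$
since $\bar\xi_g(f^{(d)}\cdot h)=g^*\cdot w^{(d)}\cdot f^{(d)}\cdot h=g^*\cdot h\in\ell^\infty(\mathbb{Z}^d,\mathbb{Z})$ by Theorem \ref{t:estimates}(i); in particular $\xi_g$ is invariant under each toppling $v\mapsto v-u^{\mathbf{n}}f^{(d)}$. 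Second, by Lemma \ref{l:constant} the value of $\xi_g$ is unchanged when one adds the constant maximal configuration $v_{\max}$ (all coordinates equal to $2d-1$), because $v_{\max}\in\widetilde{\mathbb{Z}}\subset K_d$. Thus I may freely add $v_{\max}$ and topple without affecting $\xi_g$.

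Fix $x\in X_{f^{(d)}}$. By Lemma \ref{l:surjective} choose $v_0\in\Lambda_{2d}$ with $\xi_g(v_0)=x$. For each $N$ I stabilize $v_0+v_{\max}$ \emph{inside} the box $Q_N$, treating $\mathbb{Z}^d\smallsetminus Q_N$ as a sink (grains leaving $Q_N$ are discarded), and call the result $r_N$. Since $v_{\max}$ was added, the restriction of $r_N$ to $Q_N$ is a \emph{recurrent} configuration of the finite sandpile on $Q_N$ by the standard finite theory, so $0\le (r_N)_{\mathbf{n}}\le 2d-1$ for $\mathbf{n}\in Q_N$; off $Q_N$ one has $r_N=v_0+v_{\max}\le 4d-2$, hence $r_N\in\Lambda_{4d-1}$ uniformly in $N$. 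Writing $T_N\in R_d$ for the (finitely supported) toppling function, $r_N=v_0+v_{\max}-f^{(d)}\cdot T_N+S_N$, where $S_N\ge0$ is supported within distance $1$ of $\partial Q_N$ and records the discarded grains, so that $\|S_N\|_1=O(N^d)$ (the total discarded mass is at most the initial mass $(4d-2)|Q_N|$). Consequently $\xi_g(r_N)=x+\xi_g(S_N)$, and the decay $|(g^*\cdot w^{(d)})_{\mathbf{n}}|\le C\|\mathbf{n}\|^{-d-1}$ for $g\in\tilde{I}_d$ (Theorem \ref{t:homoclinic}, as used in Lemma \ref{l:surjective}) bounds every coordinate of $\bar\xi_g(S_N)$ by $C\,N^{-d-1}\|S_N\|_1=O(N^{-1})\to0$.

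By compactness of $\Lambda_{4d-1}$ I pass to a coordinatewise subsequential limit $r_\infty$ of $(r_N)$. Every fixed site eventually lies in the stabilized interior of $Q_N$, so $r_\infty\in\Lambda_{2d}$; moreover every finite sub-pattern of $r_\infty$ is a sub-pattern of a box-recurrent configuration and therefore passes the burning test, whence $r_\infty\in\mathcal{R}_\infty$. Using the continuity of $\xi_g$ on $\Lambda_{4d-1}$ (Lemma \ref{l:surjective}) together with $\xi_g(S_N)\to0$ gives $\xi_g(r_\infty)=\lim_N\xi_g(r_N)=x$. As $x$ was arbitrary, $\xi_g(\mathcal{R}_\infty)=X_{f^{(d)}}$. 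I expect this step --- producing a genuinely recurrent preimage as a limit of box stabilizations while controlling the boundary error $S_N$ --- to be the main obstacle; the algebraic invariances above are routine, but the interplay between stabilization, recurrence, and the passage to the limit is where the work lies.

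For the entropy, note first that $\xi_g\colon\mathcal{R}_\infty\to X_{f^{(d)}}$ is continuous (Lemma \ref{l:surjective}) and shift-equivariant (by \eqref{eq:equivariance}, $\xi_g\circ\sigma^{\mathbf{n}}=\alpha^{\mathbf{n}}\circ\xi_g$), so by surjectivity $(X_{f^{(d)}},\alpha_{f^{(d)}})$ is a topological factor of $(\mathcal{R}_\infty,\sigma_{\mathcal{R}_\infty})$ and therefore
$$
h_{\textup{top}}(\sigma_{\mathcal{R}_\infty})\ge h_{\textup{top}}(\alpha_{f^{(d)}}),
$$
which equals the integral in \eqref{eq:entropy2} by \eqref{eq:entropy}. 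For the reverse inequality I use that the topological entropy of the $\mathbb{Z}^d$-subshift $\mathcal{R}_\infty$ equals $\lim_N|Q_N|^{-1}\log|\pi_{Q_N}(\mathcal{R}_\infty)|$, and bound the number of globally admissible patterns on $Q_N$ by the number of recurrent configurations on a box; the exponential growth rate of the latter is exactly $\int_0^1\!\cdots\!\int_0^1\log f^{(d)}\,dt_1\cdots dt_d$ by Dhar's burning bijection with spanning trees combined with the Burton--Pemantle tree-entropy computation. The two bounds coincide, which establishes all the equalities in \eqref{eq:entropy2} and shows in particular that $\xi_g$ is entropy-preserving.
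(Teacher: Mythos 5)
Your proposal is correct, and its two halves relate to the paper's proof differently. For surjectivity your architecture is the same as the paper's (take a preimage in $\Lambda_{2d}$ via Lemma \ref{l:surjective}, modify it by elements of $(f^{(d)})$ to force local recurrence on boxes, kill the boundary error using the decay $|(g^*\cdot w^{(d)})_\mathbf{n}|\le C\|\mathbf{n}\|^{-d-1}$, and pass to a coordinatewise limit that survives the burning test), but you implement the stabilization step by importing classical finite-volume sandpile theory --- $v_{\max}$ is recurrent on each box and stabilization of (recurrent $+$ nonnegative) is recurrent --- whereas the paper proves exactly this from scratch in Lemmas \ref{l:cover} and \ref{l:section}, with the sets $Y_v(q)$ and the bound \eqref{eq:v'} doing the bookkeeping that your discarded-mass term $S_N$ does; your $v_{\max}$ trick (justified by Lemma \ref{l:constant}) neatly avoids the paper's negative boundary values. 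Two cosmetic points there: with your conventions the discarded grains enter with a minus sign, $r_N=v_0+v_{\max}-f^{(d)}\cdot T_N-S_N$ (immaterial, since you only need $\xi_g(S_N)\to0$), and the invariance $\xi_g(v+f^{(d)}\cdot h)=\xi_g(v)$ for general $h\in\ell^\infty(\mathbb{Z}^d,\mathbb{Z})$ is really Lemma \ref{l:Wf} --- the rearrangement $g^*\cdot w^{(d)}\cdot f^{(d)}\cdot h=g^*\cdot h$ needs the continuity approximation there, not just Theorem \ref{t:estimates}(i); for your purposes the finitely supported case \eqref{eq:timesg2} suffices since $T_N\in R_d$. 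Where you genuinely diverge is the entropy upper bound: you use $|\pi_{Q_N}(\mathcal{R}_\infty)|\le|\mathcal{R}_{Q_N}|$ together with Dhar's determinant formula and the Burton--Pemantle asymptotics for the Dirichlet Laplacian, i.e., the external formula the paper itself attributes to Dhar in Remark \ref{r:surjective}, whereas the paper derives the bound internally: by Proposition \ref{p:sand} and Lemma \ref{l:max}, $\xi_g$ is injective on each set $S^{(Q_L)}_{\mathcal{R}_\infty}(\bar v)$ with $(Q_{L+K},1/4d)$-separated image, so $|\pi_{Q_L}(\mathcal{R}_\infty)|$ is dominated by the maximal cardinality of a separated set in $X_{f^{(d)}}$, as in \eqref{eq:Deltasize}. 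Your route is shorter but buys less: the separation lemma is precisely what the paper reuses in Theorem \ref{t:sand2} to show that \emph{every} measure of maximal entropy on $\mathcal{R}_\infty$ pushes forward to Haar measure, a conclusion your determinant-counting argument does not yield.
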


For the proof of this result we need a bit of notation and several lemmas. For every $Q\subset \mathbb{Z}^d$ and $v\in W_d$ we set
	\begin{equation}
	\label{eq:DeltaQ}
S^{(Q)}(v)=\{v'\in W_d :\pi _{\mathbb{Z}^d\smallsetminus Q}(v')=\pi _{\mathbb{Z}^d\smallsetminus Q}(v)\},
	\end{equation}
If $V\subset W_d$ is a subset we set $S_V^{(Q)}(v)=S^{(Q)}(v)\cap V$.

We fix $g\in\tilde{I}_d$. Let $\varepsilon $ with $0<\varepsilon <1/4d$. Since $g^*\cdot w^{(d)}\in \ell ^1(\mathbb{Z}^d)$ we can find $K\ge1$ with
	\begin{equation}
	\label{eq:K}
|\bar{\xi }_g(v)_\mathbf{0}-\bar{\xi }_g(v')_\mathbf{0}|<\varepsilon \enspace \textup{for every}\enspace v,v'\in \Lambda _{2d}\enspace \textup{with}\enspace \pi _{Q_K}(v)=\pi _{Q_K}(v')
	\end{equation}
(cf. \eqref{eq:QM})

	\begin{lemm}
	\label{l:max}
Let $v\in \Lambda _{2d}$, $Q\subset \mathbb{Z}^d$ a finite set and $v'\in S_{\Lambda _{2d}}^{(Q)}(v)$ \textup{(}cf. \eqref{eq:Lambda} and \eqref{eq:DeltaQ}\textup{)}.
	\begin{enumerate}
	\item
$\xi _g(v')=\xi _g(v)$ if and only if $v'-v\in (f^{(d)})$.
	\item
If $\xi _g(v')\ne\xi _g(v)$ then
	$$
\pmb{|}\xi _g(v')_\mathbf{n}-\xi _g(v)_\mathbf{n}\pmb{|}\ge 1/4d
	$$
for some $\mathbf{n}\in Q+Q_K=\{\mathbf{m}+\mathbf{k}:\mathbf{m}\in Q,\mathbf{k}\in Q_K\}$, where $K$ is defined in \eqref{eq:K}, $Q_K$ in \eqref{eq:QM} and $\pmb{|}\cdot \pmb{|}$ in \eqref{eq:metric}.
	\end{enumerate}
	\end{lemm}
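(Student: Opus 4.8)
The plan is to reduce both statements to the single element $w=v'-v$. Since $v'\in S^{(Q)}_{\Lambda_{2d}}(v)$, the difference $w$ agrees with $0$ off the finite set $Q$, so $w\in R_d$, and because $\xi_g$ is a group homomorphism we have $\xi_g(v')-\xi_g(v)=\xi_g(w)=:z$. Thus $\xi_g(v')=\xi_g(v)$ holds exactly when $z=0$, i.e.\ when $w\in\ker(\xi_g)\cap R_d$. Statement (1) is then immediate from Proposition~\ref{p:injective}, which identifies this kernel with $(f^{(d)})$; so I would simply quote that proposition.

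For (2) I would first localise where $z$ can be large. Writing $\bar\xi_g(w)_\mathbf{n}=\sum_{\mathbf{m}\in Q}(g^*\cdot w^{(d)})_{\mathbf{n}-\mathbf{m}}w_\mathbf{m}$ with $|w_\mathbf{m}|\le 2d-1$, I observe that for $\mathbf{n}\notin Q+Q_K$ the sets $\mathbf{n}+Q_K$ and $Q$ are disjoint, so $v$ and $v'$ agree on $\mathbf{n}+Q_K$. Translating \eqref{eq:K} to the coordinate $\mathbf{n}$ via the shift-equivariance \eqref{eq:equivariance} of $\bar\xi_g$ then yields $|\bar\xi_g(w)_\mathbf{n}|<\varepsilon<1/4d$, and hence $\pmb{|}z_\mathbf{n}\pmb{|}<1/4d$, for all $\mathbf{n}\notin Q+Q_K$. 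Consequently the coordinates of $z$ can only reach size $1/4d$ inside $Q+Q_K$, which is exactly the localisation claimed.

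It then remains to show that $z\neq0$ forces some coordinate in $Q+Q_K$ to be large. I would argue by contradiction: if $\pmb{|}z_\mathbf{n}\pmb{|}<1/4d$ also for $\mathbf{n}\in Q+Q_K$, then by the previous step $\pmb{|}z_\mathbf{n}\pmb{|}<1/4d$ for every $\mathbf{n}\in\mathbb{Z}^d$. I choose the lift $\zeta\in W_d$ of $z$ with $|\zeta_\mathbf{n}|=\pmb{|}z_\mathbf{n}\pmb{|}<1/4d$. Since $z\in X_{f^{(d)}}$, the integer $2d\zeta_\mathbf{n}-\sum_{j=1}^d(\zeta_{\mathbf{n}+\mathbf{e}^{(j)}}+\zeta_{\mathbf{n}-\mathbf{e}^{(j)}})$ has absolute value strictly below $1$ and hence vanishes, so $f^{(d)}\cdot\zeta=0$: the small real lift $\zeta$ is a genuinely harmonic, bounded function on $\mathbb{Z}^d$ and is therefore constant by the discrete Liouville theorem (equivalently, by the maximum principle).

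Finally I would pin this constant to $0$ by decay. Because $g\in I_d$, the sequence $g^*\cdot w^{(d)}$ lies in $\ell^1(\mathbb{Z}^d)$, so $\bar\xi_g(w)=(g^*\cdot w^{(d)})\cdot w$ is a finite combination of shifts of an $\ell^1$ sequence and thus $\bar\xi_g(w)_\mathbf{n}\to0$. As $\zeta_\mathbf{n}-\bar\xi_g(w)_\mathbf{n}\in\mathbb{Z}$ for every $\mathbf{n}$, while the constant $\zeta_\mathbf{n}$ has modulus $<1/4d$ and $\bar\xi_g(w)_\mathbf{n}$ is eventually of modulus $<1/4d$, this integer must be $0$ for large $\|\mathbf{n}\|$, forcing the constant value of $\zeta$ to be $0$. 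Hence $z=\rho(\zeta)=0$, contradicting $\xi_g(v')\neq\xi_g(v)$. The main obstacle is precisely this last contradiction: the passage from the ``harmonic mod~$1$'' relation satisfied by $z\in X_{f^{(d)}}$ to genuine harmonicity of a small real lift, which is where the threshold $1/4d$ and the nonexpansiveness of $\alpha_{f^{(d)}}$ enter, with the identification of the constant as $0$ resting on the $\ell^1$-decay guaranteed by $g\in I_d$.
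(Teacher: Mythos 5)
Your proof is correct, and it departs from the paper's at the decisive closing step. Both arguments share the same opening: the choice of $K$ in \eqref{eq:K} together with the shift-equivariance \eqref{eq:equivariance} localizes any large coordinate of $z=\xi_g(v')-\xi_g(v)$ to $Q+Q_K$ (your verification that $(\mathbf{n}+Q_K)\cap Q=\varnothing$ for $\mathbf{n}\notin Q+Q_K$ is exactly what the paper means by ``\eqref{eq:close} holds automatically''), and if all coordinates were $<1/4d$ one lifts $z$ to a small $\zeta\in W_d$ and observes that $f^{(d)}\cdot\zeta$ is integer-valued of modulus $<4d\cdot\tfrac1{4d}=1$, hence zero. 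From there the paper stays algebraic: it forms $h=\zeta-\bar\xi_g(v'-v)$, uses the smallness of $\zeta$ and the decay of $\bar\xi_g(v'-v)$ to conclude $h\in R_d$, computes $f^{(d)}\cdot h=g^*\cdot(v'-v)$ via \eqref{eq:lemmaWf}, and invokes unique factorization in $R_d$ (with $f^{(d)}$ irreducible and not dividing $g^*$) to obtain $v'-v\in(f^{(d)})$ --- thereby proving (1) and (2) in one stroke, without appealing to Proposition \ref{p:injective}. You instead finish analytically: $\zeta$ is a bounded, genuinely harmonic function, hence constant, and the constant is pinned to $0$ because $\zeta-\bar\xi_g(v'-v)$ is integer-valued while $\bar\xi_g(v'-v)_\mathbf{n}\to0$ (this is where $g\in I_d$ enters, exactly as in the paper); part (1) you then obtain by citing Proposition \ref{p:injective}, which is legitimate since that proposition is proved earlier and independently of Lemma \ref{l:max} (its proof is in essence the same unique-factorization computation that the paper redoes inside the lemma). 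The trade-off: the paper's route is self-contained $R_d$-algebra and yields the structural conclusion $v'-v\in(f^{(d)})$ directly, while your route replaces the factorization step in (2) by the Liouville property of bounded harmonic functions on $\mathbb{Z}^d$ --- a fact the paper itself uses when it asserts $\ker(f^{(d)}(\sigma))=\widetilde{\mathbb{R}}$ in the proof of Theorem \ref{t:periodic}, so nothing foreign is imported. One quibble: your parenthetical ``equivalently, by the maximum principle'' is too glib --- on the infinite lattice a bounded harmonic function need not attain its supremum, so the maximum principle alone does not give constancy; you need the genuine discrete Liouville theorem (Choquet--Deny, or a random-walk coupling argument). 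This imprecision does not affect the validity of the proof.
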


	\begin{proof}
We put $y=\bar{\xi }_g(v)$, $x=\rho (y)=\xi _g(v)$, $y'=\bar{\xi }_g(v')$ and $x'=\xi _g(v)$. Assume that
	\begin{equation}
	\label{eq:close}
\pmb{|}x'_\mathbf{n}-x_\mathbf{n}\pmb{|}<1/4d
	\end{equation}
for every $\mathbf{n}\in Q+Q_K$. Since \eqref{eq:close} holds automatically for $\mathbf{n}\in\mathbb{Z}^d\smallsetminus (Q+Q_K)$ by \eqref{eq:K}, it holds for every $\mathbf{n}\in\mathbb{Z}^d$.

We choose $z\in W_{f^{(d)}}$ with $\rho (z)=x'-x$ and $\|z_\mathbf{n}\|_\infty < 1/4d$ (cf. \eqref{eq:Wf}). Then $f^{(d)}\cdot z\in \ell ^\infty (\mathbb{Z}^d,\mathbb{Z})$, and the smallness of the coordinates of $z$ implies that $f^{(d)}\cdot z=0$.

Since $\rho (z)=\rho (y'-y)$ we obtain that $z-(y'-y)\in \ell ^\infty (\mathbb{Z}^d,\mathbb{Z})$. As the coordinates of $z$ are small and $\lim_{\mathbf{n}\to\infty }|y'-y|=\lim_{\mathbf{n}\to\infty }|\bar{\xi }_g(v'-v)|=0$ due to the continuity of $\bar{\xi }_g$, we conclude that $h=z-(y'-y)\in R_d$.

According to \eqref{eq:lemmaWf},
	$$
f^{(d)}\cdot (z-(y'-y))=f^{(d)}\cdot h=g^*\cdot (v'-v).
	$$
As $R_d$ has unique factorization and $g^*$ is not divisible by $f^{(d)}$, $v'-v$ must lie in the ideal $(f^{(d)})\subset R_d$. Theorem \ref{t:estimates} (i) and \eqref{eq:barxi} together imply that $\xi _g(v')=x'=x=\xi _g(v)$.
	\end{proof}

If $\varepsilon '>0$ and $Q\subset \mathbb{Z}^d$ we call a subset $Y\subset X_{f^{(d)}}$ \textit{$(Q,\varepsilon ')$-separated} if there exists, for every pair of distinct points $x,x'\in Y$, an $\mathbf{n}\in Q$ with $\pmb{|}x_\mathbf{n}-x'_\mathbf{n}\pmb{|}\ge\varepsilon '$. The set $Y$ is \textit{$(Q,\varepsilon ')$-spanning} if there exists, for every $x\in X_{f^{(d)}}$, an $x'\in Y$ with $\pmb{|}x_\mathbf{n}-x'_\mathbf{n}\pmb{|}<\varepsilon '$ for every $\mathbf{n}\in Q$.

	\begin{lemm}
	\label{l:spanning}
Let $Q\subset \mathbb{Z}^d$ be a finite set and $v\in \Lambda _{2d}$. Then the set $\xi _g(S_{\Lambda _{2d}}^{(Q+Q_K)}(v))$ is $(Q,\varepsilon )$-spanning.
	\end{lemm}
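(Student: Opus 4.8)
The plan is to combine the two facts tailored for exactly this purpose: the surjectivity $\xi _g(\Lambda _{2d})=X_{f^{(d)}}$ from Lemma \ref{l:surjective}, and the locality estimate \eqref{eq:K}, which says that altering a configuration in $\Lambda _{2d}$ only outside the window $Q_K$ perturbs the value of $\bar{\xi }_g$ at the origin by less than $\varepsilon $. Spanning is then obtained by a \emph{splicing} construction: given a target $x\in X_{f^{(d)}}$, I realize it exactly as $\xi _g(w)$ for some $w\in\Lambda _{2d}$, and paste $w$ into $v$ on the enlarged block $Q+Q_K$ while leaving $v$ untouched elsewhere. The locality estimate then guarantees that the spliced configuration reproduces $x$ to within $\varepsilon $ on the smaller block $Q$.

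Concretely, I would fix $x\in X_{f^{(d)}}$ and, using Lemma \ref{l:surjective} (valid since $g\in\tilde{I}_d$), choose $w\in\Lambda _{2d}$ with $\xi _g(w)=x$. I then set
	$$
v'_\mathbf{m}=
	\begin{cases}
w_\mathbf{m}&\textup{if}\enspace \mathbf{m}\in Q+Q_K,
	\\
v_\mathbf{m}&\textup{otherwise.}
	\end{cases}
	$$
Because $w,v\in\Lambda _{2d}$, the coordinates of $v'$ lie in $\{0,\dots ,2d-1\}$, and $v'$ agrees with $v$ off $Q+Q_K$, so $v'\in S_{\Lambda _{2d}}^{(Q+Q_K)}(v)$. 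It remains to show $\pmb{|}\xi _g(v')_\mathbf{n}-x_\mathbf{n}\pmb{|}<\varepsilon $ for every $\mathbf{n}\in Q$, which exhibits $\xi _g(v')$ as the required spanning point.

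The heart of the argument is to upgrade the origin-centred estimate \eqref{eq:K} to an estimate centred at an arbitrary $\mathbf{n}\in Q$ via the shift-equivariance $\bar{\xi }_g\circ \sigma ^\mathbf{n}=\sigma ^\mathbf{n}\circ \bar{\xi }_g$ in \eqref{eq:equivariance}. For $\mathbf{n}\in Q$ one has $\mathbf{n}+Q_K\subset Q+Q_K$, so $v'$ and $w$ agree on $\mathbf{n}+Q_K$; equivalently $\sigma ^\mathbf{n}v'$ and $\sigma ^\mathbf{n}w$ agree on $Q_K$. Applying \eqref{eq:K} to the pair $\sigma ^\mathbf{n}v',\sigma ^\mathbf{n}w\in\Lambda _{2d}$ and translating back by equivariance yields $|\bar{\xi }_g(v')_\mathbf{n}-\bar{\xi }_g(w)_\mathbf{n}|<\varepsilon $. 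Since reduction modulo $1$ does not increase distances, this gives $\pmb{|}\xi _g(v')_\mathbf{n}-\xi _g(w)_\mathbf{n}\pmb{|}<\varepsilon $, and $\xi _g(w)_\mathbf{n}=x_\mathbf{n}$. Hence $\xi _g(v')\in\xi _g(S_{\Lambda _{2d}}^{(Q+Q_K)}(v))$ is $\varepsilon $-close to $x$ throughout $Q$, which is precisely the $(Q,\varepsilon )$-spanning property.

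The only step requiring genuine care is the equivariance bookkeeping in the last paragraph, namely keeping straight which copy of $Q_K$ is centred where, and checking the inclusion $\mathbf{n}+Q_K\subset Q+Q_K$ that forces the choice of the \emph{enlarged} window $Q+Q_K$ rather than $Q$ itself. I do not anticipate a substantive obstacle: the range constant $K$ of \eqref{eq:K} was introduced exactly to control the radius of influence of $\bar{\xi }_g$, so the splicing is designed to succeed, and everything else is a direct application of Lemma \ref{l:surjective} together with \eqref{eq:K}.
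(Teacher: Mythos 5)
Your proof is correct and takes essentially the same route as the paper: realize the target point via the surjectivity of Lemma \ref{l:surjective}, splice it into $v$ on the enlarged window $Q+Q_K$, and transport the estimate \eqref{eq:K} to each $\mathbf{n}\in Q$ by shift-equivariance. In fact your definition of the spliced configuration corrects a typo in the paper's own proof, where the two cases (agreeing with $w$ on $Q+Q_K$ and with $v$ elsewhere) are inadvertently swapped, so that as printed the claimed membership in $S_{\Lambda_{2d}}^{(Q+Q_K)}(v)$ and the appeal to \eqref{eq:K} would not literally apply.
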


	\begin{proof}
According to Lemma \ref{l:surjective}, $\xi _g(\Lambda _{2d})=X_{f^{(d)}}$. If we fix $w\in \Lambda _{2d}$ and set
	$$
w'_\mathbf{n}=
	\begin{cases}
v_\mathbf{n}&\textup{if}\enspace \mathbf{n}\in Q+Q_K,
	\\
w_\mathbf{n}&\textup{otherwise},
	\end{cases}
	$$
then $w'\in S_{\Lambda _{2d}}^{(Q+Q_K)}(v)$ and $\pmb{|}\xi _g(w)_\n-\xi _g(w')_\n\pmb{|}<\varepsilon $ for every $\mathbf{n}\in Q$ by \eqref{eq:K}.
	\end{proof}

	\begin{lemm}
	\label{l:separated}
For every finite set $Q\subset \mathbb{Z}^d$ and every $w\in \mathcal{R}_\infty $, the restriction of $\xi _g$ to $S_{\mathcal{R}_\infty }^{(Q)}(w)$ is injective and the set $\xi _g(S_{\mathcal{R}_\infty }^{(Q)}(w))$ is $(Q+Q_K,1/4d)$-separated.
	\end{lemm}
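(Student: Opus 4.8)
The plan is to read off both assertions from Lemma~\ref{l:max} together with the final statement of Proposition~\ref{p:sand}; essentially all of the analytic content has already been absorbed into Lemma~\ref{l:max}. The starting observation is that any two points $v,v'\in S_{\mathcal{R}_\infty }^{(Q)}(w)$ agree with $w$, and hence with each other, on $\mathbb{Z}^d\smallsetminus Q$; their difference $v'-v$ is therefore supported on the finite set $Q$ and lies in $R_d$. Moreover, since $\mathcal{R}_\infty \subset \Lambda _{2d}$, both points lie in $\Lambda _{2d}$ and $v'\in S_{\Lambda _{2d}}^{(Q)}(v)$, so the hypotheses of Lemma~\ref{l:max} are met.

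For injectivity I would suppose $\xi _g(v)=\xi _g(v')$ with $v,v'\in S_{\mathcal{R}_\infty }^{(Q)}(w)$. By part~(1) of Lemma~\ref{l:max} this forces $v'-v\in (f^{(d)})$. On the other hand $v,v'\in\mathcal{R}_\infty =\mathcal{R}_\infty ^{(2d)}$ and $f^{(d)}=f^{(d,2d)}$, so the last assertion of Proposition~\ref{p:sand} tells us that a nonzero element of $R_d$ of the form $v-v'$ can never lie in $f^{(d)}\cdot R_d=(f^{(d)})$. The only way to reconcile the two is $v=v'$, which is the desired injectivity on $S_{\mathcal{R}_\infty }^{(Q)}(w)$.

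For the separation claim I would take two distinct image points $x=\xi _g(v)$ and $x'=\xi _g(v')$ arising from $v,v'\in S_{\mathcal{R}_\infty }^{(Q)}(w)$. Since $x\ne x'$ we have $\xi _g(v)\ne\xi _g(v')$, and part~(2) of Lemma~\ref{l:max} immediately produces an $\mathbf{n}\in Q+Q_K$ with $\pmb{|}x_\mathbf{n}-x'_\mathbf{n}\pmb{|}\ge 1/4d$. This is exactly the statement that $\xi _g(S_{\mathcal{R}_\infty }^{(Q)}(w))$ is $(Q+Q_K,1/4d)$-separated; note that this part does not even use the injectivity proved above, although injectivity guarantees that distinct configurations give rise to distinct, and hence separated, image points.

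I do not expect a genuine obstacle here, as the heavy lifting (unique factorization in $R_d$ and the $\ell ^1$-decay of $g^*\cdot w^{(d)}$ controlling the comparison window $Q_K$) is already packaged inside Lemma~\ref{l:max}, and the non-divisibility statement is supplied by Proposition~\ref{p:sand}. The only point needing a moment's care is the purely bookkeeping verification that configurations in $S_{\mathcal{R}_\infty }^{(Q)}(w)$ satisfy the hypotheses of Lemma~\ref{l:max}, which follows from $\mathcal{R}_\infty \subset \Lambda _{2d}$ together with the finite support of $v'-v$.
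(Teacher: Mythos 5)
Your proposal is correct and follows essentially the same route as the paper: the paper's own proof of Lemma \ref{l:separated} is precisely the one-line combination of the last assertion of Proposition \ref{p:sand} (a nonzero difference $v-v'\in R_d$ of recurrent configurations never lies in $(f^{(d)})$) with the two parts of Lemma \ref{l:max}. You have merely spelled out the bookkeeping the paper leaves implicit, namely that points of $S_{\mathcal{R}_\infty }^{(Q)}(w)$ lie in $\Lambda _{2d}$ and differ by an element of $R_d$ supported in $Q$, so the hypotheses of Lemma \ref{l:max} are satisfied.
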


	\begin{proof}
If $v,v'$ are distinct points in $S_{\mathcal{R}_\infty }^{(Q)}(w)$, then Proposition \ref{p:sand} and Lemma \ref{l:max} show that $\pmb{|}\xi _g(v)_\mathbf{n}-\xi _g(v')_\mathbf{n}\pmb{|}\linebreak[0]\ge 1/4d$ for some $\mathbf{n}\in Q+Q_K$.
	\end{proof}

We write every $h\in R_d$ as $h=\sum_{\mathbf{n}\in\mathbb{Z}^d}h_\mathbf{n}u^\mathbf{n}$ and set $\textup{supp}(h)=\{\mathbf{n}\in\mathbb{Z}^d: h_\mathbf{n}\ne0\}$. For $Q\subset \mathbb{Z}^d$ we put
	\begin{equation}
	\label{eq:R+}
	\begin{gathered}
R(Q)=\{h\in R_d: \textup{supp}(h)\subset Q\},
	\\
R^+(Q)=\{h\in R(Q): h_\mathbf{n}\ge0\enspace \textup{for every}\enspace \mathbf{n}\in\mathbb{Z}^d\},
	\\
S^+(Q)=\{h\in R(Q):h_\mathbf{n}\in\{0,1\}\enspace \textup{for every}\enspace \mathbf{n}\in\mathbb{Z}^d\}.
	\end{gathered}
	\end{equation}
For $L\ge1$, $v\in \Lambda _{2d}$ and $q\ge0$ we set
	\begin{align}
Y_v(q)&=\{w\in S ^{(Q_{L+K+1})}(v):\textup{for every}\enspace \mathbf{n}\in\mathbb{Z}^d, \, 0\le w_\mathbf{n}<2d\enspace \textup{if}\enspace \|\mathbf{n}\|_\textup{max}\ne L+K+1\notag
	\\
&\qquad \qquad \qquad \qquad \qquad \textup{and}\enspace -q\le w_\mathbf{n}<2d\enspace \textup{if}\enspace \|\mathbf{n}\|_\textup{max}=L+K+1\},
	\label{eq:Y}
	\\
&\qquad \qquad Y_v'(q)=\{w\in Y_v(q):\pi _{Q_{L+K}}(w)\in \pi _{Q_{L+K}}(\mathcal{R}_\infty )\notag\}.
	\end{align}

	\begin{lemm}
	\label{l:cover}
Let $L\ge1$, $q\ge0$ and $v\in \Lambda _{2d}$. Then
	\begin{equation}
	\label{eq:equality}
Y_v'(q)= Y_v(q)\smallsetminus \hspace{-4mm}\bigcup_{0\ne h\in S^+(Q_{L+K})}\hspace{-4mm}(Y_v(q+1)-h\cdot f^{(d)}).
	\end{equation}
	\end{lemm}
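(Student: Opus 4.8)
The plan is to read \eqref{eq:equality} as a statement about topplings. Multiplication by $f^{(d)}$ is the toppling operator: for a $0/1$-polynomial $h\in S^+(Q_{L+K})$ with support $S=\mathcal{S}(h)$ one has $(f^{(d)}\cdot h)_\mathbf{n}=2d-\mathsf{N}_S(\mathbf{n})$ when $\mathbf{n}\in S$ and $(f^{(d)}\cdot h)_\mathbf{n}=-\mathsf{N}_S(\mathbf{n})$ when $\mathbf{n}\notin S$, which is exactly the change in heights produced by toppling every site of $S$ once. Thus passing from $w'\in Y_v(q+1)$ to $w=w'-f^{(d)}\cdot h$ is one simultaneous toppling of $S$, and I would prove \eqref{eq:equality} by showing, through the two inclusions, that an element of $Y_v(q)$ fails to have a recurrent restriction to $Q_{L+K}$ exactly when it arises from $Y_v(q+1)$ by toppling a nonempty $S\subset Q_{L+K}$.

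For the inclusion $Y_v'(q)\subset Y_v(q)\smallsetminus\bigcup_h(Y_v(q+1)-h\cdot f^{(d)})$ I argue by contradiction. Suppose $w\in Y_v'(q)$ and simultaneously $w=w'-f^{(d)}\cdot h$ with $0\ne h\in S^+(Q_{L+K})$ and $w'\in Y_v(q+1)$. Writing $S=\mathcal{S}(h)\subset Q_{L+K}$ and using $w'_\mathbf{n}<2d$ for $\mathbf{n}\in Q_{L+K}$, I get $w_\mathbf{n}=w'_\mathbf{n}-2d+\mathsf{N}_S(\mathbf{n})<\mathsf{N}_S(\mathbf{n})$ for every $\mathbf{n}\in S$, so $S$ is a forbidden subconfiguration contained in $Q_{L+K}$. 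But $w\in Y_v'(q)$ means $\pi_{Q_{L+K}}(w)=\pi_{Q_{L+K}}(\tilde w)$ for some $\tilde w\in\mathcal{R}_\infty$, and then $S$ is a forbidden subconfiguration of $\tilde w$, contradicting Proposition \ref{p:sand}. This direction needs only the easy fact that a restriction of a recurrent configuration has no forbidden subconfiguration.

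For the reverse inclusion I prove the contrapositive: given $w\in Y_v(q)$ with $\pi_{Q_{L+K}}(w)\notin\pi_{Q_{L+K}}(\mathcal{R}_\infty)$ I construct the toppling. Running the burning test of Section \ref{s:sandpiles} inside $E=Q_{L+K}$ leaves a nonempty maximal unburnt set $U\subset Q_{L+K}$; it satisfies $w_\mathbf{n}<\mathsf{N}_U(\mathbf{n})$ for $\mathbf{n}\in U$, whereas every burnt $\mathbf{n}\in Q_{L+K}\smallsetminus U$ satisfies $w_\mathbf{n}\ge\mathsf{N}_U(\mathbf{n})$, since it was burnt against a set containing $U$. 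Taking $h\in S^+(Q_{L+K})$ with $\mathcal{S}(h)=U$ and $w'=w+f^{(d)}\cdot h$, so that $w=w'-f^{(d)}\cdot h$, I must check $w'\in Y_v(q+1)$. On $U$ one finds $w'_\mathbf{n}=w_\mathbf{n}+2d-\mathsf{N}_U(\mathbf{n})\in[0,2d)$; on $Q_{L+K}\smallsetminus U$ one finds $w'_\mathbf{n}=w_\mathbf{n}-\mathsf{N}_U(\mathbf{n})\in[0,2d)$ by the burning inequality $w_\mathbf{n}\ge\mathsf{N}_U(\mathbf{n})$; and $w'$ still agrees with $v$ off $Q_{L+K+1}$ since $f^{(d)}\cdot h$ is supported in $Q_{L+K+1}$. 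The remaining and most delicate check is on the boundary shell $\{\mathbf{n}:\|\mathbf{n}\|_\textup{max}=L+K+1\}$, where $Y_v(q+1)$ only permits values down to $-(q+1)$ and where $w'_\mathbf{n}=w_\mathbf{n}-\mathsf{N}_U(\mathbf{n})$ with $w_\mathbf{n}\ge-q$. I expect this to be the main obstacle, and the geometry of the maximum-norm cube resolves it: a site with $\|\mathbf{n}\|_\textup{max}=L+K+1$ has at most one neighbour in $Q_{L+K}$ (only the faces of the cube, not its edges or corners, reach inward), so $\mathsf{N}_U(\mathbf{n})\le1$ and $w'_\mathbf{n}\ge-(q+1)$. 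This is precisely why the shell sits at radius $L+K+1$ while the topplings are confined to $Q_{L+K}$, and it pins down the appearance of $q+1$ in \eqref{eq:equality}.

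Both inclusions rely on identifying $\pi_{Q_{L+K}}(\mathcal{R}_\infty)$ with the set of patterns on $Q_{L+K}$ that carry no forbidden subconfiguration inside $Q_{L+K}$. The nontrivial (extendability) half of this identification I would dispatch separately: given such a pattern, extend it to the configuration equal to $2d-1$ at every site outside $Q_{L+K}$ and verify that it lies in $\mathcal{R}_\infty$. Indeed, a finite forbidden set $F$ meeting $\mathbb{Z}^d\smallsetminus Q_{L+K}$ would, at a site of $F$ of maximal $\|\cdot\|_\textup{max}$, be forced to contain all $2d$ neighbours of that site (its height being $2d-1$), hence contain a site of strictly larger norm — impossible; so $F\subset Q_{L+K}$, contradicting the hypothesis. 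With this identification in hand the two inclusions above combine to give \eqref{eq:equality}.
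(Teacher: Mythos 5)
Your proof is correct, and its skeleton is the paper's: both inclusions are driven by the burning test, and both hinge on the same geometric fact about the shell, namely that a site with $\|\mathbf{n}\|_\textup{max}=L+K+1$ has at most one lattice neighbour in $Q_{L+K}$ (the paper asserts this as $0\ge (h''\cdot f^{(d)})_\mathbf{n}\ge -1$ on the shell without proof; your face/edge/corner observation is precisely the missing justification, and it is what produces the $q+1$ in \eqref{eq:equality}). The genuine difference is in how the toppling polynomial is produced in the harder inclusion. Given $w\in Y_v(q)\smallsetminus Y_v'(q)$, the paper first extracts from the proof of Proposition \ref{p:sand} some nonzero $h\in S^+(Q_{L+K})$ with $(w+h\cdot f^{(d)})_\mathbf{n}<2d$ on $\textup{supp}(h)$, and then repairs negative coordinates iteratively, replacing $h$ by $h+u^\mathbf{n}$ wherever $(w+h\cdot f^{(d)})_\mathbf{n}<0$, which requires a (mild) termination argument before arriving at the final $h''$. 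You instead take $h=\sum_{\mathbf{n}\in U}u^\mathbf{n}$ with $U$ the final unburnt set of the burning test in $Q_{L+K}$ and verify everything in one pass: the unburnt inequality $w_\mathbf{n}\le \mathsf{N}_U(\mathbf{n})-1$ on $U$ gives $w'_\mathbf{n}\in[0,2d)$ there, while the burnt inequality $w_\mathbf{n}\ge \mathsf{N}_{E^{(k)}}(\mathbf{n})\ge \mathsf{N}_U(\mathbf{n})$ (burning against a set containing $U$) gives nonnegativity off $U$, so no iteration is needed. You also make explicit a step the paper elides: Proposition \ref{p:sand} is stated for configurations in $\mathcal{R}_\infty$, and transferring it to the condition $\pi_{Q_{L+K}}(w)\in\pi_{Q_{L+K}}(\mathcal{R}_\infty)$ requires identifying this projection with the set of patterns carrying no forbidden subconfiguration in $Q_{L+K}$; your extension-by-$(2d-1)$ argument (a maximal-norm site of a forbidden set would need all $2d$ neighbours in the set, forcing a site of strictly larger norm) supplies exactly that extendability. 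Net effect: the same two-inclusion structure, but your converse construction is self-contained and termination-free, at the modest cost of invoking the burning-test formalism explicitly where the paper works with ad hoc augmentations of $h$.
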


	\begin{proof}
Suppose that $v\in Y_v'(q)$. According to the proof of Proposition \ref{p:sand} there exists, for every nonzero $h\in S^+(Q_{L+K})$, an $\mathbf{n}\in \textup{supp}(h)\subset Q_{L+K}$ with $(v+h\cdot f^{(d)})_\mathbf{n}> 2d-1$. In particular, $v+h\cdot f^{(d)}(q)\notin Y_v(q+1)$ and $v\notin Y_v(q+1)-h\cdot f^{(d)}$. This shows that
	$$
Y_v'(q)\subset Y_v(q)\smallsetminus \hspace{-4mm}\bigcup_{0\ne h\in S^+(Q_{L+K})}\hspace{-4mm}(Y_v(q+1)-h\cdot f^{(d)}).
	$$

Conversely, if $v\in Y_v(q)\smallsetminus \bigcup_{0\ne h\in S^+(Q_{L+K})} (Y_v(q+1)-h\cdot f^{(d)})$, but $v\notin Y_v'(q)$, then the proof of Proposition \ref{p:sand} allows us to find a nonzero $h\in S^+(Q_{L+K})$ with $(v+h\cdot f^{(d)})_\mathbf{n}<2d$ for every $\mathbf{n}\in \textup{supp}(h)$. If $(v+h\cdot f^{(d)})_\mathbf{n}<0$ for some $\mathbf{n}\in Q_{L+K}$, then $\mathbf{n}\notin\textup{supp}(h)$ and $-2d\le (v+h\cdot f^{(d)})_\mathbf{n}<0$. We replace $h$ by $h'=h+u^\mathbf{n}\in S^+(Q_{L+K})$ and obtain that $0\le (v+h'\cdot f^{(d)})_\mathbf{n}<2d$ for every $\mathbf{n}\in \textup{supp}(h')$. By repeating this process we can find $h''\in S^+(Q_{L+K})$ with $\textup{supp}(h'')\supset \textup{supp}(h)$ such that $0\le (v+h''\cdot f^{(d)})_\mathbf{n}\le 2d-1$ for every $\mathbf{n}\in Q_{L+K}$. Since $0\ge (h''\cdot f^{(d)})_\mathbf{n}\ge -1$ if $\|\mathbf{n}\|_\textup{max}=L+K+1$ and $(h''\cdot f^{(d)})_\mathbf{n}=0$ outside $Q_{L+K+1}$ we see that $v+h''\cdot f^{(d)}\in Y_v'(q+1)$. This contradicts our choice of $v$ and proves \eqref{eq:equality}.
	\end{proof}

	\begin{lemm}
	\label{l:section}
For every $v\in\Lambda _{2d}$ and $L\ge1$ there exists an $h\in R^+(Q_L)$ with $v'=v+h\cdot f^{(d)}\in Y_v'((2d-1)\cdot (2L+1)^d)$.
	\end{lemm}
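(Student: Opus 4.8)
The plan is to build $v'$ by a \emph{filling} (anti-toppling) procedure, running the order-independent toppling calculus of Section \ref{s:sandpiles} in the opposite direction to the stabilization map $T$ of Lemma \ref{l:surjective}. The mechanism rests on the dichotomy already isolated in Proposition \ref{p:sand} and reused in Lemma \ref{l:cover}: if $h'$ is a nonzero $\{0,1\}$-indicator, then adding $h'\cdot f^{(d)}$ raises each coordinate on $\textup{supp}(h')$ by $2d-\mathsf{N}_{\textup{supp}(h')}(\mathbf{n})\ge 0$ and lowers the coordinates on the outer boundary of $\textup{supp}(h')$, with $\sum_{\mathbf{n}}(f^{(d)}\cdot h')_\mathbf{n}=0$ since $f^{(d)}(\mathbf{1})=0$. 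By Proposition \ref{p:sand} a configuration is recurrent on the window precisely when \emph{no} such move can keep all the raised coordinates below $2d$; so non-recurrence always supplies an admissible filling move.

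Concretely I would iterate as follows. Set $v^{(0)}=v$, $h^{(0)}=0$. Given $v^{(i)}=v+h^{(i)}\cdot f^{(d)}$ with $h^{(i)}\in R^+(Q_L)$, if $\pi_{Q_{L+K}}(v^{(i)})\notin\pi_{Q_{L+K}}(\mathcal{R}_\infty)$ then the burning-test characterization of Proposition \ref{p:sand}, applied in the finite window, furnishes a nonzero indicator $h'\in S^+(Q_L)$ all of whose support coordinates stay below $2d$ after the move; I then put $h^{(i+1)}=h^{(i)}+h'$ and $v^{(i+1)}=v+h^{(i+1)}\cdot f^{(d)}$. Each $h^{(i)}$ is automatically nonnegative and supported in the box, so the content of the lemma is that this process (i) terminates and (ii) respects the two-sided bounds defining $Y_v'(q)$ in \eqref{eq:Y}. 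For termination I would use the monovariant of \eqref{eq:D}: every admissible move strictly increases the total mass held inside the window, which is bounded above by its capacity $(2d-1)(2L+1)^d=q$, so the iteration halts after finitely many steps, exactly as in \eqref{eq:increase}--\eqref{eq:tildev}. When it halts, the failure of the dichotomy is precisely the statement that $\pi_{Q_{L+K}}(v')\in\pi_{Q_{L+K}}(\mathcal{R}_\infty)$, i.e.\ $v'$ is recurrent on $Q_{L+K}$; Corollary \ref{c:sand} is invoked here to exclude spurious finite clusters of maximal sites.

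The hard part will be the boundary bookkeeping, not the combinatorial core. One must verify that the inevitable loss of mass created by the filling never pushes a coordinate lying \emph{inside} $Q_{L+K}$ below $0$, the negative values being confined to the outer shell $\|\mathbf{n}\|_\textup{max}=L+K+1$ on which $Y_v(q)$ alone permits them, and that the accumulated deficit there never exceeds $q$ in absolute value. The first point is handled by always choosing admissible moves, which by definition keep the raised coordinates in $\{0,\dots,2d-1\}$, so that only the shell adjacent to the filled region is depressed; the second follows from mass conservation ($\sum_\mathbf{n}(f^{(d)}\cdot h)_\mathbf{n}=0$) together with the capacity bound $q=(2d-1)(2L+1)^d$, which caps the total mass that can ever be moved. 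Finally, the abelian (order-independence) property of the moves guarantees that the resulting $h\in R^+(Q_L)$ is well defined and independent of the choices made along the way, so that $v'=v+h\cdot f^{(d)}\in Y_v'(q)$ as claimed.
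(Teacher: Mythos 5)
Your overall strategy is the paper's own: iterate the dichotomy of Lemma \ref{l:cover} to add filling moves $h'\in S^+(Q_L)$, control the boundary shell via mass conservation (your deficit bound is exactly the computation \eqref{eq:v'}), and stop when the window becomes recurrent. But your termination argument contains a genuine error. You claim that ``every admissible move strictly increases the total mass held inside the window,'' bounded above by the capacity $(2d-1)(2L+1)^d$. This is false: since $f^{(d)}(\mathbf{1})=0$, a move $h'$ whose support does not touch the inner boundary of the window redistributes mass entirely \emph{within} it and leaves $\sum_{\mathbf{n}\in Q_L}v_\mathbf{n}$ unchanged. For instance, $h'=u^{\mathbf{m}}$ with all $2d$ neighbours of $\mathbf{m}$ in $Q_L$ raises $v_\mathbf{m}$ by $2d$ and lowers each neighbour by $1$, with zero net change; and forbidden subconfigurations produced by the burning test can sit anywhere in the interior (a cluster of $0$'s, say), so such mass-conserving moves genuinely occur. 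Plain mass is thus only weakly monotone, and weak monotonicity plus a capacity bound does not force the iteration to halt. The paper avoids this precisely by using the \emph{weighted} functional \eqref{eq:D}, $D_{Q_{L+1}}(v)=\sum_{\mathbf{n}\in Q_{L+1}}v_\mathbf{n}\cdot\|\mathbf{n}\|_\textup{max}^2$ --- which you cite, but then silently replace by unweighted mass. One checks $D_{Q_{L+1}}(v+u^{\mathbf{n}}\cdot f^{(d)})\le D_{Q_{L+1}}(v)-2$ for every $\mathbf{n}\in Q_L$ (note the direction: here $D$ strictly \emph{decreases}; your appeal to \eqref{eq:increase} carries the sign of the toppling direction of Lemma \ref{l:surjective}, whereas in this lemma one \emph{adds} $f^{(d)}$), so each step drops $D$ by at least $2\|h^{(k)}\|_1\ge 2$, while your mass-conservation bound \eqref{eq:v'} confines all coordinates of every $v^{(k)}$ to a fixed finite range, so $D$ is bounded below and the process terminates.

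A secondary inaccuracy in your bookkeeping: you assert that ``only the shell adjacent to the filled region is depressed.'' The depressed sites are the outer lattice boundary of $\textup{supp}(h')$, which in general lies strictly \emph{inside} the window, where $Y_v(q)$ requires coordinates to remain $\ge 0$; a raw burning-test move can therefore create negative interior coordinates. The paper repairs this inside the proof of Lemma \ref{l:cover} by enlarging the support ($h\mapsto h+u^{\mathbf{n}}$ at every interior site driven below $0$, iterated), and since you invoke Lemma \ref{l:cover} for the dichotomy the step is salvageable --- but as written your argument misses it. Finally, the appeal to the abelian property to make $h$ independent of choices is unsubstantiated for this filling procedure and in any case unnecessary: the lemma asserts only existence, and uniqueness is established separately in Proposition \ref{p:correction}.
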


	\begin{proof}
For every $v\in \ell ^\infty (\mathbb{Z}^d,\mathbb{Z})$ we define $D_{Q_{L+1}}(v)$ by \eqref{eq:D}. Since $D_{Q_{L+1}}(v+u^\mathbf{n}\cdot f^{(d)})\le D_{Q_{L+1}}(v)-2$ for every $\mathbf{n}\in Q_L$, $D_{Q_{L+1}}(v+h\cdot f^{(d)})\le D_{Q_{L+1}}(v)-2\|h\|_1$ for every $h\in S^+(Q_L)$.

Suppose that $v\in \Lambda _{2d}$. If $w\notin Y_v'(0)$ then \eqref{eq:equality} shows that we can find a nonzero $h^{(1)}\in S^+(Q_L)$ with $v^{(1)}=v+h^{(1)}\cdot f^{(d)}\in Y_v(1)$, and the first paragraph of this proof shows that $D_{Q_{L+1}}(v^{(1)})\le D_{Q_{L+1}}(v)-2\|h^{(1)}\|_1$.

If $v^{(1)}\notin Y_v'(1)$ we can repeat this argument and find a nonzero $h^{(2)}\in S^+(Q_L)$ with $v^{(2)}=v^{(1)}+h^{(2)}\cdot f^{(d)}\in Y_v(2)$ and $D_{Q_{L+1}}(v^{(2)})\le D_{Q_{L+1}}(v)-2\|h^{(1)}\|_1-2\|h^{(2)}\|_1$.

Proceeding by induction, we choose nonzero elements $h^{(1)},\dots ,h^{(m)}\in S^+(Q_L)$ with $v^{(k)}=v+(h^{(1)}+\dots +h^{(k)})\cdot f^{(d)}\in Y_v(m)$ for every $k=1,\dots ,m$.

We claim that $v^{(k)}\in Y_v((2d-1)\cdot (2L+1)^d)$ for every $k\ge1$, and that this process has to stop, i.e., that
	\begin{equation}
	\label{eq:claim}
v'=v^{(m)}=v+(h^{(1)}+\dots +h^{(m)})\cdot f^{(d)}\in Y_v'((2d-1)\cdot (2L+1)^d)
	\end{equation}
for some $m\ge1$.

In order to verify this we assume that we have found $h^{(1)},\dots ,h^{(k)}\linebreak[0]\in S^+(L)$ with $v^{(k)}=v+(h^{(1)}+\dots +h^{(k)})\cdot f^{(d)}\in Y_v(k)$. Since $\sum_{\mathbf{n}\in Q_{L+1}} v^{(k)}_\mathbf{n}=\sum_{\mathbf{n}\in Q_{L+1}} v_\mathbf{n}$, $0\le v^{(k)}_\mathbf{n}\le2d-1$ for $\mathbf{n}\in Q_L$, $v^{(k)}_\mathbf{n}\le v_\mathbf{n}$ if $\|\mathbf{n}\|_\textup{max}=L+1$ and $v^{(k)}_\mathbf{n}=v_\mathbf{n}$ for every $\mathbf{n}\notin Q_{L+1}$, we know that
	\begin{equation}
	\label{eq:v'}
	\begin{aligned}
(2d-1)\cdot 2d\cdot (2L+1)^{d-1}&\ge \sum_{\{\mathbf{n}:\|\mathbf{n}\|_\textup{max}=L+1\}} v_\mathbf{n}\ge \sum_{\{\mathbf{n}:\|\mathbf{n}\|_\textup{max}=L+1\}} v^{(k)}_\mathbf{n}
	\\
&\ge \sum_{\{\mathbf{n}:\|\mathbf{n}\|_\textup{max}=L+1\}}v_\mathbf{n} -\sum_{\mathbf{n}\in Q_L}v^{(k)}_\mathbf{n}
	\\
&\ge -(2d-1)\cdot (2L+1)^d,
	\end{aligned}
	\end{equation}
so that $v^{(k)}\in Y_v((2d-1)\cdot (2L+1)^d)$ for every $k\ge1$.

Furthermore,
	\begin{align*}
D_{Q_{L+1}}(v^{(k)})&=\smash[t]{D_{Q_{L+1}}(v)-2\sum_{j=1}^k \|h^{(j)}\|_1\le D_{Q_{L+1}}(v)-2k}
	\\
&< (L+1)^2\cdot (2d-1)\cdot (2L+3)^d -2k
	\end{align*}
and
	$$
D_{Q_{L+1}}(v^{(k)})\ge -(L+1)^2\cdot (2d-1)\cdot (2L+1)^d\cdot |Q_{L+1}\smallsetminus Q_L|
	$$
for every $k$, so that the integer $k$ has to remain bounded. This shows that our inductive process has to terminate, which proves \eqref{eq:claim}.
	\end{proof}

Before we complete the proof of Theorem \ref{t:surjective} we state another consequence of the Lemmas \ref{l:cover} and \ref{l:section}.

	\begin{prop}
	\label{p:correction}
Let $v\in\ell ^\infty (\mathbb{Z}^d,\mathbb{Z})$ and $M\ge1$. Then there exists a unique $h\in R_d$ with the following properties.
	\begin{enumerate}
	\item
$\textup{supp}(h)=\{\mathbf{m}\in\mathbb{Z}^d: h_\mathbf{m}\ne0\} \subset Q_M$;
	\item
If $v'=v+h\cdot f^{(d)}$, then $\pi _{Q_M}(v')\in \pi _{Q_M}(\mathcal{R}_\infty )$;
	\item
$v_\mathbf{m}=v'_\mathbf{m}$ for every $\mathbf{m}\in\mathbb{Z}^d$ with $\|\mathbf{m}\|_\textup{max}>M+1$;
	\item
$\sum_{\{\mathbf{n}: \|\mathbf{n}\|_\textup{max}=M+1\}} |v'_\mathbf{n}|\le (2M+3)^d\cdot \|v\|_\infty $.
	\end{enumerate}
	\end{prop}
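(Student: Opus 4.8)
The plan is to read multiplication by $f^{(d)}$, restricted to the box $Q_M$, as the toppling operator of the abelian sandpile on $Q_M$ with dissipation at the boundary. Since $\mathrm{supp}(h)\subset Q_M$ and $f^{(d)}$ has range one, $(h\cdot f^{(d)})_{\mathbf n}$ can be nonzero only for $\mathbf n\in Q_{M+1}$; this makes (1) a constraint we impose by construction and makes (3) automatic, because $(h\cdot f^{(d)})_{\mathbf n}=0$ whenever $\|\mathbf n\|_{\mathrm{max}}>M+1$. For existence I would run the stabilization procedure of Lemmas \ref{l:cover} and \ref{l:section} inside $Q_M$: first bring all coordinates on $Q_M$ into $\{0,\dots,2d-1\}$ by the abelian toppling/stabilization procedure, adding suitable multiples of $u^{\mathbf n}\cdot f^{(d)}$ for $\mathbf n\in Q_M$ (this terminates by finiteness of $Q_M$ and the abelian property), and then apply the recurrence-correction of Lemma \ref{l:section} to reach a pattern with $\pi_{Q_M}(v')\in\pi_{Q_M}(\mathcal R_\infty)$. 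The potential-function monotonicity behind \eqref{eq:equality}--\eqref{eq:claim} (the functional $D_M$ of \eqref{eq:D}) guarantees that the process stops after finitely many topplings, so the accumulated correction lies in $R_d$ with support in $Q_M$, yielding (1), (2) and (3).

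For uniqueness I would use Proposition \ref{p:sand}. Suppose $h,h'$ both satisfy (1)--(4) and set $\delta=h-h'\in R_d$ with $\mathrm{supp}(\delta)\subset Q_M$; writing $v'=v+h\cdot f^{(d)}$ and $v''=v+h'\cdot f^{(d)}$ we have $v'=v''+\delta\cdot f^{(d)}$. If $\delta\ne0$ we may assume, after possibly interchanging $h$ and $h'$, that $M_\delta=\max_{\mathbf m}\delta_{\mathbf m}>0$, so that $\mathcal S_{\mathrm{max}}(\delta)$ (cf. \eqref{eq:Smax}) is a nonempty finite subset of $Q_M$. Since $\pi_{Q_M}(v')$ is recurrent, all its coordinates lie in $\{0,\dots,2d-1\}$, and the computation \eqref{eq:violation} from the proof of Proposition \ref{p:sand}, applied with $v''$ in place of $v$ and $\delta$ in place of $h$, forces $v''_{\mathbf n}\le\mathsf N_{\mathcal S_{\mathrm{max}}(\delta)}(\mathbf n)-1$ for every $\mathbf n\in\mathcal S_{\mathrm{max}}(\delta)$. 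As $\mathcal S_{\mathrm{max}}(\delta)$ is a finite subset of $Q_M$, this shows $\pi_{\mathcal S_{\mathrm{max}}(\delta)}(v'')\notin\mathcal P_{\mathcal S_{\mathrm{max}}(\delta)}$, contradicting the recurrence of $\pi_{Q_M}(v'')$. Hence $\delta=0$ and $h=h'$.

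Finally, the bound (4) should come from conservation of mass. Since $f^{(d)}(\mathbf 1)=0$, the correction $h\cdot f^{(d)}$ has total sum zero, so $\sum_{\mathbf n\in Q_{M+1}}v'_{\mathbf n}=\sum_{\mathbf n\in Q_{M+1}}v_{\mathbf n}$, while $v'=v$ outside $Q_{M+1}$ by (3). Using $0\le v'_{\mathbf n}\le 2d-1$ on $Q_M$ (property (2)) and the count $|Q_{M+1}|=(2M+3)^d$, the entire excess or deficit of mass is confined to the single layer $\{\mathbf n:\|\mathbf n\|_{\mathrm{max}}=M+1\}$, and tracking it exactly as in the estimate \eqref{eq:v'} of Lemma \ref{l:section} should give $\sum_{\{\mathbf n:\|\mathbf n\|_{\mathrm{max}}=M+1\}}|v'_{\mathbf n}|\le(2M+3)^d\cdot\|v\|_\infty$. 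I expect this quantitative step to be the main obstacle: bounding $\sum|v'_{\mathbf n}|$ rather than the signed sum $\sum v'_{\mathbf n}$ requires controlling the sign of the overflow on the boundary layer, which is precisely what the two-sided inequalities \eqref{eq:v'} are designed to do, and one must verify that this control persists when the initial datum $v$ is an arbitrary element of $\ell^\infty(\mathbb Z^d,\mathbb Z)$ rather than a configuration in $\Lambda_{2d}$.
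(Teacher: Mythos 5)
Your existence and uniqueness arguments are the paper's own. For existence the paper does exactly what you propose: it first extracts, as in the proof of Lemma \ref{l:cover}, a polynomial $h^-$ with nonnegative coefficients and $\textup{supp}(h^-)\subset Q_M$ such that $(v-h^-\cdot f^{(d)})_\mathbf{n}<2d$ on $Q_M$ (your toppling phase), and then, as in Lemma \ref{l:section}, a polynomial $h^+\ge0$ supported in $Q_M$ so that $v'=v+(h^+-h^-)\cdot f^{(d)}$ has recurrent restriction to $Q_M$ (your filling phase); (1) and (3) are immediate because $\textup{supp}(h\cdot f^{(d)})\subset Q_{M+1}$. For uniqueness the paper pads $v''$ outside $Q_M$ with the maximal stable constant to obtain an element of $\mathcal{R}_\infty$ and then invokes Proposition \ref{p:sand}; your localized version --- applying the computation \eqref{eq:violation} to $\delta=h-h'$ on $\mathcal{S}_{\textup{max}}(\delta)\subset Q_M$, using only $0\le v'_\mathbf{n},v''_\mathbf{n}\le 2d-1$ on $Q_M$ and the finiteness of $\mathcal{S}_{\textup{max}}(\delta)$ --- is the same mechanism with the padding unpacked, and it is correct.

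Your reservation about (4) is justified, and it points at a genuine defect that the paper's one-line justification (``(4) follows from $\sum_{\mathbf{n}\in Q_{M+1}}v_\mathbf{n}=\sum_{\mathbf{n}\in Q_{M+1}}v'_\mathbf{n}$'') does not repair: conservation controls the signed boundary sum only, and for arbitrary $v\in\ell^\infty(\mathbb{Z}^d,\mathbb{Z})$ the sign of the layer coordinates cannot be controlled well enough to get the stated constant. Concretely, take $v\equiv -N$ for any $N\ge0$. Then no toppling occurs ($h^-=0$), so $h\ge0$ and every layer coordinate satisfies $v'_\mathbf{n}=-N-\sum_{\mathbf{k}\in Q_M,\,\|\mathbf{k}-\mathbf{n}\|_1=1}h_\mathbf{k}\le0$, whence $\sum_{\{\mathbf{n}:\|\mathbf{n}\|_\textup{max}=M+1\}}|v'_\mathbf{n}|=-\sum_{\{\mathbf{n}:\|\mathbf{n}\|_\textup{max}=M+1\}}v'_\mathbf{n}=N(2M+3)^d+\sum_{\mathbf{n}\in Q_M}v'_\mathbf{n}$; since a recurrent pattern on $Q_M$ cannot contain two adjacent zeros, $\sum_{\mathbf{n}\in Q_M}v'_\mathbf{n}>0$ (in fact of order $(2M+1)^d$), so the bound $(2M+3)^d\|v\|_\infty$ is strictly exceeded --- already for $v\equiv0$. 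What the two-phase monotonicity you invoke does yield (layer coordinates only increase in the toppling phase and only decrease in the filling phase, with total fluxes $\sum_{Q_M}(v-\tilde v)$ and $\sum_{Q_M}(v'-\tilde v)$ for the intermediate configuration $\tilde v=v-h^-\cdot f^{(d)}$) is the inequality with an additive correction of order $(2d-1)(2M+1)^d$, and this weaker form suffices for every use of Proposition \ref{p:correction} in the paper. So the ``main obstacle'' you flagged is real, but it is a flaw of the statement and of the paper's own proof of (4), not of your argument; everywhere the paper's proof is actually complete, yours coincides with it.
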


	\begin{proof}
The proof of Lemma \ref{l:cover} allows us to find a polynomial $h^-\in R_d$ with nonnegative coefficients and $\textup{supp}(h^-)\subset Q_M$ such that $(v-h^-\cdot f^{(d)})_\mathbf{n}<2d$ for every $\mathbf{n}\in Q_M$. Next we proceed as in the proof of Lemma \ref{l:section} and choose a polynomial $h^+\in R_d$ with nonnegative coefficients and $\textup{supp}(h^+)\subset Q_M$ such that $v'=v+(h^+-h^-)\cdot f^{(d)}$ satisfies (2). Condition (3) holds obviously, and (4) follows from the fact that $\sum_{\mathbf{n}\in Q_{M+1}}v_\mathbf{n}=\sum_{\mathbf{n}\in Q_{M+1}}v_\mathbf{n}'$.

In order to verify the uniqueness of $h=h^+-h^-$ we assume that $h'\in R_d$ is another polynomial with $\textup{supp}(h')\subset Q_M$ such that $v''=v+h'\cdot f^{(d)}$ satisfies Condition (2) above. We assume without loss in generality that $h_\mathbf{m}>h'_\mathbf{m}$ for some $\mathbf{m}\in Q_M$ and set $g=h-h'$ and
	$$
w_\mathbf{n}=
	\begin{cases}
v_\mathbf{n}''&\textup{if}\enspace \mathbf{n}\in Q_M,
	\\
2d&\textup{otherwise}.
	\end{cases}
	$$
Then $w\in\mathcal{R}_\infty $ and $(w+g\cdot g^{(d)})_\mathbf{n}=v_\mathbf{n}<2d$ for every $\mathbf{n}\in Q_M$. Since $\textup{supp}(g)\subset Q_M$ and $g_\mathbf{n}>0$ for some $\mathbf{n}\in Q_M$ this contradicts Proposition \ref{p:sand}.
	\end{proof}

	\begin{proof}[Proof of Theorem \ref{t:surjective}]
We fix $\varepsilon >0$ and choose $K$ according to \eqref{eq:K}. Lemma \ref{l:section} and \eqref{eq:v'} show that $X_{f^{(d)}}=\xi _g(\Lambda _{2d})=\xi _g(\Lambda _{2d}(L+K+1,(2d-1)\cdot (2L+2K+1)^d))$, where
	\begin{equation}
	\label{eq:LambdaM}
	\begin{aligned}
\Lambda _{2d}(M,q)&=\Bigl\{v\in\ell ^\infty (\mathbb{Z}^d,\mathbb{Z}):v_\mathbf{m}<2d\enspace \textup{for every}\enspace \mathbf{n}\in\mathbb{Z}^d,
	\\
&\qquad v_\mathbf{n}\ge0\enspace \textup{for every}\enspace \mathbf{n}\in\mathbb{Z}^d\enspace \textup{with}\enspace \|\mathbf{n}\|_\textup{max} >M+1,
	\\
&\qquad \sum\nolimits_{\{\mathbf{n}\in\mathbb{Z}^d: \|\mathbf{n}\|_\textup{max}=M+1\}}v_\mathbf{n}\ge -q\enspace \textup{and}\enspace \pi _{Q_M}(v)\in\pi _{Q_M}(\mathcal{R}_\infty )\Bigr\}.
	\end{aligned}
	\end{equation}
Exactly the same argument as in the proof of Lemma \ref{l:surjective} shows that $\xi _g(\mathcal{R}_\infty )=X_{f^{(d)}}$.

Since $\xi _g(\mathcal{R}_\infty )=X_{f^{(d)}}$ we know that
	\begin{equation}
	\label{eq:entropy3}
h_\textup{top}(\sigma _{\mathcal{R}_\infty })\ge h_\textup{top}(\alpha _{f^{(d)}})=\int_0^1\cdots \int_0^1 \log\,f^{(d)}(e^{2\pi is_1},\dots ,e^{2\pi is_d})\,ds_1\cdots ds_d
	\end{equation}
(cf. \cite{LSW} or \cite[Theorem 18.1]{DSAO}).

In order to prove the reverse inequality we note that $\xi _g$ is injective on $S_{\mathcal{R}_\infty }^{(Q_L)}(v)$ for every $v\in\mathcal{R}_\infty $ and $L\ge1$ and that $\xi _g(S_{\mathcal{R}_\infty }^{(Q_L)}(v))$ is a $(Q_{L+K},1/4d)$-separated subset of $X_{f^{(d)}}$, by Proposition \ref{p:sand} and Lemma \ref{l:max}. In particular, if $\bar{v}\in\mathcal{R}_\infty $ is given by
	\begin{equation}
	\label{eq:barv}
\bar{v}_\mathbf{n}=2d-1\enspace \textup{for every}\enspace \mathbf{n}\in\mathbb{Z}^d,
	\end{equation}
then $\bigl|\pi _{Q_L}(S_{\mathcal{R}_\infty }^{(Q_L)}(\bar{v}))\bigr|=\bigl|\pi _{Q_L}(\mathcal{R}_\infty )\bigr|$ for every $L\ge1$.

For every $L\ge0$ we denote by $n(L+K)$ the maximal size of a $(Q_{L+K},1/4d)$-separated set in $X_{f^{(d)}}$. From the definition of topological entropy we obtain that
	\begin{equation}
	\label{eq:Deltasize}
	\begin{aligned}
h_\textup{top}(\sigma _{\mathcal{R}_\infty })&=\lim_{L\to\infty }\frac{1}{|Q_L|} \log\,\bigl|\pi _{Q_L}(\mathcal{R}_\infty )\bigr|=\lim_{L\to\infty }\frac{1}{|Q_L|} \log\,\bigl|S_{\mathcal{R}_\infty }^{(Q_L)}(\bar{v})\bigr|
	\\
&=\lim_{L\to\infty }\frac{1}{|Q_L|} \log\,\bigl|\xi _g(S_{\mathcal{R}_\infty }^{(Q_L)}(\bar{v}))\bigr|\le \lim_{L\to\infty }\frac{1}{|Q_L|} \log n(L+K)
	\\
& = \lim_{L\to\infty }\frac{1}{|Q_{L+K}|} \log n(L+K) = h_\textup{top}(\alpha _f^{(d)}),
	\end{aligned}
	\end{equation}
which completes the proof of the theorem.
	\end{proof}

	\begin{rema}
	\label{r:surjective}
The expression \eqref{eq:entropy} for the topological entropy of $\sigma _{\mathcal{R}_\infty }$ can be found in \cite[p. 56]{Dhar2}. By using the fact that $\alpha _{f^{(d)}}$ and $\sigma _{\mathcal{R}_\infty }$ have the same topological entropy one can prove Theorem \ref{t:surjective} a little more directly: the Lemmas \ref{l:spanning} and \ref{l:separated} imply that the restriction of $\alpha $ to the closed, shift-invariant subset $\xi _g(\mathcal{R}_\infty )\subset X_{f^{(d)}}$ has the same topological entropy as $\alpha _{f^{(d)}}$. Since the Haar measure $\lambda _{X_{f^{(d)}}}$ is the unique measure of maximal entropy for $\alpha _{f^{(d)}}$ by \cite{LSW}, $\xi _g(\mathcal{R}_\infty )$ has to coincide with $X_{f^{(d)}}$, as claimed in Theorem \ref{t:surjective}.
	\end{rema}

	\begin{theo}
	\label{t:sand2}
For every $w\in \mathcal{R}_\infty $ and $L\ge1$ we denote by $\nu _L^{(w)}$ the equidistributed probability measure on the set $S_{\mathcal{R}_\infty }^{(Q_L)}(w)$ in \eqref{eq:DeltaQ}. Fix $w\in \mathcal{R}_\infty $ and let $\mu ^{(w)}$ be any limit point of the sequence of probability measures
	$$
\mu _L^{(w)}=\frac1{|Q_L|} \sum_{\mathbf{k}\in Q_L}\sigma _*^\mathbf{k}\nu _L^{(w)}
	$$
as $L\to\infty $. Then $\mu ^{(w)}$ is a measure of maximal entropy on $\mathcal{R}_\infty $ and $(\xi _g)_*\mu ^{(w)}=\lambda _{X_{f^{(d)}}}$ for every $g\in \tilde{I}_d$.

In fact, if $\mu $ is any shift-invariant probability measure of maximal entropy on $\mathcal{R}_\infty $, then $(\xi _g)_*\mu =\lambda _{X_{f^{(d)}}}$ for every $g\in \tilde{I}_d$.
	\end{theo}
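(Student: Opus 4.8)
The plan is to deduce both assertions from a single principle: a shift-invariant probability measure on $\mathcal{R}_\infty$ whose image under $\xi_g$ is $\alpha_{f^{(d)}}$-invariant and has entropy $h(\alpha_{f^{(d)}})$ must be carried to Haar measure, since $\lambda_{X_{f^{(d)}}}$ is the \emph{unique} measure of maximal entropy for $\alpha_{f^{(d)}}$ (cf.\ \cite{LSW}). Throughout I use that $\xi_g\circ\sigma^{\mathbf{n}}=\alpha^{\mathbf{n}}\circ\xi_g$ by \eqref{eq:equivariance}, so the $\xi_g$-image of a shift-invariant measure is automatically $\alpha_{f^{(d)}}$-invariant, and that $h_{\textup{top}}(\sigma_{\mathcal{R}_\infty})=h(\alpha_{f^{(d)}})$ by Theorem \ref{t:surjective}.

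For the measures $\mu^{(w)}$ I would first treat the push-forward. By \eqref{eq:equivariance}, $(\xi_g)_*\mu_L^{(w)}=\frac1{|Q_L|}\sum_{\mathbf{k}\in Q_L}\alpha_*^{\mathbf{k}}(\xi_g)_*\nu_L^{(w)}$, and since $\xi_g$ is injective on $S_{\mathcal{R}_\infty}^{(Q_L)}(w)$ (Lemma \ref{l:separated}), the measure $(\xi_g)_*\nu_L^{(w)}$ is the equidistribution on the finite set $E_L=\xi_g(S_{\mathcal{R}_\infty}^{(Q_L)}(w))$, which is $(Q_{L+K},1/4d)$-separated. Thus $(\xi_g)_*\mu^{(w)}$ is a weak$^*$ limit of translation-averages of uniform measures on separated sets, and Misiurewicz's lemma (the constructive half of the variational principle) gives $h_{(\xi_g)_*\mu^{(w)}}(\alpha)\ge\limsup_L|Q_{L+K}|^{-1}\log|E_L|$. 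The extra ingredient I need is the boundary-independence estimate $\lim_L|Q_L|^{-1}\log|S_{\mathcal{R}_\infty}^{(Q_L)}(w)|=h(\alpha_{f^{(d)}})$ for every $w\in\mathcal{R}_\infty$; the upper bound is immediate from $\pi_{Q_L}(S_{\mathcal{R}_\infty}^{(Q_L)}(w))\subseteq\pi_{Q_L}(\mathcal{R}_\infty)$, while for the lower bound I would surround an inner box $Q_{L-r}$ by a shell of maximal heights $2d-1$ and observe that in the burning algorithm such a shell ignites completely as soon as the fire reaches it, decoupling the inner box (with wired boundary) from $w$; the number of admissible inner patterns then grows at rate $h(\alpha_{f^{(d)}})$ exactly as in Theorem \ref{t:surjective}, and $|Q_{L-r}|/|Q_L|\to1$. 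Since $|E_L|=|S_{\mathcal{R}_\infty}^{(Q_L)}(w)|$ and $|Q_{L+K}|/|Q_L|\to1$, this forces $h_{(\xi_g)_*\mu^{(w)}}(\alpha)\ge h(\alpha_{f^{(d)}})$, hence $(\xi_g)_*\mu^{(w)}=\lambda_{X_{f^{(d)}}}$. Finally $\mu^{(w)}$ is itself of maximal entropy, because the factor inequality gives $h_{\mu^{(w)}}(\sigma)\ge h_{(\xi_g)_*\mu^{(w)}}(\alpha)=h(\alpha_{f^{(d)}})$, and the reverse inequality is Theorem \ref{t:surjective}.

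For the final assertion let $\mu$ be an arbitrary shift-invariant measure of maximal entropy. Since $(\xi_g)_*$ is affine and continuous and the measures of maximal entropy form a face of the simplex of invariant measures, it suffices to treat ergodic $\mu$. Writing $\nu=(\xi_g)_*\mu$ and disintegrating $\mu$ over the tail $\sigma$-algebra $\mathcal{T}_L$ generated by the coordinates outside $Q_L$ (whose atoms are the sets $S_{\mathcal{R}_\infty}^{(Q_L)}(w)$), the injectivity of $\xi_g$ on each atom, the $(Q_{L+K},1/4d)$-separation of its image, and concavity of entropy together yield $h_\nu(\alpha)\ge h_\nu(\alpha,\mathcal{D})\ge\liminf_L|Q_L|^{-1}H_\mu(\pi_{Q_L}\mid\mathcal{T}_L)$, where $\mathcal{D}$ is the partition of $X_{f^{(d)}}$ distinguishing the origin coordinate to precision $1/4d$ (only a lower bound is used, so the nonexpansiveness of $\alpha_{f^{(d)}}$ is harmless). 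Thus the whole matter reduces to showing that, for a measure of maximal entropy, the erasure-entropy rate $\liminf_L|Q_L|^{-1}H_\mu(\pi_{Q_L}\mid\mathcal{T}_L)$ equals $h(\alpha_{f^{(d)}})$; equivalently, that $\xi_g$ loses no entropy, i.e.\ the relative entropy $h_\mu(\sigma\mid\xi_g)$ vanishes.

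This last step is where I expect the real difficulty to lie. The soft estimates above only give $h_\nu(\alpha)\ge h(\alpha_{f^{(d)}})-\iota$, with $\iota=\lim_L|Q_L|^{-1}I_\mu(\pi_{Q_L};\mathcal{T}_L)$ the bulk mutual information between a box and its exterior, and for a general invariant measure $\iota$ need not vanish; it is precisely the maximal-entropy hypothesis that must force $\iota=0$. The structural input to exploit is the rigidity of Lemma \ref{l:max}(1) (via the last clause of Proposition \ref{p:sand} and unique factorization in $R_d$): two recurrent points that agree off a finite set and share the same $\xi_g$-image must coincide, so the fibres of $\xi_g$ meet every tail-class in at most one point. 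I would combine this with the quasi-locality of $g^*\cdot w^{(d)}$ (the decay $|(g^*\cdot w^{(d)})_{\mathbf{n}}|=\mathcal{O}(\|\mathbf{n}\|^{-d-1})$ from Theorem \ref{t:homoclinic}) to argue that on a measure of maximal entropy the fibre direction carries no entropy, so that $h_\mu(\sigma\mid\xi_g)=0$ and hence $h_\nu(\alpha)=h(\alpha_{f^{(d)}})$; uniqueness of the measure of maximal entropy then gives $\nu=\lambda_{X_{f^{(d)}}}$. Controlling the interplay between this exact algebraic rigidity and the purely entropic estimate is, I expect, the crux of the proof.
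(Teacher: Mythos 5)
Your first paragraph (the assertion about $\mu^{(w)}$) is correct and is essentially the paper's own argument: push $\nu_L^{(w)}$ forward, use the injectivity and $(Q_{L+K},1/4d)$-separation from Lemmas \ref{l:max} and \ref{l:separated}, apply the Misiurewicz-type half of the variational principle (the paper invokes the inequality $(*)$ of \cite[Theorem 8.6]{Walters} with a partition of $\mathbb{T}$ into $8d$ intervals whose boundaries are null), and conclude from uniqueness of the maximal measure on $X_{f^{(d)}}$ \cite{LSW}. Even your shell-of-$(2d-1)$'s decoupling is exactly what underlies the paper's cardinality bound $|\xi_g(S_{\mathcal{R}_\infty}^{(Q_L)}(w))|\ge |\pi_{Q_{L-1}}(\mathcal{R}_\infty)|$, so this part is sound and matches the source.

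The genuine gap is the final assertion, for an \emph{arbitrary} measure $\mu$ of maximal entropy, and you in effect concede it: your reduction leaves you needing $\lim_L |Q_L|^{-1}I_\mu(\pi_{Q_L};\mathcal{T}_L)=0$, and the ingredients you offer (the rigidity in Lemma \ref{l:max}(1) and the decay $|(g^*\cdot w^{(d)})_{\mathbf{n}}|=\mathcal{O}(\|\mathbf{n}\|^{-d-1})$) do not bear on why maximality of entropy forces this. Note that $\mathcal{R}_\infty$ is \emph{not} a shift of finite type (recurrence is an intersection of infinitely many burning-test conditions over all finite $F$), so no Lanford--Ruelle-type principle (``maximal entropy $\Rightarrow$ uniform/Gibbs conditional distributions'') is available off the shelf; and injectivity of $\xi_g$ on tail-atoms only yields $h_\nu(\alpha)\ge h_\mu(\sigma)-\iota$, which is vacuous without control of $\iota$. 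The paper closes precisely this hole by a combinatorial swapping argument rather than an information-theoretic one: for ergodic $\mu$ of maximal entropy, fix a cylinder $\mathcal{O}_z$, $z\in\pi_{Q_M}(\mathcal{R}_\infty)$, with $\mu(\mathcal{O}_z)>0$, and let $z'$ agree with $z$ on $Q_{M-1}$ but carry the value $2d-1$ on the ring $\{\|\mathbf{n}\|_{\textup{max}}=M\}$. If $\mu(\mathcal{O}_{z'})=0$, then along the positive-density set of sites $3M\mathbf{m}$ where the orbit visits $\mathcal{O}_z$ (ergodic theorem) one may independently substitute $z'$ --- the maximal ring guarantees every such patched point stays in $\mathcal{R}_\infty$ --- and randomizing these substitutions strictly increases entropy, contradicting maximality. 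Iterating, all cylinders with maximal ring and recurrent interior have \emph{equal} $\mu$-measure, i.e.\ conditioned on a maximal ring the interior pattern is uniformly distributed; this recreates the equidistributed measures of the first part, the separated-set estimate runs again, and one gets $h((\xi_g)_*\mu)=h(\lambda_{X_{f^{(d)}}})$, hence $(\xi_g)_*\mu=\lambda_{X_{f^{(d)}}}$. In short: the decoupling-shell trick you used only for the cardinality lower bound is what must be promoted to a measure-theoretic statement about conditional distributions; your reformulation via bulk mutual information is not carried out and, as written, leaves the theorem's second claim unproved.
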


	\begin{proof}
We fix $w\in\mathcal{R}_\infty $. Let $L\ge1$ and let $\tilde{\nu }_L=(\xi _g)_*\nu _L^{(w)}$ be the equidistributed probability measure on the $(Q_{L+K},1/4d)$-separated set $\xi _g(S_{\mathcal{R}_\infty }^{(Q_L)}(w))$ of cardinality $\ge \bigl|\pi _{Q_{L-1}}(\mathcal{R}_\infty )\bigr|$.

We set $\tilde{\mu }_L^{(w)}=(\xi _g)_*\mu _L^{(w)}=\frac1{|Q_L|} \sum_{\mathbf{k}\in Q_L}(\alpha _{f^{(d)}}^\mathbf{k})_*\tilde{\nu }_L^{(w)}$. By choosing a suitable subsequence $(L_k,\,k\ge1)$ of the natural numbers we may assume that $\lim_{k\to\infty }\mu _{L_k}^{(w)}=\mu ^{(w)}$ and $\lim_{k\to\infty }\tilde{\mu }_{L_k}^{(w)}\linebreak[0]=\tilde{\mu }^{(w)}=(\xi _g)_*\mu ^{(w)}$.

We denote by $\mu '=(\pi _{\{\mathbf{0}\}})_*\tilde{\mu }^{(w)}$ the projection of $\tilde{\mu }^{(w)}$ onto the zero coordinate in $X_{f^{(d)}}$ and choose a partition $\{I_1,\dots ,I_{8d}\}$ of $\mathbb{T}$ into half-open intervals of length $1/8d$ such that the endpoints of these intervals all have $\mu '$-measure zero. For $i=1,\dots ,8d$ we set $A_i=\{x\in X_{f^{(d)}}:x_\mathbf{0}\in I_i\}$ and observe that $\tilde{\mu }^{(w)}(\partial A_i)=0$. We write $\zeta =\{A_1,\dots ,A_{8d}\}$ for the resulting partition of $X_{f^{(d)}}$.

For every $L\ge1$ we set $\zeta _L=\bigvee_{\mathbf{k}\in Q_{L+K}}\alpha _{f^{(d)}}^{-\mathbf{k}}(\zeta )$. Since each atom of $\zeta _L$ contains at most one atom of $\tilde{\nu }_L^{(w)}$ (by Lemma \ref{l:separated}) and all these atoms have equal mass, $H_{\tilde{\nu }_L^{(w)}}(\zeta _L)=\log\,|S_{\mathcal{R}_\infty }^{(Q_L)}(w)|$.

Exactly the same argument as in the proof of the inequality $(*)$ in \cite[Theorem 8.6]{Walters} shows that, for every $M,L\ge1$ with $2M+2K<L$,
	\begin{align*}
\frac{|Q_M|}{|Q_L|}\log\,|S_{\mathcal{R}_\infty }^{(Q_L)}(w)|&=H_{\tilde{\nu }_L^{(w)}}(\zeta _M)\le H_{\tilde{\mu }_L^{(w)}}(\zeta _M)
	\\
&+\frac{|Q_{M+K}|\cdot (|Q_{L+K}|-|Q_{L-M-K}|}{|Q_L|}\cdot \log (8d).
	\end{align*}
By setting $L=L_k$ and letting $k\to\infty $ we obtain from \eqref{eq:Deltasize} that
	$$
|Q_M|\cdot h_\textup{top}(\alpha _{f^{(d)}})\le \lim_{k\to\infty }H_{\tilde{\mu }_{L_k}^{(w)}}(\zeta _M) = H_{\tilde{\mu }^{(w)}}(\zeta _M)
	$$
for every $M\ge1$, and hence that
	$$
h_\textup{top}(\alpha _{f^{(d)}})\le \lim_{M\to\infty }\frac{1}{|Q_{M+K}|}\cdot H_{\tilde{\mu }^{(w)}}(\zeta _{M})=h_{\tilde{\mu }^{(w)}}(\alpha _{f^{(d)}}).
	$$
Since $\lambda _{X_{f^{(d)}}}$ is the unique measure of maximal entropy on $X_{f^{(d)}}$, $\tilde{\mu }^{(w)}$ coincides with $\lambda _{X_{f^{(d)}}}$, and $\mu ^{(w)}$ is a measure of maximal entropy on $\mathcal{R}_\infty $.

In order to complete the proof of Theorem \ref{t:sand2} we assume that $\mu $ is an arbitrary ergodic shift-invariant probability measure with maximal entropy on $\mathcal{R}_\infty ^{(\gamma )}$. We let $M\ge5$, put $F=\pi _{Q_M}(\mathcal{R}_\infty )$ and set, for every $z\in F$,
	$$
\mathcal{O}_z=\{v\in \mathcal{R}_\infty :\pi _{Q_M}(v)=z\}.
	$$
Fix $z\in F$ with $c=\mu (\mathcal{O}_z)>0$. The ergodic theorem guarantees that
	\begin{equation}
	\label{eq:average}
\lim_{N\to\infty }\frac{1}{|Q_N|}\sum_{\mathbf{m}\in Q_N}1_\mathcal{O}(\sigma ^{3M\mathbf{m}}v)=c
	\end{equation}
for $\mu \emph{-a.e.}\,v\in\mathcal{R}_\infty $. Let $z'\in F$ be given by
	$$
z'_\mathbf{n}=
	\begin{cases}
2d-1&\textup{if}\enspace \|\mathbf{n}\|_\textup{max}=M,
	\\
z_\mathbf{n}&\textup{if}\enspace \mathbf{n}\in Q_{M-1}.
	\end{cases}
	$$

We claim that $\mu (\mathcal{O}_{z'})>0$. In order to see this we assume that $\mu (\mathcal{O}_{z'})=0$ (which implies, of course, that $z\ne z'$). If $v\in\mathcal{R}_\infty $ is fixed for the moment, and if $S_v=\{\mathbf{n}\in\mathbb{Z}^d:\sigma ^{3M\mathbf{n}}v\in\mathcal{O}_z\}$, then we can replace the coordinates of $\sigma ^{3M\mathbf{m}}v$ in $Q_M$ by those of $z'$ for every $\mathbf{m}\in S_v$, and we can do so independently at every $\mathbf{m}\in S_v$. The resulting points $v'$ will always lie in $\mathcal{R}_\infty $. An elementary entropy argument shows that we could increase the entropy of $\mu $ under the $\mathbb{Z}^d$-action $\mathbf{n}\rightarrow \sigma ^{3M\mathbf{n}}$ by making all these points $v'$ equally likely, which would violate the maximality of the entropy of $\mu $ (a more formal argument should be given in terms of conditional measures).

Exactly the same kind of argument as in the preceding paragraph allows us to conclude that the cylinder sets $\mathcal{O}_{z''}$ with $z_\mathbf{n}''\in F$ and
	$$
z_\mathbf{n}''=2d-1\enspace \textup{for every}\enspace \mathbf{n}\in Q_M\enspace \textup{with}\enspace\|\mathbf{n}\|_\textup{max}=M,
	$$
all have equal measure. A slight modification of the proof of the first part of this theorem now shows that $h((\xi _g)_*\mu )=h(\lambda _{X_{f^{(d)}}})$, i.e., that $(\xi _g)_*\mu =\lambda _{X_{f^{(d)}}}$.
	\end{proof}

\subsection{Properties of the maps $\xi _g,\,g\in \tilde{I}_d$
	\label{ss:properties}
}

\subsubsection{\label{ss:group}The `group structure' of $\mathcal{R}_\infty $} In \eqref{eq:entropy} we saw that $\sigma _{\mathcal{R}_\infty }$ and $\alpha _{f^{(d)}}$ have the same topological entropy. If $\mu $ is a shift-invariant measure of maximal entropy on $\mathcal{R}_\infty $, then the dynamical system $(\mathcal{R}_\infty ,\mu ,\sigma _{\mathcal{R}_\infty })$ has a Bernoulli factor of full entropy (cf. \cite{Sinai}). As $(X_{f^{(d)}},\lambda _{X_{f^{(d)}}},\alpha _{f^{(d)}})$ is Bernoulli by \cite{RS}, the full entropy Bernoulli factor of $(\mathcal{R}_\infty ,\mu ,\sigma _{\mathcal{R}_\infty })$ is measurably conjugate to $(X_{f^{(d)}},\lambda _{X_{f^{(d)}}},\alpha _{f^{(d)}})$. In particular, there exists a $\mu $-\textit{a.e.} defined measurable map $\phi \colon \mathcal{R}_\infty \longrightarrow X_{f^{(d)}}$ with $\phi _*\mu =\lambda _{X_{f^{(d)}}}$ and $\phi \circ \sigma _{\mathcal{R}_\infty }=\alpha _{f^{(d)}}\circ \phi $ $\mu $-\textit{a.e.}

What distinguishes the maps $\xi _g,\,g\in \tilde{I}_d$, from these abstract factor maps $\phi \colon \mathcal{R}_\infty \longrightarrow X_{f^{(d)}}$ is that the $\xi _g$ are not only continuous and surjective, but that they also reflect the somewhat elusive group structure of $\mathcal{R}_\infty $ in the following sense.

It is well known that the set $\mathcal{R}_E$ of recurrent sandpile configurations on a finite set $E\subset \mathbb{Z}^d$ in \eqref{eq:RE} is a group (cf. \cite{Dhar0}, \cite{Dhar1}, \cite{Dhar2}). However, the group operation does not extend in any immediate way to the infinite sandpile model $\mathcal{R}_\infty $.

Fix $g\in \tilde{I}_d$ and suppose that $v,v'\in\mathcal{R}_\infty $, and that $w=v+v'\in\Lambda _{4d-1}$ (with coordinate-wise addition). Proposition \ref{p:correction} shows that there exists, for every $M\ge1$, an element $w^{(M)}\in\ell ^\infty (\mathbb{Z}^d,\mathbb{Z})$ satisfying the conditions (1)--(4) there. Since $w-w^{(M)}\in (f^{(d)})$ for every $M\ge1$, $\xi _g(w^{(M)})=\xi _g(w)$ for every $M\ge1$. Exactly as in the proof of Lemma we observe that any coordinate-wise limit $\tilde{w}\in\mathcal{R}_\infty $ of the sequence $(w^{(M)},\,M\ge1)$ still satisfies that $\xi _g(\tilde{w})=\xi _g(w)=\xi _g(v)+\xi _g(v')$.

The `sum' $\tilde{w}$ of $v$ and $v'$ is, of course, not uniquely defined, but any two versions of this sum are identified under $\xi _g$.

Moreover, if $\sim $ is the equivalence relation on $\mathcal{R}_\infty $ defined by $v\sim v'$ if and only if $v-v'\in\ker(\xi _{I_d})=K_d$ (cf. \eqref{eq:kerI}), then $\mathcal{R}_\infty /_\sim $ is a compact abelian group isomorphic to $\tilde{X}_{f^{(d)}}=X_{f^{(d)}}/X_{I_d}$ (cf. Lemma \ref{l:xiI}): if $[v]$ is the equivalence class of $v\in\mathcal{R}_\infty $, then the map $\theta _d\circ \xi _{I_d}\colon \mathcal{R}_\infty \longrightarrow \tilde{X}_{f^{(d)}}$ in \eqref{eq:theta} sends $[v]$ to $\theta _d\circ \xi _{I_d}(v)$ and maps the group operation $[v]\oplus [v']\defeq[v+v']$ on $\mathcal{R}_\infty /_\sim $ to that on $\tilde{X}_{f^{(d)}}$.

\subsubsection{\label{ss:injectivity}The problem of injectivity}

In Subsection \ref{ss:group} we saw that $\mathcal{R}_\infty $ has a natural group structure modulo elements in the kernel of $\xi _g$. Another problem which depends on the intersection of $\mathcal{R}_d$ with the cosets of $\ker \xi _{I_d}$ is the question of `pulling back' to $\mathcal{R}_\infty $ dynamical properties of $\alpha _{f^{(d)}}$, such as uniqueness or the Bernoulli property of the measure of maximal entropy of $\mathcal{R}_\infty $.

It is clear that the map $\xi _{I_d}$ (and hence all the maps $\xi _g,\,g\in \tilde{I}_d$) must be noninjective on $\mathcal{R}_\infty $, since these maps are continuous, $\mathcal{R}_d$ is zero-dimensional, and the groups $X_{f^{(d)}}$ and $\bar{X}_{f^{(d)}}$ are connected. The following lemma shows that some of the maps $\xi _g,\,g\in \tilde{I}_d$, are `more injective' than others and is the reason for determining the ideal $I_d$ precisely in Section \ref{s:green}.

	\begin{lemm}
	\label{l:noninjective}
Let $g\in\tilde{I}_d$ and $h\in R_d$. For every $v,w\in \mathcal{R}_\infty $ with $\xi _g(w)\in\xi _g(v)+\ker h(\alpha )$, $\xi _{g\cdot h}(v)=\xi _{g\cdot h}(w)$. It follows that
	\begin{equation}
	\label{eq:kernel}
|\{w\in \mathcal{R}_\infty:\xi _{g\cdot h}(w)=\xi _{g\cdot h}(v)\}| = |\ker h(\alpha _{f^{(d)}})|
	\end{equation}
for every $v\in\mathcal{R}_\infty $.
	\end{lemm}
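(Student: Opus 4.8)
The plan is to reduce everything to a single multiplicativity identity relating $\xi_{g\cdot h}$ to $\xi_g$. First I would compute, directly from the definition \eqref{eq:barxi} of $\bar\xi_g$ together with the elementary fact that $(g\cdot h)^*=g^*\cdot h^*$, that
\[
\bar\xi_{g\cdot h}(w)=(g^*\cdot h^*\cdot w^{(d)})\cdot w=(g^*\cdot w^{(d)})\cdot(h^*\cdot w)=\bar\xi_g\bigl(h(\sigma)(w)\bigr)
\]
for every $w\in\ell^\infty(\mathbb{Z}^d,\mathbb{Z})$, the last equality using $h(\sigma)(w)=h^*\cdot w$ from \eqref{eq:hsigma2}. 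Applying $\rho$ and the equivariance $\xi_g\circ h(\sigma)=h(\alpha)\circ\xi_g$ from \eqref{eq:equivariance} then gives the clean identity $\xi_{g\cdot h}=h(\alpha)\circ\xi_g$. Since $\xi_g$ takes values in $X_{f^{(d)}}$, this reads $\xi_{g\cdot h}=h(\alpha_{f^{(d)}})\circ\xi_g$ on $\mathcal{R}_\infty$.

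With this identity the first assertion is immediate: if $\xi_g(w)\in\xi_g(v)+\ker h(\alpha)$, then $\xi_g(w)-\xi_g(v)\in\ker h(\alpha)$, so applying $h(\alpha)$ yields $\xi_{g\cdot h}(w)=h(\alpha)(\xi_g(w))=h(\alpha)(\xi_g(v))=\xi_{g\cdot h}(v)$. The same computation runs in reverse, so I would record the biconditional
\[
\{w\in\mathcal{R}_\infty:\xi_{g\cdot h}(w)=\xi_{g\cdot h}(v)\}=\{w\in\mathcal{R}_\infty:\xi_g(w)-\xi_g(v)\in\ker h(\alpha_{f^{(d)}})\},
\]
the right-hand side being precisely the $\xi_g$-preimage of the coset $\xi_g(v)+\ker h(\alpha_{f^{(d)}})$ inside $X_{f^{(d)}}$.

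To obtain \eqref{eq:kernel} I would then analyse the map $w\mapsto\xi_g(w)-\xi_g(v)$ from this fiber into the group $\ker h(\alpha_{f^{(d)}})$. Surjectivity is the easy half: by Theorem \ref{t:surjective} we have $\xi_g(\mathcal{R}_\infty)=X_{f^{(d)}}$, and since $\xi_g(v)+k\in X_{f^{(d)}}$ for every $k\in\ker h(\alpha_{f^{(d)}})$, each coset element is realised by some $w\in\mathcal{R}_\infty$. This already delivers the lower bound $|\{w:\xi_{g\cdot h}(w)=\xi_{g\cdot h}(v)\}|\ge|\ker h(\alpha_{f^{(d)}})|$.

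The hard part, and the step I expect to be the main obstacle, is injectivity of this correspondence: that $w,w'\in\mathcal{R}_\infty$ with $\xi_g(w)=\xi_g(w')$ must coincide, so that distinct coset elements come from distinct $w$. This is exactly the statement that $\xi_g$ separates points of $\mathcal{R}_\infty$. The available tool is the finite-difference injectivity from Proposition \ref{p:sand} (last assertion) together with Lemma \ref{l:max}(1): if $w-w'\in R_d$ and $\xi_g(w)=\xi_g(w')$ then $w-w'\in(f^{(d)})$, which is incompatible with $w,w'\in\mathcal{R}_\infty$ unless $w=w'$. The delicate point is that the pairs occurring in the fiber need not differ in only finitely many coordinates, so one must either upgrade the finite-difference statement or exploit the structure of $\ker h(\alpha_{f^{(d)}})$ to reduce to the finite-support case. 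This is precisely the sense in which choosing $g$ as irreducible as possible makes $\xi_g$ "more injective" than $\xi_{g\cdot h}$, and it is where the substantive work concentrates.
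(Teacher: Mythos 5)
Your reduction to the identity $\xi_{g\cdot h}=h(\alpha)\circ\xi_g$ and its immediate consequences reproduce the paper's proof exactly: the published argument consists of the single computation $\xi_{g\cdot h}(w)=h(\alpha)(\xi_g(w))=h(\alpha)(\xi_g(v)+y)=h(\alpha)(\xi_g(v))=\xi_{g\cdot h}(v)$, with Theorem \ref{t:surjective} cited to produce, for each $y\in\ker h(\alpha_{f^{(d)}})$, some $w\in\mathcal{R}_\infty$ with $\xi_g(w)=\xi_g(v)+y$; since distinct $y$ yield $w$'s with distinct $\xi_g$-images, this gives the inequality $\ge$ in \eqref{eq:kernel}, and the paper's proof stops there.

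The genuine gap is in your final paragraph, and it is not merely a hard step left open: the injectivity of $\xi_g$ on $\mathcal{R}_\infty$ that you propose to establish is \emph{false}. The paper states this explicitly in Subsection \ref{ss:injectivity} (a continuous map from the zero-dimensional space $\mathcal{R}_\infty$ onto the connected group $X_{f^{(d)}}$ cannot be injective), and Remark \ref{r:sand} exhibits the failure concretely: there exist $v\in\mathcal{R}_\infty$ and an infinitely supported $h'\in\{0,1\}^{\mathbb{Z}^d}$ with $v'=v+f^{(d)}(\sigma)(h')\in\mathcal{R}_\infty$ and $v'\ne v$, and then $\bar{\xi}_g(v'-v)=g(\sigma)(h')=g^*\cdot h'\in\ell^\infty(\mathbb{Z}^d,\mathbb{Z})$ by \eqref{eq:equivariance} and \eqref{eq:lemmaWf}, so $\xi_g(v')=\xi_g(v)$. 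Hence no upgrade of Proposition \ref{p:sand} and Lemma \ref{l:max}(1) to infinitely supported differences can exist, and your route to the upper bound in \eqref{eq:kernel} cannot be completed; indeed, whether the noninjectivity locus is even null for measures of maximal entropy is exactly the open conjecture of Section \ref{s:conclusions}. The equality \eqref{eq:kernel} has to be read as a statement about cardinalities rather than a bijective count: your partition of the fiber into the nonempty pieces indexed by $y\in\ker h(\alpha_{f^{(d)}})$ gives $\ge$, and since $\ker h(\alpha_{f^{(d)}})$ is dual to $R_d/((f^{(d)})+(h))$, it is either trivial or of cardinality $2^{\aleph_0}$ (for $d\ge2$ this quotient ring, if nonzero, has positive Krull dimension and is therefore infinite), so in the nontrivial case the trivial bound $|\mathcal{R}_\infty|=2^{\aleph_0}$ closes the count without any injectivity of $\xi_g$. (When $\ker h(\alpha_{f^{(d)}})=\{0\}$, e.g.\ $h=1$, the literal equality \eqref{eq:kernel} would assert that $\xi_g$ is injective and fails — a degenerate case the paper's one-line proof passes over; the lemma is of interest only for $h$ with nontrivial kernel, which is the sense in which $\xi_{g\cdot h}$ is ``less injective'' than $\xi_g$.)
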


	\begin{proof}
If $x=\xi _g(v)$, $y\in\ker h(\alpha _{f^{(d)}})$ and $w\in\mathcal{R}_\infty $ satisfies that $\xi _g(w)=x+y$ (cf. Theorem \ref{t:surjective}), then
	\begin{equation}
\xi _{g\cdot h}(w)=h(\alpha )(\xi _g(w))=h(\alpha )(x+y)= h(\alpha )(x)=\xi _{g\cdot h}(v).\tag*{$\square$}
	\end{equation}
\renewcommand{\qedsymbol}{}\vspace{-5mm}
	\end{proof}

\section{The dissipative sandpile model
	\label{s:dissipative}
}

In this section we fix $d\ge2$ and $\gamma >2d$, and consider the dissipative sandpile model $\mathcal{R}_\infty ^\gamma \subset \Lambda _{\gamma }$ described in Section \ref{s:sandpiles} and investigated in \cite{dis1,dis2,MRS}.

\subsection{The dissipative harmonic model} Consider the Laurent polynomial $f^{(d,\gamma )}\in R_d$ defined in \eqref{eq:fdgamma} and the corresponding compact abelian group
	\begin{equation}
	\label{eq:Xfdgamma}
	\begin{aligned}
X_{f^{(d,\gamma )}}= \ker f^{(d,\gamma )}(\alpha ) &= \biggl\{x=(x_\mathbf{n})_{\mathbf{n}\in\mathbb{Z}^d}\in \mathbb{T}^{\mathbb{Z}^d}: \gamma x_\mathbf{n} -\sum_{i=1}^d (x_{\mathbf{n}+\mathbf{e}^{(i)}}+ x_{\mathbf{n}-\mathbf{e}^{(i)}})=0
	\\
&\qquad \qquad \qquad \qquad \qquad \qquad \qquad \qquad \enspace \enspace \textup{for every}\enspace \mathbf{n}\in\mathbb{Z}^d\smash{\biggr\}}.
	\end{aligned}
	\end{equation}
We write $\alpha _{X_{f^{(d,\gamma )}}}$ for the shift-action \eqref{eq:alpha} of $\mathbb{Z}^d$ on $X_{f^{(d,\gamma )}}\subset \mathbb{T}^{\mathbb{Z}^d}$.

	\begin{lemm}
The shift-action $\alpha $ of $\mathbb{Z}^d$ on $X_{f^{(d,\gamma )}}$ is expansive, i.e., there exists an $\epsilon >0$ such that
	$$
\sup_{\mathbf{n}\in\mathbb{Z}^d}\pmb{|}x_\mathbf{n}-x_\mathbf{n}'\pmb{|}>\epsilon
	$$
for every $x,x'\in X_{f^{(d,\gamma )}}$ with $x\ne x'$.

The entropy of $\alpha _{f^{(d,\gamma )}}$ is given by
	$$
h_\textup{top}(\alpha _{f^{(d,\gamma )}})=h_{\lambda _{X_{f^{(d,\gamma )}}}}(\alpha _{f^{(d,\gamma )}}) =\int_{0}^1\cdots\int_{0}^1 \log f^{(d,\gamma )}(e^{2\pi it_1},\dots ,e^{2\pi it_d}) \,dt_1\cdots dt_d,
	$$
and the Haar measure $\lambda_{X_{f^{(d,\gamma )}}}$ is the unique shift-invariant measure of maximal entropy on $X_{f^{(d,\gamma )}}$.
	\end{lemm}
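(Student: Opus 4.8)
The plan is to handle the three assertions in turn; only the expansiveness is genuinely new, since the entropy formula and the uniqueness of the measure of maximal entropy will come from the same algebraic-dynamics machinery already invoked for $f^{(d)}$, now considerably simplified by the fact that $f^{(d,\gamma)}$ has no zeros on the unit torus. The decisive observation is that for $\gamma>2d$ the Fourier transform of $f^{(d,\gamma)}$ satisfies
$$
F^{(d,\gamma)}(\mathbf{t})=\gamma-2\sum_{j=1}^d\cos(2\pi t_j)\ge\gamma-2d>0\qquad\text{for all }\mathbf{t}\in\mathbb{T}^d.
$$
Hence $1/F^{(d,\gamma)}$ is smooth on $\mathbb{T}^d$, its Fourier coefficients $w^{(d,\gamma)}_\mathbf{n}=\int_{\mathbb{T}^d}e^{-2\pi i\langle\mathbf{n},\mathbf{t}\rangle}/F^{(d,\gamma)}(\mathbf{t})\,d\mathbf{t}$ decay faster than any power of $\|\mathbf{n}\|$, and the resulting fundamental solution $w^{(d,\gamma)}$ of $f^{(d,\gamma)}\cdot w=1$ now lies in $\ell^1(\mathbb{Z}^d)$ for every $d\ge2$. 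In contrast to Definition \ref{d:fundamental}, the recurrence/transience dichotomy disappears here because there is no singularity to remove.

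With the $\ell^1$-inverse in hand, expansiveness follows by a short argument. Fix $\varepsilon<(\gamma+2d)^{-1}$ and suppose $x,x'\in X_{f^{(d,\gamma)}}$ satisfy $\pmb{|}x_\mathbf{n}-x'_\mathbf{n}\pmb{|}\le\varepsilon$ for every $\mathbf{n}$. Writing $z=x-x'\in X_{f^{(d,\gamma)}}$ and lifting it to $w\in W_d$ with $|w_\mathbf{n}|\le\varepsilon$ and $f^{(d,\gamma)}\cdot w\in W_d(\mathbb{Z})$ (the analogue of \eqref{eq:Wf}), I would note that $|(f^{(d,\gamma)}\cdot w)_\mathbf{n}|\le(\gamma+2d)\varepsilon<1$, so the integer sequence $f^{(d,\gamma)}\cdot w$ vanishes identically. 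Multiplying by $w^{(d,\gamma)}$ and using $w^{(d,\gamma)}\cdot f^{(d,\gamma)}=1$ gives $w=0$, whence $x=x'$; this establishes expansiveness with any $\varepsilon<(\gamma+2d)^{-1}$. Alternatively one may simply quote the expansiveness criterion of \cite{DSAO}, by which $\alpha_{f^{(d,\gamma)}}$ is expansive precisely because the complex variety of $f^{(d,\gamma)}$ misses the unit multi-torus $\{(z_1,\dots,z_d):|z_j|=1\}$.

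For the entropy I would invoke \cite[Theorem 18.1]{DSAO} and \cite[Theorem 19.5]{DSAO} verbatim as in \eqref{eq:entropy}, so that the topological entropy and the Haar-measure entropy of the principal action $\alpha_{f^{(d,\gamma)}}$ coincide and equal the logarithmic Mahler measure of $f^{(d,\gamma)}$; positivity of $F^{(d,\gamma)}$ lets one drop the absolute value and write the stated integral, which is finite since the integrand lies between $\log(\gamma-2d)$ and $\log(\gamma+2d)$. For uniqueness, the non-vanishing of $F^{(d,\gamma)}$ on the unit torus shows that $f^{(d,\gamma)}$ has no generalized cyclotomic factor, so $\alpha_{f^{(d,\gamma)}}$ has completely positive entropy, and Haar measure is then its unique shift-invariant measure of maximal entropy exactly as for $\alpha_{f^{(d)}}$ by \cite{LSW}. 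I expect no real obstacle here: the only input beyond citing the standard results is the elementary estimate $F^{(d,\gamma)}\ge\gamma-2d>0$, which simultaneously upgrades the critical-model picture to an expansive one and certifies that the uniqueness theorem applies. The purpose of this lemma is precisely to assemble these facts in the expansive setting, so that they can later be transported to the dissipative sandpile model via the map of \cite{MRS}.
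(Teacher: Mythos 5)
Your proposal is correct, and for the entropy formula and the uniqueness of the measure of maximal entropy it follows the paper's route exactly: the paper disposes of these by citing \cite[Theorems 19.5, 20.8 and 20.15]{DSAO} (your chain ``no zeros on the torus $\Rightarrow$ no generalized cyclotomic factor $\Rightarrow$ completely positive entropy $\Rightarrow$ Haar is the unique maximal measure'' is precisely what those citations encapsulate, and matches the paper's own use of \cite{LSW} in the critical case). Where you genuinely diverge is expansiveness: the paper simply quotes \cite[Theorem 6.5]{DSAO}, whereas you give a self-contained two-line argument from the $\ell^1$-inverse $w^{(d,\gamma)}$ --- lift $z=x-x'$ to $w$ with $\|w\|_\infty\le\varepsilon$, note that $f^{(d,\gamma)}\cdot w$ is an integer sequence of norm at most $(\gamma+2d)\varepsilon<1$, hence zero, and convolve with $w^{(d,\gamma)}$ (legitimate, since $\ell^1\ast\ell^\infty$ convolutions associate by absolute convergence) to get $w=0$. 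This buys two things the citation does not: an explicit expansiveness constant $\epsilon<(\gamma+2d)^{-1}$, and independence from the general machinery of \cite{DSAO}, at the cost of having to justify $w^{(d,\gamma)}\in\ell^1(\mathbb{Z}^d)$ first --- which you do correctly via smoothness of $1/F^{(d,\gamma)}$, and which the paper needs anyway for the covering map $\xi^{(\gamma)}$ in the same section. It is worth noting that your direct argument is really a hands-on instance of the general principle behind the cited theorem (non-vanishing on $\mathbb{S}^d$ gives invertibility of $f^{(d,\gamma)}$ in $\ell^1$, e.g.\ by Wiener's lemma), so the two proofs are close in spirit; yours is the more elementary and more quantitative, the paper's the shorter.
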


	\begin{proof}
Since $f^{(d,\gamma )}$ has no zeros in
	$$
\mathbb S^d=\bigl\{(z_1,\dots,z_d)\in \mathbb C^d: |z_i|=1 \enspace \textup{for} \enspace i=1,\ldots,d\bigr\},
	$$
$\alpha _{f^{(d,\gamma )}}$ is expansive by \cite[Theorem 6.5]{DSAO}. The last two statements follow from \cite[Theorems 19.5, 20.8 and 20.15]{DSAO}.
	\end{proof}

\subsection{The covering map $\xi ^{(\gamma )}\colon \mathcal{R}_\infty ^{(\gamma )}\longrightarrow X_{f^{(d,\gamma )}}$} Since $\alpha _{f^{(d,\gamma )}}$ is expansive and has completely positive entropy, the equation
	\begin{equation}
	\label{GreenGamma}
f ^{(d,\gamma )}\cdot w=1
	\end{equation}
has a unique solution $w=w^{(d,\gamma )}\in\ell ^1(\mathbb{Z}^d)$, given by
	$$
w^{(d,\gamma )}_\mathbf{n} = \int_{0}^1\cdots\int_{0}^1 \frac {e^{-2\pi i\langle \mathbf{n},\mathbf{t}\rangle}} {\gamma -2\cdot \sum_{i=1}^d\cos(2\pi t_i)} \,dt_1\cdots dt_d,
	$$
where $\mathbf{t}=(t_1,\dots ,t_d)$ (cf. \eqref{eq:inverse}, \cite{LS} and \cite{deBHR}). Since $w^{(d,\gamma )}\in\ell ^1(\mathbb{Z}^d)$, we can proceed as in \eqref{eq:barxi} and define a homomorphism $\xi ^{(\gamma )}: \mathcal{R}^{(\gamma )}_\infty\to X_{f^{(\gamma ,d)}}$ by
	$$
\smash[b]{\bar{\xi }^{(\gamma )}(v)_\mathbf{n}=(w^{(d,\gamma )}\cdot v)_\mathbf{n}=\sum_{\mathbf{n}\in\mathbb{Z}^d} v_{\mathbf{n}-\mathbf{k}}w_\mathbf{k}^{(d,\gamma )}}
	$$
for every $v\in \mathcal{R}_\infty ^{(\gamma )}$, and by
	$$
\xi ^{(\gamma )}=\rho \circ \bar{\xi }^{(\gamma )}.
	$$

	\begin{prop}
	\label{codeprops}
The map $\xi^{(\gamma )}$ has the following properties.
	\begin{itemize}
	\item[(a)]
$\xi^{(\gamma )}(\mathcal{R}^{(\gamma )}_\infty) =X_{f^{(d,\gamma )}}$;
	\item[(b)]
For $v,v'\in\mathcal{R}^{(\gamma )}_\infty$, $\xi^{(\gamma )}(v)=\xi^{(\gamma )}(v')$ if and only if
	\begin{equation}
	\label{equivdis}
v'=v +f^{(d,\gamma )} \cdot h
	\end{equation}
for some $h\in \ell ^\infty(\mathbb{Z}^d,\mathbb{Z})$;
	\item[(c)]
$\xi^{(\gamma )}(v)\ne\xi^{(\gamma )}(v)$ for all $v,v\in \mathcal{R}^{(\gamma )}_\infty$ with $v-v'\in R_d$.
	\end{itemize}
Furthermore, the topological entropies of the shift-actions $\alpha _{f^{(d,\gamma )}}$ on $X_{f^{(d,\gamma )}}$ and $\sigma _{\mathcal{R}_\infty ^{(\gamma )}}$ on $\mathcal{R}_\infty ^{(\gamma )}$ coincide.
	\end{prop}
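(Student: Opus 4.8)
The plan is to establish the three bullet points in the order (b), (c), (a) and then deduce the entropy equality, taking advantage of two features that make the dissipative regime softer than the critical one: the fundamental solution $w^{(d,\gamma)}$ lies in $\ell^1(\mathbb{Z}^d)$, and $\alpha_{f^{(d,\gamma)}}$ is expansive with $\lambda_{X_{f^{(d,\gamma)}}}$ as its unique measure of maximal entropy. The computations rest on the identity $f^{(d,\gamma)}(\sigma)\circ\bar\xi^{(\gamma)}=\mathrm{id}$, which is immediate from $f^{(d,\gamma)}\cdot w^{(d,\gamma)}=1$ and the symmetry $(f^{(d,\gamma)})^*=f^{(d,\gamma)}$, and on the equivariance relations $\xi^{(\gamma)}\circ\sigma^{\mathbf{n}}=\alpha^{\mathbf{n}}\circ\xi^{(\gamma)}$ and $\xi^{(\gamma)}\circ h(\sigma)=h(\alpha)\circ\xi^{(\gamma)}$, which transfer verbatim from \eqref{eq:equivariance} since $w^{(d,\gamma)}\in\ell^1(\mathbb{Z}^d)$ makes $\bar\xi^{(\gamma)}$ a well-defined continuous homomorphism on every bounded shift-invariant subset of $\ell^\infty(\mathbb{Z}^d,\mathbb{Z})$.

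For (b) the reverse implication is one line: if $v'=v+f^{(d,\gamma)}\cdot h$ with $h\in\ell^\infty(\mathbb{Z}^d,\mathbb{Z})$, then $\bar\xi^{(\gamma)}(v')=w^{(d,\gamma)}\cdot v'=\bar\xi^{(\gamma)}(v)+w^{(d,\gamma)}\cdot f^{(d,\gamma)}\cdot h=\bar\xi^{(\gamma)}(v)+h$, and $\rho(h)=0$ because $h$ is integer-valued, so $\xi^{(\gamma)}(v')=\xi^{(\gamma)}(v)$. Conversely, $\xi^{(\gamma)}(v)=\xi^{(\gamma)}(v')$ means $\rho(w^{(d,\gamma)}\cdot(v'-v))=0$, so $h:=w^{(d,\gamma)}\cdot(v'-v)$ lies in $\ell^\infty(\mathbb{Z}^d,\mathbb{Z})$ (bounded since $w^{(d,\gamma)}\in\ell^1(\mathbb{Z}^d)$, integer-valued since its image under $\rho$ vanishes), and multiplying by $f^{(d,\gamma)}$ gives $v'-v=f^{(d,\gamma)}\cdot h$. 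Part (c) is then immediate: if $v,v'\in\mathcal{R}_\infty^{(\gamma)}$ satisfied $0\ne v-v'\in R_d$ and $\xi^{(\gamma)}(v)=\xi^{(\gamma)}(v')$, the element $h=w^{(d,\gamma)}\cdot(v-v')$ would be a finite $\mathbb{Z}$-linear combination of shifts of $w^{(d,\gamma)}$, hence lie in $\ell^1(\mathbb{Z}^d)$; being also integer-valued it would have finite support, so $h\in R_d$ and $v-v'=f^{(d,\gamma)}\cdot h\in f^{(d,\gamma)}\cdot R_d$, contradicting the last assertion of Proposition \ref{p:sand}.

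For surjectivity (a) I would lift and stabilize. Given $x\in X_{f^{(d,\gamma)}}$, pick $w\in W_d$ with $\rho(w)=x$ and $0\le w_{\mathbf{n}}<1$, and set $v=f^{(d,\gamma)}\cdot w\in\ell^\infty(\mathbb{Z}^d,\mathbb{Z})$; the identity above yields $\bar\xi^{(\gamma)}(v)=w^{(d,\gamma)}\cdot f^{(d,\gamma)}\cdot w=w$, hence $\xi^{(\gamma)}(v)=x$. It remains to replace this bounded (but in general negative and unstable) integer configuration by a recurrent one without altering the image. Since a toppling at $\mathbf{n}$ is precisely $v\mapsto v-f^{(d,\gamma)}\cdot u^{\mathbf{n}}$, part (b) shows that every toppling preserves $\xi^{(\gamma)}$, and the dissipativity $\gamma>2d$ guarantees that stabilization terminates (as a coordinate-wise limit). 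Concretely I would run the finite-box stabilization of Lemma \ref{l:surjective} and Proposition \ref{p:correction}, adapted to $f^{(d,\gamma)}$: first add a suitable $f^{(d,\gamma)}\cdot h_0$ to bring $v$ into $\{0,\dots,\gamma-1\}$, then stabilize inside $Q_M$ to obtain $v^{(M)}$ that is recurrent on $Q_M$ with $\xi^{(\gamma)}(v^{(M)})=\xi^{(\gamma)}(v)=x$, and pass to a coordinate-wise limit $\tilde v\in\mathcal{R}_\infty^{(\gamma)}$; the $\ell^1$-decay of $w^{(d,\gamma)}$ forces the boundary contribution to $\bar\xi^{(\gamma)}$ to vanish as $M\to\infty$, so $\xi^{(\gamma)}(\tilde v)=x$. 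I expect this stabilization and boundary-control step to be the main obstacle, exactly as in the critical case; the new, and easier, ingredient here is the convergence of toppling supplied by dissipativity.

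Finally, the entropy equality splits in two. The inequality $h_\textup{top}(\sigma_{\mathcal{R}_\infty^{(\gamma)}})\ge h_\textup{top}(\alpha_{f^{(d,\gamma)}})$ is immediate from (a) and equivariance, since $\xi^{(\gamma)}$ is then a continuous surjective factor map. For the reverse inequality I would reproduce the separated-set count of \eqref{eq:Deltasize}: part (c) together with expansiveness of $\alpha_{f^{(d,\gamma)}}$ yields, as in Lemmas \ref{l:max} and \ref{l:separated}, a fixed $\varepsilon>0$ and $K\ge1$ such that $\xi^{(\gamma)}$ is injective on each coset $S_{\mathcal{R}_\infty^{(\gamma)}}^{(Q_L)}(w)$ and maps it to a $(Q_{L+K},\varepsilon)$-separated subset of $X_{f^{(d,\gamma)}}$. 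Hence $|\pi_{Q_L}(\mathcal{R}_\infty^{(\gamma)})|$ is bounded above by the largest $(Q_{L+K},\varepsilon)$-separated subset of $X_{f^{(d,\gamma)}}$, and dividing by $|Q_L|$ and letting $L\to\infty$ gives $h_\textup{top}(\sigma_{\mathcal{R}_\infty^{(\gamma)}})\le h_\textup{top}(\alpha_{f^{(d,\gamma)}})$, completing the proof.
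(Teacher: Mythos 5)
Your proposal is correct and follows essentially the same route as the paper, whose entire proof of Proposition \ref{codeprops} is the remark that the arguments are ``completely analogous to (but simpler than)'' the critical case: you carry out exactly that adaptation, using $w^{(d,\gamma)}\in\ell^1(\mathbb{Z}^d)$ to get the convolution identity $f^{(d,\gamma)}(\sigma)\circ\bar{\xi}^{(\gamma)}=\mathrm{id}$ (the analogue of Lemma \ref{l:Wf}), Proposition \ref{p:sand} for (c) and for injectivity on the sets $S^{(Q_L)}_{\mathcal{R}^{(\gamma)}_\infty}(w)$, the stabilization scheme of Lemma \ref{l:surjective} and Proposition \ref{p:correction} for (a), and the separated-set count of \eqref{eq:Deltasize} (with expansiveness replacing the $1/4d$-separation argument of Lemma \ref{l:max}) for the entropy equality. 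Your intermediate steps --- the characterization of $\ker$ in (b) via integrality of $w^{(d,\gamma)}\cdot(v'-v)$, and the deduction of (c) from unique factorization-free reasoning through finite support of an integer-valued $\ell^1$ element --- are all sound, so there is no gap to flag.
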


	\begin{proof}
The proofs are completely analogous to (but simpler than) those of the corresponding results in the critical case.
	\end{proof}

	\begin{coro}
	\label{c:correction}
For every $v\in\ell ^\infty (\mathbb{Z}^d,\mathbb{Z})$ there exists a $h\in\ell ^\infty (\mathbb{Z}^d,\mathbb{Z})$ such that $w=v+f^{(d,\gamma )}\cdot h\in\mathcal{R}_\infty ^{(\gamma )}$.
	\end{coro}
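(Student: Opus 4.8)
The plan is to read this off directly from Proposition \ref{codeprops}, rather than by a hands-on stabilization argument. The first point to make is that the homomorphism $\xi^{(\gamma)}$ is in fact defined on \emph{all} of $\ell^\infty(\mathbb{Z}^d,\mathbb{Z})$, not just on $\mathcal{R}_\infty^{(\gamma)}$: since $w^{(d,\gamma)}\in\ell^1(\mathbb{Z}^d)$, the convolution $\bar{\xi}^{(\gamma)}(v)=w^{(d,\gamma)}\cdot v$ converges for every bounded $v$ and lies in $\ell^\infty(\mathbb{Z}^d)$. Moreover, multiplying by $f^{(d,\gamma)}$ and using \eqref{GreenGamma} gives $f^{(d,\gamma)}\cdot\bar{\xi}^{(\gamma)}(v)=\bigl(f^{(d,\gamma)}\cdot w^{(d,\gamma)}\bigr)\cdot v=v\in\ell^\infty(\mathbb{Z}^d,\mathbb{Z})$, so that $\xi^{(\gamma)}(v)=\rho(\bar{\xi}^{(\gamma)}(v))\in\ker f^{(d,\gamma)}(\alpha)=X_{f^{(d,\gamma)}}$ for every $v\in\ell^\infty(\mathbb{Z}^d,\mathbb{Z})$ (here $f^{(d,\gamma)}$ is symmetric, so no involution bookkeeping is needed).

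The second step uses surjectivity. Given $v\in\ell^\infty(\mathbb{Z}^d,\mathbb{Z})$, set $x=\xi^{(\gamma)}(v)\in X_{f^{(d,\gamma)}}$. By Proposition \ref{codeprops}(a) there exists a recurrent configuration $w\in\mathcal{R}_\infty^{(\gamma)}$ with $\xi^{(\gamma)}(w)=x=\xi^{(\gamma)}(v)$. Writing $z=w-v\in\ell^\infty(\mathbb{Z}^d,\mathbb{Z})$, the equality $\rho(w^{(d,\gamma)}\cdot w)=\rho(w^{(d,\gamma)}\cdot v)$ means precisely that $h\defeq w^{(d,\gamma)}\cdot z$ has all coordinates in $\mathbb{Z}$; and $h\in\ell^\infty(\mathbb{Z}^d)$ because it is the convolution of the $\ell^1$ sequence $w^{(d,\gamma)}$ with the bounded sequence $z$. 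Thus $h\in\ell^\infty(\mathbb{Z}^d,\mathbb{Z})$, and multiplying by $f^{(d,\gamma)}$ and invoking \eqref{GreenGamma} once more yields $f^{(d,\gamma)}\cdot h=\bigl(f^{(d,\gamma)}\cdot w^{(d,\gamma)}\bigr)\cdot z=z=w-v$. Hence $w=v+f^{(d,\gamma)}\cdot h$ with $w\in\mathcal{R}_\infty^{(\gamma)}$ and $h\in\ell^\infty(\mathbb{Z}^d,\mathbb{Z})$, which is exactly the assertion. (Equivalently, Proposition \ref{codeprops} shows that $\xi^{(\gamma)}$ descends to an isomorphism of $\ell^\infty(\mathbb{Z}^d,\mathbb{Z})/f^{(d,\gamma)}(\sigma)(\ell^\infty(\mathbb{Z}^d,\mathbb{Z}))$ onto $X_{f^{(d,\gamma)}}$, and the corollary just records that each of these cosets meets $\mathcal{R}_\infty^{(\gamma)}$.)

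The genuine content has already been done: it sits inside Proposition \ref{codeprops}(a), whose proof rests on expansiveness of $\alpha_{f^{(d,\gamma)}}$, the entropy coincidence, and uniqueness of the measure of maximal entropy. I therefore expect no real obstacle in the corollary itself; the only thing to be careful about is that $v$ ranges over all bounded integer sequences, so one cannot simply quote the fiber description of Proposition \ref{codeprops}(b) (stated for pairs in $\mathcal{R}_\infty^{(\gamma)}$) but must reprove the one implication needed, as above. It is worth contrasting this with the naive approach of \emph{toppling $v$ to stability}: each toppling subtracts $f^{(d,\gamma)}\cdot u^{\mathbf{n}}$, so a stabilization would automatically produce a difference in $f^{(d,\gamma)}(\sigma)(\ell^\infty(\mathbb{Z}^d,\mathbb{Z}))$, but guaranteeing that the stable limit is \emph{recurrent} (and not merely stable) in infinite volume is the delicate point that the algebraic argument bypasses entirely.
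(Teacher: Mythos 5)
Your proof is correct and takes the same route as the paper, whose entire proof of this corollary is the one-line citation ``This follows from Proposition \ref{codeprops} (a)--(b)'': you have simply written out what that citation leaves implicit, namely that $\xi^{(\gamma)}$ extends to all of $\ell^\infty(\mathbb{Z}^d,\mathbb{Z})$ with image in $X_{f^{(d,\gamma)}}$, that surjectivity from (a) supplies a recurrent $w$ in the same fiber, and that the kernel computation behind (b) (which, as you rightly note, is stated only for pairs in $\mathcal{R}_\infty^{(\gamma)}$ and so must be re-derived for arbitrary bounded $v$) produces $h\in\ell^\infty(\mathbb{Z}^d,\mathbb{Z})$ with $w=v+f^{(d,\gamma)}\cdot h$. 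No gaps; your side remark that a naive infinite-volume stabilization would only yield a stable, not necessarily recurrent, configuration correctly identifies why the algebraic detour through $\xi^{(\gamma)}$ is the right mechanism.
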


	\begin{proof}
This follows from Proposition \ref{codeprops} (a)--(b).
	\end{proof}

	\begin{rema}
	\label{r:correction}
The element $w$ in Corollary \ref{c:correction} can be constructed explicitly by using the method described in the proofs of Lemma \ref{l:surjective}, Theorem 4.1 and Subsection \ref{ss:group}.
	\end{rema}

In \cite{MRS}, two elements $v,v'\in \ell ^\infty(\mathbb{Z}^d,\mathbb{Z})$ are called equivalent (denoted by $v\sim v'$) if they satisfy \eqref{equivdis} for some $h\in \ell ^\infty(\mathbb{Z}^d,\mathbb{Z})$.\footnote{Definition 3.2 in \cite[page 404]{MRS} contains a misprint: the requirement that $h\in \ell ^\infty(\mathbb{Z}^d,\mathbb{Z})$ is omitted, although it is used subsequently.} We write $[v]\subset \ell ^\infty(\mathbb{Z}^d,\mathbb{Z})$ for the equivalence class of $v$ in this relation. The following theorem summarizes the results of \cite{MRS}.

	\begin{theo}
	\label{MRSresult}
The quotient $\mathcal{R}_\infty^{(\gamma )}/_\sim$ is a compact space. Moreover, $(\mathcal{R}^{(\gamma )}_\infty/_\sim,\oplus)$ is a compact abelian group, where
	$$
[y]\oplus[\tilde y]=[y+\tilde y].
	$$
Furthermore, there exists a shift-invariant measure of maximal entropy on $\mathcal{R}^{(\gamma )}_\infty$, denoted by $\mu $, such that
	\begin{equation}
	\label{largeset}
\mu\bigl(\bigl\{y\in \mathcal{R}_\infty^{(\gamma )}: [y]\cap \mathcal{R}_\infty^{(\gamma )}\enspace \textup{is a singleton}\bigr\}\bigr)=1.
	\end{equation}
	\end{theo}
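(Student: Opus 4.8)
The plan is to dispatch the three assertions of Theorem \ref{MRSresult} in turn, reducing the first two to the structural properties of $\xi^{(\gamma)}$ and isolating \eqref{largeset} as the genuinely hard part. \emph{Compactness and the group structure.} By Proposition \ref{codeprops}(b), two configurations $v,v'\in\mathcal{R}_\infty^{(\gamma)}$ satisfy $\xi^{(\gamma)}(v)=\xi^{(\gamma)}(v')$ exactly when $v\sim v'$, so the fibres of $\xi^{(\gamma)}$ are precisely the sets $[v]\cap\mathcal{R}_\infty^{(\gamma)}$ and $\xi^{(\gamma)}$ descends to a continuous bijection $\bar\xi^{(\gamma)}\colon\mathcal{R}_\infty^{(\gamma)}/_\sim\longrightarrow X_{f^{(d,\gamma)}}$. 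Since $\mathcal{R}_\infty^{(\gamma)}$ is closed in $\Lambda_\gamma$ and hence compact, the quotient is compact, and $\bar\xi^{(\gamma)}$ is a continuous bijection from a compact space onto the Hausdorff group $X_{f^{(d,\gamma)}}$, hence a homeomorphism; this gives both the compactness and a transported group structure. To identify the transported operation with $[y]\oplus[\tilde y]=[y+\tilde y]$ I would use that $\bar\xi^{(\gamma)}$ is additive: for $y,\tilde y\in\mathcal{R}_\infty^{(\gamma)}$ the sum $y+\tilde y$ lies in $\ell^\infty(\mathbb{Z}^d,\mathbb{Z})$ and, by Corollary \ref{c:correction}, is $\sim$-equivalent to some $w\in\mathcal{R}_\infty^{(\gamma)}$, so $[y+\tilde y]$ is well defined and $\bar\xi^{(\gamma)}([y+\tilde y])=\xi^{(\gamma)}(y)+\xi^{(\gamma)}(\tilde y)$. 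Thus $\oplus$ corresponds to addition in $X_{f^{(d,\gamma)}}$ and $(\mathcal{R}_\infty^{(\gamma)}/_\sim,\oplus)$ is a compact abelian group.

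For \eqref{largeset} I would first reconcile the set appearing there with the set $\mathcal{V}$ of Proposition \ref{p:injective2}. Because $\alpha_{f^{(d,\gamma)}}$ is expansive, $f^{(d,\gamma)}$ has no zeros on $\mathbb{S}^d$, so $f^{(d,\gamma)}(\sigma)$ is injective on $\ell^\infty(\mathbb{Z}^d)$; consequently distinct $h$ give distinct members $y+f^{(d,\gamma)}\cdot h$ of $[y]$, and $[y]\cap\mathcal{R}_\infty^{(\gamma)}$ is a singleton iff $y+w\notin\mathcal{R}_\infty^{(\gamma)}$ for every nonzero $w\in f^{(d,\gamma)}(\sigma)(\ell^\infty(\mathbb{Z}^d,\mathbb{Z}))$, which is exactly membership in $\mathcal{V}$. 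Proposition \ref{p:injective2} already shows $\mathcal{V}$ is a shift-invariant dense $G_\delta$, so the remaining task is purely measure-theoretic: to produce a shift-invariant measure of maximal entropy $\mu$ with $\mu(\mathcal{V})=1$. Existence of \emph{some} measure of maximal entropy is automatic, since $\mathcal{R}_\infty^{(\gamma)}$ is an expansive subshift with upper semicontinuous entropy. Moreover the dissipative analogue of Theorem \ref{t:sand2} — whose proof is the same as, but simpler than, the critical case, using Proposition \ref{codeprops} and expansiveness — gives $(\xi^{(\gamma)})_*\mu=\lambda_{X_{f^{(d,\gamma)}}}$ for every such $\mu$, so that $h_\mu(\sigma)=h_\lambda(\alpha_{f^{(d,\gamma)}})$ and $\xi^{(\gamma)}$ is a zero relative entropy extension of $(X_{f^{(d,\gamma)}},\lambda)$.

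What remains, and this is the substantive content of \cite{MRS}, is to upgrade the topological genericity of $\mathcal{V}$ to full measure, equivalently to show that for $\lambda$-almost every $x$ the fibre $(\xi^{(\gamma)})^{-1}(x)\cap\mathcal{R}_\infty^{(\gamma)}$ is a single point. The route I would follow combines the zero relative entropy just established; expansiveness of $\alpha_{f^{(d,\gamma)}}$, which via the dissipative analogue of Lemma \ref{l:separated} forces distinct recurrent configurations agreeing off a finite window to map to uniformly separated points of $X_{f^{(d,\gamma)}}$; and the toppling rigidity special to $\gamma>2d$, where dissipation makes stabilization almost surely local. Writing a partner as $y'=y+f^{(d,\gamma)}\cdot h=y-\sum_{\mathbf{n}}h_{\mathbf{n}}\,f^{(d,\gamma)}u^{\mathbf{n}}$, one reads $h$ as a net toppling profile relating two recurrent configurations, and a counting argument comparing the growth rate $h_{\mathrm{top}}$ of $|\pi_{Q_L}(\mathcal{R}_\infty^{(\gamma)})|$ with the number of separated points in $X_{f^{(d,\gamma)}}$ suggests that the set of $y$ admitting a nonzero such profile should carry no mass under a measure of maximal entropy. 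Making this rigorous is the main obstacle: one must control the \emph{global}, non-local identifications $y\sim y'$ that differ in infinitely many coordinates, which the local separation and counting estimates do not capture directly. It is precisely here that dissipativity — the almost sure finiteness of stabilization, guaranteeing a unique recurrent representative of a typical class — must be used in an essential way, and its absence is exactly why the analogous statement in the critical case of Section \ref{s:critical} remains open.
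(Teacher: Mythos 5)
Your handling of the first two assertions is correct but follows a genuinely different route from the paper. The paper proves Theorem \ref{MRSresult} essentially by citation: compactness and the group structure are quoted from \cite[Proposition 3.2 and Theorem 3.1]{MRS}, the measure $\mu$ is taken to be the unique weak limit point of the finite-volume uniform measures (\cite[Theorem 3.2]{MRS}), shift-invariance is deduced from the uniqueness of that limit point, maximality of entropy via the counting methods of \cite[Chapter 8]{Walters}, and \eqref{largeset} is exactly \cite[Proposition 3.3]{MRS}. You instead derive compactness and the group law internally, from Proposition \ref{codeprops} and Corollary \ref{c:correction}: the fibres of $\xi^{(\gamma)}$ on $\mathcal{R}_\infty^{(\gamma)}$ are the classes $[v]\cap\mathcal{R}_\infty^{(\gamma)}$, so $\xi^{(\gamma)}$ descends to a continuous bijection $\bar\xi^{(\gamma)}\colon\mathcal{R}_\infty^{(\gamma)}/_\sim\longrightarrow X_{f^{(d,\gamma)}}$, hence a homeomorphism, and additivity transports the group structure. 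This is in effect the paper's own later argument for Theorem \ref{t:unique}(ii), and it is legitimate here because Proposition \ref{codeprops} is established independently of Theorem \ref{MRSresult}; what it buys is a self-contained proof of these two parts, at the cost of leaning on Proposition \ref{codeprops}, whose proof the paper itself only sketches as ``analogous to the critical case.''

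The genuine gap is \eqref{largeset}, and you flag it yourself. Your reduction is sound as far as it goes: the singleton set is indeed the set $\mathcal{V}$ of Proposition \ref{p:injective2} (the injectivity of $f^{(d,\gamma)}(\sigma)$ on $\ell^\infty(\mathbb{Z}^d)$ is correct, via convolution with $w^{(d,\gamma)}\in\ell^1(\mathbb{Z}^d)$), some measure of maximal entropy exists, and $(\xi^{(\gamma)})_*\mu=\lambda_{X_{f^{(d,\gamma)}}}$ follows as in Theorem \ref{t:sand2}. But none of this yields $\mu(\mathcal{V})=1$: a dense $G_\delta$ can be null for every invariant measure; a factor map of zero relative entropy onto the unique measure of maximal entropy can still be finite-to-one or countable-to-one almost everywhere (a group extension by a finite group already defeats the entropy argument); and the separation/counting estimates you invoke only control identifications $v'-v$ supported on finite windows, which Proposition \ref{p:sand} (equivalently Proposition \ref{codeprops}(c)) excludes for \emph{all} points, not merely almost all. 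The substantive identifications are exactly the non-local ones, where $v$ and $v'$ differ in infinitely many coordinates, and ruling these out almost surely is the content of \cite[Proposition 3.3]{MRS} --- a dedicated probabilistic argument exploiting dissipativity, which the paper imports verbatim rather than reproves. A blind proof must either reproduce that argument or substitute one; your closing paragraph offers the right heuristic (a.s.\ locality of stabilization for $\gamma>2d$) but, as you concede, no proof, so the theorem's third assertion remains unestablished in your proposal.
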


	\begin{proof}
The first two statements are the results of \cite[Proposition 3.2 and Theorem 3.1]{MRS}. Furthermore, the main result of \cite{MRS}, Theorem 3.2, states that, if $\mu_{V}$ is the uniform measure on $\pi _{V}\bigl(\mathcal{R}^{(\gamma )}_{Q(N)}\bigr)$, where $V\subset \mathbb{Z}^d$ is a rectangle, than the set of limit points of sequences $\mu_{V}, V\nearrow \mathbb{Z}^d$, is a singleton. Denote by $\mu $ this unique limit point. We claim that $\mu $ is a shift-invariant measure on $\mathcal{R}^{(\gamma )}_\infty$, which moreover, has maximal entropy.

The invariance follows immediately from the uniqueness of the weak limit point. Denote by $\sigma$ the $\mathbb{Z}^d$-shift action on $\mathcal{R}^{(\gamma )}_\infty$. For every Borel set $A\subseteq \mathcal{R}^{(\gamma )}_\infty$, every $\mathbf{n}\in \mathbb{Z}^d$, and any sequence of rectangles $E _k\nearrow \mathbb{Z}^d$:
	$$
\mu( \sigma^\mathbf{n} A) =\lim_{k\to\infty} \mu_{E _k}( \sigma^\mathbf{n} A)=\lim_{k\to\infty} \mu_{E _k+\mathbf{n}}(A)=\mu(A).
	$$
Using the methods of \cite[Chapter 8]{Walters} (see also the proof of Theorem \ref{t:sand2} above), one can show that
	$$
h_\mu (\sigma _{\mathcal{R}^{(\gamma )}_\infty})=\lim_{E \to\mathbb{Z}^d} -\frac 1{|E |}\sum_{y_E \in \mathcal{R}^{(\gamma )}_E } \mu([y_E ])\log \mu([y_E ])= \lim_{E \to\mathbb{Z}^d} \frac 1{|E |}\log |\mathcal{R}^{(\gamma )}_E |=h_\textup{top}(\sigma _{\mathcal{R}^{(\gamma )}_\infty}),
	$$
where $\sigma _{\mathcal{R}^{(\gamma )}_\infty}$ is the restriction of $\sigma $ to $\mathcal{R}^{(\gamma )}_\infty$. Finally, (\ref{largeset}) is the result of \cite[Proposition 3.3]{MRS}.
	\end{proof}

We are now able to extend the results of \cite{MRS} further.
	\begin{theo}
	\label{t:unique}
Let $d\ge2$, $\gamma >2d$, and let $\mathcal{R}_\infty ^{(\gamma )}$ be the dissipative sandpile model \eqref{eq:sandpiles}.
	\begin{enumerate}
	\item[(i)]
The set $\mathcal{C}=\bigl\{y\in \mathcal{R}^{(\gamma )}_\infty: [y]\cap \mathcal{R}^{(\gamma )}_\infty\enspace \textup{is a singleton}\bigr\}$ is a dense $G_\delta$-subset of $\mathcal{R}^{(\gamma )}_\infty$;
	\item[(ii)]
The group $(\mathcal{R}^{(\gamma )}_\infty/_\sim,\oplus)$ is isomorphic to $X_{f ^{(d,\gamma)}}$;
	\item[(iii)]
The subshift $\mathcal{R}^{(\gamma )}_\infty$ admits a unique measure $\mu $ of maximal entropy.
	\item[(iv)]
The shift action of $\mathbb{Z}^d$ on $(\mathcal{R}^{(\gamma )}_\infty,\mu )$ is Bernoulli.
	\end{enumerate}
	\end{theo}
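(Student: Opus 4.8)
The plan is to deduce all four statements from the single covering map $\xi^{(\gamma)}\colon\mathcal{R}_\infty^{(\gamma)}\to X_{f^{(d,\gamma)}}$ of Proposition \ref{codeprops}, which is continuous, shift-equivariant, surjective, entropy-preserving, and a group homomorphism (being $\rho$ composed with convolution by $w^{(d,\gamma)}\in\ell^1(\mathbb{Z}^d)$). For (i) I would first observe that the set $\mathcal{C}$ coincides with the set $\mathcal{V}$ of Proposition \ref{p:injective2}. Indeed, $v\sim v'$ means $v'-v=f^{(d,\gamma)}\cdot h$ for some $h\in\ell^\infty(\mathbb{Z}^d,\mathbb{Z})$, and since $f^{(d,\gamma)}$ has no zeros on the torus the operator $f^{(d,\gamma)}(\sigma)$ is injective on $\ell^\infty(\mathbb{Z}^d,\mathbb{Z})$ (its inverse is convolution by $w^{(d,\gamma)}$); hence $[y]\cap\mathcal{R}_\infty^{(\gamma)}$ is a singleton precisely when no nonzero $w\in f^{(d,\gamma)}(\sigma)(\ell^\infty(\mathbb{Z}^d,\mathbb{Z}))$ satisfies $y+w\in\mathcal{R}_\infty^{(\gamma)}$, i.e.\ precisely when $y\in\mathcal{V}$. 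Proposition \ref{p:injective2}, which already covers the dissipative range $\gamma>2d$, then yields that $\mathcal{C}=\mathcal{V}$ is a dense $G_\delta$.

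For (ii), Proposition \ref{codeprops}(a)--(b) shows that $\xi^{(\gamma)}$ descends to a bijection $\hat{\xi}\colon\mathcal{R}_\infty^{(\gamma)}/_\sim\to X_{f^{(d,\gamma)}}$ sending $[y]$ to $\xi^{(\gamma)}(y)$; linearity of $\bar{\xi}^{(\gamma)}$ makes $\hat\xi$ a homomorphism for the operation $[y]\oplus[\tilde y]=[y+\tilde y]$. Since $\mathcal{R}_\infty^{(\gamma)}/_\sim$ is compact (Theorem \ref{MRSresult}) and $X_{f^{(d,\gamma)}}$ is Hausdorff, the continuous bijection $\hat\xi$ is a homeomorphism, hence a topological group isomorphism.

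For (iii) and (iv) I would transport the unique shift-invariant measure of maximal entropy $\lambda_{X_{f^{(d,\gamma)}}}$ on $X_{f^{(d,\gamma)}}$ back along $\xi^{(\gamma)}$. Adapting Theorem \ref{t:sand2} to the dissipative setting (the proof is the same but simpler, since $\xi^{(\gamma)}$ is globally defined by an $\ell^1$-convolution and $\alpha_{f^{(d,\gamma)}}$ is expansive) gives $(\xi^{(\gamma)})_*\nu=\lambda_{X_{f^{(d,\gamma)}}}$ for every shift-invariant measure of maximal entropy $\nu$ on $\mathcal{R}_\infty^{(\gamma)}$; in particular this holds for the measure $\mu$ of Theorem \ref{MRSresult}. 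Because $\mu(\mathcal{C})=1$ by \eqref{largeset}, the injectivity set $X_0=\xi^{(\gamma)}(\mathcal{C})=\{x\in X_{f^{(d,\gamma)}}:|(\xi^{(\gamma)})^{-1}(x)|=1\}$ satisfies $\lambda_{X_{f^{(d,\gamma)}}}(X_0)=((\xi^{(\gamma)})_*\mu)(X_0)\ge\mu(\mathcal{C})=1$. By Lusin--Souslin the restriction of $\xi^{(\gamma)}$ to $\mathcal{C}$ is a Borel isomorphism onto $X_0$ with Borel, shift-equivariant inverse $\psi$. For any ergodic maximal-entropy $\nu$ one then has $\nu(\mathcal{C})=\nu((\xi^{(\gamma)})^{-1}(X_0))=((\xi^{(\gamma)})_*\nu)(X_0)=\lambda_{X_{f^{(d,\gamma)}}}(X_0)=1$, and injectivity of $\xi^{(\gamma)}$ on $\mathcal{C}$ forces $\nu=\psi_*\lambda_{X_{f^{(d,\gamma)}}}$; the same identity holds for $\mu$, so $\nu=\mu$, and the ergodic, hence every, maximal-entropy measure is unique, proving (iii). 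Moreover $\psi$ is then an almost-everywhere defined, shift-equivariant, measure-preserving isomorphism between $(\mathcal{R}_\infty^{(\gamma)},\mu,\sigma)$ and $(X_{f^{(d,\gamma)}},\lambda_{X_{f^{(d,\gamma)}}},\alpha_{f^{(d,\gamma)}})$; since the latter is Bernoulli by \cite{RS} (being an expansive algebraic $\mathbb{Z}^d$-action of completely positive entropy, cf.\ \cite{DSAO}), the system $(\mathcal{R}_\infty^{(\gamma)},\mu,\sigma)$ is Bernoulli, which is (iv).

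The main obstacle is the step in (iii) that upgrades the almost-injectivity from the single measure $\mu$ of \cite{MRS} to \emph{every} measure of maximal entropy on $\mathcal{R}_\infty^{(\gamma)}$. This rests on establishing the dissipative analogue of Theorem \ref{t:sand2}, so that an arbitrary maximal-entropy $\nu$ also pushes forward to Haar measure, together with the measurable-selection argument guaranteeing that $\psi$ is Borel and measure-preserving; once $\lambda_{X_{f^{(d,\gamma)}}}(X_0)=1$ is secured, the equality $\nu(\mathcal{C})=1$ and the resulting uniqueness and Bernoullicity follow formally.
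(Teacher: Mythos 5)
Your proposal is correct and follows essentially the same route as the paper's proof: (i) via Proposition \ref{p:injective2}, (ii) directly from Proposition \ref{codeprops} plus compactness of $\mathcal{R}^{(\gamma)}_\infty/_\sim$, and (iii)--(iv) by the dissipative analogue of Theorem \ref{t:sand2} (which the paper likewise only asserts, with the same justification that the argument simplifies) pushing every maximal-entropy measure forward to $\lambda_{X_{f^{(d,\gamma)}}}$, combined with \eqref{largeset} and the injectivity of $\xi^{(\gamma)}$ on $\mathcal{C}$, before transporting Bernoullicity from \cite{RS}. The only differences are added explicit details --- the identification $\mathcal{C}=\mathcal{V}$ via injectivity of $f^{(d,\gamma)}(\sigma)$ on $\ell^\infty(\mathbb{Z}^d,\mathbb{Z})$, and the Lusin--Souslin step making $\xi^{(\gamma)}(\mathcal{C})$ Borel --- which the paper leaves implicit.
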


	\begin{proof}
The first statement is proved in Proposition \ref{p:injective2}. Using the properties of $\xi^\gamma:\mathcal{R}^{(\gamma )}_\infty\to X_{f ^{(d,\gamma)}}$ (Lemma \ref{codeprops}), the second statement is immediate. The same proof as in Theorem \ref{t:sand2} shows that $h_\textup{top}(\sigma _{\mathcal{R}^{(\gamma )}_\infty})= h_\textup{top}(X_{f^{(d,\gamma)}})$, and that $\xi ^{(\gamma )}_*\nu =\lambda _{X_{f^{(d,\gamma )}}}$ for every shift-invariant probability measure $\nu $ of maximal entropy on $\mathcal{R}_\infty ^{(\gamma )}$.

Since the restriction of the continuous map $\xi ^{(\gamma )}\colon \mathcal{R}_\infty ^{(\gamma )}\longrightarrow X_{f^{(d,\gamma )}}$ to $\mathcal{C}$ is injective, $\xi ^{(\gamma )}(\mathcal{C})$ is a Borel subset of $X_{f^{(d,\gamma )}}$ with full Haar measure.

If $\nu $ is a shift-invariant probability measure of maximal entropy on $\mathcal{R}_\infty ^{(\gamma )}$, then $\xi ^{(\gamma )}_*\nu =\lambda _{X_{f^{(d,\gamma )}}}$. Hence $\nu (\mathcal{C})=1$, and the injectiveness of $\xi ^{(\gamma )}$ on $\mathcal{C}$ implies that $\nu =\mu $, where $\mu $ is the measure appearing in Theorem \ref{MRSresult}. This proves (iii).

The Bernoulli property of the shift-action of $\mathbb{Z}^d$ on $(\mathcal{R}_\infty ^{(\gamma )},\mu )$ follows from the corresponding property of $\alpha _{f^{(d,\gamma )}}$ on $(X_{f^{(d,\gamma )}},\lambda _{X_{f^{(d,\gamma )}}})$ proved in \cite{RS}, since the two systems are measurably conjugate.
	\end{proof}

\section{\label{s:conclusions}Conclusions and final remarks}

(1) In \cite{Dhar3}, \textit{toppling invariants} have been constructed for the abelian sandpile model in finite volume. These are functions which are linear in height variables and are invariant under the topplings. It is also obvious that the definition \cite[Equation (3.3)]{Dhar3} cannot be extended to the infinite volume. The underlying problem (non-summability of the lattice potential function) is precisely the problem overcome by the introduction of $\ell^1$-homoclinic points $\{ v=g\cdot w^{(d)}:g\in I_d\}$. The inevitable drawback is a larger kernel $\xi_g\supsetneq f^{(d)}\cdot\ell^\infty(\Z^d,\Z)$. Nevertheless, we conjecture that for $d\ge 2$, the set $\{v\in \mathcal R_\infty:\textup{there exists}\enspace \tilde v\in\mathcal{R}_\infty: \tilde v\ne v \textup{ and } \xi_{I_d}(v)=\xi_{I_d}(\tilde v)\}$ has measure $0$ with respect to any measure of maximal entropy. As in the dissipative case, this would imply that $\mathcal{R}_\infty$ carries a unique measure of maximal entropy.

(2) In the present paper we did not address the properties of the infinite volume sandpile dynamics, see e.g. \cite{JR}. We note that the sandpile dynamics takes a particularly simple form in the image space, the harmonic model $X_{f^{(d)}}$ or its factor group $\tilde{X}_{f^{(d)}}$. Namely, given any initial configuration $v$, suppose one grain of sand is added at site $\n$. For every $g\in \tilde{I}_d=I_d\smallsetminus (f^{(d)})$,
	$$
\xi_g(v+\delta ^{(\mathbf{n})}) =\xi _g(v)+\xi _g(\delta ^{(\mathbf{n})})= \xi _g(v)+\rho (\alpha^{-\n}z^{(g)}),
	$$
where $\delta ^{(\mathbf{n})}=\sigma ^{-\mathbf{n}}\delta ^{(\mathbf{0})}$ (cf. Footnote \ref{delta}) and $z^{(g)}=\rho (g^*\cdot w^{(d)})\in\Delta _\alpha ^{(1)}(X_{f^{(d)}})$ is the homoclinic point appearing in \eqref{eq:Delta}. It might be interesting to understand whether any statistical properties of the harmonic model can be used to draw any conclusions on the distribution of avalanches and other dynamically relevant notions in $\mathcal R_\infty$.

Finally, as already mentioned in the introduction, the group $\mathcal{G}_d=R_d/(f^{(d)})$ is the appropriate infinite analogue of the groups of addition operators in finite volumes: on the sandpile model, $\mathcal{G}_d$ can be viewed as the abelian group generated by the elementary addition operators $\{a_\n : \n\in \Z^d\}$ satisfying the basic relations
	$$
a_\n^{2d} =\prod_{\k:\|\k-\n\|_{\max}=1} a_\k
	$$
for all $\n\in\Z^d$. These addition operators are well-defined on $\mathcal{R}_E$, $E\Subset\mathbb{Z}^d$, but for the infinite volume limit $\mathcal{R}_\infty $ these operators are not defined everywhere. Under the maps $\xi _g\colon \mathcal{R}_\infty \longrightarrow X_{f^{(d)}},\,\,g\in I_d$, or $\xi _{I_d}\colon \mathcal{R}_\infty \longrightarrow \tilde{X}_{f^{(d)}}=X_{f^{(d)}}/X_{I_d}$, the addition operator $a_\mathbf{n}$ is sent to addition of the homoclinic points $\xi _g(\delta ^{(\mathbf{n})})=\rho (g^*\cdot w^{(d)})=g(\alpha )(x^\Delta )$ (on $X_{f^{(d)}}$) and $\xi _{I_d}(\delta ^{(\mathbf{n})})$ (on $\tilde{X}_{f^{(d)}}$), respectively. These additions are defined everywhere on $X_{f^{(d)}}$ and $\tilde{X}_{f^{(d)}}$, and the isomorphism between $\tilde{X}_{f^{(d)}}$ and $\mathcal{R}_\infty /_\sim$ implies that \textit{the addition operators $a_\mathbf{n},\,\mathbf{n}\in\mathbb{Z}^d$, are defined everywhere} on $\mathcal{R}_\infty /_\sim$ (cf. Subsection \ref{ss:group}).

	\end{document}